\newcommand{\Z}{\mathbb Z} 
\newcommand{\R}{\mathbb R}
\newcommand{\la}{\lambda}
\newcommand{\Ac}{\mathcal{A}}
\newcommand{\Cc}{\mathcal{C}}
\newcommand{\cM}{\mathcal{M}}
\newcommand{\Rc}{\mathcal{R}}
\newcommand{\bs}{\boldsymbol}
\newcommand{\ep}{\varepsilon}
\newcommand{\La}{\Lambda}
\newcommand{\Aug}{\mathcal{A}ug}
\DeclareMathOperator{\ind}{ind}
\DeclareMathOperator{\id}{id}
\DeclareMathOperator{\mfm}{\mathfrak{m}}
\DeclareMathOperator{\Cth}{Cth}
\DeclareMathOperator{\Fuk}{\mathcal{F}uk}
\DeclareMathOperator{\CZ}{CZ}
\newtheorem{lem}{Lemma}
\newtheorem{teo}{Theorem}
\newtheorem{coro}{Corollary}
\newtheorem{prop}{Proposition}
\theoremstyle{definition}
\newtheorem{defi}{Definition}
\newtheorem{ex}{Example}
\newtheorem{rem}{Remark}
\newtheorem{nota}{Notations}
\begin{document}
\title{$A_\infty$-category of Lagrangian cobordisms in the symplectization of $P\times\R$.}
\author{No\'emie Legout}
\date{}
\maketitle
\begin{abstract}
	We define a unital $A_\infty$-category $\Fuk(\R\times Y)$ whose objects are exact Lagrangian cobordisms in the symplectization of $Y=P\times\R$, with negative cylindrical ends over Legendrians equipped with augmentations. The morphism spaces $\hom_{\Fuk(\R\times Y)}(\Sigma_0,\Sigma_1)$ are given in terms of Floer complexes $\Cth_+(\Sigma_0,\Sigma_1)$ which are
	versions of the Rabinowitz Floer complex defined by Symplectic Field Theory (SFT) techniques.
\end{abstract}
\tableofcontents

\section{Introduction}

This paper deals with Lagrangian cobordisms in the symplectization $(\R\times Y,d(e^t\alpha))$ of a contact manifold $(Y,\alpha)$. These cobordisms are properly embedded Lagrangian submanifolds admitting cylindrical ends on Legendrian submanifolds of $Y$, and here $Y$ will be the contactization $(P\times\R,dz+\beta)$ of a Liouville manifold $(P,\beta)$.

Our goal is to define through SFT-techniques introduced in \cite{EGH} a unital $A_\infty$-category $\Fuk(\R\times Y)$ whose objects are Lagrangian cobordisms equipped with an augmentation of the Chekanov-Eliashberg algebra (CE-algebra) of its negative end, and whose morphism spaces are given by certain Floer-type complexes $\Cth_+(\Sigma_0,\Sigma_1)$. In particular, when $\Sigma_0$ is a cylinder over a Legendrian equipped with an augmentation and $\Sigma_1$ a parallel copy the complex $\Cth_+(\Sigma_0,\Sigma_1)$ is an SFT-formulation of the Lagrangian Rabinowitz Floer complex due to Merry \cite{Merry}.
There already exists several versions of Fukaya categories whose objects are (non compact) exact Lagrangians, notably the (partially) wrapped Fukaya categories of a Liouville domain (see \cite{AS,Abouzaid,ZS}) and more recently of Liouville sectors \cite{GPS}. In this paper we instead consider Lagrangian submanifolds in a trivial Liouville cobordism, meaning a trivial cylinder over a contact manifold. The main difference between this and the case of Liouville domains is that we have a non empty concave end.
It is known that additional assumptions are necessary in order to define Floer complexes in this setting (Lagrangian cobordisms with loose negative ends are known to satisfy some flexibility results). For that reason, we impose some restriction on the Lagrangians. More precisely, we consider only exact Lagrangian cobordisms with negative cylindrical ends over Legendrian submanifolds whose CE-algebra admits an augmentation. In particular, an exact Lagrangian filling implies the existence of an augmentation \cite{EHK}.

Generalizing the structures we define in this paper to the case of Lagrangians in a more general Liouville cobordism should be possible  using the latest techniques of virtual perturbations in \cite{Pardon} or the polyfold technology developed in \cite{HWZ}.
Note that Cieliebak-Oancea in \cite{OC} have defined a version of Rabinowitz Floer homology for Lagrangians in a Liouville cobordism under the assumption that this cobordism admits a filling.
The Floer complex we define in this paper are similar to the ones defined by Cieliebak-Oancea; for instance, there is an identification on the level of generators. The main difference is that their differential is defined by Floer strips with a Hamiltonian perturbation term that corresponds to wrapping, while the differential considered here is defined in terms of honest SFT-type pseudo-holomorphic discs. It is expected that the two theories give quasi-isomorphic complexes. \\

We start by contrasting the complex considered here with the Floer-type complex for pairs of Lagrangian cobordisms considered in \cite{CDGG2}. Namely, given a pair of transverse exact Lagrangian cobordisms $(\Sigma_0,\Sigma_1)$ where $\Sigma_i$ has positive and negative cylindrical ends over Legendrians $\La_i^+$ and $\La_i^-$ respectively and $\La_i^-$ are equipped with augmentations, the authors in \cite{CDGG2} define the Floer complex $(\Cth(\Sigma_0,\Sigma_1),\mathfrak{d})$ whose underlying vector space is given by
\begin{alignat*}{1}
	\Cth(\Sigma_0,\Sigma_1)=C(\La_0^+,\La_1^+)\oplus CF(\Sigma_0,\Sigma_1)\oplus C(\La_0^-,\La_1^-)
\end{alignat*}
where $C(\La_0^\pm,\La_1^\pm)$ is generated by Reeb chords from $\La_1^\pm$ to $\La_0^\pm$ and $CF(\Sigma_0,\Sigma_1)$ is generated by intersection points in $\Sigma_0\cap\Sigma_1$. This complex is actually the cone of a map
\begin{alignat*}{1}
	f_1:CF_{-\infty}(\Sigma_0,\Sigma_1):=CF(\Sigma_0,\Sigma_1)\oplus C(\La_0^-,\La_1^-)\to C(\La_0^+,\La_1^+)
\end{alignat*}
In \cite{L} the author defined a product structure on $CF_{-\infty}(\Sigma_0,\Sigma_1)$, as well as higher order maps satisfying the $A_\infty$-equations. Moreover, in the same paper it is proved that the map $f_1$ generalizes to a family of maps $\{f_d\}_{d\geq1}$,
\begin{alignat*}{1}
	f_d:\Cth(\Sigma_{d-1},\Sigma_d)\otimes\dots\otimes\Cth(\Sigma_0,\Sigma_1)\to C(\La_0^+,\La_d^+)
\end{alignat*}
defined for a $(d+1)$-tuple of pairwise transverse exact Lagrangian cobordisms $(\Sigma_0,\dots,\Sigma_d)$, and satisfying the $A_\infty$-functor equations; where the $A_\infty$-structure maps on $C(\La_{d-1}^+,\La_d^+)\otimes\dots\otimes C(\La_0^+,\La_1^+)$ are given by the structure maps of the augmentation category $\Aug_-(\La_0^+\cup\dots\cup\La_d^+)$, see \cite{BCh}.
However, there exists no non trivial $A_\infty$-structure on the whole complex $\Cth(\Sigma_0,\Sigma_1)$ for example for degree reasons: the grading of Reeb chord generators in the positive end in $\Cth(\Sigma_0,\Sigma_1)$ is given by the Conley-Zehnder index plus $1$ (see Subsections \ref{sec:grad} and \ref{Cth-}) so a count of rigid pseudo-holomorphic discs with boundary on the positive cylindrical ends, with two negative Reeb chord asymptotics and one positive Reeb chord asymptotic would not provide a degree $0$ order $2$ map for example.

In this article, we use similar techniques for constructing a version 
of the Rabinowitz complex $(\Cth_+(\Sigma_0,\Sigma_1),\mfm_1)$, on which it will be possible to define higher order structure maps. The underlying vector space is:
\begin{alignat*}{1}
\Cth_+(\Sigma_0,\Sigma_1)=C(\La_1^+,\La_0^+)\oplus CF(\Sigma_0,\Sigma_1)\oplus C(\La_0^-,\La_1^-)
\end{alignat*}
so the only difference with $\Cth(\Sigma_0,\Sigma_1)$ is the generators we consider in the positive end; unlike in the complex $\Cth(\Sigma_0,\Sigma_1)$ these generators consist of the chords which \emph{start} at $\La_0^+$ and \emph{end} at $\La_1^+$. 
The differential is defined by a count of pseudo-holomorphic discs with boundary on the cobordisms and asymptotic to Reeb chords and intersection points such that
\begin{itemize}
	\item $C(\La_0^-,\La_1^-)$ is a subcomplex which is the linearized Legendrian contact cohomology complex of $\La_0^-\cup\La_1^-$ restricted to chords from $\La_1^-$ to $\La_0^-$,
	\item $C(\La_1^+,\La_0^+)$ is a quotient complex which is the linearized Legendrian contact homology complex of $\La_0^+\cup\La_1^+$ restricted to chords from $\La_0^+$ to $\La_1^+$.
\end{itemize}
In the case $\Sigma_0=\R\times\La_0$ and $\Sigma_1$ is a cylinder over a perturbed copy of $\La_0$ translated far in the positive Reeb direction, the complex $\Cth_+(\Sigma_0,\Sigma_1)$ is the complex of the $2$-copy of $\La_0$ considered in \cite{EESa}.\\

Then we proceed to investigate the properties of this complex. Some of them resemble  properties satisfied by the complex $\Cth(\Sigma_0,\Sigma_1)$ but there are also some significant differences.\\

\noindent\textbf{Acyclicity:}
Contrary to $\Cth(\Sigma_0,\Sigma_1)$, the complex $\Cth_+(\Sigma_0,\Sigma_1)$ is not always acyclic.
For example if $Y=J^1M$ for a closed manifold $M$, $\Sigma_0$ is a cylinder over the $0$-section in $J^1M$ and $\Sigma_1$ a cylinder over a Morse perturbation of the $0$-section, the homology of $\Cth_+(\Sigma_0,\Sigma_1)$ does not vanish but equals instead the Morse homology of $M$.
However, in the case $Y=P\times\R$ where any compact subset of $P$ is Hamiltonian displaceable, for example $Y=\R^{2n+1}$, the complex $\Cth_+(\Sigma_0,\Sigma_1)$ is always acyclic. It is also always acyclic whenever $\La_0^-=\La_1^-=\emptyset$ as in this case the complex is actually the same as the dual complex of $\Cth(\Sigma_1,\Sigma_0)$.\\

\noindent\textbf{Structure maps and continuation element:} 
The new Cthulhu complex carries structure maps which satisfy the $A_\infty$-equations. More precisely, for any $(d+1)$-tuple $(\Sigma_0,\dots,\Sigma_d)$ of pairwise transverse exact Lagrangian cobordisms, we define a map
\begin{alignat*}{1}
	\mfm_d:\Cth_+(\Sigma_{d-1},\Sigma_d)\otimes\dots\otimes\Cth_+(\Sigma_0,\Sigma_1)\to\Cth_+(\Sigma_0,\Sigma_d)
\end{alignat*}
by counts of SFT-buildings consisting of pseudo-holomorphic discs with boundary on the $\Sigma_i$'s and asymptotic to Reeb chords and intersection points.  Then, for any $1\leq k\leq d$ and sub-tuple $(\Sigma_{i_0},\dots,\Sigma_{i_k})$ with $0\leq i_0<\dots< i_k\leq d$, one has
\begin{alignat}{1}
	\sum\limits_{j=1}^k\sum\limits_{n=0}^{k-j}\mfm_{k-j+1}\big(\id^{\otimes k-j+1}\otimes\mfm_j\otimes\id^{\otimes n}\big)=0\label{Ainf-eq}
\end{alignat}
where the inner $\mfm_j$ has domain $\Cth_+(\Sigma_{i_{n+j-1}},\Sigma_{i_{n+j}})\otimes\dots\otimes\Cth_+(\Sigma_{i_n},\Sigma_{i_{n+1}})$ and $\mfm_{k-j+1}$ has domain $\Cth_+(\Sigma_{i_{k-1}},\Sigma_{i_k})\otimes\dots\otimes\Cth_+(\Sigma_{i_{n}},\Sigma_{i_{n+j}})\otimes\dots\otimes\Cth_+(\Sigma_{i_0},\Sigma_{i_1})$.\\

In the case when $\Sigma_1$ is a suitable small Hamiltonian perturbation of $\Sigma_0$ one establishes the existence of a \textit{continuation element} in $\Cth_+(\Sigma_0,\Sigma_1)$ (see Section \ref{sec:unit} for a precise description of the Hamiltonian perturbation $\Sigma_1$):

\begin{teo} There exists an element $e_{\Sigma_0,\Sigma_1}\in\Cth_+(\Sigma_0,\Sigma_1)$ satisfying that for any exact Lagrangian cobordism $\Sigma_2$ transverse to $\Sigma_0$ and $\Sigma_1$ the map
\begin{alignat*}{1}
	\mfm_2(\,\cdot\,,e_{\Sigma_0,\Sigma_1}):\Cth_+(\Sigma_1,\Sigma_2)\to\Cth_+(\Sigma_0,\Sigma_2)
\end{alignat*}
is a quasi-isomorphism.
\end{teo}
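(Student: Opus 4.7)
The plan is to construct $e_{\Sigma_0,\Sigma_1}$ explicitly from the Morse-theoretic data of the perturbation and then prove the quasi-isomorphism statement by comparing $\mfm_2(\cdot,e_{\Sigma_0,\Sigma_1})$ with a standard continuation chain map. When $\Sigma_1$ is the small Hamiltonian perturbation of $\Sigma_0$ described in Section~\ref{sec:unit}, each of the three summands of $\Cth_+(\Sigma_0,\Sigma_1)$ has a Morse-theoretic description: the intersection points in $\Sigma_0\cap\Sigma_1$ correspond to critical points of a Morse function on the compact part of $\Sigma_0$, and the short Reeb chords at the positive and negative ends correspond to critical points of Morse functions on $\La_0^\pm$. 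I would define $e_{\Sigma_0,\Sigma_1}$ as the sum of the generators corresponding to local minima of these Morse functions in each of the three summands (with signs or corrections determined by the compatibility of the profiles near the ends).

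The first step is to verify that $\mfm_1(e_{\Sigma_0,\Sigma_1})=0$. In the small perturbation regime, a standard SFT-Morse limit argument in the spirit of Ekholm identifies rigid $\mfm_1$-disks emanating from the chosen generators with Morse gradient trajectories; since a minimum is a boundary-free cycle in Morse homology, the $\mfm_1$-cycle property follows. The second step is to produce, for any $\Sigma_2$ transverse to both $\Sigma_0$ and $\Sigma_1$, a continuation chain map
\[
\Phi:\Cth_+(\Sigma_1,\Sigma_2)\to\Cth_+(\Sigma_0,\Sigma_2)
\]
by counting pseudo-holomorphic strips associated to the Hamiltonian isotopy from $\Sigma_1$ to $\Sigma_0$; this map is a quasi-isomorphism by the standard invariance proof for the complex. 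The third step is to show that $\mfm_2(\cdot,e_{\Sigma_0,\Sigma_1})$ is chain-homotopic to $\Phi$. I would set this up via a one-parameter degeneration: shrinking the perturbation parameter $\ep\to 0$, the moduli spaces of triangles on $(\Sigma_0,\Sigma_1,\Sigma_2)$ with one input fixed at $e_{\Sigma_0,\Sigma_1}$ degenerate into configurations whose principal stratum reproduces the continuation strips counted by $\Phi$, and whose codimension-one boundary strata provide a chain homotopy via the $A_\infty$-equations~\eqref{Ainf-eq} applied to $e_{\Sigma_0,\Sigma_1}$.

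The main obstacle will be the SFT compactness analysis in this degeneration argument. Because $\Cth_+$ mixes generators at both the positive and negative ends, pseudo-holomorphic discs with input $e_{\Sigma_0,\Sigma_1}$ can break off additional Reeb-chord strips at either end, and the structure of the complex (subcomplex at the negative end, quotient complex at the positive end) must be used to show that these extra breakings either cancel in pairs or vanish by the cycle property of $e_{\Sigma_0,\Sigma_1}$ and the cycle property of the augmentations on $\La_i^-$. Establishing the requisite transversality for a generic one-parameter family of almost complex structures, and enumerating the full list of breakings so as to identify the boundary of the homotopy-defining moduli spaces with precisely $\mfm_2(\cdot,e_{\Sigma_0,\Sigma_1})-\Phi$, is the substantive technical part of the argument.
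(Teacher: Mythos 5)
There are two genuine gaps here, one in the construction of the element and one in the proof of the quasi-isomorphism.

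First, your candidate element is not the right one. In the paper the continuation element is $e=e^0+e^-$, where $e^0$ is the sum of the minima of the perturbing Morse function $F$ taken \emph{only} over the filling components of $\Sigma_0$ (by construction $F$ has no minimum on any component with nonempty negative end), and $e^-$ is the sum of the minimum Morse chords in $C(\La_0^-,\La_1^-)$, one per component with negative end. There is no contribution at all from the positive end. Your proposal to take ``minima in each of the three summands'' would add the minimum chords of $f_+$, which sit in $C(\La_1^+,\La_0^+)^\dagger[n-1]$ in degree $n$ rather than degree $0$, so the element would not even be homogeneous; worse, the components $d_{0+}$ and $b_1^-\circ\Delta_1^\Sigma$ of $\mfm_1$ applied to such a chord need not vanish, so the Morse-limit argument you invoke for the cycle property breaks down. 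The precise distribution of minima between $CF(\Sigma_0,\Sigma_1)$ and $C(\La_0^-,\La_1^-)$ (filling components versus components with negative end) is exactly what makes the cancellation $\sum\mfm_1^0(e_i^0)+\mfm_1^0(e_i^-)=0$ work after wrapping the negative end, and it cannot be replaced by an ad hoc ``signs or corrections'' clause.

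Second, the comparison with a continuation map $\Phi$ is not available in this framework without building it from scratch, and that construction is the actual content you defer. The differential and products here are defined by honest SFT discs with Reeb chord asymptotics decorated by augmentations; there is no Hamiltonian term in the Floer equation, and the generator sets of $\Cth_+(\Sigma_1,\Sigma_2)$ and $\Cth_+(\Sigma_0,\Sigma_2)$ involve chords of different Legendrian links at both ends, so a ``count of strips associated to the Hamiltonian isotopy'' requires moving boundary conditions together with a full SFT compactness, transversality and breaking analysis at both cylindrical ends --- none of which is sketched beyond naming it as the obstacle. The paper avoids all of this: it proves directly that $\mfm_2(\,\cdot\,,e)$ is an isomorphism at the chain level by ordering the generators by action (after splitting $CF(\Sigma_1,\Sigma_2)$ by the sign of the action) and showing the matrix is triangular with $1$'s on the diagonal; the diagonal entries are identified with unique rigid gradient flow lines by slightly wrapping the ends of $\Sigma_1$, using the transfer maps and projection to $P$, and the off-diagonal terms are controlled by energy/action estimates in terms of $\eta$. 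Quasi-isomorphism for an arbitrary transverse $\Sigma_2$ then follows by choosing the perturbation small enough that $\Sigma_2$ meets the standard neighborhood of $\Sigma_0$ in fibres. If you want to keep your continuation-map strategy you must actually construct $\Phi$ and the parametrized moduli spaces realizing the homotopy; otherwise the triangularity argument is the shorter and complete route.
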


Finally we use these ingredients to construct a unital $A_\infty$-category $\Fuk(\R\times Y)$ via localization, in the same spirit as the construction of the wrapped Fukaya category  of Liouville sectors in \cite{GPS}:
\begin{teo}
	There exists a unital $A_\infty$-category $\Fuk(\R\times Y)$ whose objects are Lagrangian cobordisms equipped with augmentations of its negative ends and whose morphism spaces in the cohomological category satisfy $H^*\hom_{\Fuk(\R\times Y)}(\Sigma_0,\Sigma_1)\cong H^*(\Cth_+(\Sigma_0,\Sigma_1),\mfm_1)$ whenever $\Sigma_0$ and $\Sigma_1$ are transverse.
\end{teo}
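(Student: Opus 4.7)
The plan is to assemble the operations $\mfm_d$ into a non-unital $A_\infty$-category of transverse tuples of Lagrangian cobordisms and then to localize at the class of continuation elements produced by the preceding theorem, following the scheme used by Ganatra--Pardon--Shende \cite{GPS} for the wrapped Fukaya category of a Liouville sector.

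First, I would define a preliminary $A_\infty$-category $\mathcal{O}$ whose objects are Lagrangian cobordisms equipped with augmentations of their negative ends, together with a coherent system of small admissible Hamiltonian perturbations realizing, for each finite subcollection, a pairwise transverse configuration. On transverse tuples, the morphism spaces and higher operations are the complexes $\Cth_+(\Sigma_0,\Sigma_1)$ and the maps $\mfm_d$, which satisfy the $A_\infty$-relations~\eqref{Ainf-eq}. The absence of a strict unit in $\mathcal{O}$ is an intrinsic feature: $\Sigma$ is not transverse to itself, so $\Cth_+(\Sigma,\Sigma)$ is not even defined.

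Next, I would designate as the class $W$ of \emph{continuation morphisms} every continuation element $e_{\Sigma,\Sigma'}\in\Cth_+(\Sigma,\Sigma')$ produced by the preceding theorem, where $\Sigma'$ is a sufficiently small Hamiltonian perturbation of $\Sigma$, together with all its $\mfm_2$-compositions. By the preceding theorem, $\mfm_2(-,e_{\Sigma,\Sigma'})$ is a quasi-isomorphism, hence every morphism in $W$ is invertible in cohomology. I would then set $\Fuk(\R\times Y):=\mathcal{O}[W^{-1}]$, using the $A_\infty$-localization machinery as in \cite{GPS}. The universal property of localization promotes the continuation elements to honest isomorphisms in the cohomological category, and provides each object with a homotopy unit, yielding the required unital structure. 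Finally, for a transverse pair $\Sigma_0,\Sigma_1$, the localized morphism complex can be computed as a homotopy colimit (a telescope) of Cthulhu complexes along the sequence of continuation maps; since each structure map in this telescope is a quasi-isomorphism, its cohomology equals $H^*(\Cth_+(\Sigma_0,\Sigma_1),\mfm_1)$, giving the claimed identification of morphism spaces.

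The principal obstacle is establishing the coherence of the continuation elements required for the localization to behave well. Concretely, one must verify that $W$ forms a homotopy-coherent system of quasi-isomorphisms: along a chain $\Sigma_0\to\Sigma_1\to\Sigma_2$ of small perturbations, the continuation elements must satisfy $\mfm_2(e_{\Sigma_1,\Sigma_2},e_{\Sigma_0,\Sigma_1})\simeq e_{\Sigma_0,\Sigma_2}$ up to $\mfm_1$-homotopy, and analogous compatibilities must hold for the higher $\mfm_d$. These amount to a careful bookkeeping of SFT-buildings of pseudo-holomorphic discs connecting continuation elements in several cobordism slots; they will follow from the same SFT-compactness and gluing analysis that underlies the $A_\infty$-relations~\eqref{Ainf-eq}, applied to stretched moduli spaces involving multiple continuation inputs.
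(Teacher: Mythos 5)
Your proposal follows essentially the same route as the paper: one builds a directed $A_\infty$-category $\mathcal{O}$ from cofinal sequences of small Hamiltonian perturbations of each cobordism (the paper adds formal strict units so that $\mathcal{O}$ is strictly unital, sidestepping the issue that $\Cth_+(\Sigma,\Sigma)$ is undefined), with operations the $\mfm_d$, and then localizes at the continuation elements in the style of \cite{GPS}, identifying $H^*\hom_{\Fuk(\R\times Y)}(\Sigma_0,\Sigma_1)$ with $H^*(\Cth_+(\Sigma_0,\Sigma_1),\mfm_1)$ via the quasi-isomorphism property of $\mfm_2(\,\cdot\,,e)$. The coherence you flag as the principal obstacle, $\mfm_2(e_{\Sigma_1,\Sigma_2},e_{\Sigma_0,\Sigma_1})=e_{\Sigma_0,\Sigma_2}$, is already established exactly (not merely up to homotopy) in Corollary \ref{coro:unit}, so your plan goes through as in the paper.
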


The homology of the Rabinowitz complex $\Cth_+(\Sigma_0,\Sigma_1)$ is invariant under cylindrical at infinity Hamiltonian isotopies (in particular under Legendrian isotopies of its ends). This implies that the quasi-equivalence class of the category $\Fuk(\R\times Y)$ does not depend on choices of representatives of Hamiltonian isotopy classes of Lagrangian cobordisms involved in its construction by localization (see Section \ref{sec:Fuk}).\\

\noindent\textbf{Behaviour under concatenation:}
Given a pair of concatenated cobordisms $(V_0\odot W_0,V_1\odot W_1)$, we describe the complex $\Cth_+(V_0\odot W_0,V_1\odot W_1)$ in terms of the complexes  $\Cth_+(V_0,V_1)$ and $\Cth_+(W_0,W_1)$ and some \textit{transfer maps} fitting into a diagram
\begin{alignat*}{1}
	\Cth_+(V_0,V_1)\xleftarrow{\bs{\Delta}_1^W}\Cth_+(V_0\odot W_0,V_1\odot W_1)\xrightarrow{\bs{b}_1^V}\Cth_+(W_0,W_1)
\end{alignat*}
We prove that $\bs{\Delta}_1^W$ and $\bs{b}_1^V$ are chain maps which induce a Mayer-Vietoris sequence and moreover preserve the continuation element in homology.\\

In addition, the transfer maps generalize also to families of maps $\{\bs{\Delta}_d\}_{d\geq1}$ and $\{\bs{b}_d\}_{d\geq1}$ satisfying the $A_\infty$-functor equations. That is to say, for a $(d+1)$-tuple of concatenated cobordisms $(V_0\odot W_0,\dots,V_d\odot W_d)$ there are maps
\begin{alignat*}{1}
	&\mfm_d^{V\odot W}:\Cth_+(V_{d-1}\odot W_{d-1},V_d\odot W_d)\otimes\dots\otimes\Cth_+(V_0\odot W_0,V_1\odot W_1)\to\Cth_+(V_0\odot W_0,V_d\odot W_d)\\
	&\bs{\Delta}_d^W:\Cth_+(V_{d-1}\odot W_{d-1},V_d\odot W_d)\otimes\dots\otimes\Cth_+(V_0\odot W_0,V_1\odot W_1)\to\Cth_+(V_0,V_d)\\
	&\bs{b}_d^V:\Cth_+(V_{d-1}\odot W_{d-1},V_d\odot W_d)\otimes\dots\otimes\Cth_+(V_0\odot W_0,V_1\odot W_1)\to\Cth_+(W_0,W_d)
\end{alignat*}
such that for all $1\leq k\leq d$ and sub-tuple $(V_{i_0}\odot W_{i_0},\dots,V_{i_k}\odot W_{i_k})$ with $0\leq i_0<\dots< i_k\leq d$, the maps $\{\mfm_k^{V\odot W}\}_{1\leq k\leq d}$ satisfy the $A_\infty$-equations \eqref{Ainf-eq}, and the maps $\{\bs{\Delta}_k^W\}_{1\leq k\leq d}$ and $\{\bs{b}_k^V\}_{1\leq k\leq d}$ satisfy
\begin{alignat*}{1}
	&\sum_{s=1}^k\sum_{j_1+\dots+j_s=k}\mfm_s^V\big(\bs{\Delta}_{j_s}^W\otimes\dots\otimes\bs{\Delta}_{j_1}^W\big)+\sum_{j=1}^{k}\sum_{n=0}^j\bs{\Delta}_{k-j+1}^W\big(\id^{\otimes k-j+1}\otimes\mfm_j^{V\odot W}\otimes\id^{\otimes n}\big)=0\\
	&\sum_{s=1}^k\sum_{j_1+\dots+j_s=k}\mfm_s^W\big(\bs{b}_{j_s}^V\otimes\dots\otimes\bs{b}_{j_1}^V\big)+\sum_{j=1}^{k}\sum_{n=0}^j\bs{b}_{k-j+1}^V\big(\id^{\otimes k-j+1}\otimes\mfm_j^{V\odot W}\otimes\id^{\otimes n}\big)=0.
\end{alignat*}
\vspace{2mm}

\noindent\textbf{Acknowledgements} The author warmly thanks Baptiste Chantraine, Georgios Dimitroglou-Rizell and Paolo Ghiggini for helpful discussions and comments on earlier versions of the paper, as well as Alexandru Oancea for stimulating discussions. The author was partly supported by the grant KAW 2016.0198 from the Knut and Alice Wallenberg Foundation and the Swedish Research Council under the grant no. 2016-03338.

\section{Background}
\subsection{Geometric set-up}
Throughout the paper we will be working with a contact manifold $(Y,\alpha)$ given by the contactization of a \textit{Liouville manifold}. We briefly recall the definition of these terms.
A \textit{Liouville domain} $(\widehat{P},\theta)$ is the data of a $2n$-dimensional manifold with boundary $\widehat{P}$ as well as a $1$-form $\theta$ on $\widehat{P}$ such that $d\theta$ is symplectic, and the Liouville vector field $V$ defined by $\iota_Vd\theta=\theta$ is required to point outward on the boundary $\partial\widehat{P}$. In particular, $\theta_{|\partial\widehat{P}}$ is a contact form on $\partial\widehat{P}$.
The \textit{completion} of $(\widehat{P},\theta)$ is the exact symplectic manifold $(P=\widehat{P}\cup_{\partial \widehat{P}}[0,\infty)\times\partial \widehat{P},\omega=d\beta)$, where $\beta$ equals $\theta$ in $\widehat{P}$ and $e^\tau\theta_{|\partial\widehat{P}}$ on $[0,\infty)\times\partial\widehat{P}$ where $\tau$ is the coordinate on $[0,\infty)$. The Liouville vector field smoothly extends to the whole manifold $P$. We call $(P,\beta)$ a \textit{Liouville manifold}.

The \textit{contactization} of a Liouville manifold $(P,\beta)$ is the contact manifold $(Y,\alpha)$ where $Y$ is the $2n+1$-dimensional manifold $Y=P\times\R$ and $\alpha=dz+\beta$, where $z$ is the $\R$-coordinate. The Reeb vector field of $\alpha$ is given by $R_\alpha=\partial_z$ so in particular there are no closed Reeb orbits in $Y$. A \textit{Legendrian submanifold} of $(Y,\alpha)$ is a $n$-dimensional submanifold $\La$ satisfying $\alpha_{|T\La}=0$, and \textit{Reeb chords} of $\La$ are trajectories of the Reeb flow starting and ending on $\La$. We consider only Legendrian with a finite number of isolated Reeb chords, and denote $\Rc(\La)$ the set of Reeb chords of $\La$. These are called pure Reeb chords. Given two Legendrian $\La_0$ and $\La_1$, we denote $\Rc(\La_1,\La_0)$ the set of Reeb chords starting on $\La_0$ and ending on $\La_1$, these are called mixed Reeb chords.

The main objects under consideration in this article are exact Lagrangian cobordisms between Legendrian submanifolds of $Y$. These are Lagrangian submanifolds in the \textit{symplectization} of $(Y,\alpha)$ which is the symplectic manifold $(\R\times Y,d(e^t\alpha))$ where $t$ is the $\R$-coordinate.
\begin{defi}
	Given $\La^-,\La^+\subset Y$ Legendrian, an \textit{exact Lagrangian cobordism} from $\La^-$ to $\La^+$ is a submanifold $\Sigma\subset\R\times Y$ such that there exists
	\begin{itemize}
		\item $T>0$ such that
		\begin{enumerate}
			\item $\Sigma\cap[T,\infty)\times Y=[T,\infty)\times\La^+$,
			\item $\Sigma\cap(-\infty,-T]\times Y=(-\infty,-T]\times\La^-$,
			\item $\Sigma\cap[-T,T]\times Y$ is compact.
		\end{enumerate}
		\item $f:\Sigma\to\R$ a smooth function called a \textit{primitive} of $\Sigma$, satisfying
		\begin{enumerate}
			\item $e^t\alpha_{|T\Sigma}=df$,
			\item $f$ is constant on $[T,\infty)\times\La^+$ and $(-\infty,-T]\times\La^-$.
		\end{enumerate}
	\end{itemize}
\end{defi}

In all the paper, we will assume that the coefficient field is $\Z_2$. Moreover, we assume that $2c_1(P)=0$, and that the Legendrian submanifolds and Lagrangian cobordisms between them have Maslov number $0$. This will ensure a well-defined $\Z$ grading for the various complexes that will appear. 

\subsection{Almost complex structure}
Given a family of pairwise transverse Lagrangian cobordisms $(\Sigma_0,\dots,\Sigma_d)$ with Legendrian cylindrical ends $\R\times\La_i^\pm$, $0\leq i\leq d$, we consider several types of moduli spaces of pseudo-holomorphic discs with boundary on those Lagrangian cobordisms. Those discs are asymptotic to intersection points and/or Reeb chords of the links $\La_0^\pm\cup\dots\cup\La_d^\pm$. First, let us describe briefly the almost complex structure we consider on the symplectic manifold $(\R\times Y,d(e^t\alpha))$, in order to define the moduli spaces mentioned above and achieve transversality.

An almost complex structure $J$ on $(\R\times Y,d(e^t\alpha))$ is called \textit{cylindrical} if
\begin{itemize}
	\item it is compatible with $d(e^t\alpha)$,
	\item $J(\partial_t)=R_\alpha$,
	\item $J(\xi)=\xi$,
	\item $J$ is invariant by translation along the $t$-coordinate axis.
\end{itemize}
We denote $\mathcal{J}^{cyl}(\R\times Y)$ the set of cylindrical almost complex structures on $\R\times Y$.
An almost complex structure on $P$ is called \textit{admissible} if it is cylindrical on $P\backslash\widehat{P}$ outside of a compact set.
The \textit{cylindrical lift} of an admissible almost complex structure $J_P$ on $P$ is the unique cylindrical almost complex structure $\widetilde{J}_P$ on $\R\times(P\times\R)$ making the projection $\pi_P:\R\times(P\times\R)\to P$ holomorphic. 

Let $J^+$ and $J^-$ be two cylindrical almost complex structures which coincide outside of $\R\times K$ for some compact $K\subset Y$. Assuming that the cobordisms we consider are all cylindrical outside of $[-T,T]\times Y$ for some fixed $T>0$, we take an almost complex structure $J$ which is equal to $J^-$ on $(-\infty,-T)\times Y$, to $J^+$ on $(T,+\infty)\times Y$, and to the cylindrical lift of an admissible complex structure $J_P$ on $[-T,T]\times (Y\backslash K)$. We denote $\mathcal{J}_{J^+,J^-}(\R\times Y)$ this class of almost complex structures on $\R\times Y$.

In order to achieve transversality for the moduli spaces later on, we will finally need domain dependent almost complex structures with values in $\mathcal{J}_{J^+,J^-}(\R\times Y)$, i.e. families of almost complex structures in $\mathcal{J}_{J^+,J^-}(\R\times Y)$ parametrized by the domains of the pseudo-holomorphic discs (punctured Riemann discs), which is part of a \textit{universal choice of perturbation data}, see \cite[Section (9h)]{S}.

\subsection{Moduli spaces of curves with boundary on Lagrangian cobordisms}\label{sec:mod}
Let $\mathcal{R}^{d+1}$ be the space of $d+1$ cyclically ordered points $\bs{y}=(y_0,\dots,y_d)\in (S^1)^{d+1}$ quotiented by the automorphisms of the unit disc $D^2$. This is the Deligne-Mumford space. For $\bs{y}\in\mathcal{R}$, let us denote $S_{\bs{y}}=D^2\backslash\{y_0,\dots,y_d\}$. In a sufficiently small neighborhood of the punctures $y_i$ in the disc, we have strip-like end coordinates $[s_i,t_i]\in(0,+\infty)\times[0,1]$, $0\leq i\leq d$.

Let us denote $\Sigma_{0...d}=(\Sigma_0,\dots,\Sigma_d)$ a $d+1$-tuple of Lagrangian cobordisms satisfying the following:
\begin{itemize}
	\item if $\Sigma_0=\Sigma_d$, then $\Sigma_i=\Sigma_0$ for all $1\leq i\leq d$,
	\item if $\Sigma_0\neq\Sigma_d$, then the ordered family $\Sigma_0,\dots,\Sigma_d$ is of the form
	$$\Sigma_{i_0},\dots,\Sigma_{i_0},\Sigma_{i_1},\dots,\Sigma_{i_1},\Sigma_{i_2},\dots,\Sigma_{i_k},$$ with $\Sigma_{i_0}:=\Sigma_0$ and $\Sigma_{i_k}:=\Sigma_d$, and such that $\Sigma_{i_0},\Sigma_{i_1},\dots,\Sigma_{i_d}$ are pairwise transverse. In other words, we allow only consecutive repetition of a given Lagrangian. 
\end{itemize} 
The set of \textit{asymptotics} $A(\Sigma_{i-1},\Sigma_i)$ associated to the pair $(\Sigma_{i-1},\Sigma_i)$ consists of Reeb chords in $\Rc(\La_{i-1}^\pm\cup\La_i^\pm)$, and intersection points in $\Sigma_{i-1}\cap\Sigma_i$ when the cobordisms are transverse. 
Consider a $d+1$-tuple of asymptotics $(a_0,\dots,a_d)$, with $a_i\in A(\Sigma_{i-1},\Sigma_i)$, $\Sigma_{-1}:=\Sigma_d$ and $\La_{-1}^\pm:=\La_d^\pm$. If $\Sigma_{i-1}=\Sigma_i$, then $a_i$ is called a \textit{pure asymptotic}, and it is a pure Reeb chord of $\La_{i-1}^\pm=\La_i^\pm$, while if $\Sigma_{i-1}\neq\Sigma_i$, then $a_i$ is called a \textit{mixed asymptotic}. 
Given $J$ an almost complex structure on $\R\times Y$, we denote $\cM_{\Sigma_{0,...,d},J}(a_0;a_1,\dots,a_d)$ the set of pairs $(\bs{y},u)$ where
\begin{enumerate}
	\item $\bs{y}\in\mathcal{R}^{d+1}$,
	\item $u:(S_{\bs{y}},j)\to(\R\times Y,J)$ is a pseudo-holomorphic map (with $j$ the standard almost complex structure on $D^2$),
	\item $u$ maps the boundary of $S_r$ contained between $y_i$ and $y_{i+1}$ for $0\leq i\leq d$ ($y_{d+1}:=y_0$) to $\Sigma_i$,
	\item $\lim_{z\to y_i}u(z)=a_i$.
\end{enumerate}
Let us specify the condition (4) in the case $a_i$ is a Reeb chord, for which we also denote $a_i:[0,1]\to Y$ a parametrization. We say that
\begin{itemize}
	\item $u$ has a \textit{positive asymptotic to $a_i$ at $y_i$} if $\lim\limits_{s_i\to+\infty}u(s_i,t_i)=a_i(t_i)$,
	\item $u$ has a \textit{negative asymptotic to $a_i$ at $y_i$} if $\lim\limits_{s_i\to+\infty}u(s_i,t_i)=a_i(1-t_i)$.
\end{itemize}

\begin{rem}\label{rem:label} 
Note that the fact that a mixed Reeb chord asymptotic is a positive or a negative asymptotic is entirely determined by the \textquotedblleft jump\textquotedblright\, of the chord. Namely, positive mixed Reeb chord asymptotics are mixed chords of $\La_i^+\cup\La_{i+1}^+$ from $\La_i^+$ to $\La_{i+1}^+$, and negative mixed Reeb chord asymptotics are mixed Reeb chords of $\La_i^-\cup\La_{i+1}^-$ from $\La_{i+1}^-$ to $\La_i^-$.
\end{rem}

\begin{nota}
From now on, we denote the Lagrangian boundary condition for discs only by the family $(\Sigma_{i_0},\Sigma_{i_2},\dots,\Sigma_{i_k})$, even though the pseudo-holomorphic discs we will consider can have pure Reeb chords asymptotics too.
\end{nota}
In the following two subsections we describe the several types of moduli spaces we will make use of later.

\subsubsection{Moduli spaces of curves with cylindrical boundary conditions}\label{mod_cyl}

The Lagrangian boundary conditions we consider here are trivial 
cylinders over Legendrians, and we take an almost complex structure $J\in\mathcal{J}^{cyl}(\R\times Y)$. If the boundary conditions consists of only one cylinder $\R\times\La$ then we denote 
\begin{alignat*}{1}
	\cM_{\R\times\La,J}(\gamma^+;\gamma_1,\dots,\gamma_d)
\end{alignat*}
the moduli space of discs with boundary on $\R\times\La$, with a positive asymptotic to $\gamma^+$ and negative asymptotics at $\gamma_i$ for $1\leq i\leq d$. We call discs in such moduli spaces \textit{pure}, as all asymptotics are pure.
In case the Lagrangian conditions is a family of distinct transverse cylinders $\R\times\La_{0...d}:=(\R\times\La_0,\dots,\R\times\La_d)$ with $d>0$, we consider the:
\begin{enumerate}
	\item Banana-type moduli spaces:
	\begin{alignat*}{1}
	\cM_{\R\times\La_{0...d},J}(\gamma_{d,0};\bs{\delta}_0,\gamma_1,\bs{\delta}_1,\dots,\gamma_d,\bs{\delta}_d)
	\end{alignat*}
	 where $\gamma_{d,0}\in\Rc(\La_0,\La_d)$ is a mixed Reeb chord from $\La_d$ to $\La_0$, $\gamma_i\in\Rc(\La_{i-1},\La_{i})\cup\Rc(\La_{i},\La_{i-1})$ are mixed chords of $\La_{i-1}\cup\La_i$ and $\bs{\delta}_i$ are words of Reeb chords of $\La_i$ and are negative asymptotics. Note that according to Remark \ref{rem:label}, $\gamma_{d,0}$ is a positive Reeb chord asymptotic and then $\gamma_i$ is a positive asymptotic if it is in $\Rc(\La_{i},\La_{i-1})$ and a negative one if it is in $\Rc(\La_{i-1},\La_{i})$.
	\item $\Delta$-type moduli spaces:
	\begin{alignat*}{1}
	\cM_{\R\times\La_{0...d},J}(\gamma_{0,d};\bs{\delta}_0,\gamma_1,\bs{\delta}_1,\dots,\gamma_d,\bs{\delta}_d)
	\end{alignat*}
	where $\gamma_{0,d}\in\Rc(\La_d,\La_0)$ is a negative Reeb chord asymptotic and with the same condition as above on asymptotics $\gamma_i$ and $\bs{\delta}_i$.
\end{enumerate}
The discs in moduli spaces of type (1) and (2) are called \textit{mixed} as $d+1$ asymptotics are mixed.
There is a $\R$-action by translation on moduli spaces with cylindrical Lagrangian boundary condition, we use the notation $\widetilde{\cM}$ to denote the quotient of the moduli space $\cM$ by $\R$. 

\subsubsection{Moduli spaces of curves with boundary on non-cylindrical Lagrangians}\label{mod_non_cyl}
The Lagrangian boundary conditions consist of Lagrangian cobordisms $(\Sigma_0,\dots,\Sigma_d)$ such that at least one is not a trivial cylinder. Denote $\mathbf{J}$ a domain dependent almost complex structure with values in $\mathcal{J}_{J^+,J^-}(\R\times Y)$. If $d=0$, $\Sigma:=\Sigma_0$ is a non trivial cobordism from $\La^-$ to $\La^+$ and we denote
\begin{alignat*}{1}
	\cM_{\Sigma,\mathbf{J}}(\gamma^+;\gamma_1,\dots,\gamma_d)
\end{alignat*}
the moduli space of discs where $\gamma\in\Rc(\La^+)$ is a positive Reeb chord asymptotic and $\gamma_i\in\Rc(\La^-)$ are negative Reeb chord asymptotics. We call again those discs \textit{pure}. If the Lagrangian boundary condition consists of several distinct Lagrangians $\Sigma_{0...d}=(\Sigma_0,\dots,\Sigma_d)$ where $d>0$ and $\Sigma_i$ is a cobordism from $\La_i^-$ to $\La_i^+$, then we consider the following \textit{mixed} moduli spaces:
\begin{enumerate}
	\item Banana-type moduli space: $\cM_{\Sigma_{0...d},\mathbf{J}}(\gamma_{d,0};\bs{\delta}_0,a_1,\bs{\delta}_1,\dots,a_d,\bs{\delta}_d)$,
	\item $\mfm_0$-type moduli space:
	$\cM_{\Sigma_{0...d},\mathbf{J}}(x;\bs{\delta}_0,a_1,\bs{\delta}_1,\dots,a_d,\bs{\delta}_d)$,
	\item $\Delta$-type moduli space
	$\cM_{\Sigma_{0...d},\mathbf{J}}(\gamma_{0,d};\bs{\delta}_0,a_1,\bs{\delta}_1,\dots,a_d,\bs{\delta}_d)$,
\end{enumerate}
where $\gamma_{d,0}\in\Rc(\La_0^+,\La_d^+)$ is a positive Reeb chord asymptotic, $\gamma_{0,d}\in\Rc(\La_d^-,\La_0^-)$ is a negative Reeb chord asymptotic, $a_i$ are intersection points in $\Sigma_{i-1}\cap\Sigma_i$ or mixed Reeb chord asymptotics in $\Rc(\La_i^+,\La_{i-1}^+)\cup\Rc(\La_{i-1}^-,\La_{i}^-)$, and $\bs{\delta}_i$ are words of pure Reeb chords of $\La_i^-$.

\subsection{Action and energy}
Consider a $d+1$-tuple of pairwise disjoint cobordisms $(\Sigma_0,\dots,\Sigma_d)$ with cylindrical ends over $\La_i^{\pm}$. Let $T>0$ and $\varepsilon>0$ such that all cobordisms are cylindrical outside of $[-T+\varepsilon,T-\varepsilon]\times Y$. The \textit{length} of a chord $\gamma$ is defined by $\ell(\gamma):=\int_\gamma\alpha$. Then, the \textit{action} of asymptotics is defined as follows:
\begin{alignat*}{1}
&\mathfrak{a}(\gamma)=e^T\ell(\gamma)+\mathfrak{c}_j-\mathfrak{c}_i\,\,\mbox{ for } \gamma\in \Rc(\La_i^+,\La_j^+),\\
&\mathfrak{a}(x)=f_j(x)-f_i(x)\,\,\mbox{ for }\,x\in\Sigma_i\cap\Sigma_j,\,i<j,\\
&\mathfrak{a}(\gamma)=e^{-T}\ell(\gamma)\,\,\mbox{ for } \gamma\in \Rc(\La_i^-,\La_j^-),\\
&\mathfrak{a}(\gamma)=e^{\pm T}\ell(\gamma)\,\,\mbox{ for } \gamma\in \Rc(\La^\pm).
\end{alignat*}
 Given a function $\chi:\R\to\R$ satisfying for some $\varepsilon>0$
$$\chi(t)=\left\{\begin{array}{ccl}
	e^T&\mbox{ for } &t\geq T\\
	e^t&\mbox{ for }& -T+\varepsilon\leq t\leq T-\varepsilon\\
	e^{-T}&\mbox{ for } &t\leq-T
	\end{array}\right.$$
and $\chi'(t)\geq0$, one defines the \textit{energy} of a pseudo-holomorphic disc $u$ to be:
\begin{alignat*}{1}
	E(u)=\int_ud(\chi(t)\alpha)
\end{alignat*}
This energy is always positive and vanishes if and only if the disc is constant. The energy of the pseudo-holomorphic discs considered in this paper is finite and can be expressed in terms of the action of the asymptotics.

\begin{prop} For the moduli spaces described in Sections \ref{mod_cyl} and \ref{mod_non_cyl}, we have the following:
	\begin{enumerate}
		\item If $u\in\cM_{\R\times\La}(\gamma^+;\gamma_1,\dots,\gamma_d)$, then
		$E(u)=\mathfrak{a}(\gamma^+)-\sum_i\mathfrak{a}(\gamma_i)$.

		\item If $u\in\cM_{\R\times\La_{0...d}}(\gamma_{d,0};\bs{\delta}_0,\gamma_1,\bs{\delta}_1,\dots,\gamma_d,\bs{\delta}_d)$, 
$$E(u)=\mathfrak{a}(\gamma_{d,0})+\sum_{\gamma_i\in\Rc(\La_{i},\La_{i-1})}\mathfrak{a}(\gamma_i)-\sum_{\gamma_i\in\Rc(\La_{i-1},\La_{i})}\mathfrak{a}(\gamma_i)-\sum_{i=0}^d\mathfrak{a}(\bs{\delta}_i).$$

		\item If $u\in\cM_{\R\times\La_{0...d}}(\gamma_{0,d};\bs{\delta}_0,\gamma_1,\bs{\delta}_1,\dots,\gamma_d,\bs{\delta}_d)$, 
$$E(u)=-\mathfrak{a}(\gamma_{0,d})+\sum_{\gamma_i\in\Rc(\La_{i},\La_{i-1})}\mathfrak{a}(\gamma_i)-\sum_{\gamma_i\in\Rc(\La_{i-1},\La_{i})}\mathfrak{a}(\gamma_i)-\sum_{i=0}^d\mathfrak{a}(\bs{\delta}_i).$$

		\item If $u\in\cM_\Sigma(\gamma^+;\gamma_1,\dots,\gamma_d)$, then $E(u)=\mathfrak{a}(\gamma^+)-\sum_i\mathfrak{a}(\gamma_i)$.

		\item If $u\in\cM_{\Sigma_{0...d}}(\gamma_{d,0};\bs{\delta}_0,a_1,\bs{\delta}_1,\dots,a_d,\bs{\delta}_d)$,
			$$E(u)=\mathfrak{a}(\gamma_{d,0})+\sum_{a_i\in\Rc(\La_{i}^+,\La_{i-1}^+)}\mathfrak{a}(a_i)-\sum_{a_i\in\Rc(\La_{i-1}^-,\La_{i}^-)}\mathfrak{a}(a_i)-\sum_{a_i\in\Sigma_{i-1}\cap\Sigma_i}\mathfrak{a}(a_i)-\sum_{i=0}^d\mathfrak{a}(\bs{\delta}_i).$$

		\item If $u\in\cM_{\Sigma_{0...d}}(x;\bs{\delta}_0,a_1,\bs{\delta}_1,\dots,a_d,\bs{\delta}_d)$,
			$$E(u)=\mathfrak{a}(x)+\sum_{a_i\in\Rc(\La_{i}^+,\La_{i-1}^+)}\mathfrak{a}(a_i)-\sum_{a_i\in\Rc(\La_{i-1}^-,\La_{i}^-)}\mathfrak{a}(a_i)-\sum_{a_i\in\Sigma_{i-1}\cap\Sigma_i}\mathfrak{a}(a_i)-\sum_{i=0}^d\mathfrak{a}(\bs{\delta}_i).$$

		\item If $u\in\cM_{\Sigma_{0...d}}(\gamma_{0,d};\bs{\delta}_0,a_1,\bs{\delta}_1,\dots,a_d,\bs{\delta}_d)$,
			$$E(u)=-\mathfrak{a}(\gamma_{0,d})+\sum_{a_i\in\Rc(\La_{i}^+,\La_{i-1}^+)}\mathfrak{a}(a_i)-\sum_{a_i\in\Rc(\La_{i-1}^-,\La_{i}^-)}\mathfrak{a}(a_i)-\sum_{a_i\in\Sigma_{i-1}\cap\Sigma_i}\mathfrak{a}(a_i)-\sum_{i=0}^d\mathfrak{a}(\bs{\delta}_i).$$
	\end{enumerate}
\end{prop}

\subsection{Grading}\label{sec:grad}
Given cobordisms $(\Sigma_0,\dots,\Sigma_d)$ as above, we associate a grading to the asymptotics of pseudo-holomorphic discs with boundary on $\Sigma_{0...d}$ using the Conley-Zehnder index. We refer for example to \cite{EES1} for the definition of this index.

\begin{enumerate}
	\item \textbf{Grading of Reeb chords:} Consider $\La\subset Y$ a connected Legendrian, then the grading of a Reeb chord $\gamma\in\Rc(\La)$ is defined to be
	\begin{alignat*}{1}
	|\gamma|=\CZ(\gamma)-1
	\end{alignat*}
	where $\CZ(\gamma)$ denotes the Conley-Zehnder index of a capping path for $\gamma$. Note that it does not depend on the choice of capping path as by hypothesis we consider Maslov $0$ Legendrians, and it does not depend neither on a choice of symplectic trivialization of $TP$ along the capping path, as $2c_1(P)=0$. If the Legendrian $\La$ is not connected, there are no capping paths for Reeb chords connecting two distinct components so some additional choices are needed (see \cite{DR}). If $\gamma$ is a chord from $\La_j$ to $\La_i$, one fixes points $p_j\in\La_j$ and $p_i\in\La_i$ and a path $\Gamma_{i,j}$ from $p_i$ to $p_{j}$ as well as a path of Lagrangians from $T_{p_i}\pi_P(\La_{i})$ to $T_{p_{j}}\pi_P(\La_{j})$. Then, one takes as capping path for $\gamma$ a path from the ending point of $\gamma$ to $p_i$, followed by $\Gamma_{i,j}$, followed by a path from $p_{j}$ to the starting point of $\gamma$. The grading of mixed chords depends on those additional paths but the difference in grading of two chords does not.
	
	\item \textbf{Grading of intersection points:} 
	Let $p\in\Sigma_i\cap\Sigma_j$, for $i<j$. Generically, the immersed Lagrangian $\Sigma_i\cup\Sigma_j$ lifts to an embedded Legendrian submanifold $\widetilde{\Sigma}_i\cup\widetilde{\Sigma}_j\subset\big((\R\times Y)\times\R_u,du+e^t\alpha\big)$ and $p$ is the projection of a Reeb chord $\gamma_p$ of $\widetilde{\Sigma}_i\cup\widetilde{\Sigma}_j$. If $\gamma_p$ is a chord from $\widetilde{\Sigma}_j$ to $\widetilde{\Sigma}_i$, then $|p|=\CZ(\gamma_p)$. If $\gamma_p$ is a chord from $\widetilde{\Sigma}_i$ to $\widetilde{\Sigma}_j$, then $|p|=n+1-\CZ(\gamma_p)$. These Conley-Zehnder indices are computed after a choice of path connecting $\widetilde{\Sigma}_i$ and $\widetilde{\Sigma}_j$ as explained above for the non-connected case.
	Again, the vanishing of the Maslov number for Lagrangian cobordisms, and of the first Chern class of $P$ imply that the grading of intersection points does not depend on the choices made, except paths to connect any two distinct components of the Legendrian lift.
\end{enumerate}


The expected dimension of the moduli spaces described in Sections \ref{mod_cyl} and \ref{mod_non_cyl} can then be expressed in terms of the grading of asymptotics; this is the purpose of the next proposition. 
\begin{prop}\label{teo:grading} Consider the moduli spaces described in Sections \ref{mod_cyl} and \ref{mod_non_cyl}. For those where it applies denote $j^+$ the number of positive mixed Reeb chord asymptotics among $\{\gamma_1,\dots,\gamma_d\}$ or $\{a_1,\dots,a_d\}$, and $l$ the number of intersection points asymptotics among $\{a_1,\dots,a_d\}$. Moreover, we assume that negative asymptotics to pure Reeb chords $\bs{\delta}_i$ have degree $0$. Then we have:
	\begin{alignat*}{1}\displaystyle
	&\dim\widetilde{\cM}_{\R\times\La}(\gamma^+;\gamma_1,\dots,\gamma_d)=|\gamma^+|-\sum|\gamma_i|-1,\\
	&\dim\widetilde{\cM}_{\R\times\La_{0...d}}(\gamma_{d,0};\bs{\delta}_0,\gamma_1,\bs{\delta}_1,\dots,\gamma_d,\bs{\delta}_d)\\
&\hspace{1cm}=|\gamma_{d,0}|+\sum_{\gamma_i\in\Rc(\La_{i},\La_{i-1})}|\gamma_i|-\sum_{\gamma_i\in\Rc(\La_{i-1},\La_{i})}|\gamma_i|+(2-n)j^+-1,\\
	&\dim\widetilde{\cM}_{\R\times\La_{0...d}}(\gamma_{0,d};\bs{\delta}_0,\gamma_1,\bs{\delta}_1,\dots,\gamma_d,\bs{\delta}_d)\\
	&\hspace{1cm}=-|\gamma_{0,d}|+\sum_{\gamma_i\in\Rc(\La_{i},\La_{i-1})}|\gamma_i|-\sum_{\gamma_i\in\Rc(\La_{i-1},\La_{i})}|\gamma_i|+(2-n)(j^+-1)-1,\\
	&\dim\cM_\Sigma(\gamma^+;\gamma_1,\dots,\gamma_d)=|\gamma^+|-\sum|\gamma_i|,\\
	&\dim\cM_{\Sigma_{0...d}}(\gamma_{d,0};\bs{\delta}_0,a_1,\bs{\delta}_1,\dots,a_d,\bs{\delta}_d)\\
	&\hspace{1cm}=|\gamma_{d,0}|+\sum_{a_i\in\Rc(\La_{i}^+,\La_{i-1}^+)}|a_i|-\sum_{a_i\in\Sigma_{i-1}\cap\Sigma_i}|a_i|-\sum_{a_i\in\Rc(\La_{i-1}^-,\La_{i}^-)}|a_i|+(2-n)j^++l,\\
	&\dim\cM_{\Sigma_{0...d}}(x;\bs{\delta}_0,a_1,\bs{\delta}_1,\dots,a_d,\bs{\delta}_d)\\
	&\hspace{1cm}=|x|+\sum_{a_i\in\Rc(\La_{i}^+,\La_{i-1}^+)}|a_i|-\sum_{a_i\in\Sigma_{i-1}\cap\Sigma_i}|a_i|-\sum_{a_i\in\Rc(\La_{i-1}^-,\La_{i}^-)}|a_i|+(2-n)j^++l-2,\\
	&\dim\cM_{\Sigma_{0...d}}(\gamma_{0,d};\bs{\delta}_0,a_1,\bs{\delta}_1,\dots,a_d,\bs{\delta}_d)\\
	&\hspace{1cm}=-|\gamma_{0,d}|+\sum_{a_i\in\Rc(\La_{i}^+,\La_{i-1}^+)}|a_i|-\sum_{a_i\in\Sigma_{i-1}\cap\Sigma_i}|a_i|-\sum_{a_i\in\Rc(\La_{i-1}^-,\La_{i}^-)}|a_i|+n+(2-n)j^++l-2.
	\end{alignat*}
\end{prop} 

\begin{nota}
	Given a moduli space $\cM_{\Sigma_{0...d}}(a_0;a_1,\dots,a_d)$, we add an exponent indicating the (expected) dimension of it as a smooth manifold: $\cM^i_{\Sigma_{0...d}}(a_0;a_1,\dots,a_d)$. This dimension is equal to the index of the Fredholm operator obtained by linearizing $\bar\partial$ at a pseudo-holomorphic disc $u$.
\end{nota}

\subsection{Compactness}\label{sec:structure}
This section sums up what will be used to prove almost all the results in this paper. Namely, once transversality is achieved simultaneously for all moduli spaces considered above, which is possible using a domain dependent almost complex structure, $0$-dimensional (eventually after quotienting by an $\R$-action) moduli spaces are compact manifolds. We call discs in these moduli spaces \textit{rigid discs}.
Then, $1$-dimensional moduli spaces are not necessarily compact and can be compactified by adding \textit{broken discs}. The goal of this section is to describe the types of broken discs one can find in the boundary of the compactification of the moduli spaces in Sections \ref{mod_cyl} and \ref{mod_non_cyl}.

Consider a $1$-dimensional moduli space $\cM^1_{L_{0...d}}(a_0;\bs{\delta}_0,a_1,\bs{\delta}_1,\dots,\bs{\delta}_{d-1},a_d,\bs{\delta}_d)$ of curves where $L_{0...d}=\R\times\La_{0...d}$ or $\Sigma_{0...d}$ (in the first case $\cM^1_{L_{0...d}}=\widetilde{\cM^2}_{L_{0...d}}$), with mixed asymptotics $a_i$ and asymptotics to words of pure Reeb chords $\bs{\delta}_i$.
By results in \cite{BEHWZ},\cite[Theorem 3.20]{Abbas}, a sequence of curves in such a moduli space admits a subsequence converging to a \textit{pseudo-holomorphic building} (see the cited references for a precise definition) consisting of several pseudo-holomorphic discs together with \textit{nodes} connecting these components and choices of asymptotics for these nodes, satisfying the following:
\begin{itemize}
		\item each disc in the pseudo-holomorphic building has positive energy, so in particular a component with only Reeb chords asymptotics must have at least one positive Reeb chord asymptotic,
		\item each disc has a non negative Fredholm index because of the regularity of the almost complex structure, 
		\item if the building consists of the discs $u_1,\dots,u_k$, then the \textit{glued solution} $u$ has index
		 $\ind(u)=\nu+\sum_i\ind(u_i)$, where $\nu$ is the number of nodes asymptotic to intersection points.
\end{itemize}

Let us now precise what these conditions imply in particular in the case of moduli spaces described in the previous sections.
\begin{enumerate}
	\item \textbf{Cylindrical boundary condition:} 
	
Consider a $1$-dimensional moduli space $\widetilde{\cM^2}_{\R\times\La_{0...d}}(\gamma_0;\bs{\delta}_0,\gamma_1,\bs{\delta}_1,\dots,\bs{\delta}_{d-1},\gamma_d,\bs{\delta}_d)$ 
The conditions described above imply that a sequence of discs in this moduli space admits a subsequence which limits to a pseudo-holomorphic building consisting two index $1$ pseudo-holomorphic discs with boundary on cylinders, and which glue together along a node asymptotic to a Reeb chord. Remark that this Reeb chord can be pure or mixed, we will later be interested only in the case of nodes asymptotic to mixed Reeb chords, see Remark \ref{rem:delta}.

	\item \textbf{Non-cylindrical boundary conditions:}
	
Consider a $1$-dimensional moduli space $\cM^1_{\Sigma_{0...d}}(a_0;\bs{\delta}_0,a_1,\bs{\delta}_1,\dots,\bs{\delta}_{d-1},a_d,\bs{\delta}_d)$ of curves with boundary on the cobordisms $\Sigma_0,\dots,\Sigma_d$, with mixed asymptotics $a_i$ and asymptotics to words of pure Reeb chords $\bs{\delta}_i$.
A sequence of discs in such a moduli space admits a subsequence converging to a pseudo-holomorphic building which is
	\begin{enumerate}
		\item either a pseudo-holomorphic building with two index $0$ components (which are not trivial strips) with boundary on non cylindrical parts of the cobordisms and connected by node asymptotic to an intersection point, or
		\item a pseudo-holomorphic building consisting of some index $0$ components with boundary on the non cylindrical parts of the cobordisms, and one index $1$ component with boundary on the positive or negative cylindrical ends of the cobordisms, connected to each index $0$ component via a node asymptotic to a Reeb chord.
	\end{enumerate} 
\end{enumerate} 

\begin{nota}
	We denote $\partial\overline{\cM^2}_{\R\times\La_{0...d}}:=\partial\overline{\widetilde{\cM^2}}_{\R\times\La_{0...d}}$, and $\partial\overline{\cM^1}_{\Sigma_{0...d}}$ the set of pseudo-holomorphic buildings arising at the boundary of the compactification of the corresponding moduli spaces.
\end{nota}

\subsection{Legendrian contact homology}\label{sec:leg}
Consider a compact Legendrian submanifold $\La\subset Y$. We denote by $C(\La)$ the $\Z_2$-vector space generated by Reeb chords of $\La$. 
The Legendrian contact homology of $\La$ is an invariant of $\La$ up to Legendrian isotopy, introduced by Chekanov in \cite{Che} and Eliashberg \cite{E} (see also \cite{EES1,EES2}). It is the homology of a differential graded algebra (DGA) $(\Ac(\La),\partial)$ associated to $\La$. The algebra $\Ac(\La)$, called the Chekanov-Eliashberg algebra of $\La$, is the unital tensor algebra of $C(\La)$, i.e. $\Ac(\La)=\bigoplus_{i\geq0}C(\La)^{\otimes i}$ with $C(\La)^{\otimes0}:=\Z_2$.
The grading of Reeb chords is as defined in Section \ref{sec:grad}.
Given a cylindrical almost complex structure, the differential $\partial$ is defined as follows. For $\gamma\in\Rc(\La)$ we have
\begin{alignat*}{1}
	\partial\gamma=\sum_{d\geq 0}\sum_{\gamma_i\in\Rc(\La)}\widetilde{\cM^1}_{\R\times\La}(\gamma^+;\gamma_1,\dots,\gamma_d)\gamma_1\gamma_2\dots\gamma_d
\end{alignat*}
The differential $\partial$ extends to the whole algebra by Leibniz rule, and satisfies $\partial^2=0$. We consider the SFT definition of the differential here, which has been proved in \cite{DR} to give the same invariant as the original version with a differential defined by a count of discs in $P$ with boundary on $\pi_P(\Lambda)$.
For a generic cylindrical almost complex structure, the moduli spaces $\widetilde{\cM^1}_{\R\times\La}(\gamma;\gamma_1,\dots,\gamma_d)$ are compact $0$-dimensional manifolds. The Legendrian contact homology of $\La$, denoted $LCH_*(\La)$, does not depend on a generic choice of almost complex structure and is invariant under Legendrian isotopy.

Consider now an almost complex structure $J\in\mathcal{J}_{J^+,J^-}(\R\times Y)$. It is proved in \cite{EHK} that an exact Lagrangian cobordism $\Sigma$ from $\La^-$ to $\La^+$ induces a DGA-map $\Phi_\Sigma:\Ac(\La^+)\to\Ac(\La^-)$, defined by
\begin{alignat*}{1}
	\Phi_\Sigma(\gamma)=\sum_{d\geq 0}\sum_{\gamma_i\in\Rc(\La)}\cM^0_{\Sigma}(\gamma^+;\gamma_1,\dots,\gamma_d)\gamma_1\gamma_2\dots\gamma_d
\end{alignat*}
When $\Sigma$ is a Lagrangian filling of $\La$, i.e. $\La^-=\emptyset$, then $\Phi_\Sigma$ is a map from $\Ac(\La)$ to $\Ac(\emptyset):=\Z_2$. It is an instance of an \textit{augmentation} of $\Ac(\La)$. More generally, we have the following definition.
\begin{defi}
	An augmentation of $(\Ac(\La),\partial)$ over $\Z_2$ is a map $\ep\colon\Ac(\La)\to\Z_2$ satisfying
	\begin{enumerate}
		\item $\ep\circ\partial=0$
		\item $\ep(\gamma)=0$ if $|\gamma|\neq0$,
		\item $\ep(1)=1$,
		\item $\ep(\gamma_1\gamma_2)=\ep(\gamma_1)\ep(\gamma_2)$,
	\end{enumerate}
In other words, it is a unital DGA-map when considering $\Z_2$ as a DGA with a vanishing differential.
\end{defi}
\begin{rem}
	The condition 2. in the definition above means that the augmentations we consider are graded (more precisely $0$-graded). In this paper we will only consider $0$-graded augmentations, although we could as well take them $\rho$-graded for $\rho$ a positive integer (meaning that all chords with degree $0$ mod $\rho$ can potentially be augmented) or even ungraded (any chord can be augmented). If we consider $\rho$-graded, resp. ungraded, augmentations, the complexes appearing later in the paper and involving augmentations become $\rho$-graded, resp. ungraded, as well as some higher order operations being part of some $A_\infty$-structure.
\end{rem}
Chekanov made use of augmentations to linearize the DGA $(\Ac(\La),\partial)$, leading to finite dimensional invariants called \textit{linearized Legendrian contact homologies}. Bourgeois and Chantraine \cite{BCh} generalized this idea using two augmentations instead of one:
assume $\Ac(\La)$ admits augmentations $\ep_0,\ep_1$, then there is a complex $(LCC_*^{\ep_0,\ep_1}(\La),\partial^{\ep_0,\ep_1}_1)$ where $LCC_*^{\ep_0,\ep_1}(\La):=C(\La)$ and for a Reeb chord $\gamma$,
\begin{alignat*}{1}
	\partial_1^{\ep_0,\ep_1}(\gamma)=\sum_{d\geq 0}\sum_{\gamma_1,\dots,\gamma_d\in\Rc(\La)}\sum_{i=1}^d\widetilde{\cM^1}_{\R\times\La}(\gamma^+;\gamma_1,\dots,\gamma_d)\ep_0(\gamma_1\dots\gamma_{i-1})\ep_1(\gamma_{i+1}\dots\gamma_d)\gamma_i
\end{alignat*}
This map satisfies $(\partial_1^{\ep_0,\ep_1})^2=0$.
The \textit{Legendrian contact homology of $\La$ bilinearized by $(\ep_0,\ep_1)$} is the homology of this complex, denoted $LCH_*^{\ep_0,\ep_1}(\La)$.
The dual complex $(LCC^*_{\ep_0,\ep_1}(\La),\mu_{\ep_0,\ep_1}^1)$ is the complex of the \textit{bilinearized Legendrian contact cohomology of $\La$}, $LCH^*_{\ep_0,\ep_1}(\La)$. For Reeb chords $\gamma,\beta\in\Rc(\La)$, if we denote by $\langle \partial_1^{\ep_0,\ep_1}(\beta),\gamma\rangle$ the coefficient of $\gamma$ in $\partial_1^{\ep_0,\ep_1}(\beta)$, then we have
\begin{alignat*}{1}
	\mu_{\ep_0,\ep_1}^1(\gamma)=\sum_{\beta\in\Rc(\La)}\langle \partial_1^{\ep_0,\ep_1}(\beta),\gamma\rangle \beta
\end{alignat*}
When $\ep_0=\ep_1$, these complexes correspond to the linearized Legendrian contact (co)homology complexes defined by Chekanov.
Finally, given an augmentation $\ep^-$ of $\Ac(\La^-)$ and an exact Lagrangian cobordism $\La^-\prec_\Sigma\La^+$, it induces an augmentation $\ep^+:=\ep^-\circ\Phi_\Sigma$ of $\Ac(\La^+)$.

\subsection{The Cthulhu complex $\Cth$}\label{Cth-}

The Cthulhu homology is the homology of a Floer-type complex defined in \cite{CDGG2} for a pair $\La_0^-\prec_{\Sigma_0}\La_0^+$ and $\La_1^-\prec_{\Sigma_1}\La_1^+$ of transverse exact Lagrangian cobordisms in $\R\times Y$ such that the algebras $\Ac(\La_0^-)$ and $\Ac(\La_1^-)$ admit augmentations $\ep_0^-$ and $\ep_1^-$ respectively. The Cthulhu complex $\big(\Cth(\Sigma_0,\Sigma_1),\mathfrak{d}_{\ep_0^-,\ep_1^-}\big)$ has three types of generators,
\begin{alignat*}{1}
\Cth(\Sigma_0,\Sigma_1)=C(\La_0^+,\La_1^+)[2]\oplus CF(\Sigma_0,\Sigma_1)\oplus C(\La_0^-,\La_1^-)[1]
\end{alignat*}
where $C(\La_0^+,\La_1^+)[2]$ denotes the $\Z_2$-vector space generated by Reeb chords from $\La_1^+$ to $\La_0^+$ with a grading shift, namely if $\gamma\in C(\La_0^+,\La_1^+)[2]$ then $|\gamma|_{\Cth(\Sigma_0,\Sigma_1)}=|\gamma|+2$, $CF(\Sigma_0,\Sigma_1)$ is the $\Z_2$-vector space generated by intersection points in $\Sigma_0\cap\Sigma_1$, and $C(\La_0^-,\La_1^-)$ is generated by Reeb chords from $\La_1^-$ to $\La_0^-$.
The differential is given by the matrix
\begin{alignat*}{1}
\mathfrak{d}_{\ep_0^-,\ep_1^-}=\left(\begin{matrix} d_{++}&d_{+0}&d_{+-}\\
0&d_{00}&d_{0-}\\
0&d_{-0}&d_{--}\\
\end{matrix}\right)
\end{alignat*}
It is a degree $1$ map defined by a count of rigid  pseudo-holomorphic discs with boundary on the cobordisms, as schematized on Figure \ref{fig:cthulhu_diff}. The study of broken discs arising at the boundary of the compactification of $1$-dimensional moduli spaces gives that $\mathfrak{d}_{\ep_0^-,\ep_1^-}$ squares to $0$, see \cite[Theorem 4.1]{CDGG2}.

\begin{figure}[ht]  
	\begin{center}\includegraphics[width=15cm]{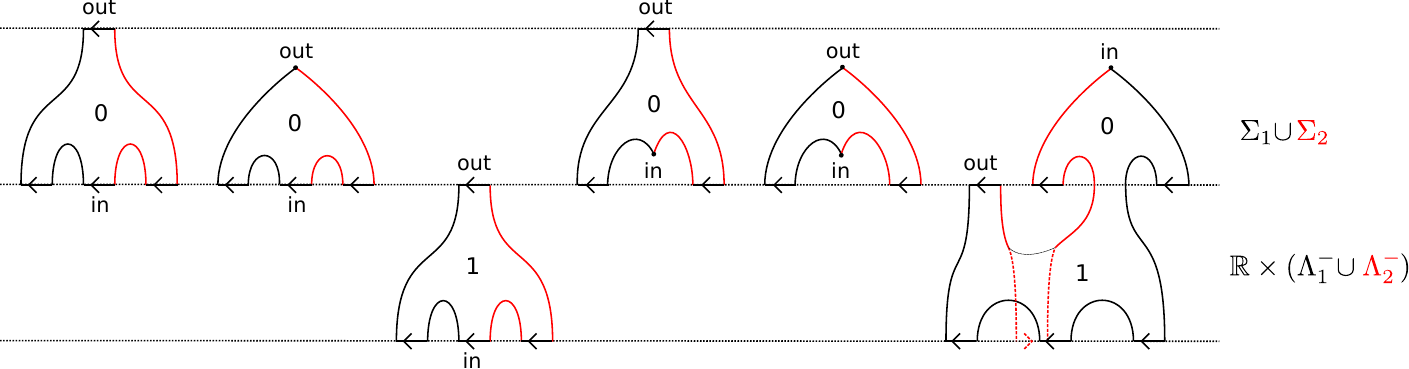}\end{center}
	\caption{Curves contributing to the differential $\partial_{\ep_0^-,\ep_1^-}$, "\textit{in}" stands for input and "\textit{out}" for output. The "$0$" and "$1$" indicate the Fredholm index of the respective discs.}
	\label{fig:cthulhu_diff}
\end{figure}

Denote by $\big(CF_{-\infty}(\Sigma_0,\Sigma_1),\mfm_1^{-\infty})$ the quotient complex of the Cthulhu complex $\Cth(\Sigma_0,\Sigma_1)$, with $CF_{-\infty}(\Sigma_0,\Sigma_1)=CF(\Sigma_0,\Sigma_1)\oplus C(\La_0^-,\La_1^-)[1]$ and $$\mfm_1^{-\infty}=\left(\begin{matrix}
d_{00}&d_{0-}\\
d_{-0}&d_{--}\\
\end{matrix}\right)$$
In \cite{L} the author proved that given a triple of pairwise transverse cobordisms $\Sigma_0,\Sigma_1$ and $\Sigma_2$, there is a non trivial map:
\begin{alignat*}{1}
	\mfm_2^{-\infty}:CF_{-\infty}(\Sigma_1,\Sigma_2)\otimes CF_{-\infty}(\Sigma_0,\Sigma_1)\to CF_{-\infty}(\Sigma_0,\Sigma_2)
\end{alignat*}
satisfying the Leibniz rule $\mfm_2^{-\infty}(\mfm_1^{-\infty}\otimes\id)+\mfm_2^{-\infty}(\id\otimes\mfm_1^{-\infty})+\mfm_1^{-\infty}\circ\mfm_2^{-\infty}=0$, see Section \ref{section:def_product} for more details.

\section{The complex $\Cth_+$}\label{sec:Cth}

\subsection{Definition of the complex}\label{section:def_complex}

In this section, we define the Rabinowitz complex $\Cth_+(\Sigma_0,\Sigma_1)$ for a pair of transverse exact Lagrangian cobordisms $\La_0^-\prec_{\Sigma_0}\La_0^+$ and $\La_1^-\prec_{\Sigma_1}\La_1^+$. We assume again that $\Ac(\La_i^-)$ admit augmentations $\ep_i^-$ for $i=0,1$, inducing augmentations $\ep_i^+$ of $\Ac(\La_i^+)$. The complex $\Cth_+(\Sigma_0,\Sigma_1)$ is generated by three types of generators:
\begin{alignat*}{1}
\Cth_+(\Sigma_0,\Sigma_1)=C(\La_1^+,\La_0^+)^\dagger[n-1]\oplus CF(\Sigma_0,\Sigma_1)\oplus C(\La_0^-,\La_1^-)[1]
\end{alignat*}
If we denote $|\cdot|_{\Cth_+}$ the grading of generators in $\Cth_+(\Sigma_0,\Sigma_1)$, then we have 
\begin{alignat*}{1}
	&|\gamma_{01}|_{\Cth_+}=n-1-|\gamma_{01}|,\mbox{ for }\gamma_{01}\in C(\La_1^+,\La_0^+)^\dagger[n-1]\\
	&|x|_{\Cth_+}=|x|,\mbox{ for }x\in CF(\Sigma_0,\Sigma_1)\\
	&|\xi_{10}|_{\Cth_+}=|\xi_{10}|+1,\mbox{ for }\xi_{10}\in C(\La_0^-,\La_1^-)[1]
\end{alignat*}
The difference on generators between $\Cth_+(\Sigma_0,\Sigma_1)$ and the original Cthulhu complex $\Cth(\Sigma_0,\Sigma_1)$ in \cite{CDGG2} is that the generators that are Reeb chords in the positive end are chords from $\La_0^+$ to $\La_1^+$ in $\Cth_+(\Sigma_0,\Sigma_1)$, whereas they are chords from $\La_1^+$ to $\La_0^+$ in $\Cth(\Sigma_0,\Sigma_1)$. The differential on $\Cth_+(\Sigma_0,\Sigma_1)$ is then given by
\begin{alignat*}{1}
\mfm_1^{\ep^-_0,\ep^-_1}
=\left(
\begin{matrix}
\Delta_1^{+}&0&0\\
d_{0+}&d_{00}&d_{0-}\\
b_1^-\circ\Delta_1^\Sigma&b_1^-\circ\Delta_1^\Sigma&b_1^-
\end{matrix}\right)
\end{alignat*}
where:
\begin{enumerate}
	\item $\Delta_1^+:C(\La_1^+,\La_0^+)^\dagger[n-1]\to C(\La_1^+,\La_0^+)^\dagger[n-1]$ is defined by
	\begin{alignat*}{1}
	&\Delta_1^+(\gamma^+_{01})=\sum\limits_{\gamma^-_{01}}\sum\limits_{\bs{\zeta}_i}\#\widetilde{\cM^1}_{\R\times\La^+_{01}}(\gamma_{01}^-;\bs{\zeta}_0,\gamma_{01}^+,\bs{\zeta}_1)\cdot\ep_0^+(\bs{\zeta}_0)\ep_1^+(\bs{\zeta}_1)\cdot\gamma^-_{01}	
	\end{alignat*}
	and is of degree $1$ according to Proposition \ref{teo:grading}.
	\item $\mfm_1^0:=d_{0+}+d_{00}+d_{0-}:\Cth_+(\Sigma_{0},\Sigma_1)\to CF(\Sigma_0,\Sigma_1)$ with
	\begin{alignat*}{1}
	&d_{0+}(\gamma_{01}^+)=\sum\limits_{x}\sum\limits_{\bs{\delta}_i}\#\cM^0_{\Sigma_{0},\Sigma_1}(x;\bs{\delta}_0,\gamma_{01}^+,\bs{\delta}_1)\cdot\ep_0^-(\bs{\delta}_0)\ep_1^-(\bs{\delta}_1)\cdot x\\
	&d_{00}(q)=\sum\limits_{x}\sum\limits_{\bs{\delta}_i}\#\cM^0_{\Sigma_{0},\Sigma_1}(x;\bs{\delta}_0,q,\bs{\delta}_1)\cdot\ep_0^-(\bs{\delta}_0)\ep_1^-(\bs{\delta}_1)\cdot x\\
	&d_{0-}(\gamma_{10}^-)=\sum\limits_{x}\sum\limits_{\bs{\delta}_i}\#\cM^0_{\Sigma_{0},\Sigma_1}(x;\bs{\delta}_0,\gamma_{10}^-,\bs{\delta}_1)\cdot\ep_0^-(\bs{\delta}_0)\ep_1^-(\bs{\delta}_1)\cdot x
	\end{alignat*}
	is also of degree $1$.
	\item $\Delta_1^\Sigma:\Cth^*_+(\Sigma_{0},\Sigma_1)\to C_{n-1-*}(\La_1^-,\La_0^-)$ is defined, for $a\in\Cth_+(\Sigma_0,\Sigma_1)$, by:
	\begin{alignat*}{1}
	&\Delta_1^\Sigma(a)=\sum\limits_{\gamma_{01}}\sum\limits_{\bs{\delta}_i}\#\cM^0_{\Sigma_{0},\Sigma_1}(\gamma_{01};\bs{\delta}_0,a,\bs{\delta}_1)\cdot\ep_0^-(\bs{\delta}_0)\ep_1^-(\bs{\delta}_1)\cdot\gamma_{01}	
	\end{alignat*}
	so in particular it vanishes for energy reasons on $C(\La_0^-,\La_1^-)$. This map is of degree $0$, i.e. $|\gamma_{01}|=n-1-|a|_{\Cth_+}$ where $|\gamma_{01}|$ is as defined in Section \ref{sec:grad}.
	\item Let us denote $\mathfrak{C}^*(\La_0^-,\La_1^-)=C_{n-1-*}(\La_1^-,\La_0^-)\oplus C^{*-1}(\La_0^-,\La_1^-)$, one finally defines the map $b_1^-:\mathfrak{C}^*(\La_0^-,\La_1^-)\to C^{*-1}(\La_0^-,\La_1^-)$ by
	\begin{alignat*}{1}
	b_1^-(\gamma)=\sum\limits_{\gamma^+_{10}}\sum\limits_{\bs{\delta}_i}\#\widetilde{\cM^1}_{\R\times\La^-_{01}}(\gamma_{10};\bs{\delta}_0,\gamma,\bs{\delta}_1)\cdot\ep_0^-(\bs{\delta}_0)\ep_1^-(\bs{\delta}_1)\cdot\gamma_{10}	
	\end{alignat*}
	where $\gamma$ is a positive asymptotic if it is in $C(\La_1^-,\La_0^-)$ and a negative asymptotic if it is in $C(\La_0^-,\La_1^-)$. This map is of degree $1$.
\end{enumerate}
On Figure \ref{fig:curves_diff} are schematized the pseudo-holomorphic curves contributing to the differential $\mfm_1^{\ep_0^-,\ep_1^-}$. 
\begin{figure}[ht]  
	\begin{center}\includegraphics[width=11cm]{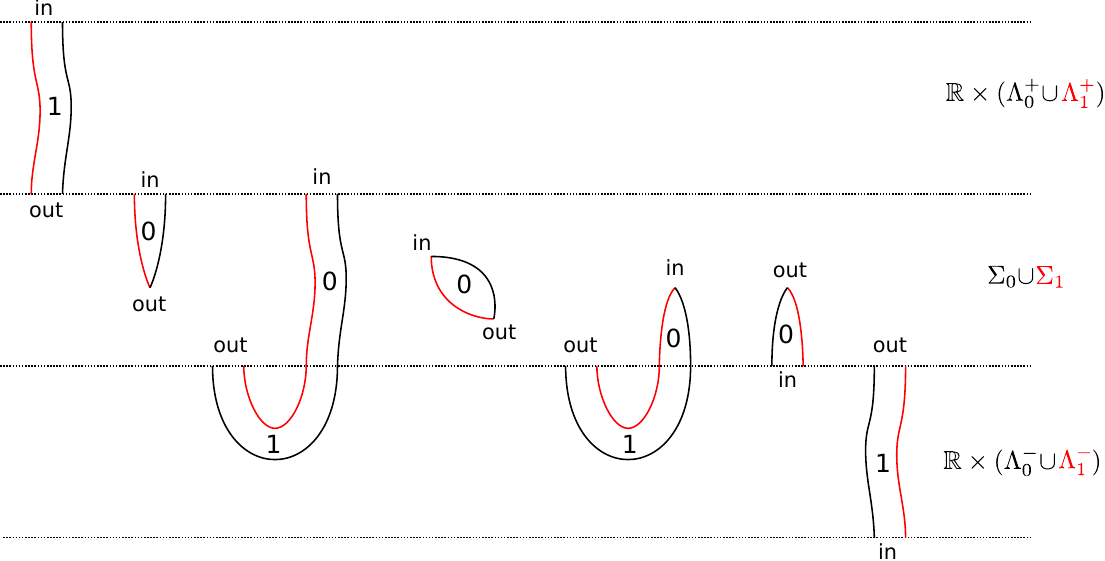}\end{center}
	\caption{Curves contributing to the differential $\mfm_1^{\ep^-_0,\ep^-_1}$.}
	\label{fig:curves_diff}
\end{figure}

\begin{rem}\label{rem:relation}
In the definition of $\mfm_1^{\ep_0^-,\ep_1^-}$, all components are related to components of the differential of $\Cth(\Sigma_0,\Sigma_1)$ or $\Cth(\Sigma_1,\Sigma_0)$ as follows:
\begin{itemize}
	\item the map $\Delta_1^+$ is the dual of $d_{++}$ in $\Cth(\Sigma_1,\Sigma_0)$, and it is the differential of the bilinearized Legendrian contact homology of $\La_0^+\cup\La_1^+$ restricted to $C(\La_1^+,\La_0^+)$,
	\item the map $d_{0+}$ is the dual of $d_{+0}$ in $\Cth(\Sigma_1,\Sigma_0)$,
	\item the map $\Delta_1^\Sigma$ restricted to the positive Reeb chords is the dual of $d_{+-}$ in $\Cth(\Sigma_1,\Sigma_0)$ and restricted to intersection points it is the dual of $d_{0-}$ in $\Cth(\Sigma_1,\Sigma_0)$,
	\item $b_1^-$ restricted to $C(\La_1^-,\La_0^-)$ is the banana map in $\Cth(\Sigma_0,\Sigma_1)$, and restricted to $C(\La_0^-,\La_1^-)$ it is the map $d_{--}$ in $\Cth(\Sigma_0,\Sigma_1)$ that is to say the differential of the Legendrian contact cohomology of $\La_0^-\cup\La_1^-$ restricted to $C(\La_0^-,\La_1^-)$.
\end{itemize}
In particular, the Floer complex $(CF_{-\infty}(\Sigma_0,\Sigma_1),\mfm_1^{-\infty})$ is a subcomplex of $\Cth_+(\Sigma_0,\Sigma_1)$.
\end{rem}

\begin{teo}\label{diff}
	$\mfm_1^{\ep_0^-,\ep_1^-}$ is a degree $1$ map satisfying $\mfm_1^{\ep_0^-,\ep_1^-}\circ\mfm_1^{\ep_0^-,\ep_1^-}=0$.
\end{teo}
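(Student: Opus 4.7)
The plan is to establish $\mfm_1^{\ep_0^-,\ep_1^-}\circ\mfm_1^{\ep_0^-,\ep_1^-}=0$ via the standard SFT compactness-and-gluing argument: each entry of the composition $\mfm_1^2$ is identified with the algebraic count of boundary points of an appropriate 1-dimensional moduli space, and Section~\ref{sec:structure} then forces the count to vanish. The degree assertion is an immediate bookkeeping check from Proposition~\ref{teo:grading} and the grading shifts in the definition of $\Cth_+$.

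Splitting the output into the three summands $C(\La_1^+,\La_0^+)^\dagger[n-1]$, $CF(\Sigma_0,\Sigma_1)$ and $C(\La_0^-,\La_1^-)[1]$ produces nine identities to verify. Three of them are inherited from the existing theory: by Remark~\ref{rem:relation}, $(\Delta_1^+)^2=0$ is dual to $d_{++}^2=0$ in $\Cth(\Sigma_1,\Sigma_0)$ (hence follows from \cite[Theorem~4.1]{CDGG2}), the identity $(b_1^-)^2=0$ on $C(\La_0^-,\La_1^-)$ coincides with $d_{--}^2=0$ in $\Cth(\Sigma_0,\Sigma_1)$, and the $V_0\to V_0$ component $d_{00}^2$ plus its negative-breaking correction is part of $\mathfrak{d}_{\ep_0^-,\ep_1^-}^2=0$ already handled there. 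For the remaining identities I would pick the appropriate 1-dimensional moduli space $\cM^1_{\Sigma_0,\Sigma_1}(a_0;\bs{\delta}_0,a_1,\bs{\delta}_1)$, or $\widetilde{\cM^2}_{\R\times\La^\pm_{01}}$ when both ends are cylindrical, and read off its boundary via Section~\ref{sec:structure}: type~(a) breakings contribute compositions of the form $d_{00}\circ d_{0\bullet}$, while type~(b) breakings with their index-1 piece in the negative end contribute terms of the shape $d_{0-}\circ b_1^-\circ\Delta_1^\Sigma$ and $b_1^-\circ b_1^-\circ\Delta_1^\Sigma$, and those with their index-1 piece in the positive end (a $\Delta$-type or banana-type disc) contribute $d_{0+}\circ\Delta_1^+$ and $\Delta_1^\Sigma\circ\Delta_1^+$. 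The pure Reeb chord asymptotics $\bs{\delta}_i$ on the index-0 cobordism components are absorbed into the augmentation coefficients $\ep_0^-(\bs{\delta}_0)\ep_1^-(\bs{\delta}_1)$ in the usual way.

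The main obstacle is the bookkeeping for the compositions involving $\Delta_1^\Sigma$. The expression $b_1^-\circ\Delta_1^\Sigma$ is a \emph{two-disc} configuration (an index-0 disc in the cobordism followed by an index-1 banana in the negative cylinder), and one must check that every such concatenation appears exactly once as a genuine type~(b) boundary point of the relevant 1-parameter family, and conversely that every type~(b) breaking with its index-1 piece in the negative end factors through a negative mixed Reeb chord output of $\Delta_1^\Sigma$. This is enforced by the action/energy estimates of the preceding subsection, since a $\Delta_1^\Sigma$-output is by definition a negative mixed chord and the action comparison prevents it from escaping to the positive end. Once this compatibility is in place, summing all boundary points of the relevant 1-dimensional moduli spaces to zero yields the desired entry of $\mfm_1^2=0$, and the three "inherited" identities $(\Delta_1^+)^2=0$, $(b_1^-)^2=0$ and $d_{00}^2+\text{corr.}=0$ can alternatively be re-derived within the same framework from compactness of the $\widetilde{\cM^2}_{\R\times\La^\pm_{01}}$ banana/$\Delta$-type families.
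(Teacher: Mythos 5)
Your overall strategy is the paper's — identify the entries of $\mfm_1^{\ep_0^-,\ep_1^-}\circ\mfm_1^{\ep_0^-,\ep_1^-}$ with counts of broken configurations, invoke Section~\ref{sec:structure} and Remark~\ref{rem:delta} for the pure-chord breakings, and get the degree from Proposition~\ref{teo:grading} — and your handling of the entries with output in $CF(\Sigma_0,\Sigma_1)$ (where $d_{0+}\circ\Delta_1^+$, $d_{00}^2$ and $d_{0-}\circ b_1^-\circ\Delta_1^\Sigma$ genuinely are the type~(a)/(b) boundary strata of a single one-dimensional moduli space with intersection-point output), as well as of $(\Delta_1^+)^2$ and of $(b_1^-)^2$ on $C(\La_0^-,\La_1^-)$, is fine. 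The gap is in the entries of $\mfm_1^2$ landing in $C(\La_0^-,\La_1^-)$ with input a positive chord $\gamma_{01}$ or an intersection point $x$, namely $b_1^-\circ\Delta_1^\Sigma\circ\Delta_1^+ + b_1^-\circ\Delta_1^\Sigma\circ d_{0+} + (b_1^-)^2\circ\Delta_1^\Sigma$ and $b_1^-\circ\Delta_1^\Sigma\circ d_{00} + (b_1^-)^2\circ\Delta_1^\Sigma$. You assert that terms such as $b_1^-\circ b_1^-\circ\Delta_1^\Sigma$ arise as type~(b) boundary points of ``the relevant 1-parameter family''. They do not: a configuration computing $(b_1^-)^2\circ\Delta_1^\Sigma$ is a three-storey building (one index-$0$ disc on the cobordisms and two index-$1$ discs in the negative end) of glued index $0+1+1=2$, and likewise $b_1^-\circ\Delta_1^\Sigma\circ\Delta_1^+$ has glued index $2$; such buildings bound two-dimensional families, not one-dimensional ones. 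More fundamentally, none of the moduli spaces of Section~\ref{mod_non_cyl} has its distinguished output in $C(\La_0^-,\La_1^-)$: the output of a $\Delta$-type disc lies in $C(\La_1^-,\La_0^-)$ and only reaches the complex after post-composition with $b_1^-$. So there is no single $1$-parameter family whose boundary count equals these matrix entries, and your ``sum of boundary points'' step is not even defined for them. Your appeal to ``$(b_1^-)^2=0$'' does not repair this: that identity holds on $C(\La_0^-,\La_1^-)$, whereas here $b_1^-$ is iterated starting from $\Delta_1^\Sigma$-outputs in $C(\La_1^-,\La_0^-)$, where $(b_1^-)^2$ is not zero.

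What is needed (and what the paper does in items (3) and (5) of its proof) is a two-step argument built around an auxiliary map you never introduce: $\Delta_1^-$, the negative-end analogue of $\Delta_1^+$, which does not occur in $\mfm_1$ at all. The boundary of $\overline{\widetilde{\cM^2}}_{\R\times\La^-_{01}}(\xi_{10};\bs{\delta}_0,\beta_{01},\bs{\delta}_1)$ gives $(b_1^-)^2+b_1^-\circ\Delta_1^-=0$ on $C(\La_1^-,\La_0^-)$, while the boundary of the cobordism-level $\Delta$-type family $\overline{\cM^1}_{\Sigma_0,\Sigma_1}(\beta_{01};\bs{\delta}_0,a,\bs{\delta}_1)$ gives $\Delta_1^\Sigma\circ\Delta_1^++\Delta_1^\Sigma\circ d_{0+}+\Delta_1^-\circ\Delta_1^\Sigma=0$ for $a=\gamma_{01}$ (and the analogous relation with $d_{00}$ for $a=x$). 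Post-composing the latter with $b_1^-$ and adding the former pre-composed with $\Delta_1^\Sigma$, the cross terms $b_1^-\circ\Delta_1^-\circ\Delta_1^\Sigma$ cancel and the desired entries vanish; equivalently one studies the boundaries of the products $\widetilde{\cM^1}_{\R\times\La^-_{01}}\times\overline{\cM^1}_{\Sigma_0,\Sigma_1}$ and $\overline{\widetilde{\cM^2}}_{\R\times\La^-_{01}}\times\cM^0_{\Sigma_0,\Sigma_1}$, as the paper does. (Your idea of inheriting identities from $\mathfrak{d}^2=0$ of \cite{CDGG2} via Remark~\ref{rem:relation} is legitimate and would even cover the $CF\to C(\La_0^-,\La_1^-)$ entry, since $CF_{-\infty}$ is a subcomplex; but the $C(\La_1^+,\La_0^+)\to C(\La_0^-,\La_1^-)$ entry has no counterpart in $\Cth(\Sigma_0,\Sigma_1)$ and still requires the factorization above.) Until this mechanism is added, the proof does not close.
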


\begin{rem}[$\partial$-breaking]\label{rem:delta}
Before we prove the theorem, let us give some precision about some types of pseudo-holomorphic buildings arising as limit of a sequence of pseudo-holomorphic discs in a $1$-dimensional moduli space. Namely, the buildings containing a non-trivial pure disc are a bit special.

Consider again the case 1. in Section \ref{sec:structure}, i.e. the limit of a sequence of discs with boundary on trivial cylinders. As we said, it consists of two index $1$ discs connected by a Reeb chord node. If this node is a pure Reeb chord $\gamma\in\Rc(\La)$, then the (non trivial) disc $u$ in the building for which this node is a positive Reeb chord asymptotic is a pure disc, given the condition we take on the Lagrangian boundary conditions in Section \ref{sec:mod}, and thus contributes to the differential of $\gamma$ in the Legendrian contact homology DGA of $\La$. 
Then, applying an augmentation $\ep$ of $\Ac(\La)$ to all the negative pure Reeb chord asymptotics of $u$ results in a pseudo-holomorphic curve count contributing to $\ep\circ\partial(\gamma)$. 
The sum over all possible negative pure Reeb chord asymptotics of a pure disc with positive asymptotic $\gamma$ leads in a curve count giving the whole term $\ep\circ\partial(\gamma)$ which vanishes by definition of an augmentation.

Consider now the case 2.(b) in Section \ref{sec:structure}, and the subcase where the index $1$ curve that we denote $u$ has boundary on the negative ends of the cobordisms and has one positive Reeb chord asymptotic which is a pure Reeb chord $\gamma\in\Rc(\La^-)$. In this case, for the same reason as above, $u$ is a pure disc, and the contribution of such discs vanish once we apply an augmentation to pure negative Reeb chords.

In the subcase of 2.(b) where the index $1$ curve $u$ has boundary on the positive ends of the cobordisms, $u$ can not have a positive asymptotic to a pure Reeb chord because this is not a node and thus would imply that the sequence of discs we started with had a positive pure Reeb chord asymptotic. However, there can be one (or several) index $0$ pure curve with a positive asymptotic to a pure chord $\gamma$ of $\La^+$, and with boundary on a cobordism $\La^-\prec_\Sigma\La^+$. Such an index $0$ curve, call it $v$, contributes thus to $\Phi_\Sigma(\gamma)$ where $\Phi_\Sigma:\Ac(\La^+)\to\Ac(\La^-)$ is the map induced by the cobordism. Applying the augmentation $\ep^-$ to negative Reeb chord asymptotics of $v$ leads to a curve count contributing to $\ep^-\circ\Phi_\Sigma(\gamma)=\ep^+(\gamma)$ by definition of $\ep^+$.
Fixing $\gamma$ and summing over all possible negative Reeb chords of $\La^-$ leads in a curve count giving the term $\ep^+(\gamma)$.

In this paper, every time we define a map via a count of mixed pseudo-holomorphic discs in some moduli spaces, we sum over all possible pure negative Reeb chord asymptotics, to which we apply then the given augmentations of the Legendrian negative ends. Thus, 
\begin{enumerate}
	\item[(A)] the contribution of broken discs having a non trivial pure disc component with boundary on cylindrical ends will vanish,
	\item[(B)] applying the augmentation $\ep_i^-$ to negative pure chords in $\Rc(\La_i^-)$ corresponds to applying $\ep_i^+$ to the potential pure chords asymptotics in $\Rc(\La_i^+)$ of a disc with boundary on the positive cylindrical ends.
\end{enumerate}
This being said, we will now ignore the broken discs of case (A), and the use of the induced augmentations $\ep_i^+$ when describing the boundary of the compactification of moduli spaces refers to breakings in case (B).
\end{rem}

\begin{proof}[Proof of Theorem \ref{diff}]
The degree of $\mfm_1^{\ep_0^-,\ep_1^-}$ follows from Proposition \ref{teo:grading}.
Then, we have
\begin{alignat*}{1}
	&\mfm_1^{\ep_0^-,\ep_1^-}\circ\mfm_1^{\ep_0^-,\ep_1^-}\\
\small
&=\left(
\begin{matrix}
\Delta_1^{+}\circ\Delta_1^+&0&0\\
d_{0+}\circ\Delta_1^++d_{00}\circ d_{0+}+d_{0-}\circ b_1^-\circ\Delta_1^\Sigma&d_{00}^2+d_{0-}\circ b_1^-\circ\Delta_1^\Sigma&d_{00}\circ d_{0-}+d_{0-}\circ b_1^-\\
b_1^-\circ\Delta_1^\Sigma\circ\Delta_1^++b_1^-\circ\Delta_1^\Sigma\circ d_{0+}+(b_1^-)^2\circ\Delta_1^\Sigma&b_1^-\circ\Delta_1^\Sigma\circ d_{00}+(b_1^-)^2\circ\Delta_1^\Sigma&b_1^-\circ\Delta_1^\Sigma\circ d_{0-}+(b_1^-)^2
\end{matrix}\right)
\end{alignat*}
\normalsize
\begin{enumerate}
	\item $\Delta_1^{+}\circ\Delta_1^+$ vanishes because for any $\gamma_{01}\in C(\La_1^+,\La_0^+)$, the discs contributing to $\Delta_1^{+}\circ\Delta_1^+(\gamma_{01})$ are in one-to-one correspondence with broken curves in the boundary of the compactification of moduli spaces
	$\widetilde{\cM^2}_{\R\times\La^+_{01}}(\xi_{01};\bs{\zeta}_0,\gamma,\bs{\zeta}_1)$, for all possible chord $\xi_{01}\in C(\La_1^+,\La_0^+)$ and words of pure Reeb chords $\bs{\zeta}_i$. Observe that $\Delta_1^+$ is in fact the bilinearized Legendrian homology differential of $\La_0^+\cup\La_1^+$ restricted to the subcomplex $C(\La_1^+,\La_0^+)$.
	\item For $\gamma_{01}\in C(\La_1^+,\La_0^+)$, the term $\big(d_{0+}\circ\Delta_1^++d_{00}\circ d_{0+}+d_{0-}\circ b_1^-\circ\Delta_1^\Sigma\big)(\gamma_{01})$ is given exactly by the count of broken curves in 
	$\partial\overline{\cM^1}_{\Sigma_{0},\Sigma_1}(p;\bs{\delta}_0,\gamma_{01},\bs{\delta}_1)$ for all $p\in\Sigma_0\cap\Sigma_1$ and words $\bs{\delta}_i$.
	\item For $\gamma_{01}\in C(\La_1^+,\La_0^+)$, the term $\big(b_1^-\circ\Delta_1^\Sigma\circ\Delta_1^++b_1^-\circ\Delta_1^\Sigma\circ d_{0+}+(b_1^-)^2\circ\Delta_1^\Sigma\big)(\gamma_{01})$ is given by the count of curves in
	\begin{alignat*}{1}
	&\widetilde{\cM^1}_{\R\times\La^-_{01}}(\xi_{10};\bs{\delta}_0',\beta_{01},\bs{\delta}_1')\times\partial\overline{\cM^1}_{\Sigma_{0},\Sigma_1}(\beta_{01};\bs{\delta}_0,\gamma_{01},\bs{\delta}_1)\\
	\mbox{ and }\,&\partial\overline{\cM^2}_{\R\times\La^-_{01}}(\xi_{10};\bs{\delta}_0',\beta_{01},\bs{\delta}_1')\times\cM^0_{\Sigma_{0},\Sigma_1}(\beta_{01};\bs{\delta}_0,\gamma_{01},\bs{\delta}_1)
	\end{alignat*}
	for all $\xi_{10}\in C(\La_0^-,\La_1^-),\beta_{01}\in C(\La_1^-,\La_0^-)$ and words of pure Reeb chords $\bs{\delta}_i,\bs{\delta}_i'$ of $\La_i^-$. Indeed, the study of $\partial\overline{\cM^2}_{\R\times\La^-_{01}}(\xi_{10};\bs{\delta}_0',\beta_{01},\bs{\delta}_1')$ gives that the map $b_1^-$ restricted to $C(\La_1^+,\La_0^+)$ satisfies $(b_1^-)^2+b_1^-\circ\Delta_1^-=0$, where $\Delta_1^-$ is the obvious analogue of $\Delta_1^+$ but defined on $C(\La_1^-,\La_0^-)$. Then one can write 
	\begin{alignat*}{1}
	&\big(b_1^-\circ\Delta_1^\Sigma\circ\Delta_1^++b_1^-\circ\Delta_1^\Sigma\circ d_{0+}+(b_1^-)^2\circ\Delta_1^\Sigma\big)(\gamma_{01})=b_1^-\big(\Delta_1^\Sigma\circ\Delta_1^++\Delta_1^\Sigma\circ d_{0+}+\Delta_1^-\circ\Delta_1^\Sigma\big)(\gamma_{01})
	\end{alignat*}
	and there is a one-to-one correspondence between broken discs contributing to $\Delta_1^\Sigma\circ\Delta_1^++\Delta_1^\Sigma\circ d_{0+}+\Delta_1^-\circ\Delta_1^\Sigma$ and broken discs in $\partial\overline{\cM^1}_{\Sigma_{0},\Sigma_1}(\beta_{01};\bs{\delta}_0,\gamma,\bs{\delta}_1)$.
	\item $\big(d_{00}^2+d_{0-}\circ b_1^-\circ\Delta_1^\Sigma\big)(x)$, for $x\in CF(\Sigma_0,\Sigma_1)$, counts broken curves in $\partial\overline{\cM^1}_{\Sigma_{0},\Sigma_1}(p;\bs{\delta}_0,x,\bs{\delta}_1)$, for all $p\in\Sigma_0\cap\Sigma_1$ and words $\bs{\delta}_i$.
	\item $\big(b_1^-\circ\Delta_1^\Sigma\circ d_{00}+(b_1^-)^2\circ\Delta_1^\Sigma\big)(x)$ counts broken curves in
	\begin{alignat*}{1}
		&\widetilde{\cM^1}_{\R\times\La^-_{01}}(\xi_{10};\bs{\delta}_0',\beta_{01},\bs{\delta}_1')\times\partial\overline{\cM^1}_{\Sigma_{0},\Sigma_1}(\beta_{01};\bs{\delta}_0,x,\bs{\delta}_1)\\
		\mbox{ and }\,&\partial\overline{\cM^2}_{\R\times\La^-_{01}}(\xi_{10};\bs{\delta}_0',\beta_{01},\bs{\delta}_1')\times\cM^0_{\Sigma_{0},\Sigma_1}(\beta_{01};\bs{\delta}_0,x,\bs{\delta}_1)
	\end{alignat*}
	for all $\xi_{10},\beta_{01}$ and words of Reeb chords $\bs{\delta}_i$ and $\bs{\delta}_i'$ as above.
	\item $\big(d_{00}\circ d_{0-}+d_{0-}\circ b_1^-\big)(\gamma_{10})$, for $\gamma_{10}\in C(\La_0^-,\La_1^-)$, counts broken curves in $\partial\overline{\cM^1}_{\Sigma_{0},\Sigma_1}(p;\bs{\delta}_0,\gamma_{10},\bs{\delta}_1)$.
	\item $\big(b_1^-\circ\Delta_1^\Sigma\circ d_{0-}+(b_1^-)^2\big)(\gamma_{10})=(b_1^-)^2(\gamma_{10})$ for energy reasons, and vanishes as $b_1^-$ restricted to $C(\La_0^-,\La_1^-)$ is the bilinearized Legendrian contact cohomology differential of $\La_0^-\cup\La_1^-$ restricted to the subcomplex $C(\La_0^-,\La_1^-)$ (observe otherwise that the broken discs contributing to $(b_1^-)^2(\gamma_{10})$ are exactly the one appearing in $\partial\overline{\cM^2}_{\R\times\La^-_{01}}(\xi_{10};\bs{\delta}_0,\gamma_{10},\bs{\delta}_1)$, for all $\xi_{10},\bs{\delta}_0,\bs{\delta}_1$).
\end{enumerate}
\end{proof}

\begin{nota} A few remarks about notations of maps:
	\begin{enumerate}
		\item Very rigorously, we should write the augmentations involved in the definition of each map all the time, but we drop it to enlighten the notation.
		\item Given a pair of cobordisms $(\Sigma_0,\Sigma_1)$, we will then write $\mfm_1^\Sigma$ for the differential on $\Cth_+(\Sigma_0,\Sigma_1)$, so without specifying the augmentations, and \textquotedblleft$\Sigma$\textquotedblright\, stands for the ordered pair $(\Sigma_0,\Sigma_1)$. If we want to explicit the order, we will sometimes write $\mfm_1^{\Sigma_0,\Sigma_1}$ or $\mfm_1^{\Sigma_{01}}$. Similarly, we write $\Delta_1^\Sigma$ instead of $\Delta_1^{\Sigma_0,\Sigma_1}$, and finally $b_1^-$ is a short notation for $b_1^{\Lambda_0^-,\La_1^-}$ and $\Delta_1^+$ is a short notation for $\Delta_1^{\Lambda_0^+,\La_1^+}$.
		\item We write $\mfm_1^{\Sigma,+}$, $\mfm_1^{\Sigma,0}$ and $\mfm_1^{\Sigma,-}$ (or simply $\mfm_1^{+}$, $\mfm_1^{0}$ and $\mfm_1^{-}$ when the pair of cobordisms is clear from the context) the components of the differential with values in $C(\La_1^+,\La_0^+)^\dagger[n-1]$, $CF(\Sigma_0,\Sigma_1)$ and $C(\La_0^-,\La_1^-)[1]$ respectively, and then $\mfm_1^{\Sigma,ij}:=\mfm_1^{\Sigma,i}+\mfm_1^{\Sigma,j}$, for $i,j\in\{+,0,-\}$ distinct.
		\item We will sometimes denote $CF_{+\infty}(\Sigma_0,\Sigma_1):= C(\La_1^+,\La_0^+)^\dagger[n-1]\oplus CF(\Sigma_0,\Sigma_1)$, but observe that contrary to $CF_{-\infty}(\Sigma_0,\Sigma_1)$, this is not a complex.
	\end{enumerate}
\end{nota}
Considering the notations above, the components $\mfm_1^+$ and $\mfm_1^-$ of $\mfm_1^\Sigma$ can be expressed as:
\begin{alignat}{1}
&\mfm_1^+=\Delta_1^+\\
&\mfm_1^-=b_1^-\circ\bs{\Delta}_1^\Sigma
\end{alignat}
where $\bs{\Delta}_1^\Sigma:\Cth_+(\Sigma_0,\Sigma_1)\to C_{n-1-*}(\La_1^-,\La_0^-)\oplus C^{*-1}(\La_0^-,\La_1^-)$ is defined by:
\begin{alignat*}{1}
&\bs{\Delta}_1^\Sigma(a)=
\left\{\begin{array}{cl} a&\mbox{ if }a\in C(\La_0^-,\La_1^-)\\
\Delta_1^\Sigma(a)&\mbox{ otherwise}\end{array}\right.
\end{alignat*}

\begin{ex}[Case of concordances]\label{ex1}
Consider a compact non-degenerate Legendrian submanifold $\La\subset Y$, admitting augmentations $\ep_0,\ep_1$. 
Consider a $2$-copy $\La^{(2)}$ of $\La$ consisting of the components $\La_0$ and $\La_1$ such that $\La_1$ is a copy of $\La_0:=\La$ perturbed by a small negative Morse function $f$, i.e. $\La_1$ is identified with $j^1(f)$ in a neighborhood of $\La$ identified with a neighborhood of the $0$-section of $J^1(\La)$, see Figure \ref{2-copy}. The Legendrian $\La_1$ inherits a Maslov potential from $\La_0$ and the mixed Reeb chords of $\La_0\cup\La_1$ are of three types:
\begin{itemize}
	\item $p$-chords: long chords from $\La_1$ to $\La_0$ corresponding to pure chords of $\La_0$,
	\item $q$-chords: long chords from $\La_0$ to $\La_1$ corresponding to pure chords of $\La_0$,
	\item \textit{Morse} chords: short chords from $\La_1$ to $\La_0$ corresponding to critical points of $f$. Note that an index $k$ critical point of $f$ corresponds to a Morse Reeb chord of LCH degree $n-k-1$ as $f$ is negative.
\end{itemize}
We have:
\begin{alignat*}{1}
\Cth_+(\R\times\La_0,\R\times\La_1)=C(\La_1,\La_0)^\dagger[n-1]\oplus C(\La_0,\La_1)[1]=\mathfrak{C}^*(\La_0,\La_1)
\end{alignat*}
where $C(\La_1,\La_0)^\dagger[n-1]$ is generated by $q$-chords and  $C(\La_0,\La_1)[1]$ is generated by $p$ and \textit{Morse} chords.
The differential takes the form:
\begin{alignat*}{1}
	\mfm_1^{\ep_0,\ep_1}=\left(
	\begin{matrix}
	\Delta_1^+&0\\
	b_1^-\circ\Delta_1^\Sigma&b_1^-
	\end{matrix}\right)=\left(
	\begin{matrix}
	\Delta_1^+&0\\
	b_1^-&b_1^-
	\end{matrix}\right)
\end{alignat*}
because the cobordisms are trivial cylinders so the map $\Delta_1^\Sigma$ is the identity map.

Consider another 2-copy of $\La$ which we denote $\overline{\La^{(2)}}$, consisting of $\La_0\cup\overline{\La_1}$, such that $\La_0:=\La$ and $\overline{\La_1}$ a perturbation of a push-off of $\La_1$ far in the positive Reeb direction so that it lies entirely \textit{above} $\La_0$ (the $z$-coordinate of any point in $\overline{\La}_1$ is greater than the $z$ coordinate of any point in $\La_0$). This is the 2-copy of $\La$ considered in \cite{EESa}. 
The mixed Reeb chords of $\La_0\cup\overline{\La_1}$ are all from $\La_0$ to $\overline{\La_1}$ but still of three different types:
\begin{itemize}
	\item $\bar q$-chords: long chords corresponding to pure chords of $\La_0$,
	\item $\bar p$-chords: short chords corresponding to pure chords of $\La_0$,
	\item \textit{Morse} chords corresponding to critical points of $f$. Note that in this case an index $k$ critical point of $f$ corresponds to a Morse Reeb chord of LCH degree $k-1$.
\end{itemize}
We have
$$\big(\Cth_+(\R\times\La_0,\R\times\overline{\La_1}),\mfm_1^{\ep_0,\ep_1}\big)=(C(\La_1,\La_0)^\dagger[n-1],\Delta_1^+)$$
and this complex is the complex of the bilinearized Legendrian contact homology of $\La_0\cup\overline{\La_1}$ restricted to mixed chords, but with a choice of grading making the differential a degree $1$ map.
\begin{figure}[ht]  
	\begin{center}\includegraphics[width=5cm]{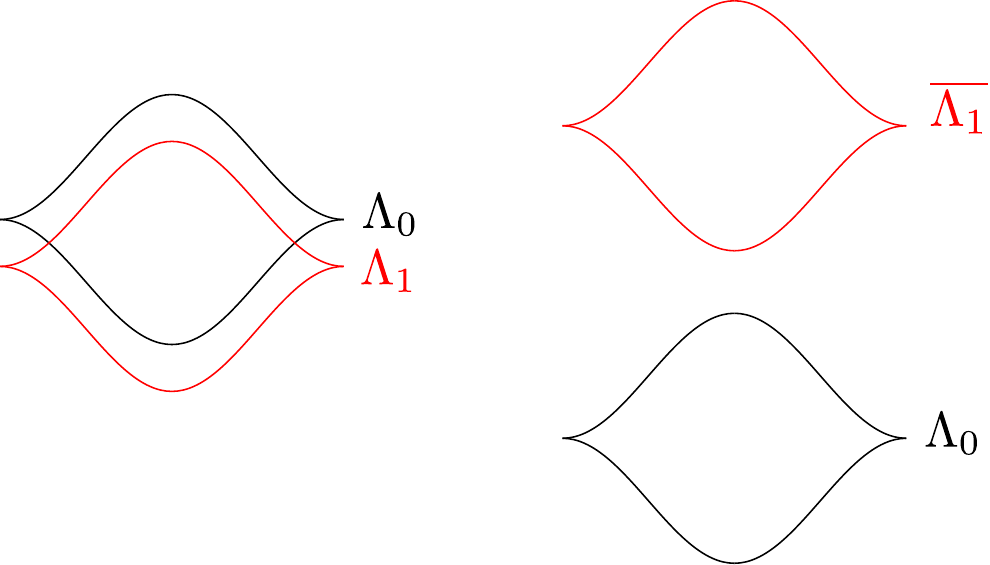}\end{center}
	\caption{Left: $2$-copy $\La^{(2)}$ of $\La$; right: $2$-copy $\overline{\La^{(2)}}$.}
	\label{2-copy}
\end{figure}
There is a canonical isomorphism of complexes
\begin{alignat*}{1}
\big(\Cth_+(\R\times\La_0,\R\times\La_1),\left(
\begin{matrix}
\Delta_1^+&0\\
b_1^-&b_1^-
\end{matrix}\right)\big)\xrightarrow[]{\simeq}(\Cth_+(\R\times\La_0,\R\times\overline{\La_1}),\Delta_1^+)
\end{alignat*}
sending a $q$-chord to its corresponding $\bar q$-chord, a $p$-chord to its corresponding $\bar p$-chord and a Morse chord $c$ to its corresponding Morse chord which we denote $\bar c$. Moreover it is of degree $0$ according to the $\Cth_+$ grading. This isomorphism of graded vector spaces extends to an isomorphism of complexes. We refer for example to \cite[Proposition 5.4]{NRSSZ} for  a detailed proof, we recall here the correspondence of pseudo-holomorphic discs with boundary on one of the 2-copy or the other.
Pseudo-holomorphic discs with boundary on $\R\times(\La_0\cup\La_1)$ with one positive asymptotic to a $q$-chords $q_1$ and one negative asymptotic to a $q$-chord $q_2$ are identified with discs with boundary on $\R\times(\La_0\cup\overline{\La_1})$ with one positive, resp. negative asymptotic to the chords $\bar q_1$, resp. $\bar q_2$.
 These discs correspond to $\Delta_1^+$ in $\Cth_+(\R\times\La_0,\R\times\La_1)$ and $\Delta_1^+$ restricted to $\bar q$ chords and with values in $\bar q$ chords  in $\Cth_+(\R\times\La_0,\R\times\overline{\La_1})$. Then, the component $b^-_1$ restricted to $q$-chords is defined by a count of bananas with two positive asymptotics, one at a $q$-chord which is an input and one at a $p$- or a Morse chord which is the output. Such bananas correspond exactly to discs with boundary on $\R\times(\La_0\cup\overline{\La_1})$ with one positive input asymptotic at the corresponding $\bar q$-chord and one negative output asymptotic to the $\bar p$- or Morse chord. These are discs contributing to $\Delta_1^+$ restricted to $\bar q$-chords and taking values in $\bar p$- and Morse chords in  $\Cth_+(\R\times\La_0,\R\times\overline{\La_1})$.
Finally, a disc contributing to $b^-_1$ restricted to $p$- and Morse chords is a disc with a negative input asymptotic at a $p$- or Morse chord $\gamma_1$ and a positive output asymptotic at a $p-$ or Morse chord $\gamma_2$. These become discs with boundary on $\R\times(\La_0\cup\overline{\La_1})$ with a positive input asymptotic at $\bar \gamma_1$ and a negative output asymptotic at $\bar \gamma_2$.

\end{ex}

\begin{ex}[$0$-section of a jet space]\label{ex:jet}
	 Consider the jet space $J^1(M)=T^*M\times\R$ of a smooth manifold $M$, endowed with the standard contact form $dz-\lambda$ where $z$ is the $\R$ coordinate and $\la$ the canonical form on $T^*M$. Then the $0$-section is a Legendrian $\La_0:=M$. Take a small push-off of $\La$ in the positive Reeb ($\partial_z$) direction and perturb it by a small Morse function $f:\La_0\to\R$. Denote this Legendrian $\La_1$. Consider the trivial augmentations $\ep_i$, $i=0,1$. Then, the complex $\Cth_+(\R\times\La_0,\R\times\La_1),\mfm_1^{\ep_0^-,\ep_1^-})$ is just the complex $(C(\La_1,\La_0)^\dagger[n-1],\Delta_1^{+})$ which is canonically identified with the Morse complex of $f$.
\end{ex}

\subsection{Concatenation of cobordisms}\label{sec:conc}

\subsubsection{Definition of the complex $\Cth_+(V_0\odot W_0,V_1\odot W_1)$}

Consider Legendrian submanifolds $\La_i^-,\La_i,\La_i^+$ for $i=0,1$, and cobordisms $\La_i^-\prec_{V_i}\La_i$ and $\La_i\prec_{W_i}\La_i^+$. As the positive end of $V_i$ is a cylinder over $\La_i$, as well as the negative end of $W_i$, one can perform the concatenation of $V_i$ and $W_i$ denoted $V_i\odot W_i$, which is an exact Lagrangian cobordism from $\La_i^-$ to $\La_i^+$, see for example \cite[Section 5.1]{CDGG2}. Assume that $\Ac(\La_i^-)$ admit augmentations $\ep_0^-,\ep_1^-$. These augmentations induce augmentations $\ep_0$ and $\ep_1$ of $\Ac(\La_0)$ and $\Ac(\La_1)$ respectively, and augmentations $\ep_0^+$ and $\ep_1^+$ of $\Ac(\La_0^+)$ and $\Ac(\La_1^+)$.
Assuming that the cobordisms $V_0\odot W_0$ and $V_1\odot W_1$ intersect transversely, the Cthulhu complex of the pair $(V_0\odot W_0,V_1\odot W_1)$ has four types of generators
\begin{alignat*}{1}
\Cth_+(V_0\odot W_0,V_1\odot W_1)&=C(\La_1^+,\La_0^+)\oplus CF(W_0,W_1)\oplus CF(V_0,V_1)\oplus C(\La_0^-,\La_1^-)\\
&=CF_{+\infty}(W_0,W_1)\oplus CF_{-\infty}(V_0,V_1)
\end{alignat*}
and the differential is given by
\begin{alignat*}{2}
\mfm^{V\odot W}_1
&=\left(\begin{matrix}
\mfm_1^{W,+}&0&0&0\\
\mfm_1^{W,0}(\id+\,b_1^V\circ\Delta_1^{W})&\mfm_1^{W,0}(\id+\,b_1^V\circ\Delta_1^{W})&\mfm_1^{W,0}\circ b_1^V&\mfm_1^{W,0}\circ b_1^V\\
\mfm_1^{V,0}\circ\Delta_1^{W}&\mfm_1^{V,0}\circ\Delta_1^{W}&\mfm_1^{V,0}&\mfm_1^{V,0}\\
\mfm_1^{V,-}\circ\Delta_1^{W}&\mfm_1^{V,-}\circ\Delta_1^{W}&\mfm_1^{V,-}&\mfm_1^{V,-}
\end{matrix}\right)	
\end{alignat*}
where $b_1^V:\Cth_+(V_0,V_1)\to C(\La_0,\La_1)[1]$ is the degree $0$ map defined by
\begin{alignat*}{1}
&b_1^V(a)=\sum\limits_{\gamma_{10}}\sum\limits_{\bs{\delta}_i}\#\cM^0_{V_{0},V_1}(\gamma_{10};\bs{\delta}_0,a,\bs{\delta}_1)\cdot\ep_0^-(\bs{\delta}_0)\ep_1^-(\bs{\delta}_1)\cdot\gamma_{10}	
\end{alignat*}
We will extend the definitions of the maps $b_1^V$ and $\Delta_1^V$ to $\Cth_+(V_0\odot W_0,V_1\odot W_1)$ in order to obtain a compact formula for $\mfm_1^{V\odot W}$.
Namely, we define
$\bs{b}_1^V:\Cth_+(V_0\odot W_0,V_1\odot W_1)\to\Cth_+(W_0,W_1)$
by
$$\bs{b}_1^V(a)=\left\{
\begin{array}{cl}
 a+b_1^V\circ\Delta_1^W(a)&\mbox{ for } a\in CF_{+\infty}(W_0,W_1)\\
 b_1^V(a) &\mbox{ for } a\in CF_{-\infty}(V_0,V_1)
\end{array}\right.$$
and we define $\bs{\Delta}_1^W:\Cth_+(V_0\odot W_0,V_1\odot W_1)\to\Cth_+(V_0,V_1)$
by
$$\bs{\Delta}_1^W(a)=\left\{
\begin{array}{cl}
\Delta_1^W(a)&\mbox{ for } a\in CF_{+\infty}(W_0,W_1)\\
a &\mbox{ for } a\in CF_{-\infty}(V_0,V_1)
\end{array}\right.$$
This may seem confusing because we have already defined a map $\bs{\Delta}_1^\Sigma$ for the case of a pair of cobordisms $(\Sigma_0,\Sigma_1)$ in the previous section. However, this map $\bs{\Delta}_1^\Sigma$ can be recovered from the map $\bs{\Delta}_1^W$ for the pair $(V_0\odot W_0,V_1\odot W_1)$ where $(V_0,V_1)=(\R\times\La_0^-,\R\times\La_1^-)$ and $(W_0,W_1)=(\Sigma_0,\Sigma_1)$, see Section \ref{special_case} for more details. In the remaining of this section, to make it clear we write $\bs{\Delta}_1^{W\subset W}$ when we consider the map for the pair $(W_0,W_1)$ not in the concatenation.

One can thus write the product in the following more compact way:
\begin{alignat*}{1}
\mfm_1^{V\odot W}=\mfm_1^{W,+0}\circ\,\bs{b}_1^V+\mfm_1^{V,0-}\circ\bs{\Delta}_1^W
\end{alignat*}
Let us now check that $\mfm_1^{V\odot W}$ is indeed a differential.
Using the definition of $\bs{b}_1^V$ and $\bs{\Delta}_1^W$, we have:
\begin{alignat*}{2}
\big(\mfm_1^{V\odot W}\big)^2=&\Big(\mfm_1^{W,+0}\circ\,\bs{b}_1^V+\mfm_1^{V,0-}\circ\,\bs{\Delta}_1^W\Big)\circ\Big(\mfm_1^{W,+0}\circ\,\bs{b}_1^V+\mfm_1^{V,0-}\circ\,\bs{\Delta}_1^W\Big)\\
=&\mfm_1^{W,+0}\circ\mfm_1^{W,+0}\circ\,\bs{b}_1^V+\mfm_1^{W,+0}\circ\big(b_1^V\circ\Delta_1^W\big)\circ\mfm_1^{W,+0}\circ\,\bs{b}_1^V+\mfm_1^{W,+0}\circ\,b_1^V\circ\mfm_1^{V,0-}\circ\,\bs{\Delta}_1^W\\
&+\mfm_1^{V,0-}\circ\,\Delta_1^W\circ\mfm_1^{W,+0}\circ\,\bs{b}_1^V+\mfm_1^{V,0-}\circ\mfm_1^{V,0-}\circ\,\bs{\Delta}_1^W
\end{alignat*}
where by definition the term $\mfm_1^{W,+}\circ\,b_1^V\circ\bs{\Delta}_1^W$ vanishes but we keep it in the formula to make it more homogeneous. We use then the following

\begin{lem}\label{lem:rel}
	The maps 
	\begin{enumerate}
		\item $\Delta_1^W\circ\mfm_1^{W,+0}\circ\,\bs{b}_1^V+\mfm_1^{V,+}\circ\,\bs{\Delta}_1^W:\Cth_+(V_0\odot W_0,V_1\odot W_1)\to C_{n-1-*}(\La_1,\La_0)$\label{rel1}, and
		\item $b_1^V\circ\mfm_1^{V}\circ\,\bs{\Delta}_1^W+\,b_1^\La\circ\bs{\Delta}_1^{W\subset W}\circ\bs{b}_1^V:\Cth_+(V_0\odot W_0,V_1\odot W_1)\to C^{*-1}(\La_0,\La_1)$\label{rel2}
	\end{enumerate}
	vanish.
\end{lem}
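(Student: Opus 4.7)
The plan is to derive both identities by interpreting the sum of the two compositions as the boundary count of a suitable $1$-dimensional moduli space of broken pseudo-holomorphic curves on the stretched concatenation, and then invoking the fact that the boundary of a compact $1$-manifold vanishes mod $2$. The key geometric input is the SFT neck-stretching at the concatenation level $\La_0 \cup \La_1$: every disc on the concatenated pair $(V_0\odot W_0, V_1\odot W_1)$ converges, in the limit, to a two-story building of rigid pieces on $(V_0,V_1)$ and on $(W_0,W_1)$ glued along matching mixed interface Reeb chords. This is exactly what is encoded in the definitions of $\bs{b}_1^V$ and $\bs{\Delta}_1^W$ of Section \ref{sec:conc}.

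For part (1), I would fix a generator $a$ of $\Cth_+(V_0\odot W_0, V_1\odot W_1)$ and an interface chord $\gamma_{01}\in C(\La_1,\La_0)$, and consider the $1$-dimensional moduli space of $\Delta$-type configurations on the split cobordisms consisting of an index-$1$ $\Delta$-type disc on $(W_0,W_1)$ with distinguished negative output $\gamma_{01}$, rigidly connected via interface Reeb chords to rigid $V$-pieces below (this is what gives the $b_1^V\circ\Delta_1^W$ summand of $\bs{b}_1^V$), together with the dual configuration where the non-rigid piece sits on the $V$-side via pull-off of a disc on the interface cylinder $\R\times(\La_0\cup\La_1)$. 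By the compactness statement of Section \ref{sec:structure}, the $0$-dimensional boundary of this moduli space decomposes into three classes of rigid degenerations: a further splitting of the index-$1$ $W$-piece (giving $\Delta_1^W\circ\mfm_1^{W,+0}\circ\bs{b}_1^V(a)$); a further splitting on the $V$-side by pull-off of a rigid banana/$\Delta$-disc on $\R\times(\La_0\cup\La_1)$ counted by $\mfm_1^{V,+}=\Delta_1^+$ (giving $\mfm_1^{V,+}\circ\bs{\Delta}_1^W(a)$); and pure-disc degenerations of the type discussed in Remark \ref{rem:delta}, which cancel via the augmentation identities. Mod $2$ this yields identity~(1).

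For part (2), the target is $C(\La_0,\La_1)$ and the argument is exactly parallel, the roles of positive and negative interface asymptotes being swapped: now $\gamma_{10}$ appears as a standard negative mixed asymptote of the $W$-side and a standard positive mixed asymptote of the $V$-side, and the distinguished output is produced via a rigid banana/bilinearized-cohomology disc on the interface cylinder counted by $b_1^\La$. Boundary degenerations on the $V$-side give $b_1^V\circ\mfm_1^V\circ\bs{\Delta}_1^W(a)$, while those on the $W$-side, coupled with the interface disc counted by $b_1^\La$, give $b_1^\La\circ\bs{\Delta}_1^{W\subset W}\circ\bs{b}_1^V(a)$. The pure-disc contributions are handled as in part~(1).

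The main obstacle I anticipate is the combinatorial bookkeeping of the boundary configurations rather than any individual analytic estimate. There are several sub-cases depending on whether $a$ lies in $CF_{+\infty}(W_0,W_1)$ or in $CF_{-\infty}(V_0,V_1)$, on where the index-$1$ piece sits, on the direction of the matching chord at the two-story join, and on how many rigid $V$- and $W$-pieces are strung together at the interface. Each sub-case has to be matched against the piecewise definitions of $\bs{b}_1^V$ and $\bs{\Delta}_1^W$, and the cancellation of pure-disc breakings via Remark \ref{rem:delta} (the step at which the matched augmentations $\ep_0^\pm,\ep_1^\pm$ enter) must be verified for each. Once this bookkeeping is set up, everything reduces to the standard compactness/gluing package already invoked in the proof of Theorem \ref{diff}.
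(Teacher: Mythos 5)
Your overall strategy---read each identity as the mod-$2$ count of the boundary of a $1$-dimensional moduli space, with pure-disc breakings absorbed by the augmentations as in Remark \ref{rem:delta}---is the right one, and the spaces that actually do the work are the ones you half-identify: $\Delta$-type discs on $(W_0,W_1)$ for (1) and banana-type discs on $(V_0,V_1)$ for (2). The gap is in how you propose to account for the composite summand $b_1^V\circ\Delta_1^W$ inside $\bs{b}_1^V$. In part (1) you assert that these terms are realized by boundary configurations consisting of an index-$1$ $\Delta$-disc on $(W_0,W_1)$ ``rigidly connected via interface Reeb chords to rigid $V$-pieces below'', so that they cancel in the boundary count. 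No such configurations exist: the relevant $1$-dimensional moduli space is $\cM^1_{W_0,W_1}(\beta_{01};\bs{\delta}_0,a,\bs{\delta}_1)$, and its SFT compactification only produces levels on $W_0\cup W_1$ and on the cylindrical ends $\R\times(\La_0^+\cup\La_1^+)$ and $\R\times(\La_0\cup\La_1)$; a $V$-level can never appear. Consequently the terms $\Delta_1^W\circ\mfm_1^{W,+0}\circ b_1^V\circ\Delta_1^W$ (on $CF_{+\infty}(W_0,W_1)$) and $\Delta_1^W\circ\mfm_1^{W,+0}\circ b_1^V$ (on $CF_{-\infty}(V_0,V_1)$) are not matched by any boundary stratum: they have to be shown to vanish outright by an action/energy estimate, and likewise $\mfm_1^{V,+}$ vanishes on $CF_{-\infty}(V_0,V_1)$ simply by definition. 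These vanishing statements are precisely what makes the identity close up, and your sketch never supplies them; the ``combinatorial bookkeeping'' you defer cannot produce them, because they are action constraints, not cancellations.

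A related issue is the neck-stretching framing. The maps $\mfm_1^{V\odot W}$, $\bs{b}_1^V$, $\bs{\Delta}_1^W$ are defined algebraically from moduli spaces on the split pieces; no curves on the concatenated Lagrangians are counted anywhere, so no stretching/gluing statement is available or needed, and invoking one quietly imports a result the paper does not establish. The argument that works is split-level from the start. For (1), after the energy vanishings above, the identity on $CF_{+\infty}(W_0,W_1)$ reduces to $\Delta_1^W\circ\mfm_1^{W,+0}(a)+\mfm_1^{V,+}\circ\Delta_1^W(a)=0$, which is the boundary count for $\cM^1_{W_0,W_1}(\beta_{01};\bs{\delta}_0,a,\bs{\delta}_1)$ (your ``dual configuration'' with the index-$1$ piece on $\R\times(\La_0\cup\La_1)$ is one of its boundary strata, not a separate family), and on $CF_{-\infty}(V_0,V_1)$ both terms vanish with no moduli space needed. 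For (2), the $1$-dimensional family is the $V$-banana $\cM^1_{V_0,V_1}(\gamma_{10};\bs{\delta}_0,\gamma_{01},\bs{\delta}_1)$---the $b_1^\La$-disc you put at the output only occurs in its boundary strata---and its boundary gives $b_1^V\circ\mfm_1^{V,0-}+b_1^V\circ\Delta_1^\La+b_1^\La\circ b_1^V+b_1^\La=0$ on interface chords; the case $a\in CF_{+\infty}(W_0,W_1)$ then follows by applying this to $\gamma_{01}=\Delta_1^W(a)$, the rigid $W$-disc being a spectator rather than part of a one-parameter family.
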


\begin{proof}
	\textit{1.} For $a\in CF_{-\infty}(V_0,V_1)$ we have 
\begin{alignat*}{1}
	\Delta_1^W\circ\mfm_1^{W,+0}\circ\,\bs{b}_1^V(a)+\mfm_1^{V,+}\circ\,\bs{\Delta}_1^W(a)=\Delta_1^W\circ\mfm_1^{W,+0}\circ\,b_1^V(a)+\mfm_1^{V,+}(a)
\end{alignat*}
and the first term vanishes for energy reason and the second one by definition.
Then, for $a\in CF_{+\infty}(W_0,W_1)$ we have
\begin{alignat*}{1}
	\Delta_1^W\circ\mfm_1^{W,+0}\circ\,\bs{b}_1^V(a)+\mfm_1^{V,+}\circ\,\bs{\Delta}_1^W(a)&=\Delta_1^W\circ\mfm_1^{W,+0}(a+b_1^V\circ\Delta_1^W(a))+\mfm_1^{V,+}\circ\,\Delta_1^W(a)\\
	&=\Delta_1^W\circ\mfm_1^{W,+0}(a)+\mfm_1^{V,+}\circ\,\Delta_1^W(a)
\end{alignat*}
because $\Delta_1^W\circ\mfm_1^{W,+0}\circ b_1^V\circ\Delta_1^W(a)$ vanishes for energy reason. Consider the boundary of the one-dimensional moduli space $\overline{\cM^1}_{W_{0},W_1}(\beta_{01};\bs{\delta}_0,a,\bs{\delta}_1)$ for $\beta_{01}\in C(\La_1,\La_0)$. The broken discs arising in the boundary (schematized on Figure \ref{broken_lemma} for the case $a=\gamma_{01}\in C(\La_1^+,\La_0^+)$) contribute exactly to 
\begin{alignat}{1}
	\big\langle(\Delta_1^W\circ\mfm_1^{W,+0}+\mfm_1^{V,+}\circ\,\Delta_1^W)(a),\beta_{01}\big\rangle\label{rel}
\end{alignat}
\begin{figure}[ht]  
		\begin{center}\includegraphics[width=9cm]{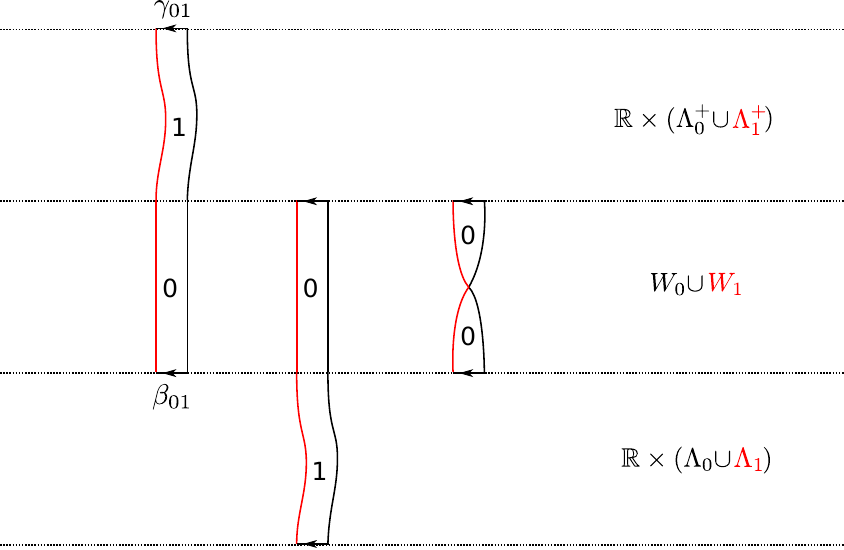}\end{center}
		\caption{Types of broken discs in the boundary of $\overline{\cM^1}_{W_{0},W_1}(\beta_{01};\bs{\delta}_0,\gamma_{01},\bs{\delta}_1)$.}
		\label{broken_lemma}
\end{figure}
	
\textit{2.} For $a\in CF_{-\infty}(V_0,V_1)$ we have
	\begin{alignat*}{1}
		b_1^V\circ\mfm_1^{V}\circ\,\bs{\Delta}_1^W(a)+b_1^\La\circ\bs{\Delta}_1^{W\subset W}\circ\bs{b}_1^V(a)=b_1^V\circ\mfm_1^V(a)+b_1^\La\circ b_1^V(a)
	\end{alignat*}
and for $a\in CF_{+\infty}(W_0,W_1)$ we have
	\begin{alignat*}{1}
		b_1^V\circ\mfm_1^{V}\circ\bs{\Delta}_1^W(a)+b_1^\La\circ\bs{\Delta}_1^{W\subset W}\circ\bs{b}_1^V(a)&=	b_1^V\circ\mfm_1^{V}\circ\Delta_1^W(a)+b_1^\La\circ\bs{\Delta}_1^{W\subset W}(a+b_1^V\circ\Delta_1^W(a))\\
		&=b_1^V\circ\mfm_1^{V}\circ\Delta_1^W(a)+b_1^\La\circ\Delta_1^W(a)+b_1^\La\circ b_1^V\circ\Delta_1^W(a)
	\end{alignat*}
To conclude that this map vanishes, one has to consider the broken curves in the boundary of the compactification of moduli spaces of bananas with boundary on $V$, namely the boundary of $\overline{\cM^1}_{V_0,V_1}(\gamma_{10};\bs{\delta}_0,a,\bs{\delta}_1)$ for $\gamma_{10}\in C(\La_0,\La_1)$, and $a\in\Cth_+(V_0\odot W_0,V_1\odot W_1)$, see Figure \ref{broken_bV} for the case $a=x\in CF(W_0,W_1)$.
\end{proof} 

\begin{figure}[ht]  
	\begin{center}\includegraphics[width=11cm]{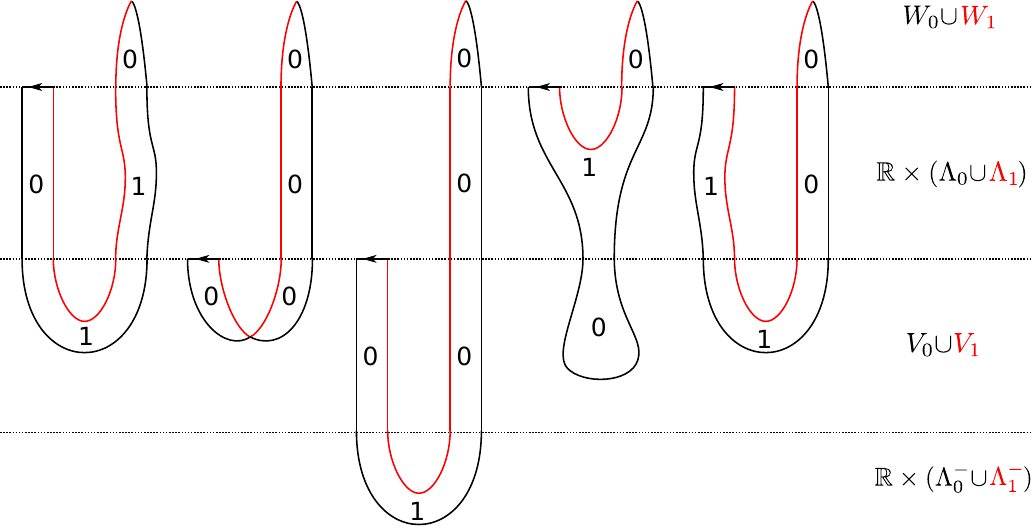}\end{center}
	\caption{Types of broken discs in $\partial\overline{\cM^1}_{V_0,V_1}(\gamma_{10};\bs{\delta}_0,\gamma_{01},\bs{\delta}_1)$.}
	\label{broken_bV}
\end{figure}
Thus, using part $1.$ of the lemma we can rewrite
\begin{alignat*}{2}
\big(\mfm_1^{V\odot W}\big)^2
=&\mfm_1^{W,+0}\circ\mfm_1^{W,+0}\circ\,\bs{b}_1^V+\mfm_1^{W,+0}\circ\,b_1^V\circ\mfm_1^{V,+}\circ\bs{\Delta}_1^V+\mfm_1^{W,+0}\circ\,b_1^V\circ\mfm_1^{V,0-}\circ\,\bs{\Delta}_1^W\\
&+\mfm_1^{V,0-}\circ\mfm_1^{V,+}\circ\,\bs{\Delta}_1^V+\mfm_1^{V,0-}\circ\mfm_1^{V,0-}\circ\,\bs{\Delta}_1^W\\
&=\mfm_1^{W,+0}\circ\mfm_1^{W,+0}\circ\,\bs{b}_1^V+\mfm_1^{W,+0}\circ\,b_1^V\circ\mfm_1^{V}\circ\bs{\Delta}_1^V+\mfm_1^{V,0-}\circ\mfm_1^{V}\circ\,\bs{\Delta}_1^V
\end{alignat*}
Now, using $\mfm_1^{W,+0}\circ\mfm_1^{W,+0}=\mfm_1^{W,+0}\circ\mfm_1^{W,-}=\mfm_1^{W,+0}\circ\,b_1^\La\circ\bs{\Delta}_1^{W\subset W}$, part $2.$ of Lemma \ref{lem:rel}, and the fact that $\mfm_1^{V,0-}\circ\mfm_1^V=0$, one gets that $\big(\mfm_1^{V\odot W}\big)^2=0$.

\subsubsection{Transfer maps}

The maps $\bs{b}_1^V$ and $\bs{\Delta}_1^W$ defined in the previous section are in fact what we will call \textit{transfer maps}. In particular, they are chain maps as we prove now.

\begin{prop}\label{prop:Phi}
	$\bs{b}_1^V:\Cth_+(V_0\odot W_0,V_1\odot W_1)\to\Cth_+(W_0,W_1)$ is a chain map.
\end{prop}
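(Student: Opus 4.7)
The plan is to verify the chain map identity $\bs{b}_1^V\circ\mfm_1^{V\odot W} + \mfm_1^W\circ\bs{b}_1^V = 0$ (over $\Z_2$) by direct algebraic manipulation, relying on the two identities already established in Lemma~\ref{lem:rel}. The key bookkeeping observation is that the image of $\mfm_1^{W,+0}\circ\bs{b}_1^V$ lies in the $CF_{+\infty}(W_0,W_1)$-summand of $\Cth_+(V_0\odot W_0,V_1\odot W_1)$, while the image of $\mfm_1^{V,0-}\circ\bs{\Delta}_1^W$ lies in the $CF_{-\infty}(V_0,V_1)$-summand; this allows one to evaluate the piecewise definition of $\bs{b}_1^V$ unambiguously on each piece.

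First I would expand, using the compact formula for $\mfm_1^{V\odot W}$ and distributing $\bs{b}_1^V$ according to its two cases,
\[\bs{b}_1^V\circ\mfm_1^{V\odot W} = \mfm_1^{W,+0}\circ\bs{b}_1^V + b_1^V\circ\Delta_1^W\circ\mfm_1^{W,+0}\circ\bs{b}_1^V + b_1^V\circ\mfm_1^{V,0-}\circ\bs{\Delta}_1^W.\]
Writing $\mfm_1^W = \mfm_1^{W,+0} + b_1^\La\circ\bs{\Delta}_1^{W\subset W}$, the first summand above cancels against the corresponding term in $\mfm_1^W\circ\bs{b}_1^V$ modulo $2$, so the sum reduces to
\[b_1^V\circ\Delta_1^W\circ\mfm_1^{W,+0}\circ\bs{b}_1^V + b_1^V\circ\mfm_1^{V,0-}\circ\bs{\Delta}_1^W + b_1^\La\circ\bs{\Delta}_1^{W\subset W}\circ\bs{b}_1^V.\]

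Next I would invoke Lemma~\ref{lem:rel}(2) to rewrite the last summand as $b_1^V\circ\mfm_1^V\circ\bs{\Delta}_1^W$. Since $\mfm_1^V = \mfm_1^{V,+} + \mfm_1^{V,0-}$ over $\Z_2$, the $\mfm_1^{V,0-}$-contributions cancel, and the remaining expression factors through $b_1^V$ as
\[b_1^V\circ\bigl(\Delta_1^W\circ\mfm_1^{W,+0}\circ\bs{b}_1^V + \mfm_1^{V,+}\circ\bs{\Delta}_1^W\bigr),\]
which vanishes by Lemma~\ref{lem:rel}(1). The main obstacle is essentially notational: one has to track at every composition which branch of the piecewise definition of $\bs{b}_1^V$ applies, based on the codomain of the preceding map, and carefully absorb the various subscripts $+0$ and $0-$ so that the two identities of Lemma~\ref{lem:rel} fit together. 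All the geometric content---the analysis of broken configurations in one-dimensional moduli spaces with boundary on $V_i$ and on $W_i$---is already packaged in Lemma~\ref{lem:rel}, so no further moduli-theoretic input is required for this proposition.
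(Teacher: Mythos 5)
Your proposal is correct and follows essentially the same route as the paper: the same expansion of $\bs{b}_1^V\circ\mfm_1^{V\odot W}$ via the piecewise definition of $\bs{b}_1^V$, cancellation of the $\mfm_1^{W,+0}\circ\bs{b}_1^V$ terms, and then both parts of Lemma~\ref{lem:rel} to kill the remaining three terms. The only difference is the order in which the two identities of the lemma are invoked (you apply part (2) first, the paper applies part (1) first), which is immaterial.
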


\begin{proof}
	We need to prove that
\begin{alignat}{1}
	\bs{b}_1^V\circ\mfm_1^{V\odot W}+\mfm_1^W\circ\,\bs{b}_1^V=0\label{eq:cotransfer}
\end{alignat}
By definition of $\mfm_1^{V\odot W}$ we have that the left-hand side of \eqref{eq:cotransfer} is equal to
\begin{alignat*}{1}
	\bs{b}_1^V\circ\mfm_1^{W,+0}\circ\,\bs{b}_1^V&+\bs{b}_1^V\circ\mfm_1^{V,0-}\circ\,\bs{\Delta}_1^W+\mfm_1^W\circ\,\bs{b}_1^V\\
	&=\mfm_1^{W,+0}\circ\,\bs{b}_1^V+b_1^V\circ\Delta_1^W\circ\mfm_1^{W,+0}\circ\,\bs{b}_1^V+b_1^V\circ\mfm_1^{V,0-}\circ\,\bs{\Delta}_1^W+\mfm_1^W\circ\,\bs{b}_1^V\\
	&=b_1^V\circ\Delta_1^W\circ\mfm_1^{W,+0}\circ\,\bs{b}_1^V+b_1^V\circ\mfm_1^{V,0-}\circ\,\bs{\Delta}_1^W+\mfm_1^{W,-}\circ\,\bs{b}_1^V
\end{alignat*}
Using the first part of Lemma \ref{lem:rel} on the first term and the second part of the lemma on the second and third terms, recalling that $\mfm_1^{W,-}=b_1^\La\circ\bs{\Delta}_1^{W\subset W}$, we get that the sum above vanishes.
\end{proof}

\begin{prop}\label{prop:Delta}
	$\bs{\Delta}_1^W:\Cth_+(V_0\odot W_0,V_1\odot W_1)\to\Cth_+(V_0,V_1)$ is a chain map.
\end{prop}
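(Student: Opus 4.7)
The plan is to reduce the identity $\bs{\Delta}_1^W\circ\mfm_1^{V\odot W}+\mfm_1^V\circ\bs{\Delta}_1^W=0$ to part (1) of Lemma \ref{lem:rel}, which has already been established by a direct analysis of broken curves in $\partial\overline{\cM^1}_{W_0,W_1}(\beta_{01};\bs{\delta}_0,a,\bs{\delta}_1)$. No new moduli-space analysis should be required; the argument is purely algebraic.

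First I would expand $\bs{\Delta}_1^W\circ\mfm_1^{V\odot W}$ using the compact expression
\begin{alignat*}{1}
\mfm_1^{V\odot W}=\mfm_1^{W,+0}\circ\bs{b}_1^V+\mfm_1^{V,0-}\circ\bs{\Delta}_1^W.
\end{alignat*}
The key observation is that $\mfm_1^{W,+0}$ takes values in $CF_{+\infty}(W_0,W_1)$ while $\mfm_1^{V,0-}$ takes values in $CF_{-\infty}(V_0,V_1)$. Since $\bs{\Delta}_1^W$ equals $\Delta_1^W$ on the first summand and the identity on the second, this yields
\begin{alignat*}{1}
\bs{\Delta}_1^W\circ\mfm_1^{V\odot W}=\Delta_1^W\circ\mfm_1^{W,+0}\circ\bs{b}_1^V+\mfm_1^{V,0-}\circ\bs{\Delta}_1^W.
\end{alignat*}

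Next I would split $\mfm_1^V=\mfm_1^{V,+}+\mfm_1^{V,0-}$. Adding $\mfm_1^V\circ\bs{\Delta}_1^W$ to the previous line, the two copies of $\mfm_1^{V,0-}\circ\bs{\Delta}_1^W$ cancel (over $\Z_2$), leaving
\begin{alignat*}{1}
\bs{\Delta}_1^W\circ\mfm_1^{V\odot W}+\mfm_1^V\circ\bs{\Delta}_1^W=\Delta_1^W\circ\mfm_1^{W,+0}\circ\bs{b}_1^V+\mfm_1^{V,+}\circ\bs{\Delta}_1^W,
\end{alignat*}
which is precisely the map shown to vanish in Lemma \ref{lem:rel}(1). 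The main point to be careful about is therefore just the bookkeeping with the grading-shifted summands and with the two cases in the definition of $\bs{\Delta}_1^W$; the geometric input (a cobordant degeneration of index $1$ mixed discs with boundary on $W_0\cup W_1$) has already been supplied, so there is no real obstacle.
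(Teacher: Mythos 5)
Your proposal is correct and follows essentially the same route as the paper: expand $\mfm_1^{V\odot W}=\mfm_1^{W,+0}\circ\bs{b}_1^V+\mfm_1^{V,0-}\circ\bs{\Delta}_1^W$, use that $\bs{\Delta}_1^W$ restricts to $\Delta_1^W$ on $CF_{+\infty}(W_0,W_1)$ and to the identity on $CF_{-\infty}(V_0,V_1)$, and conclude with Lemma \ref{lem:rel}(1). The only difference is cosmetic (you cancel the two copies of $\mfm_1^{V,0-}\circ\bs{\Delta}_1^W$ before invoking the lemma, while the paper rewrites each term and sums), so no further changes are needed.
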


\begin{proof}
	We have to prove that
	\begin{alignat}{1}
	\bs{\Delta}_1^W\circ\mfm_1^{V\odot W}+\mfm_1^{V}\circ\,\bs{\Delta}_1^W=0
	\end{alignat}
The left-hand side of the equation is
\begin{alignat*}{1}
	\bs{\Delta}_1^W\circ\mfm_1^{W,+0}\circ\,\bs{b}_1^V+\bs{\Delta}_1^W\circ\mfm_1^{V,0-}\circ\,\bs{\Delta}_1^W+\mfm_1^{V}\circ\,\bs{\Delta}_1^W
\end{alignat*}
whose first term equals $\mfm_1^{V,+}\circ\,\bs{\Delta}_1^W$ by Lemma \ref{lem:rel} and the second term equals $\mfm_1^{V,0-}\circ\,\bs{\Delta}_1^W$ by definition of $\bs{\Delta}_1^W$. Thus the sum vanishes.
\end{proof}

\subsubsection{Special cases}\label{special_case}

Let us have a look at the two following cases for the pair $(V_0\odot W_0,V_1\odot W_1)$:
\begin{enumerate}
	\item $(W_0,W_1)=(\R\times\La_0,\R\times\La_1)$,
	\item $(V_0,V_1)=(\R\times\La_0,\R\times\La_1)$
\end{enumerate}
In the first case, one has 
$$\Cth_+(V_0\odot(\R\times\La_0),V_1\odot(\R\times\La_1))=C(\La_1,\La_0)^\dagger[n-1]\oplus CF(V_0,V_1)\oplus C(\La_0^-,\La_1^-)[1]=\Cth_+(V_0,V_1)$$
and we actually have an equality of complexes as $\Delta_1^W$ on $C(\La_1,\La_0)$ is the identity map (as it counts index $0$ discs with boundary on Lagrangian cylinders so it can only count trivial strips). Thus, the map $\bs{b}_1^V$ defined for a general pair of concatenated cobordisms before gives in this case a map
\begin{alignat*}{1}
	\bs{b}_1^V:\Cth_+(V_0,V_1)\to\Cth_+(\R\times\La_0,\R\times\La_1)=\mathfrak{C}^*(\La_0,\La_1)
\end{alignat*}
satisfying $\bs{b}_1^V(a)=a+b_1^V(a)$ for $a\in C(\La_1,\La_0)$ and $\bs{b}_1^V(a)=b_1^V(a)$ for $a\in CF_{-\infty}(V_0,V_1)$.
In the second case, one has 
$$\Cth_+((\R\times\La_0)\odot W_0,(\R\times\La_1)\odot W_1)=C(\La_1^+,\La_0^+)^\dagger[n-1]\oplus CF(W_0,W_1)\oplus C(\La_0,\La_1)[1]=\Cth_+(W_0,W_1)$$
and again this equality holds in terms of complexes as $b_1^V$ is the identity map on $C(\La_0,\La_1)$ and vanishes on $C(\La_1,\La_0)$ (no index $0$ banana with boundary on $\R\times(\La_0\cup\La_1)$ and two positive Reeb chord asymptotics), and $\Delta_1^V$ is the identity map on $C(\La_1,\La_0)$.
For such a pair of concatenated cobordisms, we get the map
\begin{alignat*}{1}
\bs{\Delta}_1^W:\Cth_+(W_0,W_1)\to\mathfrak{C}^*(\La_0,\La_1)
\end{alignat*}
satisfying $\bs{\Delta}_1^W(a)=\Delta_1^W(a)$ for $a\in CF_{+\infty}(W_0,W_1)$ and $\bs{\Delta}_1^W(a)=a$ for $a\in C(\La_0,\La_1)$, which recovers exactly the definition we gave at the end of the Section \ref{section:def_complex}.

\begin{nota}
	From now on, we use the maps $\bs{b}_1^V$ and $\bs{\Delta}_1^W$ without specifying if we are in the case of a pair $(V_0\odot W_0,V_1\odot W_1)$, $\big(V_0\odot(\R\times\La_0),V_1\odot(\R\times\La_1)\big)$ or $\big((\R\times\La_0)\odot W_0,(\R\times\La_1)\odot W_1\big)$.
\end{nota}

\subsubsection{Mayer-Vietoris long exact sequence}

From the previous special cases, one deduces a Mayer-Vietoris sequence. Consider a pair of concatenations $(V_0\odot W_0,V_1\odot W_1)$. By definition, we have
\begin{lem}\label{bcircdelta}
	$\bs{b}_1^V\circ\bs{\Delta}_1^W+\bs{\Delta}_1^W\circ\bs{b}_1^V:\Cth_+(V_0\odot W_0,V_1\odot W_1)\to\mathfrak{C}^*(\La_0,\La_1)$ vanishes.
\end{lem}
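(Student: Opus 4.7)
The plan is to verify the identity pointwise, using nothing more than the explicit formulas for $\bs{b}_1^V$ and $\bs{\Delta}_1^W$ given in Section \ref{section:def_complex} together with the two special-case descriptions in Section \ref{special_case}. The natural splitting $\Cth_+(V_0\odot W_0,V_1\odot W_1) = CF_{+\infty}(W_0,W_1) \oplus CF_{-\infty}(V_0,V_1)$ governs the case analysis, and the main bookkeeping task is to remember that the outer $\bs{b}_1^V$ in $\bs{b}_1^V\circ\bs{\Delta}_1^W$ is the one defined for the pair $\big(V_0\odot(\R\times\La_0),V_1\odot(\R\times\La_1)\big)$, while the outer $\bs{\Delta}_1^W$ in $\bs{\Delta}_1^W\circ\bs{b}_1^V$ is the one for $\big((\R\times\La_0)\odot W_0,(\R\times\La_1)\odot W_1\big)$.

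For $a\in CF_{-\infty}(V_0,V_1)$, I first observe that $\bs{\Delta}_1^W(a)=a$ lives in the $CF_{-\infty}(V_0,V_1)$-part of $\Cth_+(V_0,V_1)$, so applying the first special case of $\bs{b}_1^V$ yields $\bs{b}_1^V\circ\bs{\Delta}_1^W(a)=b_1^V(a)\in C(\La_0,\La_1)$. On the other side, $\bs{b}_1^V(a)=b_1^V(a)$ lies in the $C(\La_0,\La_1)$-part of $\Cth_+(W_0,W_1)$, on which the second special case of $\bs{\Delta}_1^W$ acts as the identity, giving $\bs{\Delta}_1^W\circ\bs{b}_1^V(a)=b_1^V(a)$. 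The two contributions coincide, so their sum vanishes over $\Z_2$.

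For $a\in CF_{+\infty}(W_0,W_1)$, I expect $\bs{\Delta}_1^W(a)=\Delta_1^W(a)$ to land in $C(\La_1,\La_0)\subset\Cth_+(V_0,V_1)$, and then the first special case formula $\bs{b}_1^V=\id+b_1^V$ on $C(\La_1,\La_0)$ gives
\[
\bs{b}_1^V\circ\bs{\Delta}_1^W(a)=\Delta_1^W(a)+b_1^V(\Delta_1^W(a)).
\]
Symmetrically, $\bs{b}_1^V(a)=a+b_1^V\circ\Delta_1^W(a)$ decomposes as a sum of an element of $CF_{+\infty}(W_0,W_1)$ and one of $C(\La_0,\La_1)$, and the second special case of $\bs{\Delta}_1^W$ acts as $\Delta_1^W$ on the former and as the identity on the latter, producing the same element $\Delta_1^W(a)+b_1^V(\Delta_1^W(a))$. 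Again the two outputs agree and cancel in characteristic $2$.

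The main obstacle is none of a geometric nature: in contrast with Lemma \ref{lem:rel} or Propositions \ref{prop:Phi} and \ref{prop:Delta}, no moduli-space compactification arguments are needed here, because the author has already packaged all disc counts into the building blocks $\Delta_1^W$ and $b_1^V$. The only thing to watch is that the three a priori distinct maps denoted $\bs{b}_1^V$ (and similarly $\bs{\Delta}_1^W$) in Section \ref{special_case} are correctly identified in each step; once this is done the cancellation is immediate from the definitions, matching the author's phrasing \emph{``By definition, we have''}.
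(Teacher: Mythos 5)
Your proof is correct and follows essentially the same route as the paper: both arguments simply unwind the definitions of $\bs{b}_1^V$ and $\bs{\Delta}_1^W$ case by case over the splitting $CF_{+\infty}(W_0,W_1)\oplus CF_{-\infty}(V_0,V_1)$, using the special-case descriptions of the transfer maps and keeping track of which version of each map appears in each composition. No moduli-space analysis is needed, exactly as in the paper's own two-line computation.
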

\begin{proof}
	First, remember that $\bs{\Delta}_1^V:\Cth_+(V_0\odot W_0,V_1\odot W_1)\to\Cth_+(V_0,V_1)$, so the term $\bs{b}_1^V\circ\bs{\Delta}_1^W$ should be read as $\bs{b}_1^{V\subset V}\circ\bs{\Delta}_1^{W\subset V\odot W}$. Similarly,
	$\bs{b}_1^V:\Cth_+(V_0\odot W_0,V_1\odot W_1)\to\Cth_+(W_0,W_1)$, thus the term $\bs{\Delta}_1^W\circ\bs{b}_1^V$ should be read as being $\bs{\Delta}_1^{W\subset W}\circ\bs{b}_1^{V\subset V\odot W}$.
	Then we have for $a\in CF_{+\infty}(W_0,W_1)$:
	\begin{alignat*}{1}
	&\bs{b}_1^V\circ\bs{\Delta}_1^W(a)=\bs{b}_1^V\circ\Delta_1^W(a)=\Delta_1^W(a)+b_1^V\circ\Delta_1^W(a)\\
	&\bs{\Delta}_1^W\circ\bs{b}_1^V(a)=\bs{\Delta}_1^W(a+b_1^V\circ\Delta_1^W(a))=\Delta_1^W(a)+\bs{\Delta}_1^W\circ b_1^V\circ\Delta_1^W(a)=\Delta_1^W(a)+b_1^V\circ\Delta_1^W(a)
	\end{alignat*}
	and for $a\in CF_{-\infty}(V_0,V_1)$,
	\begin{alignat*}{1}
	&\bs{b}_1^V\circ\bs{\Delta}_1^W(a)=\bs{b}_1^V(a)=b_1^V(a)\\
	&\bs{\Delta}_1^W\circ\bs{b}_1^V(a)=\bs{\Delta}_1^W\circ b_1^V(a)=b_1^V(a)
	\end{alignat*}
\end{proof}
From this, we get a short exact sequence of complexes
\begin{alignat*}{1}
0\to\Cth^*_+(V_0\odot W_0,V_1\odot W_1)\xrightarrow[]{\small(\bs{\Delta}_1^W,\bs{b}_1^V)}\Cth^*_+(V_0,V_1)\oplus\Cth^*_+(W_0,W_1)\xrightarrow[]{\small\bs{b}_1^V+\bs{\Delta}_1^W}\Cth^*_+(\R\times\La_0,\R\times\La_1)\to0
\end{alignat*}
which gives rise to a Mayer-Vietoris sequence
\begin{alignat*}{2}
\dots\to H^*\Cth_+(V_0\odot W_0,V_1\odot W_1)&\to H^*\Cth_+(V_0,V_1)\oplus H^*\Cth_+(W_0,W_1)\\
&\to H^*\Cth_+(\R\times\La_0,\R\times\La_1)\xrightarrow[]{g_*} H^{*+1}\Cth_+(V_0\odot W_0,V_1\odot W_1)\dots
\end{alignat*}
The connecting morphism $g_*$ is given on the chain level by 
$\mfm_1^{W,0}\circ\,b_1^V+\mfm_1^{V,0-}$
on $C(\La_1,\La_0)^\dagger[n-1]\subset\Cth_+(\R\times\La_0,\R\times\La_1)$ and by $\mfm_1^{W,0}$
on $C(\La_0,\La_1)[1]\subset\Cth_+(\R\times\La_0,\R\times\La_1)$. Below we check that $g$ is indeed a chain map so induces a well-defined map in homology, and that the sequence is exact.

We need to prove that $g\circ\mfm_1^{\R\times\La}=\mfm_1^{V\odot W}\circ g$. Instead of writing big matrices, let us prove it for the two types of generators separately. Consider $\gamma_{01}\in C(\La_1,\La_0)$, we have
\begin{alignat*}{1}
g\circ\mfm_1^{\R\times\La}(\gamma_{01})+&\mfm_1^{V\odot W}\circ g(\gamma_{01})\\
=&g\big(\Delta_1^\La(\gamma_{01})+b_1^\La(\gamma_{01})\big)+\big(\mfm_1^{W,+0}\circ\,\bs{b}_1^V+\mfm_1^{V,0-}\circ\bs{\Delta}_1^W\big)\circ\big(\mfm_1^{W,0}\circ\,b_1^V+\mfm_1^{V,0-}\big)(\gamma_{01})\\
=&\big(\mfm_1^{W,0}\circ\,b_1^V+\mfm_1^{V,0-}\big)\circ\Delta_1^\La(\gamma_{01})+\mfm_1^{W,0}\circ\,b_1^\La(\gamma_{01})
+\mfm_1^{W,0}\circ \mfm_1^{W,0}\circ\,b_1^V(\gamma_{01})\\
&+\mfm_1^{W,0}\circ\,b_1^V\circ\mfm_1^{V,0-}(\gamma_{01})+\mfm_1^{V,0-}\circ\mfm_1^{V,0-}(\gamma_{01})
\end{alignat*}
where we have removed terms vanishing for energy reasons. Now, observe that $\mfm_1^{V,0-}\circ\Delta_1^\La(\gamma_{01})+\mfm_1^{V,0-}\circ\mfm_1^{V,0-}(\gamma_{01})=0$ as $\Delta_1^\La(\gamma_{01})=\mfm_1^{V,+}(\gamma_{01})$ and $\mfm_1^V$ is a differential. Finally, the remaining terms are the algebraic contributions of the broken curves arising in the boundary of products of moduli spaces of the following type
\begin{alignat*}{1}
&\overline{\cM^1}_{W_0,W_1}(x,\bs{\xi}_0,\gamma_{10},\bs{\xi}_1)\times\cM_{V_0,V_1}^0(\gamma_{10},\bs{\delta}_0,\gamma_{01},\bs{\delta}_1)\\
&\cM_{W_0,W_1}^0(x,\bs{\xi}_0,\gamma_{10},\bs{\xi}_1)\times\overline{\cM^1}_{V_0,V_1}(\gamma_{10},\bs{\delta}_0,\gamma_{01},\bs{\delta}_1)
\end{alignat*}
for $x\in CF(W_0,W_1)$, $\gamma_{10}\in C(\La_0,\La_1)$, $\bs{\delta}_i$ words of pure Reeb chords of $\La_i^-$ and $\bs{\xi}_i$ words of pure Reeb chords of $\La_i$.

Then, consider $\gamma_{10}\in C(\La_0,\La_1)$, we have
\begin{alignat*}{1}
g\circ\mfm_1^{\R\times\La}(\gamma_{10})+&\mfm_1^{V\odot W}\circ g(\gamma_{10})\\
=&g\circ \,b_1^\La(\gamma_{10})+\big(\mfm_1^{W,+0}\circ\,\bs{b}_1^V+\mfm_1^{V,0-}\circ\bs{\Delta}_1^W\big)\circ\mfm_1^{W,0}(\gamma_{10})\\
=&\mfm_1^{W,0}\circ \,b_1^\La(\gamma_{10})+\mfm_1^{W,0}\circ\mfm_1^{W,0}(\gamma_{10})
\end{alignat*}
where we have removed terms vanishing for energy reason. Then the two remaining terms are algebraic contributions of the broken configurations in the boundary of the compactification of moduli spaces $\cM^1_{W_0,W_1}(x,\bs{\xi}_0,\gamma_{10},\bs{\xi}_1)$.

Now we check the exactness of
\small
\begin{alignat*}{1}
&\dots\to H^{*-1}\Cth_+(\R\times\La_0,\R\times\La_1)\xrightarrow[]{g_{*-1}} H^{*}\Cth_+(V_0\odot W_0,V_1\odot W_1)\xrightarrow[]{\small(\bs{\Delta}_1^W,\bs{b}_1^V)}H^{*}\Cth_+(V_0,V_1)\oplus H^{*}\Cth_+(W_0,W_1)\to\dots
\end{alignat*}
\normalsize
Consider a cycle $\gamma_{01}+\gamma_{10}\in\Cth_+(\R\times\La_0,\R\times\La_1)$, so in particular it means $\Delta_1^\La(\gamma_{01})=0$ and $b_1^\La(\gamma_{01})+b_1^\La(\gamma_{10})=0$. We need to check that in homology
$\bs{\Delta}_1^W\circ g_*(\gamma_{01}+\gamma_{10})=\bs{b}_1^V\circ g_*(\gamma_{01}+\gamma_{10})=0$.
We have 
\begin{alignat*}{1}
\bs{\Delta}_1^W\circ g_*(\gamma_{01}+\gamma_{10})&=\bs{\Delta}_1^W\Big(\mfm_1^{W,0}(b_1^V(\gamma_{01})+\gamma_{10})+\mfm_1^{V,0-}(\gamma_{01})\Big)\\
&=\Delta_1^W\circ\mfm_1^{W,0}\big(b_1^V(\gamma_{01})+\gamma_{10}\big)+\mfm_1^{V,0-}(\gamma_{01})\\
&=\mfm_1^{V,0-}(\gamma_{01})
\end{alignat*}
for energy reason, and then $\mfm_1^{V,0-}(\gamma_{01})=\mfm_1^V(\gamma_{01})$ because $\mfm_1^{V,+}(\gamma_{01})=\Delta_1^\La(\gamma_{01})=0$ by assumption. Thus $\bs{\Delta}_1^W\circ g_*(\gamma_{01}+\gamma_{10})\in\Cth_+(V_0,V_1)$ is a boundary so vanishes in homology. Then 
\begin{alignat*}{1}
\bs{b}_1^V\circ g_*(\gamma_{01}+\gamma_{10})&=\bs{b}_1^V\Big(\mfm_1^{W,0}\big(b_1^V(\gamma_{01})+\gamma_{10}\big)+\mfm_1^{V,0-}(\gamma_{01})\Big)\\
&=\mfm_1^{W,0}\big(b_1^V(\gamma_{01})+\gamma_{10}\big)+b_1^V\circ\Delta_1^W\circ\mfm_1^{W,0}\big(b_1^V(\gamma_{01})+\gamma_{10}\big)+b_1^V\circ\mfm_1^{V,0-}(\gamma_{01})\\
&=\mfm_1^{W,0}\big(b_1^V(\gamma_{01})+\gamma_{10}\big)+b_1^V\circ\mfm_1^{V,0-}(\gamma_{01})
\end{alignat*}
Then, by the study of index $1$ bananas with boundary on $V_0\cup V_1$ as above, one has
$b_1^V\circ\mfm_1^{V,0-}(\gamma_{01})=b_1^\La\circ b_1^V(\gamma_{01})+b_1^V\circ\Delta_1^\La(\gamma_{01})+b_1^\La(\gamma_{01})$. But $\Delta_1^\La(\gamma_{01})=0$ by assumption, as well as $b_1^\La(\gamma_{01})=b_1^\La(\gamma_{10})$ and then by definition $b_1^\La(\gamma_{10})=\mfm_1^{W,-}(\gamma_{10})$. Thus, we get
\begin{alignat*}{1}
\bs{b}_1^V\circ g_*(\gamma_{01}+\gamma_{10})&=\mfm_1^{W,0}\circ\, b_1^V(\gamma_{01})+\mfm_1^{W,0}(\gamma_{10})+b_1^\La\circ b_1^V(\gamma_{01})+\mfm_1^{W,-}(\gamma_{10})
\end{alignat*}
Finally, $b_1^\La\circ b_1^V(\gamma_{01})=\mfm_1^{W,-}\circ\,b_1^V(\gamma_{01})$ by definition, and one can add the terms $\mfm_1^{W,+}\circ\,b_1^V(\gamma_{01})$ and $\mfm_1^{W,+}(\gamma_{10})$ which vanish to obtain that $\bs{b}_1^V\circ g_*(\gamma_{01}+\gamma_{10})=\mfm_1^W\big(b_1^V(\gamma_{01})+\gamma_{10}\big)$ is a boundary in $\Cth_+(W_0,W_1)$.

Finally, let us check the exactness of
\small
\begin{alignat*}{1}
&\dots\to H^{*}\Cth_+(V_0,V_1)\oplus H^{*}\Cth_+(W_0,W_1)\xrightarrow[]{\small\bs{b}_1^V+\bs{\Delta}_1^W} H^{*}\Cth_+(\R\times\La_0,\R\times\La_1)\xrightarrow[]{g_{*}} H^{*}\Cth_+(V_0\odot W_0,V_1\odot W_1)\to\dots
\end{alignat*}
\normalsize
Given two cycles $\gamma_{01}+c_{-\infty}^V\in\Cth_+(V_0,V_1)$ and $c_{+\infty}^W+\gamma_{10}\in\Cth_+(V_0,V_1)$, where $\gamma_{01}\in C(\La_1,\La_0)$, $c_{-\infty}^V\in CF_{-\infty}(V_0,V_1)$, $c_{+\infty}^W\in CF_{+\infty}(W_0,W_1)$ and $\gamma_{10}\in C(\La_0,\La_1)$, we want to show that in homology 
\begin{alignat*}{1}
	g_*\circ\bs{b}_1^V(\gamma_{01}+c_{-\infty}^V)+g_*\circ\bs{\Delta}_1^W(c_{+\infty}^W+\gamma_{10})=0.
\end{alignat*}
We will  actually show that $g_*\circ\bs{b}_1^V(\gamma_{01}+c_{-\infty}^V)+g_*\circ\bs{\Delta}_1^W(c_{+\infty}^W+\gamma_{10})=\mfm_1^{V\odot W}(c_{-\infty}^V+c_{+\infty}^W)$.
By definition we have
\begin{alignat*}{1}
g_*\circ\bs{b}_1^V(\gamma_{01}+c_{-\infty}^V)&+g_*\circ\bs{\Delta}_1^W(c_{+\infty}^W+\gamma_{10})=g\big(\gamma_{01}+b_1^V(\gamma_{01})+b_1^V(c_{-\infty}^V)\big)+g\big(\Delta_1^W(c_{+\infty}^W)+\gamma_{10}\big)\\
&=\mfm_1^{W,0}\circ b_1^V(\gamma_{01})+\mfm_1^{V,0-}(\gamma_{01})+\mfm_1^{W,0}\circ b_1^V(\gamma_{01})+\mfm_1^{W,0}\circ b_1^V(c_{-\infty}^V)\\
&+\mfm_1^{W,0}\circ b_1^V\circ\Delta_1^W(c_{+\infty}^W)+\mfm_1^{V,0-}\circ\Delta_1^W(c_{+\infty}^W)+\mfm_1^{W,0}(\gamma_{10})\\
&=\mfm_1^{V,0-}(\gamma_{01})+\mfm_1^{W,0}\circ b_1^V(c_{-\infty}^V)\\
&+\mfm_1^{W,0}\circ b_1^V\circ\Delta_1^W(c_{+\infty}^W)+\mfm_1^{V,0-}\circ\Delta_1^W(c_{+\infty}^W)+\mfm_1^{W,0}(\gamma_{10})
\end{alignat*}
The second and fourth terms in this last sum are equal to $\mfm_1^{W,+0}\circ\bs{b}_1^V(c_{-\infty}^V)$ and $\mfm_1^{V,0-}\circ\bs{\Delta}_1^W(c_{+\infty}^W)$ respectively, by definition of $\bs{b}_1^V$ and $\bs{\Delta}_1^W$ and because $\mfm_1^{W,+}\circ b_1^V(c_{-\infty}^V)=0$ so we can add it. The remaining terms in the sum are
\begin{alignat}{1}
\mfm_1^{V,0-}(\gamma_{01})+\mfm_1^{W,0}\circ b_1^V\circ\Delta_1^W(c_{+\infty}^W)+\mfm_1^{W,0}(\gamma_{10})\label{sumMV}
\end{alignat}
But by assumption, $\mfm_1^{V,0-}(\gamma_{01})+\mfm_1^{V,0-}(c_{-\infty}^V)=0$ because $\gamma_{01}+c_{-\infty}^V$ is a cycle, and similarly using also the fact that $\mfm_1^{W,+}(\gamma_{10})=0$, we have $\mfm_1^{W,+0}(c_{+\infty}^W)+\mfm_1^{W,+0}(\gamma_{10})=0$. Thus the sum \eqref{sumMV} is equal to
\begin{alignat*}{1}
\mfm_1^{V,0-}(c_{-\infty}^V)+\mfm_1^{W,0}\circ b_1^V\circ\Delta_1^W(c_{+\infty}^W)+\mfm_1^{W,+0}(c_{+\infty}^W)=\mfm_1^{V,0-}\circ\bs{\Delta}_1^W(c_{-\infty}^V)+\mfm_1^{W,+0}\circ\bs{b}_1^V(c_{+\infty}^W)
\end{alignat*}
where the last equality holds by definition of $\bs{b}_1^V$ and $\bs{\Delta}_1^W$ and because $\mfm_1^{W,+}\circ b_1^V\circ\Delta_1^W(c_{+\infty}^W)=0$. So we get 
\begin{alignat*}{1}
g_*\circ\bs{b}_1^V(\gamma_{01}+c_{-\infty}^V)+g_*\circ\bs{\Delta}_1^W(c_{+\infty}^W+\gamma_{10})&=\mfm_1^{W,+0}\circ\bs{b}_1^V(c_{-\infty}^V)+\mfm_1^{V,0-}\circ\bs{\Delta}_1^W(c_{+\infty}^W)\\
&+\mfm_1^{V,0-}\circ\bs{\Delta}_1^W(c_{-\infty}^V)+\mfm_1^{W,+0}\circ\bs{b}_1^V(c_{+\infty}^W)\\
&=\mfm_1^{V\odot W}(c_{-\infty}^V+c_{+\infty}^W)
\end{alignat*}
which completes the proof of exactness.

\section{Acyclicity for horizontally displaceable Legendrian ends}\label{sec:acy}

The acyclicity of the complex $\Cth_+(\Sigma_0,\Sigma_1)$ is proved in the same way as the acyclicity of the complex $\Cth(\Sigma_0,\Sigma_1)$ in \cite{CDGG2}. However, in the case of $\Cth_+$ we need some horizontal displaceability assumption of at least one of the two Legendrian ends to achieve acyclicity.
\begin{defi}
	Two Legendrian submanifolds $\La_0,\La_1\subset Y=P\times\R$ are \textit{horizontally displaceable} if there exists an Hamiltonian isotopy $\varphi_t$ of $P$ which displace the Lagrangian projections $\Pi_P(\La_0)$ and $\Pi_P(\La_1)$, i.e. $\Pi_P(\La_0)$ and $\varphi_1(\Pi_P(\La_1))$ are contained in two disjoint balls.
	A Legendrian is called \textit{horizontally displaceable} if it can be displaced from itself.	
\end{defi}

The goal of the next subsections is to prove the following:
\begin{teo}\label{teo:acyclicity}
	Let $\La_0^-,\La_1^-,\La_0^+,\La_1^+\subset Y$ be Legendrian submanifolds such that $\La_0^-$ and $\La_1^-$, or $\La_0^+$ and $\La_1^+$ are horizontally displaceable. Assume moreover that $\Ac(\La_0^-)$ and $\Ac(\La_1^-)$ admit augmentations $\ep_0^-$ and $\ep_1^-$. Then, for any pair of transverse exact Lagrangian cobordisms $\La_0^-\prec_{\Sigma_0}\La_0^+$ and $\La_1^-\prec_{\Sigma_1}\La_1^+$, the complex $\big(\Cth_+(\Sigma_0,\Sigma_1),\mfm_1^{\ep_0^-,\ep_1^-}\big)$ is acyclic.
\end{teo}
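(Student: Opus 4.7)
My plan mimics the acyclicity argument for the ordinary Cthulhu complex in \cite{CDGG2}, using the displaceability hypothesis to reduce to a known acyclic case. The strategy has two ingredients: invariance under Hamiltonian isotopies and a specific displacing deformation.

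The first ingredient is the invariance of the quasi-isomorphism class of $\Cth_+(\Sigma_0,\Sigma_1)$ under cylindrical-at-infinity Hamiltonian isotopies of $\Sigma_1$, which I would set up via chain-level continuation maps counting pseudo-holomorphic discs on a one-parameter family of Lagrangians, shown to be quasi-isomorphisms by the standard chain-homotopy argument. For the second ingredient, assume first that $\La_0^+$ and $\La_1^+$ are horizontally displaceable by some $h:P\to\R$; lift $h$ to a contact Hamiltonian on $Y$ and extend it to a cylindrical-at-infinity Hamiltonian on $\R\times Y$ supported near $t=+\infty$ and trivial near $t=-\infty$. The time-$1$ flow applied to $\Sigma_1$ gives $\Sigma_1'$ whose positive Legendrian end $\La_1'^+$ has Lagrangian projection disjoint from that of $\La_0^+$. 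Since mixed Reeb chords of a pair of Legendrians in $P\times\R$ correspond to intersection points of Lagrangian projections in $P$, one obtains $\Rc(\La_0^+,\La_1'^+)=\Rc(\La_1'^+,\La_0^+)=\emptyset$. Thus both $\Cth_+(\Sigma_0,\Sigma_1')$ and the ordinary Cthulhu complex $\Cth(\Sigma_0,\Sigma_1')$ from \cite{CDGG2} collapse as chain complexes to the common sub/quotient $CF_{-\infty}(\Sigma_0,\Sigma_1')$. Acyclicity of $\Cth(\Sigma_0,\Sigma_1')$ from \cite{CDGG2} transfers to $\Cth_+(\Sigma_0,\Sigma_1')$, and Hamiltonian invariance concludes the argument.

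The negative-end case ($\La_0^-,\La_1^-$ displaceable) is handled dually: use a cylindrical-at-infinity Hamiltonian supported near $t=-\infty$ that horizontally displaces the negative end of $\Sigma_1$, producing $\Sigma_1'$ with $\Rc(\La_0^-,\La_1'^-)=\Rc(\La_1'^-,\La_0^-)=\emptyset$. The complex then reduces to $CF_{+\infty}(\Sigma_0,\Sigma_1')=C(\La_1^+,\La_0^+)^\dagger[n-1]\oplus CF(\Sigma_0,\Sigma_1')$, which via the identifications of Remark~\ref{rem:relation} is naturally isomorphic to the $\Z_2$-linear dual of the reduced Cthulhu complex $\Cth(\Sigma_1',\Sigma_0)$ (whose negative-end piece has also been killed); acyclicity of the latter transfers through dualization. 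The main obstacle I expect is the Hamiltonian invariance step: the lower-triangular structure of $\mfm_1^{\ep_0^-,\ep_1^-}$ means the continuation chain map must respect the filtration with quotient $C(\La_1^+,\La_0^+)^\dagger[n-1]$ and subcomplex $CF_{-\infty}$, which requires careful control of moduli spaces with time-varying Lagrangian boundary and of the Remark~\ref{rem:delta}-type vanishings for pure discs appearing in their compactifications.
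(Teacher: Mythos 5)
Your overall architecture is genuinely different from the paper's: you use the displaceability hypothesis to geometrically remove all mixed Reeb chords at one pair of ends (so that $\Cth_+$ collapses onto $CF_{-\infty}$, respectively onto the dual of the collapsed $\Cth(\Sigma_1',\Sigma_0)$), and then quote the unconditional acyclicity of the CDGG2 complex. Those reduction steps are sound: mixed chords do correspond to intersections of the Lagrangian projections, $CF_{-\infty}$ carries the same differential as a quotient of $\Cth$ and a subcomplex of $\Cth_+$, and the dualization in the negative-end case is exactly the identification recorded in Remark~\ref{rem:relation} (the paper itself uses it when $\La_i^-=\emptyset$). By contrast, the paper never displaces the ends: it wraps the ends of $\Sigma_1$ so that all chord generators become intersection points (Propositions~\ref{prop:transfer} and~\ref{prop:transfer2}), pushes everything into the negative region by a compactly supported isotopy (Proposition~\ref{prop:iso}), and reduces to the bilinearized Legendrian contact cohomology of a shifted $2$-copy of $\La_0^-\cup\La_1^-$, which is acyclic precisely because that pair is horizontally displaceable.

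The genuine gap is your first ingredient. Invariance of $H^*\Cth_+$ under a cylindrical-at-infinity Hamiltonian isotopy that moves a Legendrian end is not obtainable by ``standard'' continuation maps and a chain-homotopy argument: along your displacing isotopy the mixed chords of $\La_0^+\cup\La_1^+(s)$ (equivalently the intersections of the projections) are born and die, so the generating set of the complex changes and there is no naive chain-level continuation map between the two fixed complexes to begin with; moreover, for moving boundary conditions with positive Reeb-chord asymptotics one loses the energy positivity needed for compactness of the parametrized moduli spaces. This is exactly why both \cite{CDGG2} and this paper prove such invariance through the wrapping isomorphisms plus bifurcation analysis for compactly supported isotopies, and why the statement you need appears here only as the proposition in Section~\ref{sec:Fuk}, whose proof is assembled from Propositions~\ref{prop:transfer}, \ref{prop:transfer2} and~\ref{prop:iso} together with concordance concatenations. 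Your proposal can be repaired by invoking that invariance result instead of continuation maps, but then the technical content you would be importing (wrapping transfer isomorphisms and bifurcation invariance) is essentially the same machinery the paper applies directly in its own, shorter reduction; as written, the ``standard chain-homotopy argument'' step would fail, and the difficulty you flag (filtration compatibility and pure-disc breakings) is not the real obstruction.
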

The hypothesis of horizontal displaceability is necessary. Indeed, in the setting of Example \ref{ex:jet} the $0$-section of a jet space is not horizontally displaceable, and in fact the complex is not acyclic.\\

When $\Cth_+(\Sigma_0,\Sigma_1)$ is acyclic, one recovers long exact sequences obtained in \cite{CDGG2}. Indeed, the complex $\Cth_+(\Sigma_0,\Sigma_1)$ is the cone of the degree $1$ map $d_{0+}+b_1^-\circ\Delta_1^\Sigma:C(\La_1^+,\La_0^+)^\dagger[n-1]\to CF_{-\infty}(\Sigma_0,\Sigma_1)$, which is then a quasi-isomorphism since the complex is acyclic, i.e. we have
\begin{alignat}{1}\label{eq:qiso}
	H^{*}\big(C(\La_1^+,\La_0^+)^\dagger[n-1]\big)\simeq HF_{-\infty}^{*+1}(\Sigma_0,\Sigma_1)
\end{alignat}
Assume first that the Legendrian submanifolds $\La_0^+$ and $\La_1^+$ are horizontally displaceable. Then, the acyclicity of $\Cth_+(\R\times\La_0^+,\R\times\La_1^+)$ yields
\begin{alignat}{1}\label{eq:iso2}
	H^*\big(C(\La_1^+,\La_0^+)^\dagger[n-1]\big)\simeq H^*(C(\La_0^+,\La_1^+))
\end{alignat}
as there are no intersection point generators, and the Legendrians in the negative end are also $\La_0^+$ and $\La_1^+$.
When $(\Sigma_0,\Sigma_1)$ is a \textit{directed} pair, then $d_{0-}=0$ and $CF^{*}_{-\infty}(\Sigma_0,\Sigma_1)$ is the cone of $d_{-0}:CF^{*}(\Sigma_0,\Sigma_1)\to C^{*}(\La_0^-,\La_1^-)$. When $(\Sigma_0,\Sigma_1)$ is a \textit{V-shaped} pair, then $d_{-0}=0$ and $CF^{*}_{-\infty}(\Sigma_0,\Sigma_1)$ is the cone of $d_{0-}:C^{*-1}(\La_0^-,\La_1^-)\to CF^{*+1}(\Sigma_0,\Sigma_1)$. The long exact sequence of a cone, together with the isomorphisms \eqref{eq:qiso} and \eqref{eq:iso2}, and the fact that by definition $H^*(C(\La_0^+,\La_1^+))= LCH^*_{\ep_0^+,\ep_1^+}(\La_0^+,\La_1^+)$ and $H^{*}(C(\La_0^-,\La_1^-))=LCH^{*}_{\ep_0^-,\ep_1^-}(\La_0^-,\La_1^-)$, give
$$\begin{array}{rcl}
\dots\to LCH^{k-1}_{\ep_0^+,\ep_1^+}(\La_0^+,\La_1^+)\to& HF^{k}(\Sigma_0,\Sigma_1)& \\
&\downarrow&\\
& LCH^{k}_{\ep_0^-,\ep_1^-}(\La_0^-,\La_1^-)&\to LCH^{k}_{\ep_0^+,\ep_1^+}(\La_0^+,\La_1^+)\to\dots
\end{array}$$
for a directed pair, and
$$\begin{array}{rcl}
\dots\to LCH^{k-1}_{\ep_0^+,\ep_1^+}(\La_0^+,\La_1^+)\to&LCH^{k-1}_{\ep_0^-,\ep_1^-}(\La_0^-,\La_1^-)& \\
&\downarrow&\\
&  HF^{k+1}(\Sigma_0,\Sigma_1)&\to LCH^{k}_{\ep_0^+,\ep_1^+}(\La_0^+,\La_1^+)\to\dots
\end{array}$$
for a V-shaped pair.
These are the long exact sequences in \cite[Corollary 1.3]{CDGG2}.\\

In the case where $\La_0^+$ and $\La_1^+$ are not horizontally displaceable but $\La_0^-$ and $\La_1^-$ are, one gets the same exact sequences from the acyclicity of the dual complex $\Cth_+^{dual}(\Sigma_0,\Sigma_1)$:
\begin{alignat*}{1}
	\Cth_+^{dual}(\Sigma_0,\Sigma_1)=C^*(\La_1^+,\La_0^+)^\dagger[n-1]\oplus CF_*(\Sigma_0,\Sigma_1)\oplus C_*(\La_0^-,\La_1^-)
\end{alignat*}
with the degree $-1$ differential 
$$\left(\begin{matrix}
	b_1^+&b_1^{\Sigma_1,\Sigma_0}&b_1^{\Sigma_1,\Sigma_0}\circ b_1^-\\
	0&d^{\Sigma_1,\Sigma_0}_{00}&d^{\Sigma_1,\Sigma_0}_{0-}\\
	0&\Delta_1^{\Sigma_1,\Sigma_0}&\Delta_1^-
\end{matrix}\right)$$
However as we do not especially need this dual complex in this article we will not give more details here.

\subsection{Wrapping the ends}\label{wrapping}
Given a pair of cobordisms $(\Sigma_0,\Sigma_1)$ cylindrical outside $[-T,T]\times Y$, we will wrap the positive and negative ends of $\Sigma_1$ in order to get a pair of cobordisms such that the associated $\Cth_+$ complex has only intersection points generators. The wrapping is done by Hamiltonian isotopy. A smooth function $h:\R\to\R$ gives rise to a Hamiltonian $H:\R\times P\times\R\to\R$ defined by $H(t,p,z)=h(t)$. The corresponding Hamiltonian vector field $X_h$ is defined through the equation $d(e^t\alpha)(X_h,\,\cdot\,)=-dH$, and its Hamiltonian flow $\varphi^s_h$ takes the following simple form
\begin{alignat*}{1}
\varphi^s_h(t,p,z)=(t,p,z+se^{-t}h'(t))
\end{alignat*}
Moreover, the image of an exact Lagrangian cobordism $\Sigma$ with primitive $f_\Sigma$ by an Hamiltonian isotopy $\varphi_h^s$ as above is still an exact Lagrangian cobordism $\Sigma_s=\varphi_h^s(\Sigma)$, with a primitive $f_{\Sigma_s}$ given by
	\begin{alignat}{1}
		f_{\Sigma_s}&=f_\Sigma+s(h'-h)\circ\pi_\R
	\end{alignat}
where $\pi_\R:\R\times Y\to\R$ is the projection on the symplectization coordinate $t$.
Given $N>T$, consider a function $h_{T,N}^+:\R\to\R$ satisfying:
$$\left\{\begin{array}{l}
h_{T,N}^+(t)=0\,\mbox{ for }\,t\leq T+N,\\
h_{T,N}^+(t)=-e^t\,\mbox{ for }\,t\geq T+N+1,\\
(h_{T,N}^+)'(t)\leq0,\\
\end{array}\right.$$
such that the Hamiltonian vector field takes the form $\rho_{T,N}^+(t)\partial_z$ where $\rho_{T,N}^+:\R\to\R$ satisfies
$$\left\{\begin{array}{l}
\rho_{T,N}^+(t)=0\mbox{ for }t< T+N,\\
\rho_{T,N}^+(t)=-1\mbox{ for }t> T+N+1,\\
(\rho_{T,N}^+)'(t)\leq 0.\\
\end{array}\right.$$

\begin{figure}[ht]  
	\begin{center}\includegraphics[width=5cm]{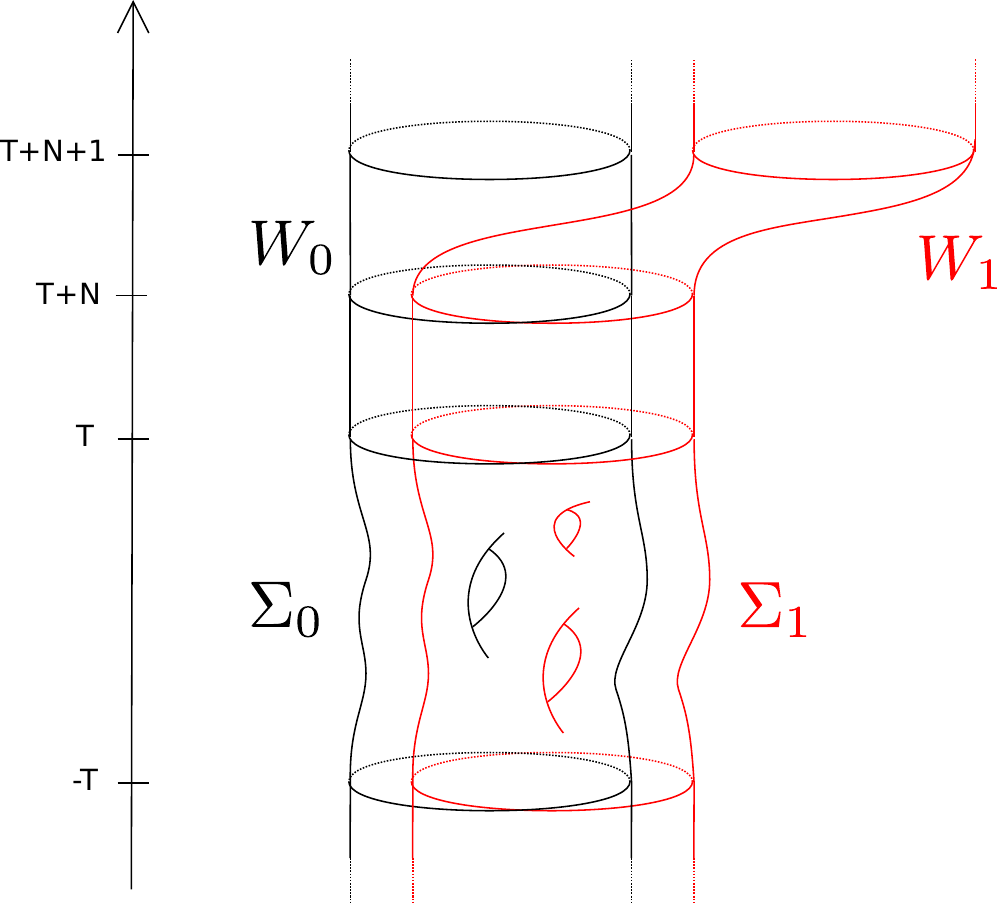}\end{center}
	\caption{Wrapping the positive end of $\Sigma_1$.}
	\label{fig:wrapping+}
\end{figure}
Let $S_+>0$ greater than the length of the longest Reeb chord from $\La_0^+$ to $\La_1^+$. We set $W_1:=\varphi_{h_{T,N}^+}^{S_+}(\R\times\La_1^+)$, and $W_0:=\R\times\La_0^+$, and consider the pair $(\Sigma_0\odot W_0,\Sigma_1\odot W_1)$, where in fact $\Sigma_0\odot W_0=\Sigma_0$, see Figure \ref{fig:wrapping+}. The complex
$\Cth_+(\Sigma_0\odot W_0,\Sigma_1\odot W_1)$ has only three types of generators, namely
$$\Cth_+(\Sigma_0\odot W_0,\Sigma_1\odot W_1)=CF(W_0,W_1)\oplus CF(\Sigma_0,\Sigma_1)\oplus C(\La_0^-,\La_1^-)[1]$$
Under this decomposition, the transfer map $\bs{\Delta}_1^W:\Cth_+(\Sigma_0\odot W_0,\Sigma_1\odot W_1)\to\Cth_+(\Sigma_0,\Sigma_1)$ is equal to the matrix
\begin{alignat*}{1}
\bs{\Delta}_1^W=\left(\begin{matrix}
\Delta_1^W&0&0\\
0&\id&0\\
0&0&\id\\
\end{matrix}\right)	
\end{alignat*}
We have then the following
\begin{prop}\label{prop:transfer}
	The transfer map $\bs{\Delta}_1^W:\Cth_+(\Sigma_0\odot W_0,\Sigma_1\odot W_1)\to\Cth_+(\Sigma_0,\Sigma_1)$ is an isomorphism.
\end{prop}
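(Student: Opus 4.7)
The map $\bs{\Delta}_1^W$ is block-diagonal with identity maps on the $CF(\Sigma_0,\Sigma_1)$ and $C(\La_0^-,\La_1^-)[1]$ summands, so it suffices to prove that the remaining block
\[
\Delta_1^W\colon CF(W_0,W_1)\longrightarrow C(\La_1^+,\La_0^+)^\dagger[n-1]
\]
is an isomorphism of $\Z_2$-vector spaces. The plan is to set up a canonical bijection between the two sets of generators, produce a ``small disc'' realising the diagonal of $\Delta_1^W$ in this basis, and use an action/energy estimate to show that off-diagonal terms are strictly smaller, so that $\Delta_1^W$ is a triangular matrix with non-zero diagonal.

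\textbf{Generators.} The Hamiltonian flow has the explicit form $\varphi_{h_{T,N}^+}^{S_+}(t,p,z)=(t,p,z+S_+\,e^{-t}(h_{T,N}^+)'(t))$, so on $\R\times\La_1^+$ the induced $z$-shift is $0$ below the slab $[T+N,T+N+1]\times Y$ and $-S_+$ above it. Since $\La_0^+\cap\La_1^+=\emptyset$ rules out intersections below the slab, and the assumption $S_+>\max\{\ell(\gamma):\gamma\in\Rc(\La_0^+,\La_1^+)\}$ rules out intersections above it, all points of $W_0\cap W_1$ lie in the wrapping slab. There the shift $t\mapsto S_+\,e^{-t}(h_{T,N}^+)'(t)$ is monotone and sweeps out $(-S_+,0)$ once, so each chord $\gamma\in\Rc(\La_0^+,\La_1^+)$ produces exactly one intersection point $x_\gamma$, located at the unique $t=t_\gamma$ where the shift equals $-\ell(\gamma)$. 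This gives a canonical bijection $\gamma\longleftrightarrow x_\gamma$ between generators.

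\textbf{Small disc.} For each pair $(x_\gamma,\gamma)$ I would exhibit an essentially explicit rigid $\Delta$-type pseudo-holomorphic disc with positive intersection-point puncture at $x_\gamma$, negative Reeb chord puncture at $\gamma$, boundary on $W_0\cup W_1$, and no pure Reeb chord asymptotics. Near $x_\gamma$ one uses a Weinstein chart in which $(W_0,W_1)$ becomes the pair (zero section, graph of $dH$) for the local Morse function determined by the wrapping Hamiltonian; away from this chart the disc is a thin sliver along the portion $\{t\geq t_\gamma\}$ of the Reeb trajectory of $\gamma$. Transversality of the linearised Cauchy-Riemann operator is verified directly in these coordinates, and the constant-$p$ projection to $P$ shows that this is the unique rigid disc in its relative homotopy class. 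It contributes $1$ to $\langle\Delta_1^W(x_\gamma),\gamma\rangle$.

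\textbf{Action filtration.} Using the primitive formula $f_{\Sigma_s}=f_\Sigma+s(h'-h)\circ\pi_\R$ of Section~\ref{wrapping}, one computes that $\mathfrak{a}(x_\gamma)$ and $\mathfrak{a}(\gamma)$ coincide up to a constant independent of $\gamma$, and that the small disc of Step~2 realises equality in the energy bound $E(u)\geq 0$ coming from case~4.(c) of the energy Proposition. For any other chord $\gamma'\neq\gamma$ appearing in $\Delta_1^W(x_\gamma)$, positivity of $E(u)$ together with positivity of the actions of the augmented pure negative chords forces the strict inequality for $\mathfrak{a}(\gamma')$ relative to $\mathfrak{a}(x_\gamma)$. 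Ordering the generators by action, the matrix of $\Delta_1^W$ is triangular with $1$'s on the diagonal, and is therefore invertible.

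The main obstacle is the rigidity-and-uniqueness statement of Step~2: exhibiting the small disc and showing it is the only rigid disc at top action level. This is morally the standard thin-strip argument used to prove invariance of Lagrangian Floer complexes under small Hamiltonian perturbation, adapted to the SFT set-up as in the wrapping-invariance part of \cite{CDGG2}; one additionally needs to rule out bubbling of pure pseudo-holomorphic discs at the positive cylindrical ends, which is arranged by an appropriate choice of domain-dependent almost complex structure together with Gromov-Hofer compactness. Once Step~2 is in place, Steps~1 and~3 follow from the explicit form of the wrapping Hamiltonian.
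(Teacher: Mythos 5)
Your reduction to the block $\Delta_1^W\colon CF(W_0,W_1)\to C(\La_1^+,\La_0^+)^\dagger[n-1]$ and your bijection $\gamma\leftrightarrow x_\gamma$ agree with the paper, and your ``small disc'' is the paper's strip over $\gamma_p$; but the two steps you use to get invertibility both have gaps, and the action filtration is the serious one. For a rigid disc $u\in\cM^0_{W_0,W_1}(\gamma';\bs{\delta}_0,x_\gamma,\bs{\delta}_1)$ the energy identity reads $E(u)=-\mathfrak{a}(\gamma')-\mathfrak{a}(x_\gamma)-\sum_i\mathfrak{a}(\bs{\delta}_i)$, and $E(u)>0$ for every non-constant disc, so your assertion that the sliver ``realises equality in $E(u)\geq 0$'' is already false. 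All positivity gives is $\mathfrak{a}(\gamma')<-\mathfrak{a}(x_\gamma)$. Now $\mathfrak{a}(x_\gamma)=f_{W_1}(x_\gamma)-f_{W_0}(x_\gamma)=S_+\bigl(h'(t_\gamma)-h(t_\gamma)\bigr)+\mathrm{const}$, which is negative of magnitude comparable to $e^{t_\gamma}\ell(\gamma)$ with $t_\gamma\geq T+N$, whereas the output chord sits at the negative end of $W_0\cup W_1$ and its action carries an $e^{-T}$-type weight. So the inequality you get is weaker than $\ell(\gamma')<\ell(\gamma)$ by a huge conformal factor, the claim that $\mathfrak{a}(x_\gamma)$ and $\mathfrak{a}(\gamma)$ ``coincide up to a constant independent of $\gamma$'' is not true (the discrepancy depends on $t_\gamma$, hence on $\gamma$), and triangularity of the matrix does not follow. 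In addition, your diagonal count is incomplete: uniqueness of the explicit disc \emph{in its relative homotopy class} does not determine the mod $2$ coefficient $\langle\Delta_1^W(x_\gamma),\gamma\rangle$, since rigid discs in other homotopy classes, or carrying augmented pure chord asymptotics, are not excluded by energy (the energy is fixed by the asymptotics, so it cannot separate homotopy classes).

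The paper closes both gaps with a grading argument instead of an action one, following \cite[Proposition 8.2]{CDGG2}: rigidity forces $n-1-|\gamma'|-|x_\gamma|_{\Cth_+(W_0,W_1)}=0$, i.e.\ $|\gamma'|=|\gamma_{x_\gamma}|$; then, because the almost complex structure is the cylindrical lift of an admissible $J_P$ there, $\pi_P\circ u$ is a pseudo-holomorphic disc in $\cM_{\pi_P(\La_0^+),\pi_P(\La_1^+)}(\gamma';\bs{\delta}_0,\pi_P(x_\gamma),\bs{\delta}_1)$, whose formal dimension $|\gamma_{x_\gamma}|-|\gamma'|-1=-1$ forces it (by regularity in $P$) to be constant. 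Hence $\gamma'=\gamma_{x_\gamma}$ and $u$ is the lift of a constant, i.e.\ the strip over the chord, so the matrix of $\Delta_1^W$ is literally the identity and there are no off-diagonal terms to order at all. If you run your projection argument for \emph{all} rigid discs asymptotic to $x_\gamma$, not just within the sliver's homotopy class, you recover exactly this proof and can discard the action filtration entirely; as written, though, the argument does not establish invertibility.
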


\begin{proof}
The proof is the same as the proof of \cite[Proposition 8.2]{CDGG2}. After wrapping, each Reeb chord from $\La_0^+$ to $\La_1^+$ creates an intersection point in $W_0\cap W_1$, and  observing that the wrapping in the positive end makes the Conley-Zehnder index increasing by $1$, there is a canonical identification of graded modules:
\begin{alignat}{1}
	CF^*(W_0,W_1)=C^*(\La_1^+,\La_0^+)^\dagger[n-1]\subset\Cth_+(\Sigma_0,\Sigma_1)\label{ident2}
\end{alignat}
If $p\in CF(W_0,W_1)$ we denote by $\gamma_p\in C^*(\La_1^+,\La_0^+)^\dagger[n-1]$ the corresponding Reeb chord.
The goal is to prove that this identification also applies at the level of complexes. We will show that under the identification \eqref{ident2}, the map $\bs{\Delta}_1^W$ is the identity map.

We consider the component 
$\Delta_1^{W}:CF^*(W_0,W_1)\to C_{n-1-*}(\La_1^+,\La_0^+)$ of $\bs{\Delta}_1^W$ which is of degree $0$. Let $u\in\cM^0_{W_0,W_1}(\gamma_{01};\bs{\delta}_0,p,\bs{\delta}_1)$ where $p\in W_0\cap W_1$, $\gamma_{01}\in\Rc(\La_1^+,\La_0^+)$ is a negative Reeb chord asymptotic, and $\bs{\delta}_i$ are words of degree $0$ pure Reeb chords which are also negative asymptotics. This disc contributes to $\Delta_1^{W}(p)$. By rigidity of $u$, we have 
\begin{alignat*}{1}
	n-1-|\gamma_{01}|-|p|_{\Cth_+(W_0,W_1)}=0
\end{alignat*}
Now, the projection of $u$ to $P$ is a pseudo-holomorphic map in $\cM_{\pi_P(\La_0^+),\pi_P(\La_1^+)}(\gamma_{01};\bs{\delta}_0,\pi_P(p),\bs{\delta}_1)$ which has dimension $|\pi_P(p)|-|\gamma_{01}|-1=|\gamma_p|-|\gamma_{01}|-1$, but we have
\begin{alignat*}{1}
	0=n-1-|\gamma_{01}|-|p|_{\Cth_+(W_0,W_1)}=n-1-|\gamma_{01}|-(n-1-|\gamma_p|)=|\gamma_p|-|\gamma_{01}|
\end{alignat*}
where we have used the identification \eqref{ident2}.
This implies that $\pi_P(u)$ is in a moduli space of dimension $-1$ so it must be constant. Hence, $\gamma_{01}=\gamma_p$. On the other side, for each intersection point $p\in W_0\cap W_1$ a strip over $\gamma_p$ lifts to a disk in $\cM^0_{W_0,W_1}(\gamma_p;\bs{\delta}_0,p,\bs{\delta}_1)$. We obtain that $\bs{\Delta}_1^W$ is the identity map.
\end{proof}

Next, we wrap the negative end of $\Sigma_1\odot W_1$ as schematized on Figure \ref{fig:wrapping-}, using a Hamiltonian defined by a function $h_{T,N}^-:\R\to\R$ satisfying:
$$\left\{\begin{array}{l}
h_{T,N}^-(t)=e^t\,\mbox{ for }\,t<-T-N-1,\\
h_{T,N}^-(t)=D\,\mbox{ for }\,t> -T-N,\\
(h_{T,N}^-)'(t)\geq0,\\
\end{array}\right.$$
for some positive constant $D\geq e^{-T-N}$, such that the Hamiltonian vector field is given by $\rho_{T,N}^-(t)\partial_z$ where $\rho_{T,N}^-:\R\to\R$ satisfies $\rho_{T,N}^-(t)=1$ for $t\leq -T-N$, $\rho_{T,N}^-(t)=0$ for $t\geq -T$, and $(\rho_{T,N}^-)'(t)\leq 0$.
Let $S_->0$ be greater than the length of the longest chord from $\La_1^-$ to $\La_0^-$ and define $V_1:=\varphi_{h_{T,N}^-}^{S_-}(\R\times\La_1^-)$ and set $V_0:=\R\times\La_0^-$. After concatenation, we obtain a pair $(V_0\odot\Sigma_0\odot W_0,V_1\odot\Sigma_1\odot W_1)=(\Sigma_0,V_1\odot\Sigma_1\odot W_1)$.
\begin{figure}[ht]  
	\begin{center}\includegraphics[width=5cm]{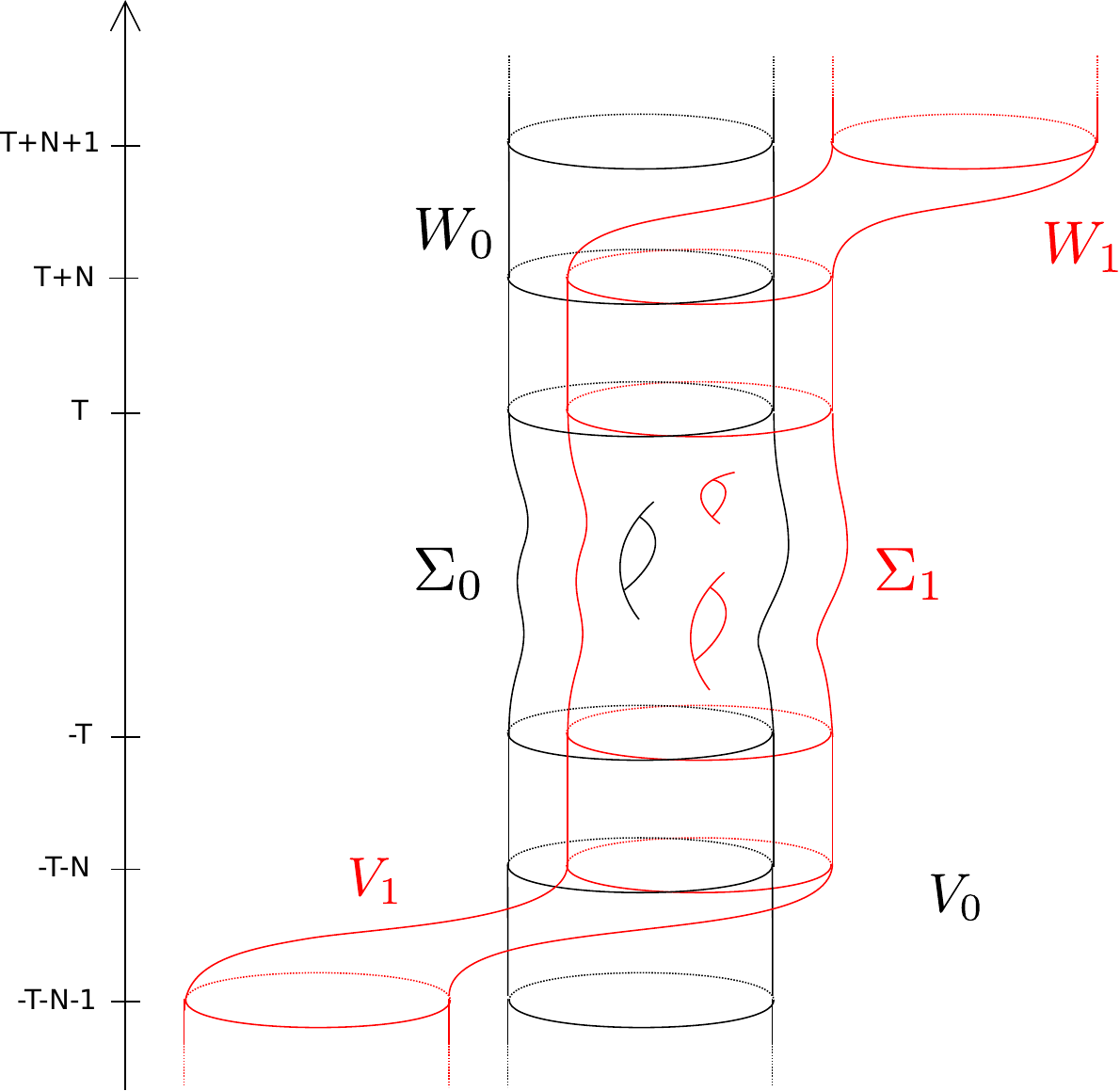}\end{center}
	\caption{Wrapping the negative end of $\Sigma_1\odot W_1$.}
	\label{fig:wrapping-}
\end{figure}
The Cthulhu complex of the pair $\big(\Sigma_0,V_1\odot(\Sigma_1\odot W_1)\big)$ has only intersection points generators, we have
$$\Cth_+\big(\Sigma_0,V_1\odot(\Sigma_1\odot W_1)\big)=CF(\Sigma_0,\Sigma_1\odot W_1)\oplus CF(V_0,V_1)$$
Under this decomposition, the map $\bs{b}_1^V:\Cth_+\big(\Sigma_0,V_1\odot(\Sigma_1\odot W_1)\big)\to\Cth_+(\Sigma_0,\Sigma_1\odot W_1)$
is given by
\begin{alignat*}{1}
\bs{b}_1^V=\left(\begin{matrix}
\id&0\\
b_1^V\circ\Delta_1^{\Sigma\odot W}&b_1^V\\
\end{matrix}\right)	
\end{alignat*}
\begin{prop}\label{prop:transfer2}
	The map $\bs{b}_1^V$ above is an isomorphism.
\end{prop}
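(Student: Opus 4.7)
Following the approach of Proposition \ref{prop:transfer}, the matrix representation of $\bs{b}_1^V$ is lower triangular with $\id$ on the upper-left block, so $\bs{b}_1^V$ is an isomorphism if and only if its lower-right block $b_1^V : CF(V_0,V_1)\to C(\La_0^-,\La_1^-)[1]$ is an isomorphism. I will establish the latter by the strict analogue of the strategy used for $\bs{\Delta}_1^W$: first exhibit a canonical graded bijection between generators, then show that under it $b_1^V$ is the identity by a dimension count in $P$.

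For the bijection, recall that $V_1=\varphi^{S_-}_{h^-_{T,N}}(\R\times\La_1^-)$ vertically translates $\La_1^-$ by $S_-\rho^-_{T,N}(t)$ in the $z$-direction, interpolating between $S_-$ (for $t\ll -T-N$) and $0$ (for $t\geq -T$). Since $S_-$ exceeds the length of the longest Reeb chord from $\La_1^-$ to $\La_0^-$, each $\gamma\in\Rc(\La_1^-,\La_0^-)$ of length $\ell$ produces a unique intersection point $p_\gamma\in V_0\cap V_1$ at the level $t$ where $S_-\rho^-_{T,N}(t)=\ell$, and every point of $V_0\cap V_1$ arises this way. The standard Conley-Zehnder computation for negative-end wrappings (opposite sign to the positive-end case of Proposition \ref{prop:transfer}) shows that $|p_\gamma|=|\gamma|+1$, matching the grading shift $[1]$, and hence gives a graded identification
\begin{alignat*}{1}
CF^*(V_0,V_1)\cong C^*(\La_0^-,\La_1^-)[1].
\end{alignat*}

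To check that $b_1^V$ is the identity under this identification, let $u\in\cM^0_{V_0,V_1}(\gamma_{1,0};\bs{\delta}_0,p,\bs{\delta}_1)$ contribute to $b_1^V(p)$. By Proposition \ref{teo:grading} (banana-type dimension formula with $j^+=j^-=0$ and $l=1$), rigidity forces $|p|=|\gamma_{1,0}|+1$, which combined with the identification $|p|=|\gamma_p|+1$ gives $|\gamma_{1,0}|=|\gamma_p|$. Project $u$ to $P$: since $\pi_P(V_i)=\pi_P(\La_i^-)$, the map $\pi_P(u)$ is a $J_P$-holomorphic strip with boundary on $\pi_P(\La_0^-)\cup\pi_P(\La_1^-)$ connecting $\pi_P(p)=\pi_P(\gamma_p)$ to $\pi_P(\gamma_{1,0})$. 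Its expected dimension equals $|\gamma_{1,0}|-|\gamma_p|-1=-1$ unless $\gamma_{1,0}=\gamma_p$, so for generic $J_P$ we must have $\gamma_{1,0}=\gamma_p$ and $\pi_P(u)$ constant. Thus $u$ is the trivial strip over $\gamma_p$, and conversely each such trivial strip lifts to a unique rigid element of $\cM^0_{V_0,V_1}(\gamma_p;\bs{\delta}_0,p,\bs{\delta}_1)$, yielding $b_1^V(p)=\gamma_p$.

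The main obstacle is verifying that the sign of the Conley-Zehnder shift induced by the negative-end wrapping is indeed $+1$ (so as to match the grading shift $[1]$ on $C(\La_0^-,\La_1^-)$), and checking Fredholm regularity of the trivial strip over each $\gamma_p$ so that it contributes with multiplicity one; both are direct adaptations of the corresponding verifications in \cite[Proposition 8.2]{CDGG2} and in Proposition \ref{prop:transfer} above, with all directions of $z$-translation and all inequalities reversed to account for wrapping in the concave rather than convex end.
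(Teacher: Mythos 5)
Your proposal is correct and follows essentially the same route as the paper: reduce to the lower-right block $b_1^V$ via the block-triangular form, identify $CF(V_0,V_1)$ with $C(\La_0^-,\La_1^-)[1]$ through the wrapping, and then use rigidity plus projection to $P$ (where the projected strip lies in a moduli space of formal dimension $|\gamma_{10}|-|\gamma_p|-1=-1$, hence is constant) to conclude that $b_1^V$ is the identity under this identification. Only a notational slip: with the paper's conventions the chords creating intersection points are those in $\Rc(\La_0^-,\La_1^-)$ (from $\La_1^-$ to $\La_0^-$), not $\Rc(\La_1^-,\La_0^-)$, though your verbal description is the correct one.
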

\begin{proof}
It is the same kind of proof as for Proposition \ref{prop:transfer}. In this case we have a canonical identification:
\begin{alignat}{1}
		CF(V_0,V_1)=C(\La_0^-,\La_1^-)[1]\subset\Cth_+(\Sigma_0,\Sigma_1\odot W_1)\label{ident}
\end{alignat}
Let us consider the component $b_1^V:CF^*(V_0,V_1)\to C^{*-1}(\La_0^-,\La_1^-)$ of $\bs{b}_1^V$ which is of degree $0$. Let $u\in\cM^0_{V_0,V_1}(\gamma_{10};\bs{\delta}_0,p,\bs{\delta}_1)$ where $p\in V_0\cap V_1$, $\gamma_{10}\in\Rc(\La_0^-,\La_1^-)$ is a positive Reeb chord asymptotic, and $\bs{\delta}_i$ are words of degree $0$ pure Reeb chords which are also negative asymptotics, contributing to this component. By rigidity, we have
\begin{alignat*}{1}
		(|\gamma_{10}|+1)-|p|_{CF(V_0,V_1)}=0
\end{alignat*}
The projection of $u$ to $P$ is a pseudo-holomorphic map in $\cM_{\pi_P(\La_0^-),\pi_P(\La_1^-)}(\gamma_{10};\bs{\delta}_0,\pi_P(p),\bs{\delta}_1)$ which has dimension $|\gamma_{10}|-|\gamma_p|-1$.
Using the identification \eqref{ident} we have
$$0=|\gamma_{10}|-|p|_{CF(V_0,V_1)}+1=|\gamma_{10}|-(|\gamma_p|+1)+1=|\gamma_{10}|-|\gamma_p|$$
So the disk $\pi_P(u)$ must be constant and $\gamma_{10}=\gamma_p$.
\end{proof}

\subsection{Invariance by compactly supported Hamiltonian isotopy}

Let us consider a pair $(\Sigma_0,\Sigma_1)$ of exact Lagrangian cobordisms and a path of exact Lagrangian cobordisms $\Sigma_0^s$ for $s\in[0,1]$ induced by a compactly supported Hamiltonian isotopy, with $\Sigma_0^0:=\Sigma_0$. In particular, for all $s\in[0,1]$, $\Sigma_0^s$ have positive and negative cylindrical ends over $\La_0^\pm$.
\begin{prop}\label{prop:iso}
	The complexes $\big(\Cth_+(\Sigma_0^0,\Sigma_1),\mfm_1^{\ep_0^-,\ep_1^-}\big)$ and $\big(\Cth(\Sigma_0^1,\Sigma_1),\mfm_1^{\ep_0^-,\ep_1^-}\big)$ are homotopy equivalent.
\end{prop}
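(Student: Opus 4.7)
The statement as typeset compares $\Cth_+(\Sigma_0^0,\Sigma_1)$ to the original Cthulhu complex $\Cth(\Sigma_0^1,\Sigma_1)$, but the differential on the right is denoted $\mfm_1^{\ep_0^-,\ep_1^-}$, which throughout the paper belongs exclusively to $\Cth_+$ (the $\Cth$ differential was $\mathfrak{d}_{\ep_0^-,\ep_1^-}$). I read this as a typographical slip; the intended statement is invariance of $\Cth_+$ under compactly supported Hamiltonian isotopy, namely $\Cth_+(\Sigma_0^0,\Sigma_1)\simeq \Cth_+(\Sigma_0^1,\Sigma_1)$. My plan is to establish this by the standard Floer continuation argument, adapted to the SFT setting of the paper.

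Because the isotopy has compact support, every $\Sigma_0^s$ has the same cylindrical ends $\La_0^\pm$, so the positive- and negative-end blocks $C(\La_1^+,\La_0^+)^\dagger[n-1]$ and $C(\La_0^-,\La_1^-)[1]$ of $\Cth_+(\Sigma_0^s,\Sigma_1)$ are canonically identified across $s$; only the middle block $CF(\Sigma_0^s,\Sigma_1)$ varies. The first step is to construct a continuation chain map
\[
\Phi\colon \Cth_+(\Sigma_0^0,\Sigma_1)\longrightarrow \Cth_+(\Sigma_0^1,\Sigma_1)
\]
by counting rigid pseudo-holomorphic discs with a moving $\Sigma_0$-boundary condition: one fixes a domain-dependent family of Lagrangians interpolating in the disc parameter between $\Sigma_0^0$ near each input puncture and $\Sigma_0^1$ near the output puncture, the standard device implemented via strip-like end coordinates. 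The asymptotic configurations are exactly the banana-, $\mfm_0$-, and $\Delta$-types of Section \ref{mod_non_cyl}, weighted by $\ep_0^-,\ep_1^-$ on the pure negative Reeb chord words, yielding the components of $\Phi$ in the three-block decomposition of $\Cth_+$ recalled in Section \ref{section:def_complex}.

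The usual SFT compactness analysis of the $1$-dimensional moving-boundary moduli spaces, performed as in Section \ref{sec:structure} and combined with the augmentation cancellations of Remark \ref{rem:delta}, shows that $\Phi\circ \mfm_1^{\ep_0^-,\ep_1^-}+\mfm_1^{\ep_0^-,\ep_1^-}\circ\Phi=0$: breaking in a $1$-parameter family of moving-boundary curves along the interpolation direction splits a curve into a pre-interpolation component (boundary on $\Sigma_0^0$) glued to a post-interpolation component (boundary on $\Sigma_0^1$), accounting for the two terms. Reversing the monotonicity of the interpolation yields a continuation map $\Psi\colon \Cth_+(\Sigma_0^1,\Sigma_1)\to\Cth_+(\Sigma_0^0,\Sigma_1)$ in the opposite direction. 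To prove $\Psi\circ\Phi\simeq\id$, I would interpolate through a further real parameter between the composed interpolation used to build $\Psi\circ\Phi$ and the trivial (constant) interpolation; counting discs in the associated parametrized moduli spaces defines a chain homotopy, the constant-interpolation endpoint contributing only trivial strips over intersection points and identity-type bananas over Reeb chords, which assemble to the identity. The argument for $\Phi\circ\Psi\simeq\id$ is symmetric.

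The principal technical obstacle is achieving transversality for these moving-boundary moduli spaces together with the compactness/breaking analysis at the unchanged cylindrical ends. Since the ends of $\Sigma_0^s$ are independent of $s$, the positive- and negative-end breakings appearing at the boundary of the relevant $\overline{\cM^1}$ produce exactly the same banana- and $\Delta$-configurations as in the proof of Theorem \ref{diff}, so the chain-map identity for $\Phi$ reduces, via the same augmentation arguments, to the Maurer--Cartan-type relations already verified for $\mfm_1^{\ep_0^-,\ep_1^-}$. Carrying this out rigorously requires a domain-dependent almost complex structure in $\mathcal{J}_{J^+,J^-}(\R\times Y)$ extended over the interpolation parameter as part of a universal perturbation datum in the sense of Section \ref{sec:mod}.
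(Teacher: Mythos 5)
Your approach is a continuation-map argument with a moving Lagrangian boundary condition, which is genuinely different from what the paper does. The paper first wraps both cylindrical ends of $\Sigma_1$ (Propositions \ref{prop:transfer} and \ref{prop:transfer2}, which are transfer-map isomorphisms) so that $\Cth_+(\Sigma_0,V_1\odot\Sigma_1\odot W_1)$ has \emph{only intersection point generators}; along the compactly supported isotopy this property persists, and the invariance is then imported from the bifurcation analysis of \cite[Proposition 8.4]{CDGG2long}; finally one unwraps. Nothing in this argument touches moduli spaces with Reeb chord asymptotics during the isotopy.

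There is a real gap in your version. The continuation map with a moving $\Sigma_0$-boundary is not established technology for the SFT-type moduli spaces used here. The SFT compactness results you invoke from Section \ref{sec:structure} and \cite{BEHWZ,Abbas} are stated for a \emph{fixed} Lagrangian boundary condition, and the whole scheme of Remark~\ref{rem:delta} (that $\partial$-breakings cancel after applying the augmentations) is proved assuming the pure-disc components have a fixed cobordism boundary; with an $s$-interpolating boundary one first has to show that the pure components that split off lie on $\Sigma_0^0$ or $\Sigma_0^1$ rather than on some intermediate $\Sigma_0^{s}$, and that the resulting curve counts reassemble the correct $\ep_0^-\circ\partial$ and $\ep_0^-\circ\Phi_{\Sigma_0^i}$ expressions. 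None of this is available off the shelf, and it is precisely what the wrap-then-bifurcate strategy is designed to avoid: after wrapping, the only generators are intersection points, the isotopy is compactly supported so the Reeb chord data never change along the family, and one is in ordinary Lagrangian intersection Floer theory where the bifurcation method of \cite{CDGG2long} (alternatively \cite{E1}) is proved to work. Your assertion that the constant-interpolation endpoint contributes ``identity-type bananas over Reeb chords'' is also not justified: in the unwrapped complex the identity on the Reeb chord blocks would have to be extracted from a rigid moduli space of discs with cylindrical boundary, which is a further argument (compare the projection-to-$P$ trick in Propositions~\ref{prop:transfer} and \ref{prop:transfer2}). In short, the approach might be completable, but it amounts to developing moving-boundary SFT compactness and a continuation package from scratch, whereas the paper's proof reduces to three results it already has. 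You did correctly spot the typographical slip in the statement: the right-hand complex is $\Cth_+$, not $\Cth$.
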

\begin{proof}
	First, wrap the positive and negative ends of $\Sigma_1$ in the negative and positive Reeb direction respectively, as done in the previous section. One gets the pair of cobordisms $(\Sigma_0,V_1\odot\Sigma_1\odot W_1)$, whose Cthulhu complex is isomorphic to that of the pair $(\Sigma_0,\Sigma_1)$ by Propositions \ref{prop:transfer} and \ref{prop:transfer2}. Then, all along the isotopy the complex $(\Sigma_0^s,V_1\odot\Sigma_1\odot W_1)$ as only intersection point generators and the bifurcation analysis explained in \cite[Proposition 8.4]{CDGG2long} (see also \cite{E1} for the case of fillings) proves that the complexes $\Cth_+(\Sigma_0^0,V_1\odot\Sigma_1\odot W_1)$ and $\Cth_+(\Sigma_0^1,V_1\odot\Sigma_1\odot W_1)$ are homotopy equivalent. Finally, unwrapping the ends of $\Sigma_1$ leads again to an isomorphism of complexes.
\end{proof}

\subsection{Proof of Theorem \ref{teo:acyclicity}}

Consider a pair of Lagrangian cobordisms $(\Sigma_0,\Sigma_1)$ satisfying the hypothesis of the Theorem. We assume without loss of generality that $\La_0^-$ and $\La_1^-$ are horizontally displaceable (in the case $\La_i^+$ are horizontally displaceable but $\La_i^-$ are not, the same type of argument works but moving the wrapping in the positive end instead of the negative end, see below). By wrapping the cylindrical ends of $\Sigma_1$ we get the pair $(\Sigma_0,V_1\odot\Sigma_1\odot W_1)$ such that:
\begin{enumerate}
	\item $\Sigma_0$ and $V_1\odot\Sigma_1\odot W_1$ are cylindrical outside $[-T-N,T+N]\times Y$.
	\item $\Cth_+(\Sigma_0,V_1\odot\Sigma_1\odot W_1)$ has only intersection points generators.
\end{enumerate}
By a Hamiltonian isotopy $\varphi^1_{h_c}$ compactly supported in $[-T-N,T+N]\times Y$, we perturb $V_1\odot\Sigma_1\odot W_1$ in such a way that all the intersection points are in fact contained in $[-T-N,-T]\times Y$, and are in bijective correspondence with mixed chords of $\La_0^-\cup\La_1^-$, as schematized on Figure \ref{fig:compact_ham_iso}. For this purpose we use for example the Hamiltonian $H_c(t,p,z)=h_c(t)$ with $h_c:\R\to\R$ satisfying
$$\left\{\begin{array}{l}
 h_c(t)=-e^t+C\mbox{ for }t\in[-T,T],\\
 (-\infty,-T-N)\cup(T+N,\infty)\subset(h_c')^{-1}(0)\\
 h_c'(t)\leq0
\end{array}\right.$$
with $C>0$ constant such that $h_c(t)=0$ for $t\leq-T-N$, to ensure the primitive of the perturbed cobordism to still vanish on the negative cylindrical end. The Hamiltonian vector field is given by $\rho_c(t)\partial_z$ with $\rho_c(t)=-1$ on $[-T,T]$ and $0$ on $(-\infty,-T-N)\cup(T+N,\infty)$.
\begin{figure}[ht]  
	\begin{center}\includegraphics[width=5cm]{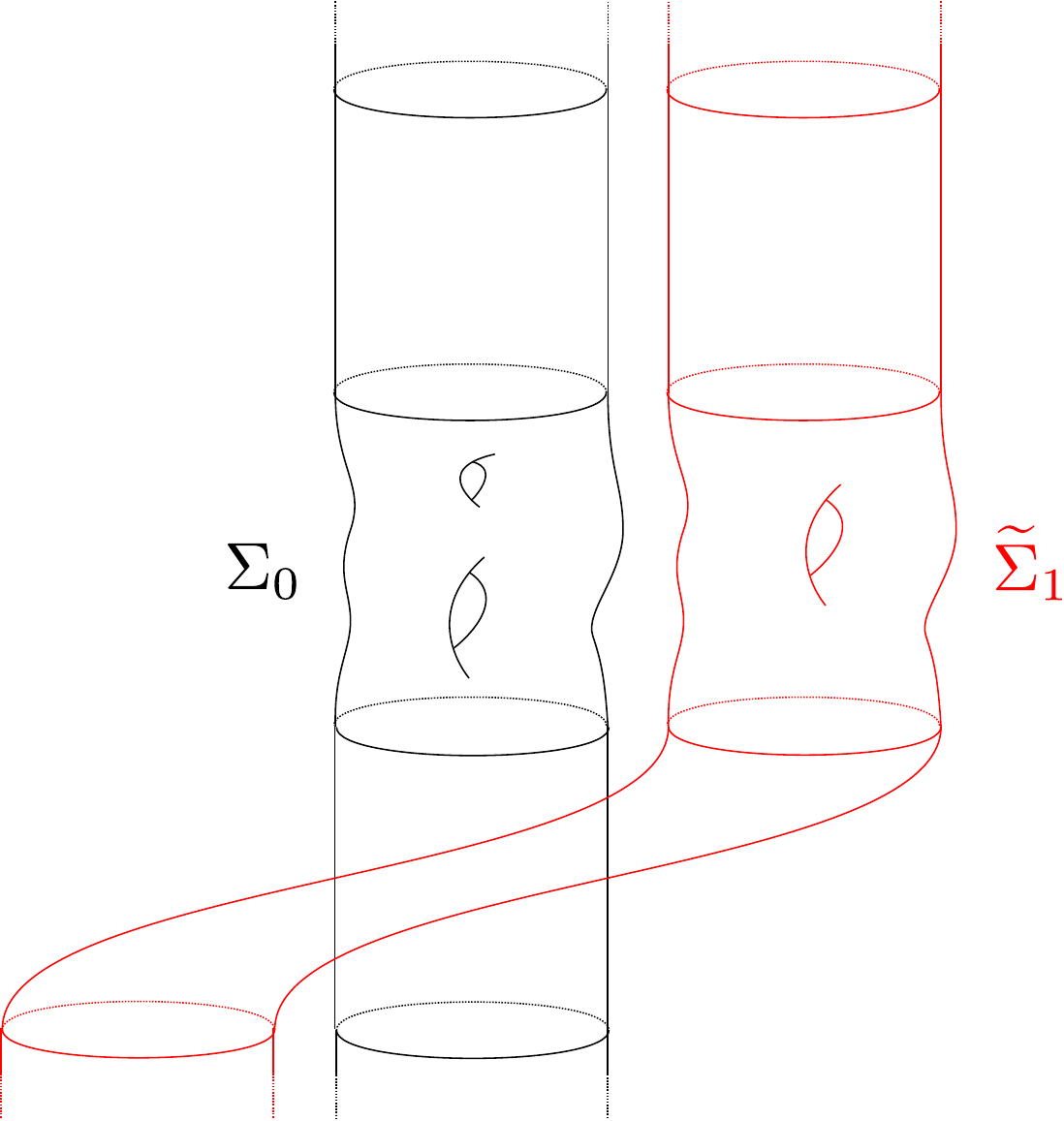}\end{center}
	\caption{Deformation by a compactly supported Hamiltonian isotopy.}
	\label{fig:compact_ham_iso}
\end{figure}
Let us denote $\widetilde{\Sigma}_1=\varphi_{h_c}^S(V_1\odot\Sigma_1\odot W_1)$, with $S$ big enough so that there are no intersection points in $[-T,T+N]\times Y$ anymore. This $S$ exists as $\Sigma_0\cap[-T-N,T+N]\times Y$ and  $V_1\odot\Sigma_1\odot W_1\cap[-T-N,T+N]\times Y$ are compact.
By Proposition \ref{prop:iso}, the complexes $\Cth_+(\Sigma_0,\Sigma_1)$ and $\Cth_+(\Sigma_0,\widetilde{\Sigma}_1)$ have the same homology. Now we prove that $\Cth_+(\Sigma_0,\widetilde{\Sigma}_1)$ is acyclic.
Given the Hamiltonian we used to perturb $V_1\odot\Sigma_1\odot W_1$, we have the canonical identification:
\begin{alignat*}{1}
	\Cth_+(\Sigma_0,\widetilde{\Sigma}_1)=\Cth_+(\R\times\La_0^-,\varphi_{h_c}^S(V_1))
\end{alignat*}
Then, we unwrap the negative end of $\varphi_{h_c}^S(V_1)$, and thus $\Cth_+(\R\times\La_0^-,\varphi_{h_c}^S(V_1))$ is isomorphic to $\Cth_+(\R\times\La_0^-,\R\times\widetilde{\La}_1^-)$ where $\widetilde{\La}_1^-$ is a translation of $\La_1^-$ in the negative Reeb direction and lies entirely below $\La_0^-$, see Figure \ref{concordances}.
In this case, we have $\Cth_+(\R\times\La_0^-,\R\times\widetilde{\La}_1^-)=C(\La_0^-,\widetilde{\La}_1^-)[1]$ with the differential $b_1^-$ being the Legendrian contact cohomology differential bilinearized by $\ep_0^-$ and $\ep_1^-$. But as the pair $(\La_0^-,\La_1^-)$ is a pair of horizontally displaceable Legendrians, so this complex is acyclic (observe that $\pi_P(\La_1^-)=\pi_P(\widetilde{\La}_1^-)$).
	

\begin{figure}[ht]  
	\begin{center}\includegraphics[width=9cm]{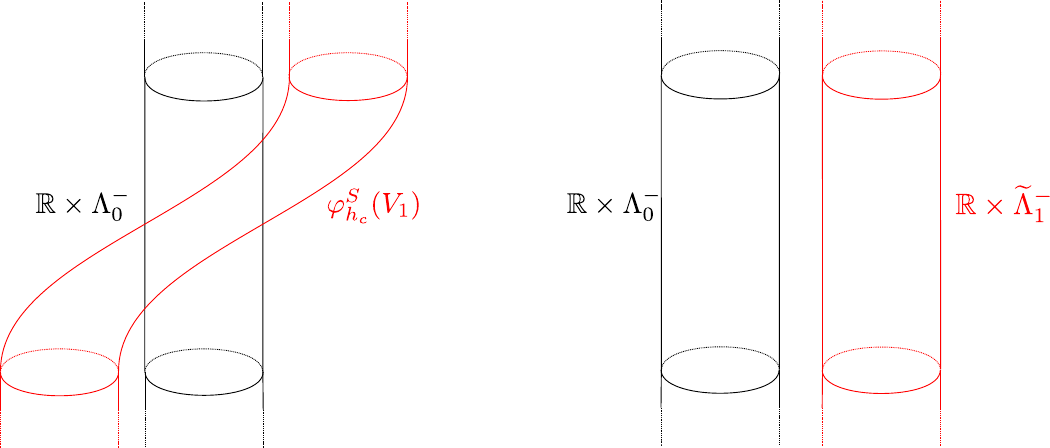}\end{center}
	\caption{Left: pair of concordances $\big(\R\times\La_0^-,\varphi_{h_c}^S(V_1)\big)$; right: pair $(\R\times\La_0^-,\R\times\widetilde{\La}_1^-)$.}
	\label{concordances}
\end{figure}

\section{Product structure}

\subsection{Definition of the map}\label{section:def_product}

Given $\La_i^-\prec_{\Sigma_i}\La_i^+$ for $i=0,1,2$ three exact Lagrangian cobordisms that are pairwise transverse such that $\Ac(\La_i^-)$ admit augmentations $\ep_0^-$, $\ep_1^-$ and $\ep_2^-$, we will define a map
\begin{alignat*}{1}
\mfm_2:\Cth_+(\Sigma_1,\Sigma_2)\otimes\Cth_+(\Sigma_0,\Sigma_1)\to\Cth_+(\Sigma_0,\Sigma_2)
\end{alignat*}
and we prove that it satisfies the Leibniz rule.
Let us denote the components of the product $\mfm_2$ by $\mfm_{ij}^k$, with $i,j,k\in\{+,0,-\}$ such that $\mfm_{ij}^k$ takes as arguments a generator of type $i$ in $\Cth_+(\Sigma_1,\Sigma_2)$, a generator of type $j$ in $\Cth_+(\Sigma_0,\Sigma_1)$ and has for output a generator of type $k$ in $\Cth_+(\Sigma_0,\Sigma_2)$. For example, $\mfm_{+-}^0$ is the component $C(\La_2^+,\La_1^+)^\dagger[n-1]\otimes C(\La_0^-,\La_1^-)[1]\to CF(\Sigma_0,\Sigma_2)$. We define $\mfm_2$ as follows. First, the eight components corresponding to the map
\begin{alignat*}{1}
	CF_{-\infty}(\Sigma_1,\Sigma_2)\otimes CF_{-\infty}(\Sigma_0,\Sigma_1)\to CF_{-\infty}(\Sigma_0,\Sigma_2)
\end{alignat*}
are the same components as those defining the product $\mfm_2^{-\infty}$ in \cite{L}, we start by recalling its definition (see also Figure \ref{m_infini}). For a pair of asymptotics $(a_2,a_1)$ which is equal to $(x_{12},x_{01}),(x_{12},\gamma_{10}),(\gamma_{21},x_{01})$ or $(\gamma_{21},\gamma_{10})$ in $CF_{-\infty}(\Sigma_1,\Sigma_2)\otimes CF_{-\infty}(\Sigma_0,\Sigma_1)$ ($x_{ij}$ is an intersection point in $\Sigma_i\cap\Sigma_j$ and $\gamma_{ij}$ is a chord from $\La_i^-$ to $\La_j^-$) we have
\begin{alignat*}{1}
&\mfm_2^{0}(a_2,a_1)=\sum\limits_{p_{20}\in\Sigma_0\cap\Sigma_2,\bs{\delta}_i}\#\cM^0_{\Sigma_{012}}(p_{20};\bs{\delta}_0,a_1,\bs{\delta}_1,a_2,\bs{\delta}_2)\ep_0^-(\bs{\delta}_0)\ep_1^-(\bs{\delta}_1)\ep_2^-(\bs{\delta}_2)\cdot p_{20}
\end{alignat*}
where the sum is over all intersection points $p_{20}\in\Sigma_0\cap\Sigma_2$ and words $\bs{\delta}_i$ of pure Reeb chords of $\La_i^-$. Then, for a pair $(x_{12},x_{01})\in CF(\Sigma_1,\Sigma_2)\otimes CF(\Sigma_0,\Sigma_1)$, we have
\small
\begin{alignat*}{1}
&\mfm_{00}^{-}(x_{12},x_{01})=\sum\limits_{\substack{\gamma_{20},\gamma_{02}\\\bs{\delta}_i,\bs{\delta}_i'}}\#\widetilde{\cM^1}_{\R\times\La^-_{012}}(\gamma_{20};\bs{\delta}_0,\gamma_{02},\bs{\delta}_2)\#\cM^0_{\Sigma_{012}}(\gamma_{02};\bs{\delta}_0',x_{01},\bs{\delta}_1',x_{12},\bs{\delta}_2')\cdot\ep_i^-(\bs{\delta}_i\bs{\delta}_i')\cdot \gamma_{20}\\
&+\sum\limits_{\substack{\gamma_{20},\gamma_{01},\gamma_{12}\\\bs{\delta}_i,\bs{\delta}_i',\bs{\delta}_i''}}\#\widetilde{\cM^1}_{\R\times\La^-_{012}}(\gamma_{20};\bs{\delta}_0,\gamma_{01},\bs{\delta}_1,\gamma_{12},\bs{\delta}_2)\#\cM^0_{\Sigma_{01}}(\gamma_{01};\bs{\delta}_0',x_{01},\bs{\delta}_1')\#\cM^0_{\Sigma_{12}}(\gamma_{12};\bs{\delta}_1'',x_{12},\bs{\delta}_2'')\cdot\ep_i^-(\bs{\delta}_i\bs{\delta}_i'\bs{\delta}_i'')\cdot \gamma_{20}
\end{alignat*}
\normalsize
where $\ep_i^-(\bs{\delta}_i)$ stands for the product of the augmentations applied to the corresponding pure chords. For a pair $(x_{12},\gamma_{10})\in CF(\Sigma_1,\Sigma_2)\otimes C(\La_0^-,\La_1^-)$, we have
\small
\begin{alignat*}{1}
&\mfm_{0-}^{-}(x_{12},\gamma_{10})=\sum\limits_{\substack{\gamma_{20},\gamma_{02}\\\bs{\delta}_i,\bs{\delta}_i'}}\#\widetilde{\cM^1}_{\R\times\La^-_{012}}(\gamma_{20};\bs{\delta}_0,\gamma_{02},\bs{\delta}_2)\#\cM^0_{\Sigma_{012}}(\gamma_{02};\bs{\delta}_0',\gamma_{10},\bs{\delta}_1',x_{12},\bs{\delta}_2')\cdot\ep_i^-(\bs{\delta}_i\bs{\delta}_i')\cdot \gamma_{20}\\
&\hspace{2cm}+\sum\limits_{\substack{\gamma_{20},\gamma_{12}\\\bs{\delta}_i,\bs{\delta}_i'}}\#\widetilde{\cM^1}_{\R\times\La^-_{012}}(\gamma_{20};\bs{\delta}_0,\gamma_{10},\bs{\delta}_1,\gamma_{12},\bs{\delta}_2)\#\cM^0_{\Sigma_{12}}(\gamma_{12};\bs{\delta}_1',x_{12},\bs{\delta}_2')\cdot\ep_i^-(\bs{\delta}_i\bs{\delta}_i')\cdot \gamma_{20}
\end{alignat*}
\normalsize
and the obvious symmetric formula for a pair $(\gamma_{21},x_{01})$,
and finally for a pair of Reeb chords $(\gamma_{21},\gamma_{10})\in C(\La_1^-,\La_2^-)\otimes C(\La_0^-,\La_1^-)$,
\begin{alignat*}{1}
&\mfm_{--}^{-}(\gamma_{21},\gamma_{10})=\sum\limits_{\gamma_{20},\bs{\delta}_i}\#\widetilde{\cM^1}_{\R\times\La^-_{012}}(\gamma_{20};\bs{\delta}_0,\gamma_{10},\bs{\delta}_1,\gamma_{21},\bs{\delta}_2)\cdot\ep_i^-(\bs{\delta}_i\bs{\delta}_i')\cdot \gamma_{20}
\end{alignat*}
\begin{figure}[ht]  
	\begin{center}\includegraphics[width=13cm]{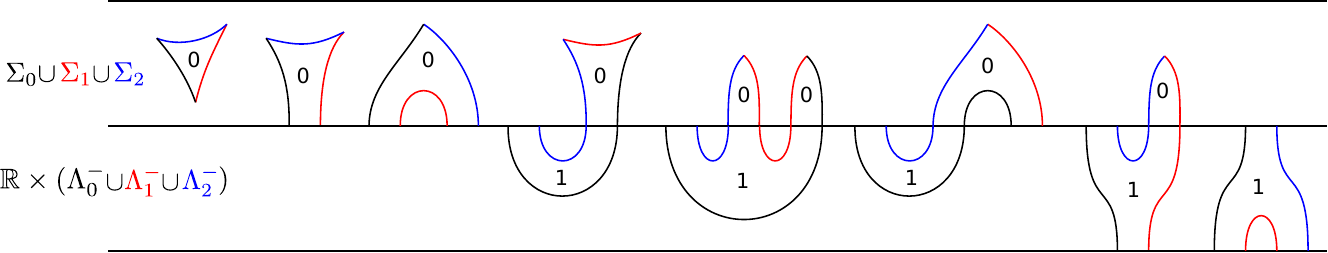}\end{center}
	\caption{Pseudo-holomorphic discs contributing to $\mfm_2^{-\infty}$.}
	\label{m_infini}
\end{figure}

Then, let us define the remaining components of the map $\mfm_2$, involving Reeb chords in the positive end. First, the components $\mfm_{00}^+$, $\mfm_{0-}^+$, $\mfm_{-0}^+$, and $\mfm_{--}^+$ vanish. It remains to define $\mfm_{++}^k$, $\mfm_{+i}^k$, and $\mfm_{i+}^k$ for $i\in\{0,-\}$ and $k\in\{+,0,-\}$.
Given a pair $(\gamma_{12},\gamma_{01})\in C(\La_2^+,\La_1^+)\otimes C(\La_1^+,\La_0^+)$, we have first
\small
\begin{alignat*}{1}
\mfm_{++}^+(\gamma_{12},\gamma_{01})&=\sum\limits_{\gamma_{02},\bs{\zeta}_i}\#\widetilde{\cM^1}_{\R\times\La_{012}^+}(\gamma_{02};\bs{\zeta}_0,\gamma_{01},\bs{\zeta}_1,\gamma_{12},\bs{\zeta}_2)\ep_i^+(\bs{\zeta}_i)\cdot\gamma_{02}\\
&+\sum\limits_{\substack{\gamma_{02},\gamma_{21}\\\bs{\zeta}_i,\bs{\delta}_i}}\#\widetilde{\cM^1}_{\R\times\La_{012}^+}(\gamma_{02};\bs{\zeta}_0,\gamma_{01},\bs{\zeta}_1,\gamma_{21},\bs{\zeta}_2)\#\cM^0_{\Sigma_{12}}(\gamma_{21};\bs{\delta}_1,\gamma_{12},\bs{\delta}_2)\cdot\ep_i^+(\bs{\zeta}_i)\ep_i^-(\bs{\delta}_i)\cdot\gamma_{02}\\
&+\sum\limits_{\substack{\gamma_{02},\gamma_{10}\\\bs{\zeta}_i,\bs{\delta}_i}}\#\widetilde{\cM^1}_{\R\times\La_{012}^+}(\gamma_{02};\bs{\zeta}_0,\gamma_{10},\bs{\zeta}_1,\gamma_{12},\bs{\zeta}_2)\#\cM^0_{\Sigma_{01}}(\gamma_{10};\bs{\delta}_0,\gamma_{01},\bs{\delta}_1)\cdot\ep_i^+(\bs{\zeta}_i)\ep_i^-(\bs{\delta}_i)\cdot\gamma_{02}\end{alignat*}
\normalsize
summing over $\gamma_{ij}\in C(\La_j^+,\La_i^+)$, $\bs{\zeta}_i$ words of Reeb chords of $\La_i^+$, for $i=0,1,2$, and $\bs{\delta}_i$ words of Reeb chords of $\La_i^-$, for $i=0,1,2$.
Then we have
\begin{alignat*}{1}
\mfm_{++}^0(\gamma_{12},\gamma_{01})=\sum\limits_{p_{20},\bs{\delta}_i}\#\cM^0_{\Sigma_{012}}(p_{20};\bs{\delta}_0,\gamma_{01},\bs{\delta}_1,\gamma_{12},\bs{\delta}_2)\ep_i^-(\bs{\delta}_i)\cdot p_{20}
\end{alignat*}
summing over $p_{02}\in\Sigma_0\cap\Sigma_2$ and $\bs{\delta}_i$ as above.
And finally the last component of the product for this pair of generators is:
\small
\begin{alignat*}{1}
&\mfm_{++}^-(\gamma_{12},\gamma_{01})=\sum\limits_{\substack{\gamma_{20},\xi_{02}\\\bs{\delta}_i,\bs{\delta}_i'}}\#\widetilde{\cM^1}_{\R\times\La_{02}^-}(\gamma_{20};\bs{\delta}_0,\xi_{02},\bs{\delta}_2)\#\cM^0_{\Sigma_{012}}(\xi_{02};\bs{\delta}_0',\gamma_{01},\bs{\delta}_1',\gamma_{12},\bs{\delta}_2')\ep_i^-(\bs{\delta}_i)\ep_i^-(\bs{\delta}_i')\cdot \gamma_{20}\\
&+\sum\limits_{\substack{\gamma_{20},\xi_{01},\xi_{12}\\ \bs{\delta}_i,\bs{\delta}_i',\bs{\delta}_i''}}\#\widetilde{\cM^1}_{\R\times\La_{012}^-}(\gamma_{20};\bs{\delta}_0,\xi_{01},\bs{\delta}_1,\xi_{12},\bs{\delta}_2)\#\cM^0_{\Sigma_{01}}(\xi_{01};\bs{\delta}_0',\gamma_{01},\bs{\delta}_1')\#\cM^0_{\Sigma_{12}}(\xi_{12};\bs{\delta}_1'',\gamma_{12},\bs{\delta}_2'')\cdot\ep_i^-(\bs{\delta}_i\bs{\delta}_i'\bs{\delta}_i'')\cdot\gamma_{20}
\end{alignat*}
\normalsize
summing over $\gamma_{20}\in C(\La_0^-,\La_2^-)$, $\xi_{ij}\in C(\La_j^-,\La_i^-)$, and $\bs{\delta}_i$, $\bs{\delta}_i'$ words of Reeb chords of $\La_i^-$.
Then, for a pair of generators $(\gamma_{12},x_{01})\in C(\La_2^+,\La_1^+)\otimes CF(\Sigma_0,\Sigma_1)$ we define:
\small
\begin{alignat*}{1}
&\mfm_{+0}^+(\gamma_{12},x_{01})=\sum\limits_{\substack{\gamma_{02},\gamma_{10}\\\bs{\zeta}_i,\bs{\delta}_i}}\#\widetilde{\cM^1}_{\R\times\La_{012}^+}(\gamma_{02};\bs{\zeta}_0,\gamma_{10},\bs{\zeta}_1,\gamma_{12},\bs{\zeta}_2)\#\cM^0_{\Sigma_{01}}(\gamma_{10};\bs{\delta}_0,x_{01},\bs{\delta}_1)\cdot\ep_i^+(\bs{\zeta}_i)\ep_i^-(\bs{\delta}_i)\cdot\gamma_{02}\\
&\mfm_{+0}^0(\gamma_{12},x_{01})=\sum\limits_{p_{20},\bs{\delta}_i}\#\cM^0_{\Sigma_{012}}(p_{20};\bs{\delta}_0,x_{01},\bs{\delta}_1,\gamma_{12},\bs{\delta}_2)\ep_i^-(\bs{\delta}_i)\cdot p_{02}\\
&\mfm_{+0}^-(\gamma_{12},x_{01})=\sum\limits_{\substack{\gamma_{20},\xi_{02}\\\bs{\delta}_i,\bs{\delta}_i'}}\#\widetilde{\cM^1}_{\R\times\La_{02}^-}(\gamma_{20};\bs{\delta}_0,\xi_{02},\bs{\delta}_2)\#\cM^0_{\Sigma_{012}}(\xi_{02};\bs{\delta}_0',x_{01},\bs{\delta}_1',\gamma_{12},\bs{\delta}_2')\ep_i^-(\bs{\delta}_i)\ep_i^-(\bs{\delta}_i')\cdot \gamma_{20}\\
&+\sum\limits_{\substack{\gamma_{20},\xi_{01},\xi_{12}\\ \bs{\delta}_i,\bs{\delta}_i',\bs{\delta}_i''}}\#\widetilde{\cM^1}_{\R\times\La_{012}^-}(\gamma_{20};\bs{\delta}_0,\xi_{01},\bs{\delta}_1,\xi_{12},\bs{\delta}_2)\#\cM^0_{\Sigma_{01}}(\xi_{01};\bs{\delta}_0',x_{01},\bs{\delta}_1')\#\cM^0_{\Sigma_{12}}(\xi_{12};\bs{\delta}_1'',\gamma_{12},\bs{\delta}_2'')\cdot\ep_i^-(\bs{\delta}_i\bs{\delta}_i'\bs{\delta}_i'')\cdot\gamma_{20}
\end{alignat*}
\normalsize
We finish by defining the product for a pair $(\gamma_{12},\gamma_{10})\in C(\La_2^+,\La_1^+)\otimes C(\La^-_0,\La^-_1)$ as follows:
\small
\begin{alignat*}{1}
&\mfm_{+-}^+(\gamma_{12},\gamma_{10})=\sum\limits_{\substack{\gamma_{02},\xi_{10}\\\bs{\zeta}_i,\bs{\delta}_i}}\#\widetilde{\cM^1}_{\R\times\La_{012}^+}(\gamma_{02};\bs{\zeta}_0,\xi_{10},\bs{\zeta}_1,\gamma_{12},\bs{\zeta}_2)\#\cM^0_{\Sigma_{01}}(\xi_{10};\bs{\delta}_0,\gamma_{10},\bs{\delta}_1)\cdot\ep_i^+(\bs{\zeta}_i)\ep_i^-(\bs{\delta}_i)\cdot\gamma_{02}\\
&\mfm_{+-}^0(\gamma_{12},\gamma_{10})=\sum\limits_{p_{20},\bs{\delta}_i}\#\cM^0_{\Sigma_{012}}(p_{20};\bs{\delta}_0,\gamma_{10},\bs{\delta}_1,\gamma_{12},\bs{\delta}_2)\ep_i^-(\bs{\delta}_i)\cdot p_{02}\\
&	\mfm_{+-}^-(\gamma_{12},\gamma_{10})=\sum\limits_{\substack {\gamma_{20},\xi_{02}\\\bs{\delta}_i,\bs{\delta}_i'}}\#\widetilde{\cM^1}_{\R\times\La_{02}^-}(\gamma_{20};\bs{\delta}_0,\xi_{02},\bs{\delta}_2)\#\cM^0_{\Sigma_{012}}(\xi_{02};\bs{\delta}_0',\gamma_{10},\bs{\delta}_1',\gamma_{12},\bs{\delta}_2')\ep_i^-(\bs{\delta}_i)\ep_i^-(\bs{\delta}_i')\cdot \gamma_{20}\\
&\hspace{2cm}+\sum\limits_{\substack{\gamma_{20},\xi_{12}\\ \bs{\delta}_i,\bs{\delta}_i'}}\#\widetilde{\cM^1}_{\R\times\La_{012}^-}(\gamma_{20};\bs{\delta}_0,\gamma_{10},\bs{\delta}_1,\xi_{12},\bs{\delta}_2)\#\cM^0_{\Sigma_{12}}(\xi_{12};\bs{\delta}_1',\gamma_{12},\bs{\delta}_2')\cdot\ep_i^-(\bs{\delta}_i\bs{\delta}_i')\cdot\gamma_{20}
\end{alignat*}
\normalsize
The components $\mfm_{0+}^k$ and $\mfm_{-+}^k$ for $k=+,0,-$ are defined analogously as  $\mfm_{+0}^k$ and $\mfm_{+-}^k$. See Figures \ref{m++}, \ref{m+0} and \ref{m+-}.
\begin{figure}[ht]  
	\begin{center}\includegraphics[width=11cm]{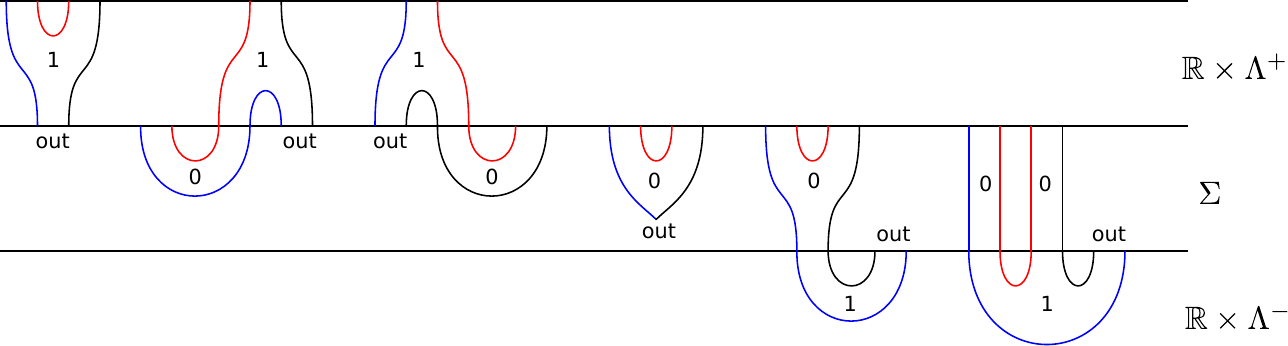}\end{center}
	\caption{Pseudo-holomorphic discs contributing to $\mfm_{++}^k$, $k=+,0,-$.}
	\label{m++}
\end{figure}
\begin{figure}[ht]  
	\begin{center}\includegraphics[width=8cm]{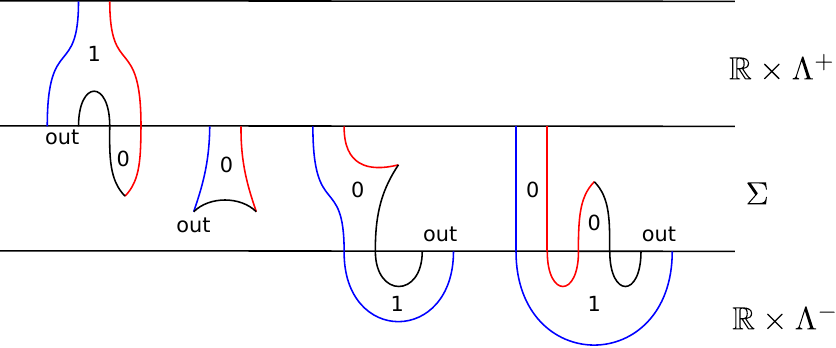}\end{center}
	\caption{Pseudo-holomorphic discs contributing to $\mfm_{+0}^k$, $k=+,0,-$.}
	\label{m+0}
\end{figure}
\begin{figure}[ht]  
	\begin{center}\includegraphics[width=8cm]{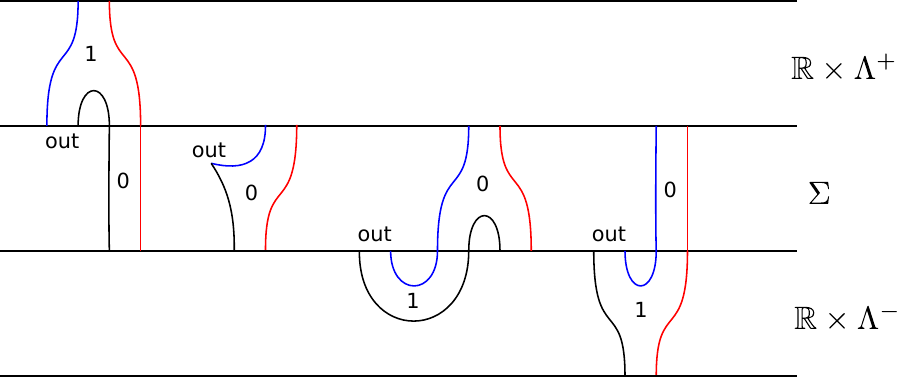}\end{center}
	\caption{Pseudo-holomorphic discs contributing to $\mfm_{+-}^k$, $k=+,0,-$.}
	\label{m+-}
\end{figure}
\begin{teo}\label{teo_Leibniz_rule}
	The map $\mfm_2$ satisfies the Leibniz rule, i.e. given three exact pairwise transverse Lagrangian cobordisms $\La_i^-\prec_{\Sigma_i}\La_i^+$ with augmentations $\ep_i^-$ of $\Ac(\La_i^-)$ for $i=0,1,2$, we have:
	\begin{alignat*}{1}
	\mfm_2(\mfm_1^{\ep^-_1,\ep^-_2},\cdot)+\mfm_2(\cdot,\mfm_1^{\ep^-_0,\ep^-_1})+\mfm_1^{\ep_0^-,\ep_2^-}\circ\mfm_2(\cdot,\cdot)=0
	\end{alignat*}
\end{teo}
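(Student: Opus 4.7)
The proof proceeds componentwise, in the same spirit as the proof of Theorem~\ref{diff}. For each choice of input types $i,j \in \{+,0,-\}$ and output type $k \in \{+,0,-\}$, I would identify a suitable family of $1$-dimensional moduli spaces (or a combination of an index-$1$ family on cylindrical ends glued with rigid curves on the cobordisms) such that the pseudo-holomorphic buildings arising at the boundary of the compactification, as described in Section~\ref{sec:structure}, account for exactly the terms in that component of $\mfm_2(\mfm_1,\cdot)+\mfm_2(\cdot,\mfm_1)+\mfm_1\circ\mfm_2$. Since the $1$-dimensional moduli spaces are interval bundles and their signed boundary counts vanish, each such identification gives one scalar equation of the Leibniz relation.

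The eight components of $\mfm_2$ whose inputs and output all lie in $CF_{-\infty}$ are exactly those defining $\mfm_2^{-\infty}$, for which the Leibniz rule is established in \cite{L}. It therefore suffices to verify the new components, namely those for which either an input or the output lies in $C(\La_j^+,\La_i^+)^\dagger[n-1]$. Per the convention of Remark~\ref{rem:delta}, all ``phantom'' breakings containing a non-trivial pure disc on a cylindrical level are discarded, and an augmentation $\ep_i^-$ applied to pure negative chords is interchanged freely with $\ep_i^+$ applied to pure positive chords.

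For output in $CF(\Sigma_0,\Sigma_2)$ I would study the $1$-dimensional spaces $\cM^1_{\Sigma_{012}}(p_{20};\bs{\delta}_0,a_1,\bs{\delta}_1,a_2,\bs{\delta}_2)$. The boundary splits into (a) two-level buildings with an interior intersection-point node, giving the three terms $\mfm_1^0\circ \mfm_2^0$, $\mfm_2^0(\cdot,\mfm_1^0\,\cdot)$ and $\mfm_2^0(\mfm_1^0\,\cdot,\cdot)$; (b) SFT buildings with an index-$1$ banana or $\Delta$-type component in the negative end producing the $\mfm_1^-$ contributions; and (c) SFT buildings with an index-$1$ $\Delta$-type component in the positive end, producing contributions that compose with $d_{0+}$ or feed into a $\mfm_2^+$ factor. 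For output in $C(\La_2^+,\La_0^+)^\dagger[n-1]$ I would combine the $1$-dimensional $\Delta$-moduli spaces $\widetilde{\cM^2}_{\R\times\La^+_{012}}(\gamma_{02};\bs{\zeta}_0,\cdot,\bs{\zeta}_1,\cdot,\bs{\zeta}_2)$ glued with rigid discs on the cobordisms, together with the $1$-dimensional spaces $\cM^1_{\Sigma_{012}}(\gamma_{02};\dots)$, to recover $\Delta_1^+ \circ \mfm_2^+$, $\mfm_2^+(\cdot,\mfm_1^+ \cdot)$, $\mfm_2^+(\mfm_1^+\cdot,\cdot)$ and the mixed terms with $d_{0+}$, exactly matching the $(+)$-component of the Leibniz equation. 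The output in $C(\La_0^-,\La_2^-)[1]$ is dual, using $1$-dimensional banana-type moduli spaces $\widetilde{\cM^2}_{\R\times\La^-_{012}}$ glued with rigid curves on the cobordisms; the relevant $(b_1^-)^2$-type identities from the proof of Theorem~\ref{diff} are what make the terms from the two levels fit together.

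The main obstacle is organisational: there are many subcases, each producing its own broken-disc diagram that must be correctly translated into the matrix entries of the Leibniz equation. The subtlest configurations are the ones in which a mixed Reeb chord node escapes into a positive cylindrical level of the building, since the resulting $\Delta_1^+$, $\mfm_2^+$, or banana component can itself carry additional index-$0$ sub-discs on the cobordisms, so one must verify that such ``mixed positive'' limits are paired consistently between the $\mfm_1^+ \circ \mfm_2$ and $\mfm_2(\mfm_1^+\cdot,\cdot)$ contributions, in the same way that the analogous pairing is arranged in \cite{CDGG2} for $\Cth(\Sigma_0,\Sigma_1)$ and in \cite{L} for $\mfm_2^{-\infty}$. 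Once the enumeration is complete, all other breakings match directly with the components of $\mfm_1^{\Sigma}$ described in Section~\ref{section:def_complex} and the conclusion follows.
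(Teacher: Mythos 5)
Your overall strategy is the paper's: reduce the eight $CF_{-\infty}$-components to \cite{L}, discard pure-disc breakings via Remark \ref{rem:delta}, and verify the remaining components by identifying, for each output type, the $1$-dimensional moduli spaces (or hybrid products of a cylindrical-level family with rigid cobordism-level discs) whose boundary counts produce exactly the terms of the Leibniz relation; the $0$-output case via $\cM^1_{\Sigma_{012}}(p_{20};\dots)$ is exactly what the paper does.

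There is, however, one concrete misstep in your description of the $+$ and $-$ output components. The spaces $\cM^1_{\Sigma_{012}}(\gamma_{02};\dots)$ you invoke for the output in $C(\La_2^+,\La_0^+)^\dagger[n-1]$ do not exist in this framework: a chord from $\La_0^+$ to $\La_2^+$ can only occur as an asymptotic of discs with boundary on the positive \emph{cylindrical} ends (it is a $\Delta$-type, i.e.\ negative, asymptotic there), never as an asymptotic of a disc with boundary on the cobordisms, whose positive-end mixed asymptotics are always of banana type in $\Rc(\La_0^+,\La_2^+)$. What is actually needed — and what produces the terms $\mfm_2^+(\id\otimes\mfm_1^{0})$, $\mfm_2^+(\id\otimes\mfm_1^{-})$ and the $d_{0+}$-terms you correctly anticipate — are the products in which the rigid piece is the cylindrical $\Delta$-disc and the $1$-parameter family sits on the cobordism level inside the $b_1^\Sigma$-factor of $\mfm_2^+=\Delta_2^+(\bs{b}_1^\Sigma\otimes\bs{b}_1^\Sigma)$, namely $\widetilde{\cM^1}_{\R\times\La_{012}^+}(\gamma_{02};\dots)\times\cM^1_{\Sigma_{01}}(\gamma_{10};\dots)$ and $\widetilde{\cM^1}_{\R\times\La_{012}^+}(\gamma_{02};\dots)\times\cM^1_{\Sigma_{12}}(\gamma_{21};\dots)$, alongside the products $\widetilde{\cM^2}_{\R\times\La_{012}^+}\times\cM^0_{\Sigma_{01}}$, $\widetilde{\cM^2}_{\R\times\La_{012}^+}\times\cM^0_{\Sigma_{12}}$ and the pure cylindrical space. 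The same asymmetry occurs in your $-$ output case: besides $\widetilde{\cM^2}_{\R\times\La^-}\times\cM^0_{\Sigma}$ (your ``glued with rigid curves'' configurations) one must also let the $1$-dimensional family live on the cobordism level, i.e.\ use $\widetilde{\cM^1}_{\R\times\La^-_{02}}\times\cM^1_{\Sigma_{012}}(\xi_{02};\dots)$ and the analogous triple products with one index-$1$ factor $\cM^1_{\Sigma_{01}}$ or $\cM^1_{\Sigma_{12}}$, since $\mfm_2^-=b_1^-\circ\Delta_2^\Sigma+b_2^-(\bs{\Delta}_1^\Sigma\otimes\bs{\Delta}_1^\Sigma)$ has its inner factors on the cobordisms. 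With these replacements your enumeration closes up exactly as in the paper's proof.
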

\begin{rem}
	A \textquotedblleft complete\textquotedblright\, notation for the product would be something like $\mfm^{\Sigma_0,\Sigma_1,\Sigma_2}_{\ep_0^-,\ep_1^-,\ep_2^-}$ as it depends on the choice of cobordisms and on the choice of augmentations of the negative ends. However, to simplify, we will just write it $\mfm_2$ as the choices mentioned  are clear from the context.
\end{rem}
As for $\mfm_1$, we can write the components $\mfm_2^+$ and $\mfm_2^-$ as a composition of maps, it will be convenient when describing the boundary of the compactification of 1-dimensional moduli spaces. First, we introduce the maps
\begin{alignat*}{1}
&\Delta_2^+:\mathfrak{C}^*(\La_1^+,\La_2^+)\otimes\mathfrak{C}^*(\La_0^+,\La_1^+)\to C_{n-1-*}(\La_2^+,\La_0^+)\\
&\Delta_2^\Sigma:\Cth_+(\Sigma_1,\Sigma_2)\otimes\Cth_+(\Sigma_0,\Sigma_1)\to C_{n-1-*}(\La_2^-,\La_0^-)\\
&b_2^-:\mathfrak{C}^*(\La_1^-,\La_2^-)\otimes\mathfrak{C}^*(\La_0^-,\La_1^-)\to C^{*-1}(\La_0^-,\La_2^-)	
\end{alignat*}
defined by
\begin{alignat*}{1}
&\Delta_2^+(\gamma_2,\gamma_1)=\sum\limits_{\gamma_{02},\bs{\zeta}_i}\#\widetilde{\cM^1}_{\R\times\La_{012}^+}(\gamma_{02};\bs{\zeta}_0,\gamma_1,\bs{\zeta}_1,\gamma_2,\bs{\zeta}_2)\ep_i^+(\bs{\zeta}_i)\cdot\gamma_{02}\\
&\Delta_2^\Sigma(a_2,a_1)=\sum\limits_{\gamma_{02},\bs{\delta}_i}\#\cM^0_{\Sigma_{012}}(\gamma_{02};\bs{\delta}_0,a_1,\bs{\delta}_1,a_2,\bs{\delta}_2)\ep_i^-(\bs{\delta}_i)\cdot\gamma_{02}\\
&b_2^-(\gamma_2,\gamma_1)=\sum\limits_{\gamma_{20},\bs{\delta}_i}\#\widetilde{\cM^1}_{\R\times\La_{012}^-}(\gamma_{20};\bs{\delta}_0,\gamma_1,\bs{\delta}_1,\gamma_2,\bs{\delta}_2)\ep_i^-(\bs{\delta}_i)\cdot\gamma_{20}
\end{alignat*}
and observe that $\Delta_2^\Sigma$ vanishes on $C(\La_1,\La_2)\otimes C(\La_0,\La_1)$ for energy reasons.
Using these maps, we have
\begin{alignat}{2}
	&\mfm_2^+=\Delta_2^+(\bs{b}_1^\Sigma\otimes\bs{b}_1^\Sigma)\label{defm+}\\
	&\mfm_2^-=b_1^-\circ\Delta_2^\Sigma+b_2^-(\bs{\Delta}_1^\Sigma\otimes\bs{\Delta}_1^\Sigma)\label{defm-}
\end{alignat}
where the maps $b_1^-, \bs{\Delta}_1^\Sigma$ are defined in Section \ref{section:def_complex} and $\bs{b}_1^\Sigma$ in Section \ref{sec:conc} (see also Section \ref{special_case}).

\subsection{Leibniz rule} \label{sec:Leibniz}
The map $\mfm_2$ restricted to $CF_{-\infty}(\Sigma_1,\Sigma_2)\otimes CF_{-\infty}(\Sigma_0,\Sigma_1)$ satisfies the Leibniz rule because $\mfm_2^{-\infty}$ satisfies it with respect to the differential $\mfm_1^{-\infty}$ (see \cite{L}) and there is no component of the differential $\mfm_1^{\ep_0^-,\ep_1^-}$ from the subcomplex $CF_{-\infty}(\Sigma_0,\Sigma_1)$ to $C(\La_0^+,\La_1^+)$. It remains to check the Leibniz rule for each pair of generators containing at least one Reeb chord in the positive end:
	\begin{enumerate}
		\item[(a)] $(\gamma_{12},\gamma_{01})\in C(\La_2^+,\La_1^+)\otimes C(\La_1^+,\La_0^+)$,
		\item[(b)] $(\gamma_{12},x_{01})\in C(\La_2^+,\La_1^+)\otimes CF(\Sigma_0,\Sigma_1)$ and $(x_{12},\gamma_{01})\in CF(\Sigma_1,\Sigma_2)\otimes C(\La_1^+,\La_0^+)$,
		\item[(c)] $(\gamma_{12},\gamma_{10})\in C(\La_2^+,\La_1^+)\otimes C(\La_0^-,\La_1^-)$ and $(\xi_{12},\gamma_{10})\in C(\La_1^-,\La_2^-)\otimes C(\La_1^+,\La_0^+)$,
	\end{enumerate}
As usual, the Leibniz rule will follow from the study of the boundary of the compactification of some (product of) moduli spaces. Recall that we described in Section \ref{sec:structure} the different types of broken discs arising in this boundary. We focus now on some particular moduli spaces and specify the algebraic contribution of each broken disc. \\

\noindent\textbf{Leibniz rule for a pair of type (a):}
For the pair of generators of type (a), we will show that the following three relations are satisfied: 
	\begin{alignat}{1}
	&\mfm_2^+(\mfm_1^{\ep_1^-,\ep_2^-}(\gamma_{12}),\gamma_{01})+\mfm_2^+(\gamma_{12},\mfm_1^{\ep_0^-,\ep_1^-}(\gamma_{01}))+\mfm_1^+\circ\mfm_2^+(\gamma_{12},\gamma_{01})=0\label{rel+++}\\
	&\mfm_2^0(\mfm_1^{\ep_1^-,\ep_2^-}(\gamma_{12}),\gamma_{01})+\mfm_2^0(\gamma_{12},\mfm_1^{\ep_0^-,\ep_1^-}(\gamma_{01}))+\mfm_1^0\circ\mfm_2(\gamma_{12},\gamma_{01})=0\label{rel++0}\\
	&\mfm_2^-(\mfm_1^{\ep_1^-,\ep_2^-}(\gamma_{12}),\gamma_{01})+\mfm_2^-(\gamma_{12},\mfm_1^{\ep_0^-,\ep_1^-}(\gamma_{01}))+\mfm_1^-\circ\mfm_2(\gamma_{12},\gamma_{01})=0\label{rel++-}
	\end{alignat}
After adding in \eqref{rel+++} the vanishing terms $\mfm_1^+\circ\mfm_2^0(\gamma_{12},\gamma_{01})=\mfm_1^+\circ\mfm_2^-(\gamma_{12},\gamma_{01})=0$, the sum of these three relations gives the Leibniz rule for the pair $(\gamma_{12},\gamma_{01})\in C(\La_1^+,\La_2^+)\otimes C(\La_0^+,\La_1^+)$.
First, we see that relation \eqref{rel+++} follows from the study of the boundary of the compactification of the following products of moduli spaces:
	\begin{alignat}{1}
	&\widetilde{\cM^2}_{\R\times\La_{012}^+}(\gamma_{02};\bs{\zeta}_0,\gamma_{01},\bs{\zeta}_1,\gamma_{12},\bs{\zeta}_2)\\
	&\widetilde{\cM^2}_{\R\times\La_{012}^+}(\gamma_{02};\bs{\zeta}_0,\gamma_{01},\bs{\zeta}_1,\gamma_{21},\bs{\zeta}_2)\times\cM^0_{\Sigma_{12}}(\gamma_{21};\bs{\delta}_1,\gamma_{12},\bs{\delta}_2)\label{mod2}\\
	&\widetilde{\cM^1}_{\R\times\La_{012}^+}(\gamma_{02};\bs{\zeta}_0,\gamma_{01},\bs{\zeta}_1,\gamma_{21},\bs{\zeta}_2)\times\cM^1_{\Sigma_{12}}(\gamma_{21};\bs{\delta}_1,\gamma_{12},\bs{\delta}_2)\label{mod3}\\
	&\widetilde{\cM^2}_{\R\times\La_{012}^+}(\gamma_{02};\bs{\zeta}_0,\gamma_{10},\bs{\zeta}_1,\gamma_{12},\bs{\zeta}_2)\times\cM^0_{\Sigma_{01}}(\gamma_{10};\bs{\delta}_0,\gamma_{01},\bs{\delta}_1)\label{mod4}\\
	&\widetilde{\cM^1}_{\R\times\La_{012}^+}(\gamma_{02};\bs{\zeta}_0,\gamma_{10},\bs{\zeta}_1,\gamma_{12},\bs{\zeta}_2)\times\cM^1_{\Sigma_{01}}(\gamma_{10};\bs{\delta}_0,\gamma_{01},\bs{\delta}_1)\label{mod5}
	\end{alignat}
\begin{figure}[ht]  
	\begin{center}\includegraphics[width=11cm]{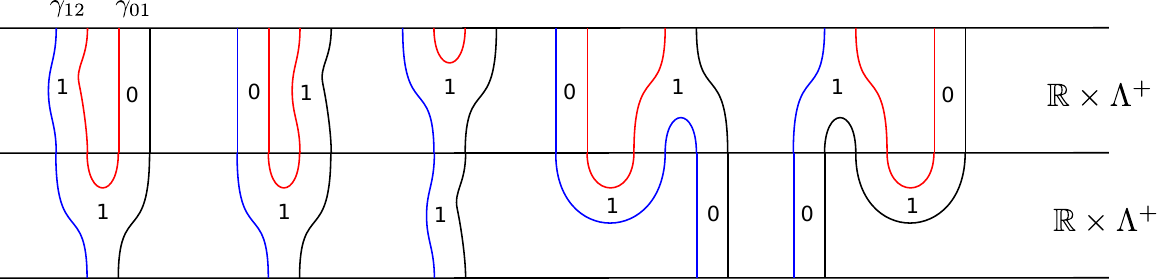}\end{center}
	\caption{Types of broken discs in the boundary of $\overline{\cM^2}_{\R\times\La_{012}^+}(\gamma_{02};\bs{\zeta}_0,\gamma_{01},\bs{\zeta}_1,\gamma_{12},\bs{\zeta}_2)$.}
	\label{bris++1}
\end{figure}
The broken discs in $\partial\overline{\cM^2}_{\R\times\La_{012}^+}(\gamma_{02};\bs{\zeta}_0,\gamma_{01},\bs{\zeta}_1,\gamma_{12},\bs{\zeta}_2)$ are schematized on Figure \ref{bris++1}. The sum of their algebraic contributions vanishes, and thus gives:
\begin{alignat}{1}
	\Delta_2^+(\mfm_1^+(\gamma_{12}),\gamma_{01})+\Delta_2^+(\gamma_{12},\mfm_1^+(\gamma_{01}))&+\mfm_1^+\circ\Delta_2^+(\gamma_{12},\gamma_{01})\label{rel++1}\\
	&+\Delta_2^+(b_1^+(\gamma_{12}),\gamma_{01})+\Delta_2^+(\gamma_{12},b_1^+(\gamma_{01}))=0\nonumber
\end{alignat}
The boundary of the compactification of \eqref{mod2}, see Figure \ref{bris++2}, gives the algebraic relation:
\begin{alignat}{1}
\Delta_2^+(b_1^\Sigma(\gamma_{12}),\mfm_1^+(\gamma_{01}))+\mfm_1^+\circ\Delta_2^+(b_1^\Sigma(\gamma_{12}),\gamma_{01})+\Delta_2^+(b_1^+\circ b_1^\Sigma(\gamma_{12}),\gamma_{01})=0\label{rel++2}
\end{alignat}
\begin{figure}[ht]  
	\begin{center}\includegraphics[width=8cm]{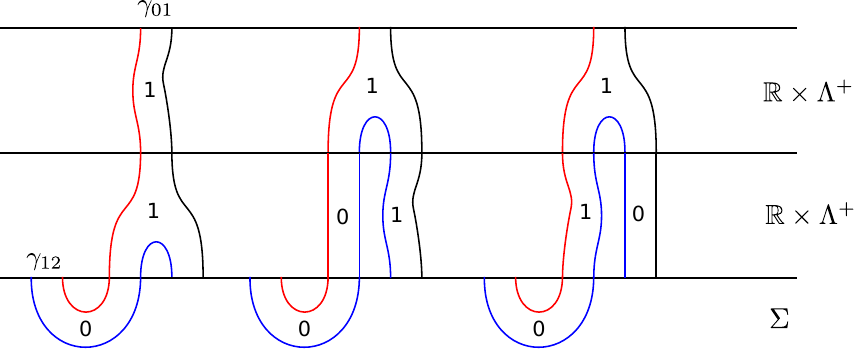}\end{center}
	\caption{Broken discs in $\partial\overline{\cM^2}_{\R\times\La_{012}^+}(\gamma_{02};\bs{\zeta}_0,\gamma_{01},\bs{\zeta}_1,\gamma_{21},\bs{\zeta}_2)\times\cM^0_{\Sigma_{12}}(\gamma_{21};\bs{\delta}_1,\gamma_{12},\bs{\delta}_2)$.}
	\label{bris++2}
\end{figure}

One gets the symmetric relation
\begin{alignat}{1}
\Delta_2^+(\mfm_1^+(\gamma_{12}),b_1^\Sigma(\gamma_{01}))+\mfm_1^+\circ\Delta_2^+(\gamma_{12},b_1^\Sigma(\gamma_{01}))+\Delta_2^+(\gamma_{12},b_1^+\circ b_1^\Sigma(\gamma_{01}))=0\label{rel++2bis}
\end{alignat}
by studying the boundary of \eqref{mod4}.
Finally, one gets the relation
\begin{alignat}{1}
\Delta_2^+(b_1^\Sigma\circ\mfm_1^+(\gamma_{12}),\gamma_{01})&+\Delta_2^+(b_1^+\circ b_1^\Sigma(\gamma_{12}),\gamma_{01})+\Delta_2^+(b_1^+(\gamma_{12}),\gamma_{01})\label{rel++3}\\
&+\Delta_2^+(b_1^\Sigma\circ\mfm_1^0(\gamma_{12}),\gamma_{01})+\Delta_2^+(b_1^\Sigma\circ\mfm_1^-(\gamma_{12}),\gamma_{01})=0\nonumber
\end{alignat}
and the symmetric
\begin{alignat}{1}
\Delta_2^+(\gamma_{12},b_1^\Sigma\circ\mfm_1^+(\gamma_{01}))&+\Delta_2^+(\gamma_{12},b_1^+\circ b_1^\Sigma(\gamma_{01}))+\Delta_2^+(\gamma_{12},b_1^+(\gamma_{01}))\label{rel++3bis}\\
&+\Delta_2^+(\gamma_{12},b_1^\Sigma\circ\mfm_1^0(\gamma_{01}))+\Delta_2^+(\gamma_{12},b_1^\Sigma\circ\mfm_1^-(\gamma_{01}))=0\nonumber
\end{alignat}
by studying first \eqref{mod3} and then \eqref{mod5} (see Figure \ref{bris++3}). Observe that for these last two, we consider the boundary of the compactification of moduli spaces of bananas with boundary on non cylindrical parts of the cobordisms and with two positive Reeb chord asymptotics, as we have done already in the proof of Lemma \ref{lem:rel}.
\begin{figure}[ht]  
	\begin{center}\includegraphics[width=10cm]{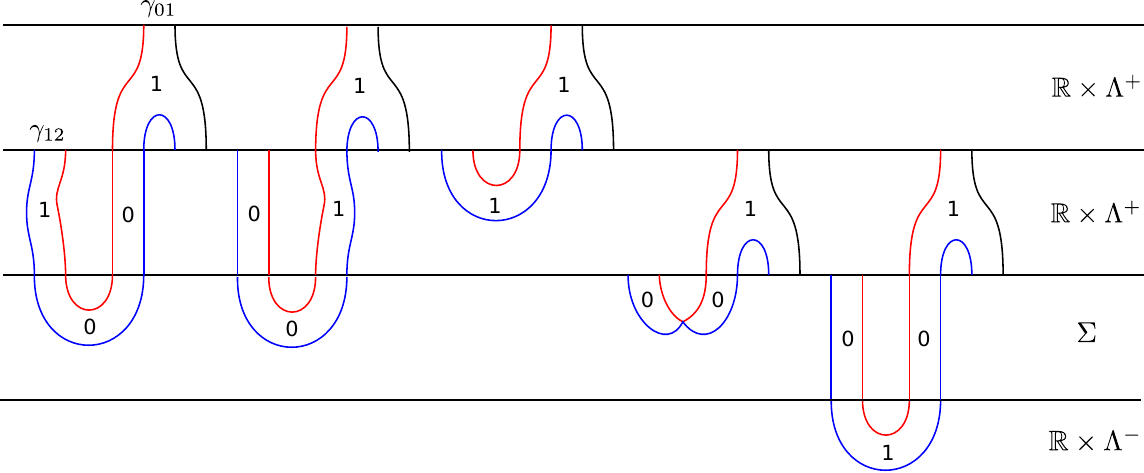}\end{center}
	\caption{Broken discs in $\widetilde{\cM^1}_{\R\times\La_{012}^+}(\gamma_{02};\bs{\zeta}_0,\gamma_{01},\bs{\zeta}_1,\gamma_{21},\bs{\zeta}_2)\times\partial\overline{\cM^1}_{\Sigma_{12}}(\gamma_{21};\bs{\delta}_1,\gamma_{12},\bs{\delta}_2)$.}
	\label{bris++3}
\end{figure}
Summing \eqref{rel++1}, \eqref{rel++2}, \eqref{rel++2bis}, \eqref{rel++3} and \eqref{rel++3bis}, cancelling terms appearing twice and using the definition of $\bs{b}_1^\Sigma$ and $\mfm_2^+$ given in \eqref{defm+}, one obtains relation \eqref{rel+++}.

Then, the study of the boundary of the compactification of 
\begin{alignat*}{1}
	\cM^1_{\Sigma_{012}}(p_{20};\bs{\delta}_0,\gamma_{01},\bs{\delta}_1,\gamma_{12},\bs{\delta}_2)
\end{alignat*}
gives relation \eqref{rel++0}, see Figure \ref{bris++0} for a description of broken discs. Indeed, the algebraic contributions of those discs are (from left to right and top to bottom on the figure):
\begin{alignat*}{1}
	&\mfm_2^0(\mfm_1^+(\gamma_{12}),\gamma_{01})+\mfm_2^0(\gamma_{12},\mfm_1^+(\gamma_{01}))+\mfm_1^0\circ\Delta_2^+(\gamma_{12},\gamma_{01})+\mfm_1^0\circ\Delta_2^+(b_1^\Sigma(\gamma_{12}),\gamma_{01})\\
	&+\mfm_1^0\circ\Delta_2^+(\gamma_{12},b_1^\Sigma(\gamma_{01}))+\mfm_2^0(\mfm_1^0(\gamma_{12}),\gamma_{01})+\mfm_2^0(\gamma_{12},\mfm_1^0(\gamma_{01}))+\mfm_1^0\circ\mfm_2^0(\gamma_{12},\gamma_{01})\\
	&+\mfm_2^0(\mfm_1^-(\gamma_{12}),\gamma_{01})+\mfm_2^0(\gamma_{12},\mfm_1^-(\gamma_{01}))+\mfm_1^0\circ b_1^-\circ\Delta_2^\Sigma(\gamma_{12},\gamma_{01})+\mfm_1^0\circ b_2^-(\Delta_1^\Sigma(\gamma_{12}),\Delta_1^\Sigma(\gamma_{01}))=0
\end{alignat*}
And using the definitions of $\mfm_2^+$ and $\mfm_2^-$ given in \eqref{defm+} and \eqref{defm-} one deduces relation \eqref{rel++0}.
\begin{figure}[ht]  
	\begin{center}\includegraphics[width=12cm]{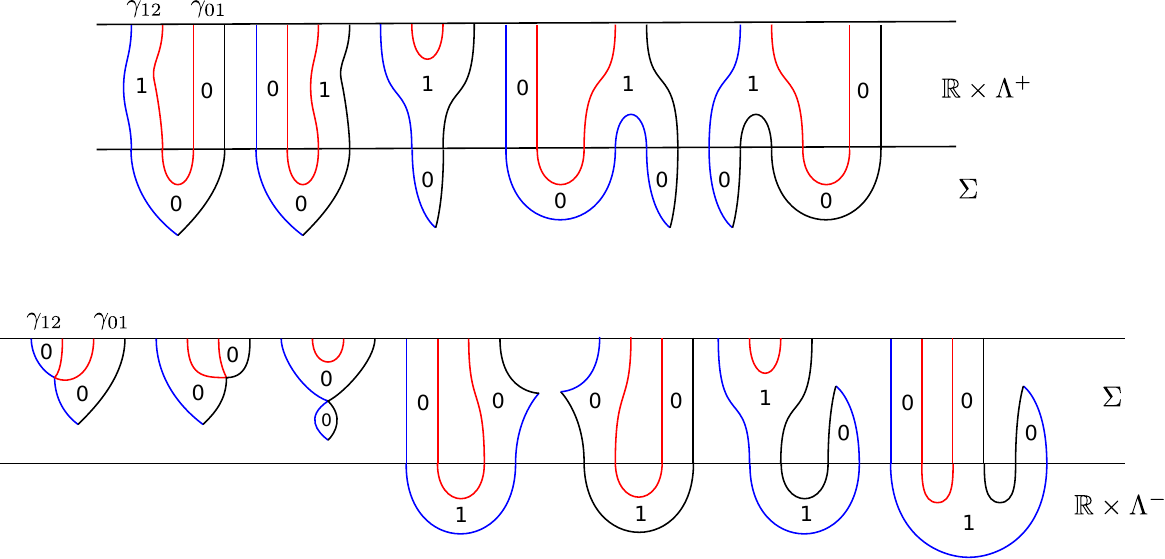}\end{center}
	\caption{Broken discs in $\partial\overline{\cM^1}_{\Sigma_{012}}(p_{20};\bs{\delta}_0,\gamma_{01},\bs{\delta}_1,\gamma_{12},\bs{\delta}_2)$.}
	\label{bris++0}
\end{figure}

Finally, analogously to the previous cases, the broken curves in the boundary of the compactification of
\begin{alignat}{1}
	&\widetilde{\cM^2}_{\R\times\La_{02}^-}(\gamma_{20};\bs{\delta}_0,\gamma_{02},\bs{\delta}_2)\times\cM^0_{\Sigma_{012}}(\gamma_{02};\bs{\delta}_0',\gamma_{01},\bs{\delta}_1',\gamma_{12},\bs{\delta}_2')\label{prod0}\\
	&\widetilde{\cM^1}_{\R\times\La_{02}^-}(\gamma_{20};\bs{\delta}_0,\gamma_{02},\bs{\delta}_2)\times\cM^1_{\Sigma_{012}}(\gamma_{02};\bs{\delta}_0',\gamma_{01},\bs{\delta}_1',\gamma_{12},\bs{\delta}_2')\label{prod0bis}\\
	&\widetilde{\cM^2}_{\R\times\La_{012}^-}(\gamma_{20};\bs{\delta}_0,\xi_{01},\bs{\delta}_1,\xi_{12},\bs{\delta}_2)\times\cM^0_{\Sigma_{01}}(\xi_{01};\bs{\delta}_0',\gamma_{01},\bs{\delta}_1')\times\cM^0_{\Sigma_{12}}(\xi_{12};\bs{\delta}_1'',\gamma_{12},\bs{\delta}_2'')\label{prod1}\\
	&\widetilde{\cM^1}_{\R\times\La_{012}^-}(\gamma_{20};\bs{\delta}_0,\xi_{01},\bs{\delta}_1,\xi_{12},\bs{\delta}_2)\times\cM^1_{\Sigma_{01}}(\xi_{01};\bs{\delta}_0',\gamma_{01},\bs{\delta}_1')\times\cM^0_{\Sigma_{12}}(\xi_{12};\bs{\delta}_1'',\gamma_{12},\bs{\delta}_2'')\label{prod2}\\
	&\widetilde{\cM^1}_{\R\times\La_{012}^-}(\gamma_{20};\bs{\delta}_0,\xi_{01},\bs{\delta}_1,\xi_{12},\bs{\delta}_2)\times\cM^0_{\Sigma_{01}}(\xi_{01};\bs{\delta}_0',\gamma_{01},\bs{\delta}_1')\times\cM^1_{\Sigma_{12}}(\xi_{12};\bs{\delta}_1'',\gamma_{12},\bs{\delta}_2'')\label{prod3}
\end{alignat}
give relation \eqref{rel++-}. First, let us consider \eqref{prod0} and \eqref{prod0bis}. There are two types of broken discs arising in $\partial\overline{\cM^2}_{\R\times\La_{02}^-}(\gamma_{20};\bs{\delta}_0,\gamma_{02},\bs{\delta}_2)$ giving the algebraic relation 
\begin{alignat*}{1}
b_1^-\circ b_1^-(\gamma_{02})+b_1^-\circ\Delta_1^-(\gamma_{02})=0
\end{alignat*}
that we have already considered in the proof of Theorem \ref{diff}.
Then, on Figure \ref{leibniz++-} are schematized the broken discs in $\partial\overline{\cM^1}_{\Sigma_{012}}(\gamma_{02};\bs{\delta}_0',\gamma_{01},\bs{\delta}_1',\gamma_{12},\bs{\delta}_2')$. From this, the broken discs in the boundary of the compactification of \eqref{prod0}
contribute algebraically to
\begin{alignat}{1}
	b_1^-\circ b_1^-\circ\Delta_2^\Sigma(\gamma_{12},\gamma_{01})+b_1^-\circ\Delta_1^-\circ\Delta_2^\Sigma(\gamma_{12},\gamma_{01})\label{rel_banane}
\end{alignat}
and the ones in the boundary of the compactification of \eqref{prod0bis}
contribute to (from top to bottom and left to right for discs on Figure \ref{leibniz++-})
\small
\begin{alignat}{1}
	&b_1^-\circ\Delta_2^\Sigma(\mfm_1^+(\gamma_{12}),\gamma_{01})+b_1^-\circ\Delta_2^\Sigma(\gamma_{12},\mfm_1^+(\gamma_{01}))+\mfm_1^-\circ\Delta_2^+(\gamma_{12},\gamma_{01})+\mfm_1^-\circ\Delta_2^+(b_1^\Sigma(\gamma_{12}),\gamma_{01})+\mfm_1^-\circ\Delta_2^+(\gamma_{12},b_1^\Sigma(\gamma_{01}))\nonumber\\
	&+\,b_1^-\circ\Delta_2^\Sigma(\mfm_1^0(\gamma_{12}),\gamma_{01})+b_1^-\circ\Delta_2^\Sigma(\gamma_{12},\mfm_1^0(\gamma_{01}))+\mfm_1^-\circ\mfm_2^0(\gamma_{12},\gamma_{01})\label{rel_autre}\\
	&+\,b_1^-\circ\Delta_2^\Sigma(\mfm_1^-(\gamma_{12}),\gamma_{01})+b_1^-\circ\Delta_2^\Sigma(\gamma_{12},\mfm_1^-(\gamma_{01}))+b_1^-\circ\Delta_2^-(\Delta_1^\Sigma(\gamma_{12}),\Delta_1^\Sigma(\gamma_{01}))+b_1^-\circ \Delta_1^-\circ\Delta_2^\Sigma(\gamma_{12},\gamma_{01})\nonumber
	\end{alignat}
\normalsize
\begin{figure}[ht]  
	\begin{center}\includegraphics[width=12cm]{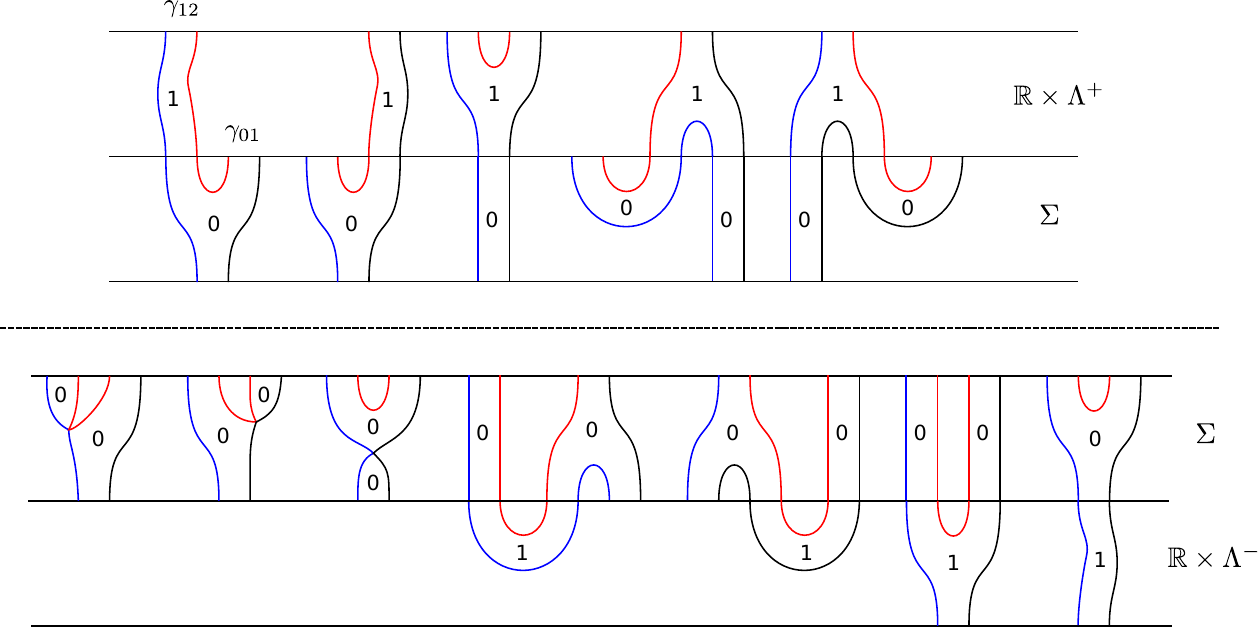}\end{center}
	\caption{Broken discs in $\partial\overline{\cM^1}_{\Sigma_{012}}(\gamma_{02};\bs{\delta}_0',\gamma_{01},\bs{\delta}_1',\gamma_{12},\bs{\delta}_2')$.}
	\label{leibniz++-}
\end{figure}
Note that the three last terms on the first line give $\mfm_1^-\circ\mfm_2^+(\gamma_{12},\gamma_{01})$ by definition of $\mfm_2^+$. Moreover, observe that the last term of \eqref{rel_banane} is the same as the last term of \eqref{rel_autre}. Thus, the boundary of the compactifications of \eqref{prod0} and \eqref{prod0bis} provides us with the following relation:
\begin{alignat}{1}
b_1^-\circ b_1^-\circ\Delta_2^\Sigma(\gamma_{12},\gamma_{01})&+b_1^-\circ\Delta_2^\Sigma(\mfm_1(\gamma_{12}),\gamma_{01})+b_1^-\circ\Delta_2^\Sigma(\gamma_{12},\mfm_1(\gamma_{01}))\label{rel_again1}\\
&+\mfm_1^-\circ(\mfm_2^++\mfm_2^0)(\gamma_{12},\gamma_{01})+b_1^-\circ\Delta_2^-(\Delta_1^\Sigma(\gamma_{12}),\Delta_1^\Sigma(\gamma_{01}))=0\nonumber
\end{alignat}

Let us now consider the products \eqref{prod1}, \eqref{prod2} and \eqref{prod3}. The broken discs arising in the boundary of the compactification of moduli spaces of bananas with three positive mixed asymptotics are schematized on Figure \ref{fig:bris_triple_banana}. We deduce from this that the broken discs in the boundary of the compactification of \eqref{prod1} give the relation:
\begin{alignat}{1}
b_1^-\circ b_2^-\big(&\Delta_1^\Sigma(\gamma_{12}),\Delta_1^\Sigma(\gamma_{01})\big)+b_1^-\circ\Delta_2^-\big(\Delta_1^\Sigma(\gamma_{12}),\Delta_1^\Sigma(\gamma_{01})\big)\label{rel_again2}\\
&+b_2^-\Big(\big(\Delta_1^-+b_1^-\big)\big(\Delta_1^\Sigma(\gamma_{12})\big),\Delta_1^\Sigma(\gamma_{01})\Big)+b_2^-\Big(\Delta_1^\Sigma(\gamma_{12}),\big(\Delta_1^-+b_1^-\big)\big(\Delta_1^\Sigma(\gamma_{01})\big)\Big)=0\nonumber
\end{alignat}
\begin{figure}[ht]  
	\begin{center}\includegraphics[width=12cm]{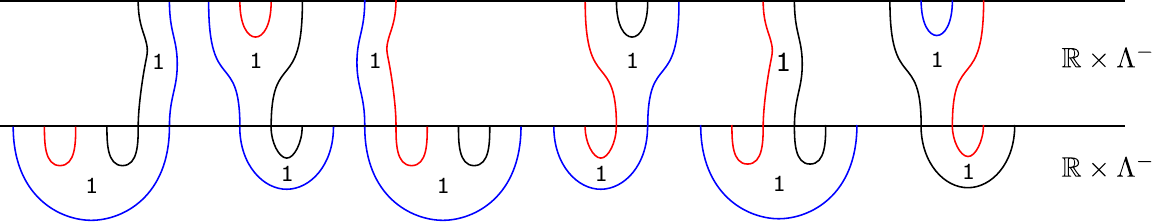}\end{center}
	\caption{Broken discs in $\partial\overline{\cM^2}_{\R\times\La_{012}^-}(\gamma_{20};\bs{\delta}_0,\xi_{01},\bs{\delta}_1,\xi_{12},\bs{\delta}_2)$.}
	\label{fig:bris_triple_banana}
\end{figure}
The last moduli spaces to study are moduli spaces of discs with boundary on the non-cylindrical parts of the cobordisms, with a positive and a negative mixed Reeb chord asymptotic. We have already considered the boundary of the compactification of such moduli spaces in the proof of Theorem \ref{diff} as well as in the proof of Lemma \ref{lem:rel}, see also Figure \ref{broken_lemma}. The algebraic contributions of broken discs in $\partial\overline{\cM^1}_{\Sigma_{01}}(\xi_{01};\bs{\delta}_0',\gamma_{01},\bs{\delta}_1')$ and $\partial\overline{\cM^1}_{\Sigma_{12}}(\xi_{12};\bs{\delta}_1'',\gamma_{12},\bs{\delta}_2'')$ give the following relations:
\begin{alignat}{1}
&\Delta_1^\Sigma\circ\Delta_1^+(\gamma_{01})+\Delta_1^\Sigma\circ\mfm_1^0(\gamma_{01})+\Delta_1^-\circ\Delta_1^\Sigma(\gamma_{01})=0\label{rel_again3}\\
&\Delta_1^\Sigma\circ\Delta_1^+(\gamma_{12})+\Delta_1^\Sigma\circ\mfm_1^0(\gamma_{12})+\Delta_1^-\circ\Delta_1^\Sigma(\gamma_{12})=0\label{rel_again4}
\end{alignat}
Observe now that in  the two last terms of the sum \eqref{rel_again2} we have $b_1^-\circ\Delta_1^\Sigma(\gamma_{12})=\mfm_1^-(\gamma_{12})$ and $b_1^-\circ\Delta_1^\Sigma(\gamma_{01})=\mfm_1^-(\gamma_{01})$ by definition of $\mfm_1^-$: Moreover in the same terms one can replace $\Delta_1^-\circ\Delta_1^\Sigma(\gamma_{12})$ and $\Delta_1^-\circ\Delta_1^\Sigma(\gamma_{01})$ by $\Delta_1^\Sigma\circ(\mfm_1^++\mfm_1^0)(\gamma_{12})$ and $\Delta_1^\Sigma\circ(\mfm_1^++\mfm_1^0)(\gamma_{01})$ respectively, using the relations \eqref{rel_again3} and \eqref{rel_again4} and the definition of $\mfm_1^+$. Finally, recall that by definition of $\bs{\Delta}_1^\Sigma$ we have $\Delta_1^\Sigma\circ(\mfm_1^++\mfm_1^0)=\bs{\Delta}_1^\Sigma\circ(\mfm_1^++\mfm_1^0)$ and $\mfm_1^-=\bs{\Delta}_1^\Sigma\circ\mfm_1^-$.  Thus, relation \eqref{rel_again2} can be rewritten:
\begin{alignat}{1}
b_2^-\big(\bs{\Delta}_1^\Sigma\otimes\bs{\Delta}_1^\Sigma\big)&\big(\mfm_1(\gamma_{12}),\gamma_{01}\big)+b_2^-\big(\bs{\Delta}_1^\Sigma\otimes\bs{\Delta}_1^\Sigma\big)\big(\gamma_{12},\mfm_1(\gamma_{01})\big)\label{rel_again5}\\
&+b_1^-\circ b_2^-(\bs{\Delta}_1^\Sigma\otimes\bs{\Delta}_1^\Sigma)(\gamma_{12},\gamma_{01})+b_1^-\circ\Delta_2^-(\Delta_1^\Sigma(\gamma_{12}),\Delta_1^\Sigma(\gamma_{01}))=0\nonumber
\end{alignat}
In order to get the Leibniz rule relation \eqref{rel++-}, we sum relations \eqref{rel_again1} and \eqref{rel_again5}, removing the term which appears twice (the last term in each of them), and get:
\small
\begin{alignat*}{1}
&b_1^-\circ b_1^-\circ\Delta_2^\Sigma(\gamma_{12},\gamma_{01})+b_1^-\circ\Delta_2^\Sigma(\mfm_1(\gamma_{12}),\gamma_{01})+b_1^-\circ\Delta_2^\Sigma(\gamma_{12},\mfm_1(\gamma_{01}))+\mfm_1^-\circ(\mfm_2^++\mfm_2^0)(\gamma_{12},\gamma_{01})\\
&+b_2^-\big(\bs{\Delta}_1^\Sigma\otimes\bs{\Delta}_1^\Sigma\big)\big(\mfm_1(\gamma_{12}),\gamma_{01}\big)+b_2^-\big(\bs{\Delta}_1^\Sigma\otimes\bs{\Delta}_1^\Sigma\big)\big(\gamma_{12},\mfm_1(\gamma_{01})\big)+b_1^-\circ b_2^-(\bs{\Delta}_1^\Sigma\otimes\bs{\Delta}_1^\Sigma)(\gamma_{12},\gamma_{01})=0
\end{alignat*}
\normalsize
By definition of $\mfm_1^-$ and $\mfm_2^-$, the sum of the first and the last term gives $\mfm_1^-\circ\mfm_2^-(\gamma_{12},\gamma_{01})$, the sum of the second and fifth term gives $\mfm_2^-(\mfm_1(\gamma_{12}),\gamma_{01})$, and the sum of the third and sixth term gives $\mfm_2^-(\gamma_{12},\mfm_1(\gamma_{01}))$. We have thus shown that relation \eqref{rel++-} holds.\\

\noindent\textbf{Leibniz rule for a pair of type (b):}

Let us consider a pair $(\gamma_{12},x_{01})$ of generators of type (b). The Leibniz rule for such a pair decomposes into the following three relations:
\begin{alignat}{1}
&\mfm_2^+(\Delta_1^+(\gamma_{12}),x_{01})+\mfm^+_2(\gamma_{12},\mfm_1(x_{01}))+\mfm_1^+\circ\mfm^+_2(\gamma_{12},x_{01})=0\label{rel+0+}\\
&\mfm_2^0(\mfm_1(\gamma_{12}),x_{01})+\mfm_2^0(\gamma_{12},\mfm_1(x_{01}))+\mfm_1^0\circ\mfm_2(\gamma_{12},x_{01})=0\label{rel+00}\\
&\mfm_2^-(\mfm_1(\gamma_{12}),x_{01})+\mfm_2^-(\gamma_{12},\mfm_1(x_{01}))+\mfm_1^-\circ\mfm_2(\gamma_{12},x_{01})=0\label{rel+0-}
\end{alignat}
where for \eqref{rel+0+} we make use of the fact that $\mfm_2^+(\mfm_1^0(\gamma_{12}),x_{01}))$ and $\mfm_2^+(\mfm_1^-(\gamma_{12}),x_{01})$ vanish by definition ($\mfm_{00}^+=\mfm_{-0}^+=0$).
The study of the boundary of the compactification of the products
\begin{alignat*}{1}
	&\widetilde{\cM^2}_{\R\times\La_{012}^+}(\gamma_{02};\bs{\zeta}_0,\xi_{10},\bs{\zeta}_1,\gamma_{12},\bs{\zeta}_2)\times\cM^0_{\Sigma_{01}}(\xi_{10};\bs{\delta}_0,x_{01},\bs{\delta}_1)\\
	&\widetilde{\cM^1}_{\R\times\La_{012}^+}(\gamma_{02};\bs{\zeta}_0,\xi_{10},\bs{\zeta}_1,\gamma_{12},\bs{\zeta}_2)\times\cM^1_{\Sigma_{01}}(\xi_{10};\bs{\delta}_0,x_{01},\bs{\delta}_1)
\end{alignat*}
gives relation \eqref{rel+0+}. In order to get relation \eqref{rel+00} we need to study the boundary of
\begin{alignat*}{1}
	\cM^1_{\Sigma_{012}}(p_{20};\bs{\delta}_0,x_{01},\bs{\delta}_1,\gamma_{12},\bs{\delta}_2)
\end{alignat*}
and finally for relation \eqref{rel+0-}, we study
\begin{alignat*}{1}
	&\widetilde{\cM^2}_{\R\times\La_{02}^-}(\gamma_{20};\bs{\delta}_0,\xi_{02},\bs{\delta}_2)\times\cM^0_{\Sigma_{012}}(\xi_{02};\bs{\delta}_0',x_{01},\bs{\delta}_1',\gamma_{12},\bs{\delta}_2')\\
	&\widetilde{\cM^1}_{\R\times\La_{02}^-}(\gamma_{20};\bs{\delta}_0,\xi_{02},\bs{\delta}_2)\times\cM^1_{\Sigma_{012}}(\xi_{02};\bs{\delta}_0',x_{01},\bs{\delta}_1',\gamma_{12},\bs{\delta}_2')\\
	&\widetilde{\cM^2}_{\R\times\La_{012}^-}(\gamma_{20};\bs{\delta}_0,\xi_{01},\bs{\delta}_1,\xi_{12},\bs{\delta}_2)\times\cM^0_{\Sigma_{01}}(\xi_{01};\bs{\delta}_0',x_{01},\bs{\delta}_1')\times\cM^0_{\Sigma_{12}}(\xi_{12};\bs{\delta}_1'',\gamma_{12},\bs{\delta}_2'')\\
	&\widetilde{\cM^1}_{\R\times\La_{012}^-}(\gamma_{20};\bs{\delta}_0,\xi_{01},\bs{\delta}_1,\xi_{12},\bs{\delta}_2)\times\cM^1_{\Sigma_{01}}(\xi_{01};\bs{\delta}_0',x_{01},\bs{\delta}_1')\times\cM^0_{\Sigma_{12}}(\xi_{12};\bs{\delta}_1'',\gamma_{12},\bs{\delta}_2'')\\
	&\widetilde{\cM^1}_{\R\times\La_{012}^-}(\gamma_{20};\bs{\delta}_0,\xi_{01},\bs{\delta}_1,\xi_{12},\bs{\delta}_2)\times\cM^0_{\Sigma_{01}}(\xi_{01};\bs{\delta}_0',x_{01},\bs{\delta}_1')\times\cM^1_{\Sigma_{12}}(\xi_{12};\bs{\delta}_1'',\gamma_{12},\bs{\delta}_2'')
\end{alignat*}

\noindent\textbf{Leibniz rule for a pair of type (c):}

Finally, for a pair $(\gamma_{12},\gamma_{10})$ of generators of type (c), we decompose the Leibniz rule into:

\begin{alignat}{1}
&\mfm_2^+(\Delta_1^+(\gamma_{12}),\gamma_{10})+\mfm_2^+(\gamma_{12},\mfm_1(\gamma_{10}))+\mfm_1^+\circ\mfm_2^+(\gamma_{12},\gamma_{10})=0\label{rel+-+}\\
&\mfm_2^0(\mfm_1(\gamma_{12}),\gamma_{10})+\mfm_2^0(\gamma_{12},\mfm_1(\gamma_{10}))+\mfm_1^0\circ\mfm_2(\gamma_{12},\gamma_{10})=0\label{rel+-0}\\
&\mfm_2^-(\mfm_1(\gamma_{12}),\gamma_{10})+\mfm_2^-(\gamma_{12},\mfm_1(\gamma_{10}))+\mfm_1^-\circ\mfm_2(\gamma_{12},\gamma_{10})=0\label{rel+--}
\end{alignat}
and observe that one of the two terms contributing to $\mfm_2^-(\gamma_{12},\mfm_1^0(\gamma_{10}))$, namely $b_2^-(\Delta_1^\Sigma(\gamma_{12}),\Delta_1^\Sigma\circ\mfm_1^0(\gamma_{10}))$, vanishes for energy reasons. Relations \eqref{rel+-+}, \eqref{rel+-0} and \eqref{rel+--} are obtained respectively by studying the boundary of the compactification of
\begin{alignat*}{1}
	&\widetilde{\cM^2}_{\R\times\La_{012}^+}(\gamma_{02};\bs{\zeta}_0,\xi_{10},\bs{\zeta}_1,\gamma_{12},\bs{\zeta}_2)\times\cM^0_{\Sigma_{01}}(\xi_{10};\bs{\delta}_0,\gamma_{10},\bs{\delta}_1)\\
	&\widetilde{\cM^1}_{\R\times\La_{012}^+}(\gamma_{02};\bs{\zeta}_0,\xi_{10},\bs{\zeta}_1,\gamma_{12},\bs{\zeta}_2)\times\cM^1_{\Sigma_{01}}(\xi_{10};\bs{\delta}_0,\gamma_{10},\bs{\delta}_1)
\end{alignat*}
of	$\cM^1_{\Sigma_{012}}(p_{20};\bs{\delta}_0,\gamma_{10},\bs{\delta}_1,\gamma_{12},\bs{\delta}_2)$,
and of
\begin{alignat*}{1}	
	&\widetilde{\cM^2}_{\R\times\La_{02}^-}(\gamma_{20};\bs{\delta}_0,\xi_{02},\bs{\delta}_2)\times\cM^0_{\Sigma_{012}}(\xi_{02};\bs{\delta}_0',\gamma_{10},\bs{\delta}_1',\gamma_{12},\bs{\delta}_2')\\
	&\widetilde{\cM^1}_{\R\times\La_{02}^-}(\gamma_{20};\bs{\delta}_0,\xi_{02},\bs{\delta}_2)\times\cM^1_{\Sigma_{012}}(\xi_{02};\bs{\delta}_0',\gamma_{10},\bs{\delta}_1',\gamma_{12},\bs{\delta}_2')\\
	&\widetilde{\cM^2}_{\R\times\La_{012}^-}(\gamma_{20};\bs{\delta}_0,\gamma_{10},\bs{\delta}_1,\xi_{12},\bs{\delta}_2)\times\cM^0_{\Sigma_{12}}(\xi_{12};\bs{\delta}_1',\gamma_{12},\bs{\delta}_2')\\
	&\widetilde{\cM^1}_{\R\times\La_{012}^-}(\gamma_{20};\bs{\delta}_0,\gamma_{10},\bs{\delta}_1,\xi_{12},\bs{\delta}_2)\times\cM^1_{\Sigma_{12}}(\xi_{12};\bs{\delta}_1',\gamma_{12},\bs{\delta}_2')
\end{alignat*}

\section{Product in the concatenation}\label{sec:prod_conc}

\subsection{Definition of the product}

Given a pair of concatenation $(V_0\odot W_0,V_1\odot W_1)$, we denote $\mfm_1^V,\mfm_1^W$ the differentials of the complexes $\Cth_+(V_0,V_1)$ and $\Cth_+(W_0,W_1)$ respectively. Given a third concatenation $V_2\odot W_2$, we denote again $\mfm_1^V$ and $\mfm_1^W$ the differentials on complexes $\Cth_+(V_i,V_j)$ and $\Cth_+(W_i,W_j)$ respectively, for $0\leq i\neq j\leq2$, without specifying the pair of cobordisms when it is clear from the context.
Moreover, we will use the transfer maps $\bs{b}_1^{V_i,V_j}:\Cth_+(V_i\odot W_i,V_j\odot W_j)\to\Cth_+(W_i,W_j)$ and $\bs{\Delta}_1^{W_i,W_j}:\Cth_+(V_i\odot W_i,V_j\odot W_j)\to\Cth_+(V_i,V_j)$ and will shorten the notations to $\bs{b}_1^V$ and $\bs{\Delta}_1^W$ as there should not be any risk of confusion about which pair of cobordisms is involved in the domain and codomain.
Finally, we denote $\mfm_2^V,\mfm_2^W$ the products $\Cth_+(V_1,V_2)\otimes\Cth_+(V_0,V_1)\to\Cth_+(V_0,V_2)$ and $\Cth_+(W_1,W_2)\otimes\Cth_+(W_0,W_1)\to\Cth_+(W_0,W_2)$ respectively. We now define a product:
\begin{alignat*}{1}
\mfm_2^{V\odot W}:\Cth_+(V_1\odot W_1,V_2\odot W_2)\otimes\Cth_+(V_0\odot W_0,V_1\odot W_1)\to\Cth_+(V_0\odot W_0,V_2\odot W_2)
\end{alignat*}
Using maps we already defined before, as well as the two inputs banana $b_2^V$ with boundary on $V_0\cup V_1\cup V_2$ (we encountered in Section \ref{section:def_product} the two inputs banana $b_2^-$ with boundary on cylindrical ends), defined by 
\begin{alignat*}{1}
	&b_2^V:\Cth_+(V_1,V_2)\otimes\Cth_+(V_0,V_1)\to C^{*-1}(\La_0,\La_1)\\
	&b_2^V(a_2,a_1)=\sum_{\gamma_{20},\bs{\delta}_i}\#\cM^0_{V_{012}}(\gamma_{20};\bs{\delta}_0,a_1,\bs{\delta}_1,a_2,\bs{\delta}_2)\cdot\ep_i^-(\bs{\delta}_i)\cdot\gamma_{20}
\end{alignat*}
we set:
\begin{alignat*}{2}
\mfm_2^{V\odot W}&=\mfm_2^{W,+0}\big(\bs{b}_1^V\otimes\bs{b}_1^V\big)+\mfm_1^{W,+0}\circ\,b_1^V\circ\Delta_2^W\big(\bs{b}_1^V\otimes\bs{b}_1^V\big)+\mfm_1^{W,+0}\circ\,b_2^V\big(\bs{\Delta}_1^W\otimes\bs{\Delta}_1^W\big)\\
&\,+\mfm_2^{V,0-}\big(\bs{\Delta}_1^W\otimes\bs{\Delta}_1^W\big)+\mfm_1^{V,0-}\circ\,\Delta_2^W\big(\bs{b}_1^V\otimes\bs{b}_1^V\big)
\end{alignat*}
where $\mfm_i^{W,+0}=\mfm_i^{W,+}+\mfm_i^{W,0}$, $i=1,2$ is the component of $\mfm_i^W$ with values in $C(\La_2^+,\La_0^+)\oplus CF(W_0,W_2)$, and $\mfm_i^{V,0-}=\mfm_i^{V,0}+\mfm_i^{V,-}$, $i=1,2$, is the component of $\mfm_i^V$ with values in $CF(V_0,V_2)\oplus C(\La_0^-,\La_2^-)$. Observe that $\mfm_1^{W,+}\circ\,b_1^V\circ\Delta_2^W\big(\bs{b}_1^V\otimes\bs{b}_1^V\big)$ and $\mfm_1^{W,+}\circ\,b_2^V\big(\bs{\Delta}_1^W\otimes\bs{\Delta}_1^W\big)$ vanish, but we keep it in the formula to make it look more homogeneous, which helps a bit to check the Leibniz rule in the next section.

\subsection{Leibniz rule}\label{sec:LCconc}
This section is dedicated in proving that the map $\mfm_2^{V\odot W}$ satisfies the Leibniz rule with respect to $\mfm_1^{V\odot W}$. This is just computation.
We want to show
\begin{alignat*}{1}
\mfm_2^{V\odot W}(\mfm_1^{V\odot W}\otimes\id)+\mfm_2^{V\odot W}(\id\otimes\mfm_1^{V\odot W})+\mfm_1^{V\odot W}\circ\mfm_2^{V\odot W}=0
\end{alignat*}
We will actually decompose it into two equations:
\begin{alignat}{1}
&\mfm_2^{V\odot W,+0_W}(\mfm_1^{V\odot W}\otimes\id)+\mfm_2^{V\odot W,+0_W}(\id\otimes\mfm_1^{V\odot W})+\mfm_1^{V\odot W,+0_W}\circ\mfm_2^{V\odot W}=0\label{Leibniz_conc1}\\
&\mfm_2^{V\odot W,0_V-}(\mfm_1^{V\odot W}\otimes\id)+\mfm_2^{V\odot W,0_V-}(\id\otimes\mfm_1^{V\odot W})+\mfm_1^{V\odot W,0_V-}\circ\mfm_2^{V\odot W}=0\label{Leibniz_conc2}
\end{alignat}
The first one corresponds to the components of the Leibniz rule taking values in $C(\La_2^+,\La_0^+)\oplus CF(W_0,W_2)$, and the second one to the components taking values in $CF(V_0,V_2)\oplus C(\La_0^-,\La_2^-)$.\\

In the proof of the Leibniz rule, we will refer to the following equations:
\begin{alignat}{1}
&\mfm_2^{W,+0}(\mfm_1^W\otimes\id)+\mfm_2^{W,+0}(\id\otimes\mfm_1^W)+\mfm_1^{W,+0}\circ\mfm_2^W=0\label{eq000}\\
&\mfm_2^{V,0-}(\mfm_1^V\otimes\id)+\mfm_2^{V,0-}(\id\otimes\mfm_1^V)+\mfm_1^{V,0-}\circ\mfm_2^V=0\label{eq0000}\\
&\Delta_2^W(\mfm_1^W\otimes\id)+\Delta_2^W(\id\otimes\mfm_1^W)+\Delta_1^W\circ\mfm_2^W+\Delta_2^\Lambda(\bs{\Delta}_1^W\otimes\bs{\Delta}_1^W)+\Delta_1^\Lambda\circ\Delta_2^W=0\label{eq3}\\
&b_2^V(\mfm_1^V\otimes\id)+b_2^V(\id\otimes\mfm_1^V)+b_1^V\circ\mfm_2^V+\,b_2^\La(\bs{b}_1^V\otimes\bs{b}_1^V)+b_1^\La\circ b_2^V=0\label{eq4}\\
&\bs{b}_1^V\circ\bs{\Delta}_1^W=\bs{\Delta}_1^W\circ\bs{b}_1^V\label{eq5}
\end{alignat} 
Equations \eqref{eq000} and \eqref{eq0000} come from the fact that $\mfm_2^W$ and $\mfm_2^V$ satisfy the Leibniz rule. 
Equations \eqref{eq3} and \eqref{eq4} (for other Lagrangian boundary conditions) appear implicitly in Section \ref{sec:Leibniz}: they come respectively from the study the boundary of the compactification of moduli spaces
\begin{alignat*}{1}
\cM^1_{W_{012}}(\gamma_{02};\bs{\delta}_0^W,a_1^W,\bs{\delta}_1^W,a_2^W,\bs{\delta}_2^W)\,\,\mbox{ and }\,\,\cM^1_{V_{012}}(\gamma_{20},\bs{\delta}_0^V,a_1^V,\bs{\delta}_1^V,a_2^V,\bs{\delta}_2^V), 
\end{alignat*}
for $\gamma_{02}\in C(\La_2,\La_0)$, $\gamma_{20}\in C(\La_0,\La_2)$, $(a_2^W,a_1^W)\in\Cth_+(W_1,W_2)\otimes\Cth_+(W_0,W_1)$, $(a_2^V,a_1^V)\in\Cth_+(V_1,V_2)\otimes\Cth_+(V_0,V_1)$, $\bs{\delta}_i^W$ words of pure Reeb chords of $\La_i$, $\bs{\delta}_i^V$ words of pure Reeb chords of $\La_i^-$.
Finally, Equation \eqref{eq5} is the content of Lemma \ref{bcircdelta}.

\subsubsection{Equation \eqref{Leibniz_conc1}}\label{sec:LC1}

Let us write the left-hand side of Equation \eqref{Leibniz_conc1} as (LR1), i.e. \eqref{Leibniz_conc1} $\Leftrightarrow$ (LR1)=0.
We start by developing the first term of (LR1), using the definition of $\mfm_2^{V\odot W}$ and the fact that $\bs{b}_1^V$ and $\bs{\Delta}_1^W$ are chain maps:

\begin{alignat*}{1}
\mfm&_2^{V\odot W,+0_W}(\mfm_1^{V\odot W}\otimes\id)\\
=&\big(\mfm_2^{W,+0}+\mfm_1^{W,+0}\circ\, b_1^V\circ\Delta_2^W\big)\Big[\bs{b}_1^V\circ\mfm_1^{V\odot W}\otimes\,\bs{b}_1^V\Big]+\mfm_1^{W,+0}\circ\,b_2^V\Big[\bs{\Delta}_1^W\circ\mfm_1^{V\odot W}\otimes\bs{\Delta}_1^W\Big]\\
=&\big(\mfm_2^{W,+0}+\mfm_1^{W,+0}\circ\, b_1^V\circ\Delta_2^W\big)\Big[\mfm_1^W\circ\,\bs{b}_1^V\otimes\,\bs{b}_1^V\Big]+\mfm_1^{W,+0}\circ\,b_2^V\Big[\mfm_1^V\circ\,\bs{\Delta}_1^W\otimes\bs{\Delta}_1^W\Big]\\
=&\mfm_2^{W,+0}\big(\mfm_1^{W}\otimes\id\big)\big[\bs{b}_1^V\otimes\,\bs{b}_1^V\big]+\mfm_1^{W,+0}\circ\, b_1^V\circ\Delta_2^W\big(\mfm_1^{W}\otimes\id\big)\big[\bs{b}_1^V\otimes\,\bs{b}_1^V\big]\\
&+\mfm_1^{W,+0}\circ\,b_2^V\big(\mfm_1^{V}\otimes\id\big)\big[\bs{\Delta}_1^W\otimes\bs{\Delta}_1^W\big]
\end{alignat*}
One decomposes similarly the symmetric term $\mfm_2^{V\odot W,+0_W}(\id\otimes\mfm_1^{V\odot W})$.
Now let us take a look at $\mfm_1^{V\odot W,+0_W}\circ\mfm_2^{V\odot W}$. We have
\begin{alignat*}{1}
\mfm&_1^{V\odot W,+0_W}\circ\mfm_2^{V\odot W}=\mfm_1^{W,+0}\circ\bs{b}_1^V\circ\mfm_2^{V\odot W}\\
=&\mfm_1^{W,+0}\circ\,\bs{b}_1^V\big(\mfm_2^{V\odot W,+0_W}+\mfm_2^{V\odot W,0_V-}\big)\\
=&\mfm_1^{W,+0}\big(\mfm_2^{V\odot W,+0_W}+b_1^V\circ\Delta_1^W\circ\mfm_2^{V\odot W,+0_W}+b_1^V\circ\mfm_2^{V\odot W,0_V-}\big)\\
=&\big(\mfm_1^{W,+0}+\mfm_1^{W,+0}\circ\,b_1^V\circ\Delta_1^W\big)\Big[\mfm_2^{W,+0}\big(\bs{b}_1^V\otimes\bs{b}_1^V\big)+\mfm_1^{W,+0}\circ\, b_1^V\circ\Delta_2^W\big(\bs{b}_1^V\otimes\bs{b}_1^V\big)\\
&+\mfm_1^{W,+0}\circ \,b_2^V\big(\bs{\Delta}_1^W\otimes\bs{\Delta}_1^W\big)\Big]+\mfm_1^{W,+0}\circ\,b_1^V\Big[\mfm_2^{V,0-}\big(\bs{\Delta}_1^W\otimes\bs{\Delta}_1^W\big)+\mfm_1^{V,0-}\circ\,\Delta_2^W\big(\bs{b}_1^V\otimes\bs{b}_1^V\big)\Big]
\end{alignat*}
The term $\mfm_1^{W,+0}\circ\,b_1^V\circ\Delta_1^W\circ\mfm_1^{W,+0}\circ\, b_1^V\circ\Delta_2^W\big(\bs{b}_1^V\otimes\bs{b}_1^V\big)$ vanishes for energy reasons, as well as $\mfm_1^{W,+0}\circ\,b_1^V\circ\Delta_1^W\circ\mfm_1^{W,+0}\circ \,b_2^V\big(\bs{\Delta}_1^W\otimes\bs{\Delta}_1^W\big)$, hence we finally get:
\begin{alignat*}{1}
\mfm_1^{V\odot W,+0_W}\circ\mfm_2^{V\odot W}=&\mfm_1^{W,+0}\circ\mfm_2^{W,+0}\big(\bs{b}_1^V\otimes\bs{b}_1^V\big)+\mfm_1^{W,+0}\circ\,b_1^V\circ\Delta_1^W\circ\mfm_2^{W,+0}\big(\bs{b}_1^V\otimes\bs{b}_1^V\big)\\
+&\mfm_1^{W,+0}\circ\mfm_1^{W,+0}\circ\, b_1^V\circ\Delta_2^W\big(\bs{b}_1^V\otimes\bs{b}_1^V\big)+\mfm_1^{W,+0}\circ\mfm_1^{W,+0}\circ \,b_2^V\big(\bs{\Delta}_1^W\otimes\bs{\Delta}_1^W\big)\\
+&\mfm_1^{W,+0}\circ\,b_1^V\circ\mfm_2^{V,0-}\big(\bs{\Delta}_1^W\otimes\bs{\Delta}_1^W\big)+\mfm_1^{W,+0}\circ\,b_1^V\circ\mfm_1^{V,0-}\circ\,\Delta_2^W\big(\bs{b}_1^V\otimes\bs{b}_1^V\big)
\end{alignat*}
Summing all together gives:
\begin{alignat*}{1}
&\text{(LR1)}=\\
&\big[\mfm_1^{W,+0}\circ\mfm_2^{W,+0}+\mfm_2^{W,+0}(\mfm_1^{W}\otimes\id)+\mfm_2^{W,+0}(\id\otimes\mfm_1^{W})\big]\big(\bs{b}_1^V\otimes\bs{b}_1^V\big)\tag{L1}\\
&+\big[\mfm_1^{W,+0}\circ\,b_1^V\big]\big[\Delta_1^W\circ\mfm_2^{W,+0}+\Delta_2^W(\mfm_1^{W}\otimes\id)+\Delta_2^W(\id\otimes\mfm_1^{W})\big]\big(\bs{b}_1^V\otimes\bs{b}_1^V\big)\tag{L2}\\
&+\mfm_1^{W,+0}\circ\mfm_1^{W,+0}\circ\, b_1^V\circ\Delta_2^W\big(\bs{b}_1^V\otimes\bs{b}_1^V\big)\tag{L3}\\
&+\mfm_1^{W,+0}\circ\mfm_1^{W,+0}\circ \,b_2^V\big(\bs{\Delta}_1^W\otimes\bs{\Delta}_1^W\big)\tag{L4}\\
&+\mfm_1^{W,+0}\big[b_1^V\circ\mfm_2^{V,0-}+b_2^V(\mfm_1^{V}\otimes\id)+b_2^V(\id\otimes\mfm_1^{V})\big]\big(\bs{\Delta}_1^W\otimes\bs{\Delta}_1^W\big)\tag{L5}\\
&+\mfm_1^{W,+0}\circ\,b_1^V\circ\mfm_1^{V,0-}\circ\,\Delta_2^W\big(\bs{b}_1^V\otimes\bs{b}_1^V\big)\tag{L6}
\end{alignat*}
Now we use Equation \eqref{eq000} on (L1), Equation \eqref{eq3} on (L2), the fact that $\mfm_1^{W,+0}\circ\mfm_1^{W,+0}=\mfm_1^{W,+0}\circ\mfm_1^{W,-}$ on (L3) and (L4) and finally Equation \eqref{eq4} on (L5), to write
\begin{alignat*}{1}
&\text{(LR1)}=\\
&\mfm_1^{W,+0}\circ\mfm_2^{W,-}\big(\bs{b}_1^V\otimes\bs{b}_1^V\big)\tag{L1'}\\
&+\mfm_1^{W,+0}\circ\,b_1^V\big[\Delta_1^W\circ\mfm_2^{W,-}+\Delta_2^\La\big(\bs{\Delta}_1^W\otimes\bs{\Delta}_1^W)+\Delta_1^\La\circ\Delta_2^W\big]\big(\bs{b}_1^V\otimes\bs{b}_1^V\big)\tag{L2'}\\
&+\mfm_1^{W,+0}\circ\mfm_1^{W,-}\circ\, b_1^V\circ\Delta_2^W\big(\bs{b}_1^V\otimes\bs{b}_1^V\big)\tag{L3'}\\
&+\mfm_1^{W,+0}\circ\mfm_1^{W,-}\circ \,b_2^V\big(\bs{\Delta}_1^W\otimes\bs{\Delta}_1^W\big)\tag{L4'}\\
&+\mfm_1^{W,+0}\big[b_1^V\circ\mfm_2^{V,+}+b_2^\La(\bs{b}_1^V\otimes\bs{b}_1^V)+\,b_1^\La\circ b_2^V\big]\big(\bs{\Delta}_1^W\otimes\bs{\Delta}_1^W\big)\tag{L5'}\\
&+\mfm_1^{W,+0}\circ\,b_1^V\circ\mfm_1^{V,0-}\circ\,\Delta_2^W\big(\bs{b}_1^V\otimes\bs{b}_1^V\big)\tag{L6'}
\end{alignat*}
We apply then the following modifications:
\begin{enumerate}
	\item on (L1') we write $\mfm_2^{W,-}=b_1^\La\circ\Delta_2^W+b_2^\La(\bs{\Delta}_1^W\otimes\bs{\Delta}_1^W)$,
	\item on (L2') observe that $\Delta_1^W\circ\mfm_2^{W,-}$ vanishes for energy reasons,
	\item on (L3') and (L4') we have $\mfm_1^{W,-}\circ\,b_1^V=b_1^\La\circ\bs{\Delta}_1^W\circ b_1^V=b_1^\La\circ b_1^V$ and $\mfm_1^{W,-}\circ\,b_2^V=b_1^\La\circ\bs{\Delta}_1^W\circ b_2^V=b_1^\La\circ b_2^V$,
	\item on (L5'), we write $\mfm_2^{V,+}=\Delta_2^\La(\bs{b}_1^V\otimes\bs{b}_1^V)$,
	\item finally, in the last term of (L2') we have $\Delta_1^\La=\mfm_1^{V,+}$ so adding it to (L6') gives $\mfm_1^{W,+0}\circ\,b_1^V\circ\mfm_1^{V}\circ\,\Delta_2^W\big(\bs{b}_1^V\otimes\bs{b}_1^V\big)$. But observe that  by definition of $\bs{\Delta}_1^W$ one has $b_1^V\circ\mfm_1^{V}\circ\,\Delta_2^W=b_1^V\circ\mfm_1^{V}\circ\,\bs{\Delta}_1^W\circ\Delta_2^W$, which gives by Lemma \ref{lem:rel} $b_1^\La\circ\bs{\Delta}_1^W\circ\bs{b}_1^V\circ\Delta_2^W$. We thus get
	$$\mfm_1^{W,+0}\circ\,b_1^V\circ\mfm_1^{V}\circ\,\Delta_2^W\big(\bs{b}_1^V\otimes\bs{b}_1^V\big)=\mfm_1^{W,+0}\circ\,b_1^\La\circ\bs{\Delta}_1^W\circ\bs{b}_1^V\circ\Delta_2^W\big(\bs{b}_1^V\otimes\bs{b}_1^V\big)$$
	 which, using Equation \eqref{eq5}, is equal to:
	 \begin{alignat*}{1}
	 \mfm_1^{W,+0}\circ\,b_1^\La\circ\bs{b}_1^V\circ\bs{\Delta}_1^W\circ\Delta_2^W\big(\bs{b}_1^V\otimes\bs{b}_1^V\big)&=\mfm_1^{W,+0}\circ\,b_1^\La\circ\bs{b}_1^V\circ\Delta_2^W\big(\bs{b}_1^V\otimes\bs{b}_1^V\big)\\
	 &=\big[\mfm_1^{W,+0}\circ\,b_1^\La\big]\big[\Delta_2^W+b_1^V\circ\Delta_2^W\big]\big(\bs{b}_1^V\otimes\bs{b}_1^V\big)
	 \end{alignat*}
\end{enumerate}
So finally we have:
\begin{alignat*}{1}
\text{(LR1)}&=\mfm_1^{W,+0}\circ b_1^\La\circ\Delta_2^W\big(\bs{b}_1^V\otimes\bs{b}_1^V\big)\tag{R1}\\
&+\mfm_1^{W,+0}\circ b_2^\La\big(\bs{\Delta}_1^W\circ\bs{b}_1^V\otimes\bs{\Delta}_1^W\circ\bs{b}_1^V\big)\tag{R2}\\
&+\mfm_1^{W,+0}\circ\,b_1^V\circ\Delta_2^\La\big(\bs{\Delta}_1^W\circ\bs{b}_1^V\otimes\bs{\Delta}_1^W\circ\bs{b}_1^V\big)\tag{R3}\\
&+\mfm_1^{W,+0}\circ b_1^\La\circ\, b_1^V\circ\Delta_2^W\big(\bs{b}_1^V\otimes\bs{b}_1^V\big)\tag{R4}\\
&+\mfm_1^{W,+0}\circ b_1^\La\circ \,b_2^V\big(\bs{\Delta}_1^W\otimes\bs{\Delta}_1^W\big)\tag{R5}\\
&+\mfm_1^{W,+0}\circ\,b_1^V\circ\Delta_2^\La\big(\bs{b}_1^V\circ\bs{\Delta}_1^W\otimes\bs{b}_1^V\circ\bs{\Delta}_1^W\big)\tag{R6}\\
&+\mfm_1^{W,+0}\circ\,b_2^\La\big(\bs{b}_1^V\circ\bs{\Delta}_1^W\otimes\bs{b}_1^V\circ\bs{\Delta}_1^W\big)\tag{R7}\\
&+\mfm_1^{W,+0}\circ\,b_1^\La\circ b_2^V\big(\bs{\Delta}_1^W\otimes\bs{\Delta}_1^W\big)\tag{R8}\\
&+\mfm_1^{W,+0}\circ\,b_1^\La\circ\,\Delta_2^W\big(\bs{b}_1^V\otimes\bs{b}_1^V\big)+\mfm_1^{W,+0}\circ\,b_1^\La\circ\,b_1^{V}\circ\,\Delta_2^W\big(\bs{b}_1^V\otimes\bs{b}_1^V\big)\tag{R9}
\end{alignat*}
We have (R1)+(R4)+(R9)=0 and (R5)+(R8)=0. Then, using Equation \eqref{eq5} gives (R2)+(R7)=0 and (R3)+(R6)=0. Thus (LR1)=0.

\subsubsection{Equation \eqref{Leibniz_conc2}}\label{sec:LC2}

Denote (LR2) the left-hand side of Equation \eqref{Leibniz_conc2} so that this equation is equivalent to (LR2)=0. Using again the fact that $\bs{b}_1^V$ and $\bs{\Delta}_1^W$ are chain maps, the first term of (LR2) is:
\begin{alignat*}{1}
\mfm&_2^{V\odot W,0_V-}(\mfm_1^{V\odot W}\otimes\id)\\
=&\mfm_2^{V,0-}\big(\bs{\Delta}_1^W\circ\mfm_1^{V\odot W}\otimes\,\bs{\Delta}_1^W\big)
+\mfm_1^{V,0-}\circ\,\Delta_2^W\big(\bs{b}_1^V\circ\mfm_1^{V\odot W}\otimes\bs{b}_1^V\big)\\
=&\mfm_2^{V,0-}\big(\mfm_1^V\circ\,\bs{\Delta}_1^W\otimes\,\bs{\Delta}_1^W\big)
+\mfm_1^{V,0-}\circ\,\Delta_2^W\big(\mfm_1^W\circ\,\bs{b}_1^V\otimes\bs{b}_1^V\big)\\
=&\mfm_2^{V,0-}\big(\mfm_1^V\otimes\id\big)\big(\bs{\Delta}_1^W\otimes\bs{\Delta}_1^W\big)
+\mfm_1^{V,0-}\circ\,\Delta_2^W\big(\mfm_1^W\otimes\id\big)\big(\bs{b}_1^V\otimes\bs{b}_1^V\big)
\end{alignat*}
One writes analogously the symmetric term $\mfm_2^{V\odot W,0_V-}(\id\otimes\mfm_1^{V\odot W})$.
Now let us consider the third term of (LR2):
\begin{alignat*}{2}
\mfm&_1^{V\odot W,0_V-}\circ\mfm_2^{V\odot W}=\mfm_1^{V,0-}\circ\bs{\Delta}_1^W\circ\mfm_2^{V\odot W}\\
=&\mfm_1^{V,0-}\circ\,\Delta_1^W\circ\mfm_2^{V\odot W,+0_W}+\mfm_1^{V,0-}\circ\mfm_2^{V\odot W,0_V-}\\
=&\mfm_1^{V,0-}\circ\,\Delta_1^W\Big[\mfm_2^{W,+0}\big(\bs{b}_1^V\otimes\bs{b}_1^V\big)+\mfm_1^{W,+0}\circ\,b_1^V\circ\Delta_2^W\big(\bs{b}_1^V\otimes\bs{b}_1^V\big)+\mfm_1^{W,+0}\circ\,b_2^V\big(\bs{\Delta}_1^W\otimes\bs{\Delta}_1^W\big)\Big]\\
&+\mfm_1^{V,0-}\circ\mfm_2^{V,0-}\big[\bs{\Delta}_1^W\otimes\bs{\Delta}_1^W\big]+\mfm_1^{V,0-}\circ\mfm_1^{V,0-}\circ\Delta_2^W\big[\bs{b}_1^V\otimes\bs{b}_1^V\big]
\end{alignat*}
The term $\mfm_1^{V,0-}\circ\bs{\Delta}_1^W\Big[\mfm_1^{W,+0}\circ\,b_1^V\circ\Delta_2^W\big(\bs{b}_1^V\otimes\bs{b}_1^V\big)+\mfm_1^{W,+0}\circ\,b_2^V\big(\bs{\Delta}_1^W\otimes\bs{\Delta}_1^W\big)\Big]$ vanishes for energy reasons. Then, observe that $\mfm_1^{V,0-}\circ\mfm_1^{V,0-}=\mfm_1^{V,0-}\circ\mfm_1^{V,+}$ and $\Delta_1^W\circ\mfm_2^{W,+0}=\Delta_1^W\circ\mfm_2^{W}$ because $\Delta_1^W\circ\mfm_2^{W,-}=0$, so we have:
\begin{alignat*}{2}
\mfm_1^{V\odot W,0_V-}\circ\mfm_2^{V\odot W}&=\mfm_1^{V,0-}\circ\bs{\Delta}_1^W\circ\mfm_2^{W}\big(\bs{b}_1^V\otimes\bs{b}_1^V\big)+\mfm_1^{V,0-}\circ\mfm_2^{V,0-}\big[\bs{\Delta}_1^W\otimes\bs{\Delta}_1^W\big]\\
&+\mfm_1^{V,0-}\circ\mfm_1^{V,+}\circ\,\Delta_2^W\big[\bs{b}_1^V\otimes\bs{b}_1^V\big]
\end{alignat*}
Summing all together gives
\begin{alignat*}{1}
\text{(LR2)}=&
\mfm_1^{V,0-}\big[\bs{\Delta}_1^W\circ\mfm_2^{W}+\Delta_2^W(\mfm_1^W\otimes\id)+\Delta_2^W(\id\otimes\mfm_1^W)\big]\big(\bs{b}_1^V\otimes\bs{b}_1^V\big)\tag{L1}\\
+&\big[\mfm_1^{V,0-}\circ\mfm_2^{V,0-}+\mfm_2^{V,0-}(\mfm_1^V\otimes\id)+\mfm_2^{V,0-}(\id\otimes\mfm_1^V)\big]\big(\bs{\Delta}_1^W\otimes\bs{\Delta}_1^W\big)\tag{L2}\\
+&\mfm_1^{V,0-}\circ\mfm_1^{V,+}\circ\,\Delta_2^W\big(\bs{b}_1^V\otimes\bs{b}_1^V\big)
\end{alignat*}
Use then Equation \eqref{eq3} on (L1) and Equation \eqref{eq0000} on (L2) to get
\begin{alignat*}{1}
\text{(LR2)}=&\mfm_1^{V,0-}\big[\Delta_2^\La(\bs{\Delta}_1^W\otimes\bs{\Delta}_1^W)+\Delta_1^\La\circ\Delta_2^W\big]\big(\bs{b}_1^V\otimes\bs{b}_1^V\big)\tag{L1'}\\
&+\mfm_1^{V,0-}\circ\mfm_2^{V,+}\big[\bs{\Delta}_1^W\otimes\bs{\Delta}_1^W\big]\tag{L2'}\\
&+\mfm_1^{V,0-}\circ\mfm_1^{V,+}\circ\Delta_2^W\big[\bs{b}_1^V\otimes\bs{b}_1^V\big]\tag{T3}
\end{alignat*}
But remark that $\mfm_1^{V,+}=\Delta_1^\La$ and  $\mfm_2^{V,+}=\Delta_2^\La(\bs{b}_1^V\otimes\bs{b}_1^V)$ by definition so then by Equation \eqref{eq5}, we get (LR2)=0.

\subsection{Functoriality of the transfer maps}\label{sec:functoriality}

In this section, we prove that the product structures behave well under the transfer maps $\bs{b}_1^V$ and $\bs{\Delta}_1^W$. Namely, we have first the following:

\begin{prop}\label{prop:transferf1}
	The map induced by $\bs{b}_1^V$ in homology preserves the product structures, in other words we have:
	$\bs{b}_1^{V_{02}}\circ\mfm_2^{V\odot W}=\mfm_2^W\big(\bs{b}_1^{V_{12}}\otimes\bs{b}_1^{V_{01}}\big)$ in homology.
\end{prop}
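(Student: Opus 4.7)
The plan is to establish the proposition by constructing an explicit chain homotopy between $\bs{b}_1^{V_{02}}\circ\mfm_2^{V\odot W}$ and $\mfm_2^W(\bs{b}_1^{V_{12}}\otimes\bs{b}_1^{V_{01}})$; this is the $A_\infty$-functor relation at order $2$ announced in the introduction. Concretely, I would define a degree $-1$ map
\begin{equation*}
\bs{b}_2^V:\Cth_+(V_1\odot W_1,V_2\odot W_2)\otimes\Cth_+(V_0\odot W_0,V_1\odot W_1)\to\Cth_+(W_0,W_2)
\end{equation*}
and verify the identity
\begin{equation*}
\bs{b}_1^V\circ\mfm_2^{V\odot W}+\mfm_2^W(\bs{b}_1^V\otimes\bs{b}_1^V)=\mfm_1^W\circ\bs{b}_2^V+\bs{b}_2^V\circ(\mfm_1^{V\odot W}\otimes\id)+\bs{b}_2^V\circ(\id\otimes\mfm_1^{V\odot W}).
\end{equation*}
Since $\bs{b}_1^V$ (Proposition~\ref{prop:Phi}) and the tensor product of two chain maps are chain maps, this identity says that the two maps of the statement differ by a boundary in the appropriate complex of hom spaces, which is exactly the conclusion.

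The candidate for $\bs{b}_2^V$ is built in the same spirit as $\bs{b}_1^V=\mathrm{id}+b_1^V\circ\Delta_1^W$ on the $+\infty$-part and $\bs{b}_1^V=b_1^V$ on the $-\infty$-part, namely
\begin{equation*}
\bs{b}_2^V(a_2,a_1)=b_2^V\bigl(\bs{\Delta}_1^W(a_2),\bs{\Delta}_1^W(a_1)\bigr)+b_1^V\circ\Delta_2^W\bigl(\bs{b}_1^V(a_2),\bs{b}_1^V(a_1)\bigr),
\end{equation*}
where $b_2^V$ is the three-input banana count with boundary on $V_0\cup V_1\cup V_2$ already introduced in Section~\ref{sec:prod_conc}. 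This formula lands in $C(\La_0,\La_2)[1]\subset\Cth_+(W_0,W_2)$, which is compatible with the expected degree.

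To check the homotopy identity, I would proceed exactly as in Sections~\ref{sec:LC1}–\ref{sec:LC2}: examine the compactifications of the $1$-dimensional moduli spaces of banana-type curves $\cM^1_{V_{012}}$ with boundary on the concordance $V_0\cup V_1\cup V_2$ and asymptotics matched to the inputs and intermediate Reeb chords/intersection points occurring in the composed maps above. By the compactness statements of Section~\ref{sec:structure}, boundary configurations split into either (a) an index-$0$ component with a mixed intersection point node, or (b) an index-$1$ component on the positive or negative cylindrical ends of $V_{0,1,2}$. Summing the contributions of all such broken configurations over all relevant moduli spaces and using the already established relations --- the Leibniz rule for $\mfm_2^V$ (Theorem~\ref{teo_Leibniz_rule}) applied to the cobordisms $V_i$, the Leibniz-type identity \eqref{eq3} for $\Delta_2^W$ with respect to $\mfm_2^W$, Equation~\eqref{eq5} ($\bs{b}_1^V\circ\bs{\Delta}_1^W=\bs{\Delta}_1^W\circ\bs{b}_1^V$ from Lemma~\ref{bcircdelta}), and the chain map properties of $\bs{b}_1^V$, $\bs{\Delta}_1^W$ --- produces the required identity term by term.

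The main obstacle is purely combinatorial bookkeeping: the bilinear source decomposes into $4\times4=16$ subsummands (according to the type of the inputs in $\{+,0_W,0_V,-\}$), and each gives rise to several distinct species of broken configurations that must be matched against the five types of summands defining $\mfm_2^{V\odot W}$ together with the homotopy terms produced by $\bs{b}_2^V$. As in Section~\ref{sec:LCconc}, many a priori distinct terms cancel either for energy reasons (as in the proof of $(\mfm_1^{V\odot W})^2=0$) or by identifying repeated contributions, so the key work is to keep track of signs/multiplicities and to correctly apply the conventions of Remark~\ref{rem:delta} when augmentations of the positive ends $\La_i^+$ are involved via $\ep_i^+=\ep_i^-\circ\Phi_{V_i\odot W_i}$.
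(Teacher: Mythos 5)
Your proposal matches the paper's proof: you define the homotopy $\bs{b}_2^V=b_1^V\circ\Delta_2^W\big(\bs{b}_1^V\otimes\bs{b}_1^V\big)+b_2^V\big(\bs{\Delta}_1^W\otimes\bs{\Delta}_1^W\big)$ by exactly the same formula, prove the same relation \eqref{phi_2}, and invoke the same ingredients (the boundary analysis of $\cM^1_{V_{012}}$, i.e.\ Equation \eqref{eq4}, together with \eqref{eq3}, \eqref{eq5}, the chain map properties of $\bs{b}_1^V$ and $\bs{\Delta}_1^W$, and energy vanishings). This is essentially the paper's argument, with the only cosmetic difference that you re-derive the banana relation from the moduli spaces where the paper cites it as an already-recorded equation.
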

\begin{proof}
Given a triple $(V_0\odot W_0,V_1\odot W_1,V_2\odot W_2)$, we define a map
\begin{alignat*}{1}
	\bs{b}_2^V:\Cth_+(V_1\odot W_1,V_2\odot W_2)\otimes \Cth_+(V_0\odot W_0,V_1\odot W_1)\to\Cth_+(W_0,W_2)
\end{alignat*}
by $$\bs{b}_2^V=b_1^V\circ\Delta_2^W\big(\bs{b}_1^V\otimes\bs{b}_1^V\big)+b_2^V\big(\bs{\Delta}_1^W\otimes\bs{\Delta}_1^W\big)$$
In order to prove the proposition, we prove that the following relation is satisfied:
\begin{alignat}{1}
	\bs{b}_2^V\big(\mfm_1^{V\odot W}\otimes\id\big)+\bs{b}_2^V\big(\id\otimes\mfm_1^{V\odot W}\big)+\bs{b}_1^V\circ\mfm_2^{V\odot W}+\mfm_2^W\big(\bs{b}_1^V\otimes\bs{b}_1^V\big)+\mfm_1^W\circ\,\bs{b}_2^V=0\label{phi_2}
\end{alignat}
Let us first consider $\bs{b}_2^V\big(\mfm_1^{V\odot W}\otimes\id\big)$. We have
\begin{alignat*}{2}
	\bs{b}_2^V\big(\mfm_1^{V\odot W}\otimes\id\big)=&\,b_1^V\circ\Delta_2^W\big[\bs{b}_1^V\circ\mfm_1^{V\odot W}\otimes\,\bs{b}_1^V\big]+b_2^V\big[\bs{\Delta}_1^W\circ\mfm_1^{V\odot W}\otimes\bs{\Delta}_1^W\big]\\
	=&b_1^V\circ\Delta_2^W\big(\mfm_1^W\circ\,\bs{b}_1^V\otimes\,\bs{b}_1^V\big)+b_2^V\big(\mfm_1^V\circ\bs{\Delta}_1^W\otimes\bs{\Delta}_1^W\big)\\
	=&b_1^V\circ\Delta_2^W\big(\mfm_1^{W}\otimes\id\big)\big(\bs{b}_1^V\otimes\bs{b}_1^V\big)+b_2^V\big(\mfm_1^{V}\otimes\id\big)\big(\bs{\Delta}_1^W\otimes\bs{\Delta}_1^W\big)
\end{alignat*}
Then we consider $\bs{b}_1^V\circ\mfm_2^{V\odot W}$. Observe that we have already computed this term in Section \ref{sec:LC1} when considering the term $\mfm_1^{V\odot W,+0_W}\circ\mfm_2^{V\odot W}$. So recall that we have
\begin{alignat*}{2}
	\bs{b}_1^V\circ\mfm_2^{V\odot W}&=\mfm_2^{W,+0}\big(\bs{b}_1^V\otimes\bs{b}_1^V\big)+\mfm_1^{W,+0}\circ\,b_1^V\circ\Delta_2^W\big(\bs{b}_1^V\otimes\bs{b}_1^V\big)+\mfm_1^{W,+0}\circ\,b_2^V\big(\bs{\Delta}_1^W\otimes\bs{\Delta}_1^W\big)\\
	&+b_1^V\circ\Delta_1^W\circ\mfm_2^{W,+0}\big(\bs{b}_1^V\otimes\bs{b}_1^V\big)+\,b_1^V\circ\mfm_2^{V,0-}\big(\bs{\Delta}_1^W\otimes\bs{\Delta}_1^W\big)+b_1^V\circ\mfm_1^{V,0-}\circ\Delta_2^W\big(\bs{b}_1^V\otimes\bs{b}_1^V\big)
\end{alignat*}
The left-hand side of \eqref{phi_2}, rearranging terms according to the decompositions above is thus given by
\begin{alignat*}{1}
&b_1^V\big(\Delta_2^W\big(\mfm_1^{W}\otimes\id\big)+\Delta_2^W\big(\id\otimes\mfm_1^{W}\big)+\Delta_1^W\circ\mfm_2^{W,+0}\big)\big(\bs{b}_1^V\otimes\bs{b}_1^V\big)\tag{L1}\\
&+\big(b_2^V\big(\mfm_1^{V}\otimes\id\big)+b_2^V\big(\id\otimes\mfm_1^{V}\big)+b_1^V\circ\mfm_2^{V,0-}\big)\big(\bs{\Delta}_1^W\otimes\bs{\Delta}_1^W\big)\tag{L2}\\
&+\big(\mfm_2^{W,+0}+\mfm_2^W\big)\big(\bs{b}_1^V\otimes\bs{b}_1^V\big)\tag{L3}\\
&+\big(\mfm_1^{W,+0}\circ\,b_1^V\circ\Delta_2^W+\mfm_1^W\circ\,b_1^V\circ\Delta_2^W\big)\big(\bs{b}_1^V\otimes\bs{b}_1^V\big)\tag{L4}\\
&+\big(\mfm_1^{W,+0}\circ\,b_2^V+\mfm_1^W\circ\,b_2^V\big)\big(\bs{\Delta}_1^W\otimes\bs{\Delta}_1^W\big)\tag{L5}\\
&+b_1^V\circ\mfm_1^{V,0-}\circ\Delta_2^W\big(\bs{b}_1^V\otimes\bs{b}_1^V\big)\tag{L6}\\
\end{alignat*}
We use now Equation \eqref{eq3} on (L1), Equation \eqref{eq4} on (L2) and the same modification as the point 5. on Section \ref{sec:LC1} on line (L6) to rewrite:
\begin{alignat*}{1}
&\big(b_1^V\circ\Delta_2^\La\big(\bs{\Delta}_1^W\otimes\bs{\Delta}_1^W\big)+b_1^V\circ\Delta_1^\La\circ\Delta_2^W\big)\big(\bs{b}_1^V\otimes\bs{b}_1^V\big)\\
&+\big(b_1^V\circ\mfm_2^{V,+}+\,b_2^\La(\bs{b}_1^V\otimes\bs{b}_1^V)+b_1^\La\circ b_2^V\big)\big(\bs{\Delta}_1^W\otimes\bs{\Delta}_1^W\big)\\
&+\mfm_2^{W,-}\big(\bs{b}_1^V\otimes\bs{b}_1^V\big)\\
&+\mfm_1^{W,-}\circ\,b_1^V\circ\Delta_2^W\big(\bs{b}_1^V\otimes\bs{b}_1^V\big)\\
&+\mfm_1^{W,-}\circ\,b_2^V\big(\bs{\Delta}_1^W\otimes\bs{\Delta}_1^W\big)\\	&+\big(b_1^V\circ\mfm_1^{V,+}\circ\,\Delta_2^W+\,b_1^\La\circ\bs{b}_1^V\circ\Delta_2^W\big)\big(\bs{b}_1^V\otimes\bs{b}_1^V\big)
\end{alignat*}
Finally, using
\begin{enumerate}
	\item $\mfm_2^{V,+}=\Delta_2^\La(\bs{b}_1^V\otimes\bs{b}_1^V)$ and $\mfm_1^{V,+}=\Delta_1^\La$,
	\item $\mfm_1^{W,-}\circ\,b_1^V=b_1^\La\circ\bs{\Delta}_1^W\circ\,b_1^V=b_1^\La\circ b_1^V$, and also $\mfm_1^{W,-}\circ\,b_2^V=b_1^\La\circ b_2^V$,
	\item $b_1^\La\circ\bs{b}_1^V\circ\Delta_2^W=b_1^\La\circ\Delta_2^W+b_1^\La\circ b_1^V\circ\Delta_2^W$
\end{enumerate}
we rewrite
\begin{alignat*}{1}
&\big(b_1^V\circ\Delta_2^\La\big(\bs{\Delta}_1^W\otimes\bs{\Delta}_1^W\big)+b_1^V\circ\Delta_1^\La\circ\Delta_2^W\big)\big(\bs{b}_1^V\otimes\bs{b}_1^V\big)\\
&+\big(b_1^V\circ\Delta_2^\La(\bs{b}_1^V\otimes\bs{b}_1^V)+\,b_2^\La(\bs{b}_1^V\otimes\bs{b}_1^V)+b_1^\La\circ b_2^V\big)\big(\bs{\Delta}_1^W\otimes\bs{\Delta}_1^W\big)\\
&+\big(b_1^\La\circ\Delta_2^W+b_2^\La(\bs{\Delta}_1^W\otimes\bs{\Delta}_1^W)\big)\big(\bs{b}_1^V\otimes\bs{b}_1^V\big)\\
&+b_1^\La\circ\,b_1^V\circ\Delta_2^W\big(\bs{b}_1^V\otimes\bs{b}_1^V\big)\\
&+b_1^\La\circ\,b_2^V\big(\bs{\Delta}_1^W\otimes\bs{\Delta}_1^W\big)\\	&+\big(b_1^V\circ\Delta_1^\La\circ\,\Delta_2^W+b_1^\La\circ\Delta_2^W+b_1^\La\circ b_1^V\circ\Delta_2^W\big)\big(\bs{b}_1^V\otimes\bs{b}_1^V\big)
\end{alignat*}
and making use of Equation \eqref{eq5}, all the terms in the sum cancel by pair.
\end{proof}

The same functorial property applies for the map $\bs{\Delta}_1^W:\Cth_+(V_0\odot W_0,V_1\odot W_1)\to\Cth_+(V_0,V_1)$. Indeed, we have
\begin{prop}\label{prop:transferf2}
	The map induced by $\bs{\Delta}_1^W$ in homology preserves the product structures, that is to say:
	$\bs{\Delta}_1^{W_{02}}\circ\mfm_2^{V\odot W}=\mfm_2^V\big(\bs{\Delta}_1^{W_{12}}\otimes\bs{\Delta}_1^{W_{01}}\big)$ in homology.
\end{prop}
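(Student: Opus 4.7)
My plan is to mirror the proof of Proposition \ref{prop:transferf1}: I will introduce an auxiliary bilinear map
$$\bs{\Delta}_2^W : \Cth_+(V_1\odot W_1,V_2\odot W_2)\otimes\Cth_+(V_0\odot W_0,V_1\odot W_1)\to\Cth_+(V_0,V_2)$$
and show that it provides a chain homotopy
\begin{alignat}{1}
	\bs{\Delta}_2^W\big(\mfm_1^{V\odot W}\otimes\id\big)+\bs{\Delta}_2^W\big(\id\otimes\mfm_1^{V\odot W}\big)+\bs{\Delta}_1^W\circ\mfm_2^{V\odot W}+\mfm_2^V\big(\bs{\Delta}_1^W\otimes\bs{\Delta}_1^W\big)+\mfm_1^V\circ\bs{\Delta}_2^W=0.\label{eq:homotopy}
\end{alignat}
Once \eqref{eq:homotopy} is established, $\bs{\Delta}_1^W\circ\mfm_2^{V\odot W}$ and $\mfm_2^V\big(\bs{\Delta}_1^W\otimes\bs{\Delta}_1^W\big)$ differ by a chain boundary, hence induce the same map in homology.

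Guided by the analogy with $\bs{b}_2^V$, my starting candidate is
$$\bs{\Delta}_2^W=\Delta_2^W\big(\bs{b}_1^V\otimes\bs{b}_1^V\big),$$
possibly plus correction terms built from $\Delta_1^W\circ b_2^V\big(\bs{\Delta}_1^W\otimes\bs{\Delta}_1^W\big)$ and $\Delta_1^W\circ b_1^V\circ\Delta_2^W\big(\bs{b}_1^V\otimes\bs{b}_1^V\big)$ to kill the residual terms supported in $C(\La_2,\La_0)\subset\Cth_+(V_0,V_2)$ that come from compositions with $\mfm_1^{W,+0}$ on negative-end generators.

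The verification of \eqref{eq:homotopy} will proceed entirely by algebraic manipulation, in the same style as Sections \ref{sec:LC1} and \ref{sec:LC2}. The ingredients are: (i) the chain-map property of $\bs{b}_1^V$ (Proposition \ref{prop:Phi}) to turn $\bs{b}_1^V\circ\mfm_1^{V\odot W}$ into $\mfm_1^W\circ\bs{b}_1^V$; (ii) the commutation $\bs{b}_1^V\circ\bs{\Delta}_1^W=\bs{\Delta}_1^W\circ\bs{b}_1^V$ from Lemma \ref{bcircdelta}; (iii) the mixed Leibniz identity \eqref{eq3} to transform $\Delta_2^W(\mfm_1^W\otimes\id)+\Delta_2^W(\id\otimes\mfm_1^W)$ into $\Delta_1^W\mfm_2^W+\Delta_2^\La\big(\bs{\Delta}_1^W\otimes\bs{\Delta}_1^W\big)+\Delta_1^\La\circ\Delta_2^W$; (iv) the Leibniz rule \eqref{eq0000} for $\mfm_2^V$; (v) the identifications $\mfm_1^{V,+}=\Delta_1^\La$ and $\mfm_2^{V,+}=\Delta_2^\La(\bs{b}_1^V\otimes\bs{b}_1^V)$ from \eqref{defm+}; (vi) the splitting of $\bs{\Delta}_1^W$ into $\Delta_1^W$ on $CF_{+\infty}(W)$ and the identity on $CF_{-\infty}(V)$, together with the vanishing $\Delta_1^W\circ\mfm_2^{W,-}=0$ for energy/grading reasons; and (vii) the analogue of \eqref{eq4} for the triple $(V_0,V_1,V_2)$ applied to $b_2^V(\bs{\Delta}_1^W\otimes\bs{\Delta}_1^W)$. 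The two halves of the identity, corresponding to output in $CF_{+\infty}(V_0,V_2)$ and in $CF_{-\infty}(V_0,V_2)=C(\La_0^-,\La_2^-)[1]$ respectively, are checked separately exactly as in Sections \ref{sec:LC1} and \ref{sec:LC2}.

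The main obstacle is bookkeeping, not conceptual novelty. Indeed, my preliminary computation with the bare candidate $\bs{\Delta}_2^W=\Delta_2^W(\bs{b}_1^V\otimes\bs{b}_1^V)$ reduces \eqref{eq:homotopy}, after applying (i)--(vi) and pairwise cancellation in $\Z_2$, to
$$\Delta_1^W\circ\mfm_1^{W,+0}\circ b_1^V\circ\Delta_2^W\big(\bs{b}_1^V\otimes\bs{b}_1^V\big)+\Delta_1^W\circ\mfm_1^{W,+0}\circ b_2^V\big(\bs{\Delta}_1^W\otimes\bs{\Delta}_1^W\big)=\Delta_1^W\circ d_{0-}\circ\bs{b}_2^V,$$
where $\bs{b}_2^V$ is the auxiliary map of Proposition \ref{prop:transferf1}. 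The delicate step will be to pin down the exact correction terms to add to $\bs{\Delta}_2^W$ so that this residue is absorbed by $\mfm_1^V\circ[\text{correction}]$; the right combination should involve exactly $\bs{b}_2^V$ and the transfer between the two complexes in the spirit of Lemma \ref{lem:rel}. Once that combinatorial fit is identified, the full verification of \eqref{eq:homotopy} is a straightforward, if lengthy, rearrangement just like Sections \ref{sec:LC1}--\ref{sec:LC2}.
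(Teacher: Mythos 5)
Your overall strategy is exactly the paper's: take $\bs{\Delta}_2^W=\Delta_2^W\big(\bs{b}_1^V\otimes\bs{b}_1^V\big)$ (this is also the $d=2$ case of the general $\bs{\Delta}_d^W$ of Section \ref{sec:higher_conc}) and verify the homotopy relation \eqref{phi_22} by the same algebraic manipulations as in Sections \ref{sec:LC1}--\ref{sec:LC2}, using the chain-map property of $\bs{b}_1^V$, Lemma \ref{bcircdelta}, Equations \eqref{eq3} and \eqref{eq0000}, the identifications $\mfm_1^{V,+}=\Delta_1^\La$, $\mfm_2^{V,+}=\Delta_2^\La(\bs{b}_1^V\otimes\bs{b}_1^V)$, and the vanishing $\Delta_1^W\circ\mfm_2^{W,-}=0$.

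However, as written your proposal stops short of a proof, and the step you leave open is resolved in the wrong direction. The residue you isolate,
\begin{alignat*}{1}
\Delta_1^W\circ\mfm_1^{W,+0}\circ b_1^V\circ\Delta_2^W\big(\bs{b}_1^V\otimes\bs{b}_1^V\big)+\Delta_1^W\circ\mfm_1^{W,+0}\circ b_2^V\big(\bs{\Delta}_1^W\otimes\bs{\Delta}_1^W\big)=\Delta_1^W\circ d_{0-}\circ\bs{b}_2^V,
\end{alignat*}
does not need to be absorbed by a correction to $\bs{\Delta}_2^W$: it vanishes identically. The inputs of $\mfm_1^{W,+0}$ here are chords in $C(\La_0,\La_2)$, i.e.\ negative-end generators of $\Cth_+(W_0,W_2)$; on these $\mfm_1^{W,+}$ vanishes by definition, and the composition $\Delta_1^W\circ d_{0-}$ vanishes for action/energy reasons (positivity of energy forces the action of the intermediate intersection point to be simultaneously larger than the positive action of the input chord and smaller than minus the action of the output chord). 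This is precisely the vanishing already used in part 1 of Lemma \ref{lem:rel} ($\Delta_1^W\circ\mfm_1^{W,+0}\circ b_1^V=0$ on $CF_{-\infty}$) and implicitly in the computation of $\bs{\Delta}_1^W\circ\mfm_2^{V\odot W}$ in Section \ref{sec:LC2}. Without this observation your argument is incomplete, and the alternative you suggest — adding correction terms with output in $C(\La_2,\La_0)\subset\Cth_+(V_0,V_2)$ to be compensated by $\mfm_1^V\circ[\text{correction}]$ — is not carried out and would in general also feed new terms into the $CF(V_0,V_2)\oplus C(\La_0^-,\La_2^-)$ components through $\mfm_1^{V,0-}$, disturbing the parts of the identity you have already balanced. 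Supply the energy argument for the residue (or simply quote Lemma \ref{lem:rel}) and your proof closes, coinciding with the paper's.
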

\begin{proof}
Given a triple $(V_0\odot W_0,V_1\odot W_1,V_2\odot W_2)$, we define a map
\begin{alignat*}{1}
	\bs{\Delta}_2^W:\Cth_+(V_1\odot W_1,V_2\odot W_2)\otimes \Cth_+(V_0\odot W_0,V_1\odot W_1)\to\Cth_+(V_0,V_2)
\end{alignat*}
by
$$\bs{\Delta}_2^W=\Delta_2^W\big(\bs{b}_1^V\otimes\bs{b}_1^V\big)$$
where the map $\Delta_2^W$ was defined in Section \ref{section:def_product} for the case of three pairwise transverse Lagrangian cobordisms. In order to prove the proposition, we prove that the following relation is satisfied:
\begin{alignat}{1}
	\bs{\Delta}_2^W\big(\mfm_1^{V\odot W}\otimes\id\big)+\bs{\Delta}_2^W\big(\id\otimes\mfm_1^{V\odot W}\big)+\bs{\Delta}_1^W\circ\mfm_2^{V\odot W}+\mfm_2^V\big(\bs{\Delta}_1^W\otimes\bs{\Delta}_1^W\big)+\mfm_1^V\circ\bs{\Delta}_2^W=0\label{phi_22}
\end{alignat}
First, we have
\begin{alignat*}{1}
	\bs{\Delta}_2^W\big(\mfm_1^{V\odot W}\otimes\id\big)=&\Delta_2^W\big(\bs{b}_1^V\circ\mfm_1^{V\odot W}\otimes\bs{b}_1^V\big)=\Delta_2^W\big(\mfm_1^W\otimes\id\big)\big(\bs{b}_1^V\otimes\bs{b}_1^V\big)	
\end{alignat*}
Then, again we have already computed the term $\bs{\Delta}_1^W\circ\mfm_2^{V\odot W}$ when considering $\mfm_1^{V\odot W,0_V-}\circ\mfm_2^{V\odot W}$ in Section \ref{sec:LC2}. Recall that we have
\begin{alignat*}{1}
	\bs{\Delta}_1^W\circ\mfm_2^{V\odot W}=&\Delta_1^W\circ\mfm_2^{W,+0}\big(\bs{b}_1^V\otimes\bs{b}_1^V\big)+\mfm_2^{V,0-}\big(\bs{\Delta}_1^W\otimes\bs{\Delta}_1^W\big)+\mfm_1^{V,0-}\circ\,\Delta_2^W\big(\bs{b}_1^V\otimes\bs{b}_1^V\big)
\end{alignat*}
Hence, the left-hand side of Equation \eqref{phi_22} is equal to:
\begin{alignat*}{2}
	&\big(\Delta_2^W(\mfm_1^{W}\otimes\id)+\Delta_2^W(\id\otimes\mfm_1^{W})+\Delta_1^W\circ\mfm_2^{W,+0}\big)\big(\bs{b}_1^V\otimes\bs{b}_1^V\big)\tag{L1}\\
	+&\mfm_2^{V,0-}\big(\bs{\Delta}_1^W\otimes\bs{\Delta}_1^W\big)+\mfm_1^{V,0-}\circ\,\Delta_2^W\big(\bs{b}_1^V\otimes\bs{b}_1^V\big)\tag{L2}\\
	+&\mfm_2^V\big(\bs{\Delta}_1^W\otimes\bs{\Delta}_1^W\big)+\mfm_1^V\circ\Delta_2^W\big(\bs{b}_1^V\otimes\bs{b}_1^V\big)\tag{L3}
\end{alignat*}
Using Equation \eqref{eq3} on line (L1) and summing (L2) and (L3) gives
\begin{alignat*}{2}
	&\Delta_1^W\circ\mfm_2^{W,-}\big(\bs{b}_1^V\otimes\bs{b}_1^V\big)+\Delta_2^\La\big(\bs{\Delta}_1^W\circ\bs{b}_1^V\otimes\bs{\Delta}_1^W\circ\bs{b}_1^V\big)+\Delta_1^\La\circ\Delta_2^W\big(\bs{b}_1^V\otimes\bs{b}_1^V\big)\\
	&+\mfm_2^{V,+}\big(\bs{\Delta}_1^W\otimes\bs{\Delta}_1^W\big)+\mfm_1^{V,+}\circ\Delta_2^W\big(\bs{b}_1^V\otimes\bs{b}_1^V\big)
\end{alignat*}
Observe that $\Delta_1^W\circ\mfm_2^{W,-}\big(\bs{b}_1^V\otimes\bs{b}_1^V\big)=0$ for energy reasons. Then, $\mfm_1^{V,+}=\Delta_1^\La$ and $\mfm_2^{V,+}=\Delta_2^\La\big(\bs{b}_1^V\otimes\bs{b}_1^V\big)$, so using Equation \eqref{eq5} one gets that the terms sum to $0$.
\end{proof}

Observe that given the maps $\bs{b}_2^V$ and $\bs{\Delta}_2^V$ defined in the proofs of Propositions \ref{prop:transferf1} and \ref{prop:transferf2}, we can rewrite the formula of the product $\mfm_2^{V\odot W}$ as follows:
\begin{alignat*}{2}
\mfm_2^{V\odot W}&=\mfm_2^{W,+0}\big(\bs{b}_1^V\otimes\bs{b}_1^V\big)+\mfm_1^{W,+0}\circ\,\bs{b}_2^V+\mfm_2^{V,0-}\big(\bs{\Delta}_1^W\otimes\bs{\Delta}_1^W\big)+\mfm_1^{V,0-}\circ\,\bs{\Delta}_2^W
\end{alignat*}

Moreover, if we restrict again to the special cases where the triples $(W_0,W_1,W_2)$ or $(V_0,V_1,V_2)$ are trivial cylinders, one has
\begin{enumerate}
	\item $(W_0,W_1,W_2)=(\R\times\La_0,\R\times\La_1,\R\times\La_2)$: The map $\bs{b}_2^V$ becomes a map
	\begin{alignat*}{1}
		\bs{b}_2^V:\Cth_+(V_1,V_2)\otimes\Cth_+(V_0,V_1)\to C(\La_0,\La_2)
	\end{alignat*}
	which is equal to the map $b_2^V$ for the case of three pairwise transverse Lagrangian cobordisms $(V_0,V_1,V_2)$, and $\bs{\Delta}_2^W$ vanishes.
	\item $(V_0,V_1,V_2)=(\R\times\La_0,\R\times\La_1,\R\times\La_2)$: in this case the map $\bs{b}_2^V$ vanishes and $\bs{\Delta}_2^W$ is a map
	\begin{alignat*}{1}
		\bs{\Delta}_2^W:\Cth_+(W_1,W_2)\otimes\Cth_+(W_0,W_1)\to C(\La_2,\La_0)
	\end{alignat*}
	which is actually equal to the map $\Delta_2^W$ for the case of three pairwise transverse Lagrangian cobordisms $(W_0,W_1,W_2)$.
\end{enumerate}

\section{Continuation element}\label{sec:unit}

Again let $\Sigma_0$ be an exact Lagrangian cobordism from $\La_0^-$ to $\La_0^+$ with $\Ac(\La_0^-)$ admitting an augmentation $\ep_0^-$.
In this section we prove that there is a \textit{continuation element} $e\in\Cth_+(\Sigma_0,\Sigma_1)$, where $\Sigma_1$ is a suitable small Hamiltonian perturbation of $\Sigma_0$.
Assume $\Sigma_0$ is cylindrical outside $[-T,T]\times Y$. Fix $\eta>0$ smaller than the length of any chord of $\La_0^-$ and $\La_0^+$, and $N>0$. Then we set $\Sigma_1:=\varphi^\eta_{\widetilde{H}}(\Sigma_0)$ for a Hamiltonian $\widetilde{H}:\R\times(P\times\R)\to\R$ being a small perturbation of $H(t,p,z)=h_{T,N}(t)$ for $h_{T,N}:\R\to\R$ satisfying
$$\left\{\begin{array}{l}
h_{T,N}(t)=-e^t\,\mbox{ for}\,t<-T-N\\
h_{T,N}(t)=-e^t+C\,\mbox{ for }\,t>T+N\\
h_{T,N}'(t)\leq 0\\

[-T,T]\subset(h')^{-1}(0)\\
\end{array}\right.$$
for a positive constant $C$, and whose corresponding Hamiltonian vector field is given by $\rho_{T,N}\partial_z$, with $\rho_{T,N}:\R\to\R$ satisfying $\rho_{T,N}(t)=-1$ for $t\leq -T-N$ and $t\geq T+N$, $\rho_{T,N}(t)=0$ for $t\in[-T,T]$, $\rho_{T,N}'\geq0$ in $[-T-N,-T]$ and $\rho_{T,N}'\leq0$ in $[T,T+N]$, see Figure \ref{fig:graph}.
\begin{figure}[ht] 
	\begin{center}\includegraphics[width=13cm]{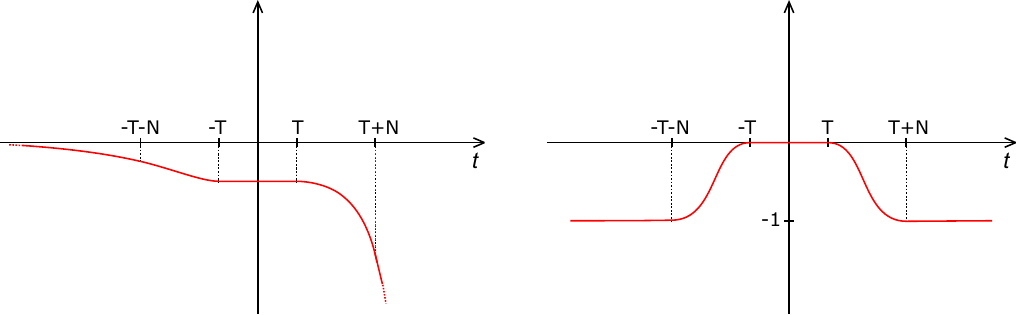}\end{center}
	\caption{Graphs of $h_{T,N}$ on the left and $\rho_{T,N}$ on the right.}
	\label{fig:graph}
\end{figure}
Moreover, under an appropriate identification of a tubular neighborhood of $\Sigma_0$ with a standard neighborhood of the $0$-section in $T^*\Sigma_0$, see \cite[Section 6.2.2]{DR}, we assume that $\Sigma_1$ is given by the graph of $dF$ in $T^*\Sigma_0$, with $F:\Sigma_0\to\R$ Morse function satisfying under this identification the following properties:

\begin{itemize}
	\item the critical points of $F$ (in one-to-one correspondence with intersection points in $\Sigma_0\cap\Sigma_1$) are all contained in $(-T,T)\times Y$.
	\item on the cylindrical ends of $\Sigma_0$, $F$ is equal to $e^t(f_\pm-\eta)$, for $f_\pm:\La_0^\pm\to\R$ Morse functions such that the $C^0$-norm of $f_\pm$ is much smaller than $\eta$. In other words, it means that the cylindrical ends of $\Sigma_0\cup\Sigma_1$ are cylinders over the $2$-copy $\La_0^\pm\cup\La_1^\pm$ where $\La_1^\pm$ is a Morse perturbation of $\La_0^\pm-\eta\partial_z$ (translation of $\La_0^\pm$ by $\eta$ in the negative Reeb direction). Moreover, we assume that $f_\pm$ admit a unique maximum on each connected component.
	\item  we assume that $F$ admits a unique maximum on each filling component of $\Sigma_0$ (observe that $F$ is decreasing with respect to the coordinate t) and has no maximum on each component of $\Sigma_0$ with a non empty negative end.
\end{itemize}
See Figure \ref{fig:unit_cycle} for a schematic picture of the $2$-copy $\Sigma_0\cup\Sigma_1$.
\begin{figure}[ht]  
	\begin{center}\includegraphics[width=6cm]{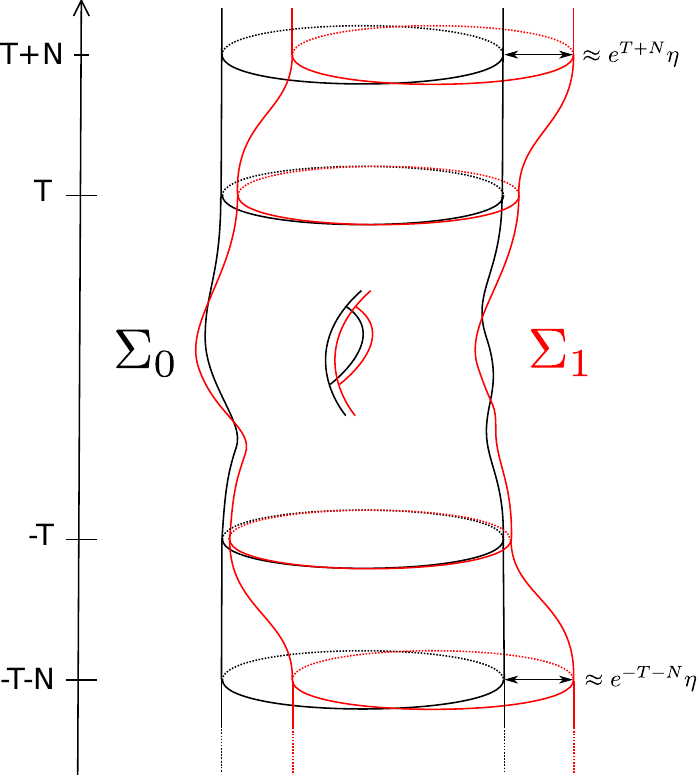}\end{center}
	\caption{Schematic picture of the perturbation $\Sigma_1$ of $\Sigma_0$.}
	\label{fig:unit_cycle}
\end{figure}
The CE-algebras $\Ac(\La_0^-)$ and $\Ac(\La_1^-)$ are canonically identified and thus an augmentation $\ep_0^-$ of $\Ac(\La_0^-)$ can be seen as an augmentation of $\Ac(\La_1^-)$. Moreover, for $\eta$ small enough and Morse functions $F,f_\pm$ such that $\Sigma_1$ is sufficiently $C_1$-close to $\Sigma_0$, one has $\ep_0^-\circ\Phi_{\Sigma_0}=\ep_0^-\circ\Phi_{\Sigma_1}$, see \cite[Theorem 2.15]{CDGG1}.

\begin{rem}
	Observe that in order to define the continuation element in $\Cth_+(\Sigma_0,\Sigma_1)$ we choose to view $\Sigma_1$ as a \textit{negative wrapping} of $\Sigma_0$ at infinity. We could also choose to view $\Sigma_0$ as a \textit{positive wrapping} of $\Sigma_1$, as done in \cite{GPS} for Lagrangians in Liouville sectors. Of course both points of view are valid, in the first case our continuation element will be represented by the sum of the maxima of the Morse functions $F$ and $f_-$ (see below) while in the second case it would correspond to a sum of minima.
\end{rem}

\begin{rem}\label{rem:morse}
According to the choice of perturbation we take we have the following isomorphism of complexes $CF^*(\Sigma_0,\Sigma_1)\simeq C_{n+1-*}^M(F)$, see \cite[Theorem 7.9]{CDGG2}, and recall that a critical point of $f_-$ of Morse index $k$ corresponds to a Morse chord of LCH index $n-k-1$ from $\La_1^-$ to $\La_0^-$, see Example \ref{ex1}. 
\end{rem}

Let us denote $e=e^0+e^-$, where $e^0=\sum e^0_i$ is the sum of the maxima $e_i^0$ of $F$ and $e^-=\sum e_i^-$is the sum of the maxima $e_i^-$ of $f_-$, where the sum is indexed over the connected components of $\Sigma$. Each $e_i^-$ corresponds to a Reeb chord from $\La_1^-$ to $\La_0^-$. Note that $e$ is of degree $0$ in the complex $\Cth_+(\Sigma_0,\Sigma_1)$.

\begin{prop}\label{unit_cycle}
	We have $\mfm_1^{\Sigma_{01}}(e)=0$, i.e. $e$ is a cycle.
\end{prop}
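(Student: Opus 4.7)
The plan is to verify $\mfm_1^{\Sigma_{01}}(e)=0$ by computing each of the three components of the output in $\Cth_+(\Sigma_0,\Sigma_1)$ separately. Since $e=e^0+e^-$ has no summand in $C(\La_1^+,\La_0^+)^\dagger[n-1]$, reading off the matrix of $\mfm_1^{\Sigma_{01}}$ gives vanishing in that first factor. The component in $C(\La_0^-,\La_1^-)[1]$ equals $b_1^-\bigl(\Delta_1^\Sigma(e^0)\bigr)+b_1^-(e^-)$, using that $\Delta_1^\Sigma$ vanishes on $C(\La_0^-,\La_1^-)$ for energy reasons. Moreover, each minimum $e^0_i$ lies on a filling component of $\Sigma_0$ by the hypothesis on $F$, so a disc in $\cM^0_{\Sigma_0,\Sigma_1}(\gamma_{01};\dots,e^0_i,\dots)$ with $\gamma_{01}$ a negative chord asymptotic would need its boundary to reach a negative cylindrical end, which is impossible when the bounding components have none. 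Thus $\Delta_1^\Sigma(e^0)=0$, and this component reduces to $b_1^-(e^-)$.

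To show $b_1^-(e^-)=0$, I invoke the standard Morse--Bott analysis for the $2$-copy $\La_0^-\cup\La_1^-$ with $\La_1^-=\La_0^--\eta\partial_z+df_-$ and $\eta$ small (as in the $2$-copy example discussed after Theorem \ref{diff} and in \cite{EESa}): the short chords $C(\La_0^-,\La_1^-)$ biject with critical points of $f_-$, and for $\eta$ sufficiently small the rigid mixed bananas with only short-chord asymptotics correspond to gradient flow lines of $f_-$, so $b_1^-$ restricted to $C(\La_0^-,\La_1^-)$ is the Morse cohomology differential $d_{f_-}$. Since $f_-$ admits a unique minimum on each connected component of $\La_0^-$, for every index-$1$ critical point $c$ the two ends of the $1$-dimensional unstable manifold $W^u(c)$ both flow into the unique minimum on the component of $c$; the count of flow lines from $c$ to that minimum is therefore even, and $d_{f_-}(e^-)=0$ over $\Z_2$.

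The remaining output in $CF(\Sigma_0,\Sigma_1)$ is $d_{00}(e^0)+d_{0-}(e^-)$. By connectedness of disc boundaries, $d_{00}(e^0_i)$ is supported on intersection points lying on the filling component containing $e^0_i$, while $d_{0-}(e^-_j)$ is supported on intersection points lying on the cylindrical component whose negative Legendrian end contains $e^-_j$; these generator subsets are disjoint, so the sum vanishes if and only if each summand vanishes separately. The vanishing $d_{00}(e^0_i)=0$ on each filling follows from the analogous Morse--Bott identification of $d_{00}$ with the Morse cohomology differential of $F$ on that filling, together with the same parity argument applied to the unique minimum of $F$. For $d_{0-}(e^-_j)=0$ on each cylindrical component, an energy estimate (using $\mathfrak{a}(e^-_j)=O(\eta)$ versus $\mathfrak{a}(\text{pure chord})=O(1)$) first rules out pure Reeb chord asymptotics $\bs{\delta}_i$ for $\eta$ small; the remaining rigid discs correspond under the Morse--Bott limit to $-\nabla F$-trajectories from $e^-_j$ at the negative infinity of the cylinder to interior critical points of $F$, and a parity analysis of these trajectories yields the vanishing.

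The hardest step will be this last one, namely the Morse--Bott identification for the coupling map $d_{0-}$, whose inputs mix a boundary short Reeb chord with an interior intersection point. Establishing this correspondence rigorously in the limit $\eta\to 0$ requires adapting Ekholm's gradient flow tree methods for Legendrian contact homology together with the Floer--Morse correspondence for cotangent bundles to the SFT-cobordism setting of the Cthulhu complex, with careful uniform compactness estimates as $\eta\to 0$.
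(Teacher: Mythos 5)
Your overall skeleton (componentwise computation, Morse-theoretic identifications, parity at unique minima) parallels the paper's, and several pieces are fine: the vanishing in the $C(\La_1^+,\La_0^+)$ factor, the vanishing of $\Delta_1^\Sigma(e^0)$ (your connectedness-of-boundary argument for filling components is a valid alternative to the paper's action argument), and the identification of $d_{00}(e^0)$ on filling components with Morse flow lines of $F$. But the central claim of your second step is unjustified: $b_1^-$ restricted to $C(\La_0^-,\La_1^-)$ is \emph{not} just the Morse differential of $f_-$. The output of $b_1^-(e_i^-)$ can be a long (non-Morse) chord $\beta_{10}$ lying over a pure chord $\beta$ of $\La_0^-$; such outputs are excluded neither by energy (the long chord sits at the positive puncture, so the energy is positive) nor by your parity argument, and you never address them. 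In the paper these terms are controlled via \cite[Theorem 5.5]{EESa}: the rigid bananas with long-chord output correspond to generalized discs consisting of a constant disc over $\beta$ with a descending gradient line of $f_-$ attached at either endpoint of $\beta$, so their total contribution is $\ep_0^-(\beta)+\ep_1^-(\beta)$, which vanishes precisely because the two copies carry the \emph{same} augmentation. This cancellation mechanism is absent from your proof, and without it the vanishing of $\mfm_1^-(e^-)$ is not established.

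The other gap is one you flag yourself: for $d_{0-}(e^-)$ you defer the Morse--Bott identification of discs with one asymptotic at a short negative-end chord and an intersection-point output, proposing to develop it by adapting gradient-flow-tree methods with uniform estimates as $\eta\to0$. The paper avoids any such new analysis by a wrapping trick: it pushes the negative end of $\Sigma_1$ slightly in the positive Reeb direction so that each $e_i^-$ becomes an intersection point $m_i$ of a pair $(V_0\odot\Sigma_0,V_1\odot\Sigma_1)$, proves $b_1^V(m_i)=e_i^-$ by projecting to $P$, and then reads off from the concatenation formula that $\mfm_1^{\Sigma,0}(e_i^-)=\mfm_1^{V\odot\Sigma,0_\Sigma}(m_i)$, a count of ordinary Floer strips for the small Morse perturbation $\widetilde F$ of $\Sigma_0$, where the standard Floer--Morse correspondence applies; the parity count (each index-one critical point emits exactly two descending flow lines, ending at minima $e_i^0$ or $m_i$) then kills $\sum\mfm_1^0(e_i^0)+\sum\mfm_1^0(e_i^-)$. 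So for this component your strategy is plausible but not carried out, whereas the paper's reduction makes it routine. As written, your proof therefore does not close either the long-chord contributions to $b_1^-(e^-)$ or the $d_{0-}$ step.
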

\begin{proof}
We develop	$\mfm_1^{\Sigma_{01}}(e)=\sum\mfm_1^0(e_i^0)+\sum\mfm_1^0(e_i^-)+\sum\mfm_1^-(e_i^0)+\mfm_1^-(e_i^-)$.
	
First, for all $i$ $\mfm_1^-(e_i^0)=0$ by assumption. Indeed the components of $\Sigma_0$ on which there is a maximum of $F$ are assumed to have an empty negative end. This is also true for action reasons because according to the perturbation we perform to construct $\Sigma_1$ from $\Sigma_0$, all intersection points in $\Sigma_0\cap\Sigma_1$ have positive action.

Then, we prove that $\sum\mfm_1^-(e_i^-)=0$. 
The strategy is the following. We will make use of the isomorphism in Example \ref{ex1} in order to view pseudo-holomorphic discs contributing to $\mfm_1^-(e_i^-)$ with boundary on $\R\times(\La_0^-\cup\La_1^-)$ as discs with boundary on $\R\times(\La_0^-\cup\overline{\La_1^-})$. Then we apply results in \cite{EESa} in order to interpret the later as negative gradient flow lines of $f_-$ or generalised discs with boundary on $\R\times\La_0^-$, and conclude.

Let $u$ be a pseudo-holomorphic disc contributing to $m_1^-(e_i^-)$. By definition, $u$ has boundary on $\R\times(\La_0^-\cup\La_1^-)$, a negative asymptotic to the maximum Reeb chord $e_i^-$ and a positive asymptotic to an output Reeb chord from $\La_1^-$ to $\La_0^-$, call it $\beta_{10}$. Observe that as $\mfm_1^-$ is of degree $1$, $|\beta_{10}|_{\Cth_+}=1$. By the isomorphism in Example \ref{ex1}, the disc $u$ is in bijective correspondence with a disc $\overline{u}$ with boundary on $\R\times(\La_0^-\cup\overline{\La_1^-})$, a positive asymptotic to $\overline{e_i^-}$ and a negative asymptotic to $\overline{\beta_{10}}$. We consider two cases: either $\beta_{10}$ (and thus also $\overline{\beta_{10}}$) is a Morse chord, or it is not.                                                                                                                                                                                                                                                                                                                                                                                                                                                                       

If $\beta_{10}$ is Morse it corresponds to an index $n-1$ critical point of $f_-$. Moreover, by \cite[Theorem 3.6]{EESa} $\overline{u}$ has no pure Reeb chords asymptotics for action reasons and corresponds to a negative gradient flow line of $f_-$ from the maximum $e_i^-$ to the critical point $\beta_{10}$ (here we abuse notation and denote the same way Morse chords and critical points of $f_-$ to which they correspond). Observe also that for each index $n-1$ critical point $\beta_{10}$ of $f_-$ there are exactly two flow lines of $f_-$ flowing to the (unique on the $i$-th connected component!) maximum  $e_i^-$. So the contribution of such a critical point $\beta_{10}$ to $\mfm_-(e_i^-)$ vanishes.

If $\beta_{10}$ is not Morse, then by \cite[Theorem 3.6]{EESa} again $\overline{u}$  corresponds to a rigid generalised disc with boundary on $\R\times\La_0^-$, which consists of a pseudo-holomorphic disc $v$ with a negative gradient flow line of $f_-$ flowing from the maximum $e_i^-$ to the boundary of $v$. By rigidity, $v$ is a constant disc at $\beta_0$ which is the pure chord of $\La_0^-$ corresponding to $\beta_{10}$. Following the proof of \cite[Theorem 5.5]{EESa} there are two ways this negative gradient flow line can be attached to $u$: either on the starting point of $\beta_0$, or on its ending point.
Thus we get that the contribution of $\beta_{10}$ to $\sum\mfm_1^-(e_i^-)$ is given by $\ep_0^-(\beta_0)+\ep_1^-(\beta_0)=0$.

Finally, we prove $\sum\mfm_1^0(e_i^0)=\sum\mfm_1^0(e_i^-)=0$.
As observe in Remark \ref{rem:morse}, $\mfm_1^0(e_i^0)$ counts negative gradient flow lines of $F$ from the maximum $e_i^0$ to a critical point of Morse index $n$. From such a point, there are exactly two flow lines of $F$ flowing out and as the gradient of $F$ points inward in the positive end, these two flow lines must flow to the (unique on this connected component!) maximum $e_i^0$. Hence $\mfm_1^0(e_i^0)=0$.
In order to show $\sum\mfm_1^0(e_i^-)=0$, wrap the negative end of (the non-empty negative ends components of) $\Sigma_1$ slightly in the positive Reeb direction using the Hamiltonian vector field $\rho^-_{T+N,N}\partial_z$ (see Section \ref{wrapping}). Let $V_1$ be the image of $\R\times\La_1^-$ by the corresponding time-$s_-$ flow where $s_-$ is bigger than the longest Morse chord from $\La_1^-$ to $\La_0^-$ but much smaller than the shortest non Morse chord from $\La_1^-$ to $\La_0^-$. We set $V_0=\R\times\La_0^-$.
Observe that each Morse chord (from $\La_1^-$ to $\La_0^-$) becomes an intersection point in $V_0\cap V_1$. We denote $m_i$ the intersection point corresponding to $e_i^-$, see Figure \ref{wrap_unit3}. Assume that the perturbation is sufficiently small and generic so that $V_1\odot\Sigma_1$ can be seen as a perturbation of $V_0\odot\Sigma_0$ by a Morse function $\widetilde{F}$ which equals $F$ on $[-T,T]\times Y \cap\Sigma_0$. Moreover, observe that the gradient of $\widetilde{F}$ points inward in the negative end.
\begin{figure}[ht]  
	\begin{center}\includegraphics[width=4cm]{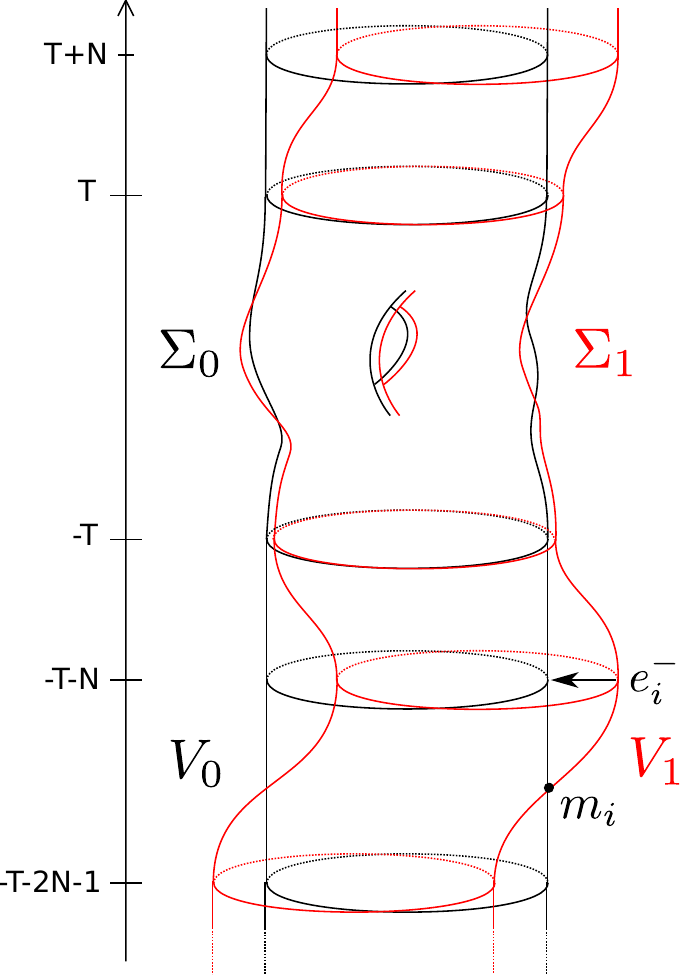}\end{center}
	\caption{Schematization of the wrapping of the negative of $\Sigma_1$.}
	\label{wrap_unit3}
\end{figure}
Consider the pair of concatenations $(V_0\odot\Sigma_0,V_1\odot\Sigma_1)$.
By projecting curves on $P$ as done in Section \ref{wrapping}, one can prove that $b_1^V(m_i)=e_i^-$. Then, by definition of the differential in a concatenation,
\begin{alignat*}{1}
	\mfm_1^{V\odot\Sigma}(m_i)=\mfm_1^{\Sigma,+0}\circ\bs{b}_1^V(m_i)+\mfm_1^{V,0-}\circ\bs{\Delta}_1^\Sigma(m_i)
\end{alignat*}
which gives $\mfm_1^{V\odot\Sigma,0_\Sigma}(m_i)=\mfm_1^{\Sigma,0}\circ\,b_1^V(m_i)=\mfm_1^{\Sigma,0}(e_i^-)$ and curves contributing to $\mfm_1^{V\odot\Sigma,0_\Sigma}(m_i)$ are in one-to-one correspondence with negative gradient flow lines of $\widetilde{F}$ from the maximum $m_i$ to a critical point in $\Sigma_0$.
Now, each intersection point in $\Sigma_0\cap\Sigma_1$ (of a non empty negative end component) which corresponds to an index $n$ critical point of $\widetilde{F}$, is the starting point of two gradient flow lines flowing to $m_i$. 
Thus, $\sum\mfm_1^0(e_i^-)=0$.
\end{proof}

\begin{teo}\label{teo_unit}
	Consider $\Sigma_0$ and $\Sigma_1$ as above. Take $\La_2^-\prec_{\Sigma_2}\La_2^+$ another exact Lagrangian cobordism such that the intersection of $\Sigma_2$ with a small standard neighborhood of $\Sigma_0$ identified with $D_\varepsilon T^*\Sigma_0$ and containing also $\Sigma_1$, consists of a union of fibres, then we have:
	\begin{alignat}{1}
	\mfm_2^{\Sigma_{012}}(\,\cdot\,,e):\Cth_+(\Sigma_1,\Sigma_2)\to\Cth_+(\Sigma_0,\Sigma_2)\label{map_prod}
	\end{alignat} is an isomorphism.
\end{teo}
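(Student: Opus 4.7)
\textbf{The plan} is to exhibit $\mfm_2(\,\cdot\,, e)$ as an upper-triangular map with units on the diagonal with respect to a suitable action filtration. First, since $\mfm_1(e) = 0$ by Proposition \ref{unit_cycle}, the Leibniz rule of Theorem \ref{teo_Leibniz_rule} immediately gives $\mfm_1 \circ \mfm_2(\,\cdot\,, e) + \mfm_2(\mfm_1\,\cdot\,, e) = 0$, so $\mfm_2(\,\cdot\,, e)$ is a chain map. Next, the hypotheses yield a natural type-preserving bijection $\phi\colon\Cth_+(\Sigma_1,\Sigma_2) \to \Cth_+(\Sigma_0,\Sigma_2)$ between generators: intersection points pair up because $\Sigma_2$ meets the tubular neighborhood of $\Sigma_0$ in a union of cotangent fibers (each fiber hits $\Sigma_0$ and the graph $\Sigma_1\subset T^*\Sigma_0$ exactly once), while mixed Reeb chords with one end on $\La_2^\pm$ pair up under the small Morse perturbation $\La_0^\pm \leadsto \La_1^\pm$. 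Filtering both complexes by action, for $\eta$ sufficiently small $\phi$ preserves the filtration up to arbitrarily small shift.

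The main technical step is then to show
\[
\mfm_2(a, e) = \phi(a) + (\text{strictly lower-filtration terms})
\]
for every generator $a$ of $\Cth_+(\Sigma_1,\Sigma_2)$. The leading contribution should come from a \emph{thin} rigid triangle with vertices at $a$, a single summand $e_i^0$ or $e_i^-$ of $e$, and $\phi(a)$, confined to a small neighborhood of $\Sigma_0$ (or of the Legendrian $\La_0^-$ when $a$ is a negative Reeb chord). I expect to identify these thin triangles by an SFT-compactness argument as $\eta\to 0$: rigid discs of bounded action must degenerate to a Morse configuration consisting of a gradient flow line of $F$ on $\Sigma_0$ (or of $f_-$ on $\La_0^-$) ending at a minimum, glued to a trivial strip or cotangent fiber realizing $\phi(a)$. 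The technical ingredients are precisely those already used in the proof of Proposition \ref{unit_cycle}: the Morse-to-Floer correspondence of \cite{EESa}, the projection argument to $P$ appearing in Propositions \ref{prop:transfer}--\ref{prop:transfer2}, and a gluing statement to invert the degeneration.

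Finally, combining the chain-map property with the expansion above, in the basis $\{\phi^{-1}(b)\}_b$ the matrix of $\mfm_2(\,\cdot\,, e)$ is upper-triangular with units on the diagonal, hence an isomorphism of chain complexes. The hardest part will be the thin-triangle analysis for mixed Reeb chord generators in the positive end $C(\La_2^+,\La_1^+)^\dagger[n-1]$, where a rigid triangle combines an index-$1$ disc with boundary on the cylindrical positive ends together with an index-$0$ disc on the cobordisms $\Sigma_0\cup\Sigma_1\cup\Sigma_2$: ruling out all configurations except the expected thin one as $\eta\to 0$ will likely require a neck-stretching along a long cylindrical region together with the Ekholm--Eliashberg--Sullivan analysis of rigid discs with boundary on two-copies of Legendrians.
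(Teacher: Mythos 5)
Your overall strategy is the same as the paper's: show that, in a suitably ordered basis, $\mfm_2(\,\cdot\,,e)$ is triangular with identity entries on the diagonal, the diagonal being produced by ``thin'' Morse-type configurations and the analysis resting on the two-copy results of \cite{EESa} and projection-to-$P$ arguments. However, your filtration claim is not correct as stated. There is no single action filtration for which $\mfm_2(a,e)=\phi(a)+(\text{strictly lower terms})$: the paper's computation gives $\mfm_2(x^-_{12},e)=x^-_{02}+\bs{z}^-_{02}+\bs{\xi}_{20}+\bs{z}^+_{02}$ and $\mfm_2(x^+_{12},e)=x^+_{02}+\bs{y}^+_{02}$, where the extra intersection points have action strictly \emph{bigger} than the diagonal term, while for a positive-end chord input the extra chords $\bs{\zeta}_{02}$ have strictly \emph{smaller} action. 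What actually works is the mixed ordering of the decomposition $C(\La_2^+,\La_1^+)^\dagger[n-1]\oplus CF^-\oplus C(\La_1^-,\La_2^-)\oplus CF^+$, with positive-end chords ordered by decreasing action and intersection points by increasing action; triangularity across blocks then uses both the definitional vanishing $\mfm^+_{00}=\mfm^+_{0-}=\mfm^+_{-0}=\mfm^+_{--}=0$ and energy/action arguments (e.g.\ that no negative-end chord appears in $\mfm_2(x^+_{12},e)$, and that configurations of types D and G do not occur). Your proposal would need these cross-block statements made explicit; they do not follow from a filtration being ``preserved up to small shift.''

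Second, the identification of the diagonal coefficients --- the real content of the theorem --- is only sketched, and your proposed route ($\eta\to0$ SFT-compactness/neck-stretching) differs from what the paper does. The paper works at fixed small $\eta$: it wraps the ends of $\Sigma_1$ by small Hamiltonian isotopies so that the relevant Morse chords become intersection points of concatenated pairs, identifies the resulting rigid strips and triangles with gradient flow lines of a Morse function $\widetilde{F}$ extending $F$, uses the generalized-disc correspondence \cite[Theorem 5.5]{EESa} for the components with boundary on cylindrical ends, kills unwanted bananas with two positive punctures by projecting to $P$, and disposes of the case where the intermediate chord $\beta_{10}$ is not a Morse chord by an explicit action estimate. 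Note also that the hypotheses on $F$ (no minimum on components with nonempty negative end, unique minimum of $f_-$, unique minimum of $F$ on filling components) dictate which of the configurations through $e^0_i$ or $e^-_i$ can occur, so your uniform ``thin triangle'' picture needs this case distinction; in particular for positive-end chord inputs the leading configuration is a two-story building (an index-$1$ disc in the positive end glued to an index-$0$ disc on the cobordisms), which you do acknowledge. Your degeneration argument is plausible but is precisely the unproved core; as written the proposal establishes the formal reduction, not the theorem.
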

\begin{rem}
	Given $\Sigma_0$ and a transverse cobordism $\Sigma_2$, one can always find a sufficiently small perturbation $\Sigma_1$ of $\Sigma_0$ such that the intersection of $\Sigma_2$ with $D_\varepsilon T^*\Sigma_0$ which contains $\Sigma_1$, consists of a union of fibres. This way, there is a canonical identification of vector spaces $\Cth_+(\Sigma_1,\Sigma_2)\cong\Cth_+(\Sigma_0,\Sigma_2)$, and for a generator $\gamma_{12}$, $x_{12}$ or $\gamma_{21}$ in $\Cth_+(\Sigma_1,\Sigma_2)$, one denotes respectively $\gamma_{02}$, $x_{02}$ or $\gamma_{20}$ the corresponding generator in $\Cth_+(\Sigma_0,\Sigma_2)$.
\end{rem}

\begin{rem}
	Proposition \ref{unit_cycle} states that the element $e$ is a cycle. Given Theorem \ref{teo_unit} one gets that it is a boundary if and only if $\Cth_+(\Sigma_1,\Sigma_2)$ is acyclic for every cobordism $\Sigma_2$ satisfying the hypothesis of the theorem, as proved in \cite[Lemma 4.17]{CDGG3}.
\end{rem}

\begin{proof}[Proof of Theorem \ref{teo_unit}]
First we write
	\begin{alignat}{1}
	\Cth_+(\Sigma_1,\Sigma_2)=C(\La_2^+,\La_1^+)^\dagger[n-1]\oplus CF^-(\Sigma_1,\Sigma_2)\oplus C(\La_1^-,\La_2^-)\oplus CF^+(\Sigma_1,\Sigma_2)\label{decomp}
	\end{alignat}
	where $CF^\pm(\Sigma_1,\Sigma_2)\subset CF(\Sigma_1,\Sigma_2)$ is the sub-vector space generated by positive, resp. negative action intersection points. According to this decomposition, ordering Reeb chords in $C(\La_2^+,\La_1^+)^\dagger[n-1]$ from biggest to smallest action and intersection points in $CF^-(\Sigma_1,\Sigma_2)\oplus CF^+(\Sigma_1,\Sigma_2)$ from smallest to biggest action, we will show that the matrix of the map \eqref{map_prod} is lower triangular with identity terms on the diagonal.\\

\noindent\textbf{1.} For $\gamma_{12}\in C(\La_2^+,\La_1^+)$, we have
\begin{alignat*}{2}
\mfm_2(\gamma_{12},e)&=\mfm_2^+(\gamma_{12},e)+\mfm_2^0(\gamma_{12},e)+\mfm_2^-(\gamma_{12},e)\\
&=\Delta_2^+\big(\gamma_{12},b_1^\Sigma(e)\big)+\mfm_2^0(\gamma_{12},e)+b_1^-\circ\Delta_2^\Sigma(\gamma_{12},e)+b_2^-\big(\Delta_1^\Sigma(\gamma_{12}),e^-\big)+b_2^-\big(\Delta_1^\Sigma(\gamma_{12}),\Delta_1^\Sigma(e^0)\big)\\
&=\Delta_2^+\big(\gamma_{12},b_1^\Sigma(e)\big)+\mfm_2^0(\gamma_{12},e)+b_1^-\circ\Delta_2^\Sigma(\gamma_{12},e)+b_2^-\big(\Delta_1^\Sigma(\gamma_{12}),e^-\big)
\end{alignat*}
where the last equality is because $\Delta_1^\Sigma(e_i^0)$ vanishes for energy reasons. On Figure \ref{unit3} we schematized pseudo-holomorphic configurations contributing to $\mfm_2(\gamma_{12},e)$. Let $i$ denote the index of the connected component of $\Sigma_1$ containing the starting point of $\gamma_{12}$. Note that by the hypothesis on the Morse function $F$, if this component has a non-empty negative end only the configurations A, B, C, and D are relevant, whereas if it is a filling component then only the configurations A', B' and C' are. We start by considering the first case and explain at the end of this part how the second case is treated in a similar way. We will prove that $$\mfm_2(\gamma_{12},e)=\gamma_{02}+\bs{\zeta}_{02}+\bs{y}_{02}^-+\bs{\xi}_{20}+\bs{y}_{02}^+$$
where $\gamma_{02}\in C(\La_2^+,\La_0^+)$ is the Reeb chord canonically identified to $\gamma_{12}$, $\bs{\zeta}_{02}\in C(\La_2^+,\La_0^+)$ is a linear combination of Reeb chords whose action are smaller than the action of $\gamma_{02}$, $\bs{y}_{02}^\pm\in CF^\pm(\Sigma_0,\Sigma_2)$ and $\bs{\xi}_{20}\in C(\La_0^-,\La_2^-)$.
\begin{figure}[ht]  
	\begin{center}\includegraphics[width=12cm]{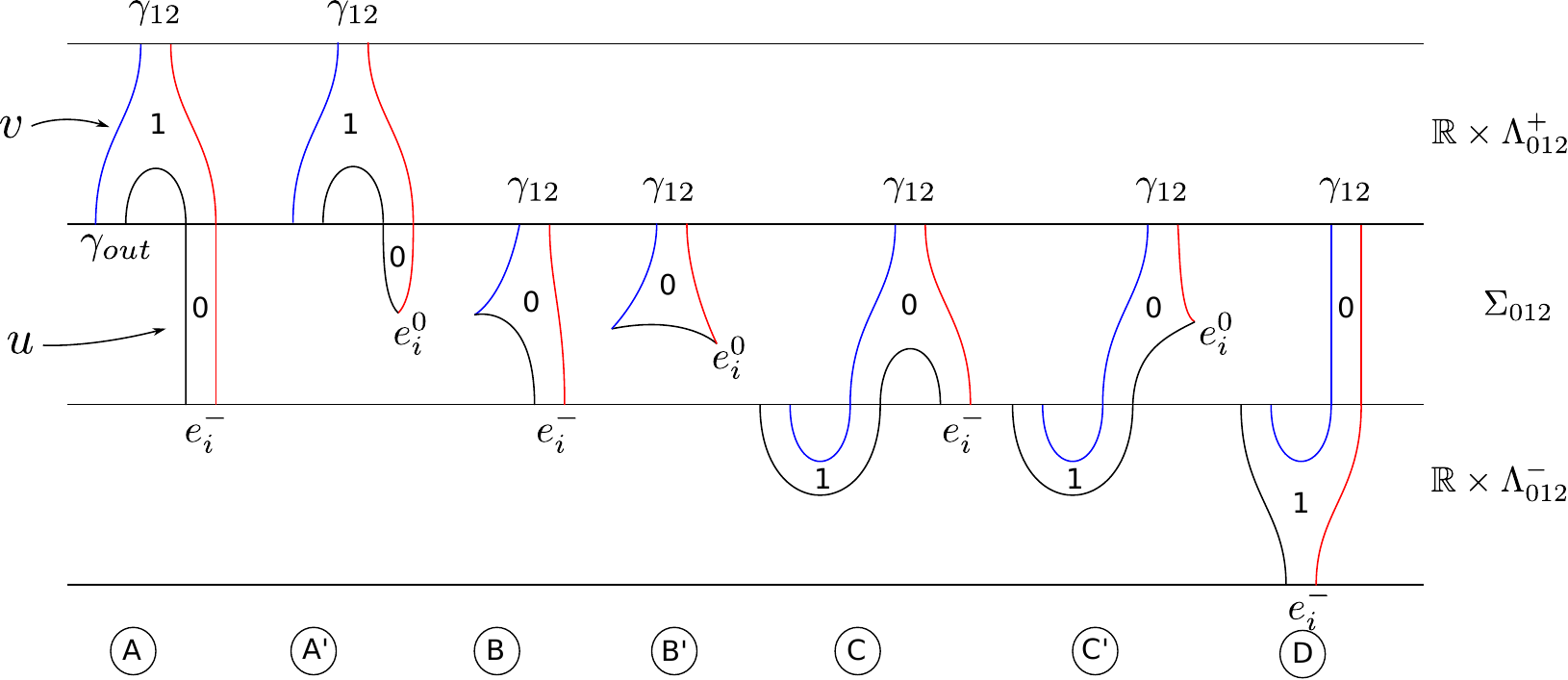}\end{center}
	\caption{Types of curves (potentially) contributing to $\mfm_2(\gamma_{12},e)$.}
	\label{unit3}
\end{figure}

Let us consider the configurations of type A. Denote $v$ a rigid disc with boundary on the positive cylindrical ends, with a positive asymptotic to $\gamma_{12}$, a negative Reeb chord asymptotic $\beta_{10}\in C(\La_0^+,\La_1^+)$, and an output negative Reeb chord asymptotic $\gamma_{out}\in C(\La_2^+,\La_0^+)$, and $u$ a rigid disc with boundary on $\Sigma_0\cup\Sigma_1$ with a positive asymptotic to $\beta_{10}$ and a negative asymptotic to a maximum Morse Reeb chord $e_i^-$.
We distinguish two cases: either $\beta_{10}$ is a Morse chord, or it is not a Morse chord.

\begin{enumerate}
	\item[(a)] If $\beta_{10}$ is a Morse chord. First, rigidity implies that $|\beta_{10}|=|e_i^-|=-1$ (LCH grading), and thus $\beta_{10}$ corresponds to the (only one by assumption) maximum of $f_+$ on the component of $\La_0^+$ containing the starting point of $\gamma_{02}$. For action reasons the disc $u$ has no pure Reeb chords asymptotics.
Similarly as in the proof of Proposition \ref{unit_cycle} we show that the count of such discs $u$ coincides with the count of some rigid gradient flow lines of a Morse function $\widetilde{F}$ which equals $F$ on $\Sigma_0\cap([-T,T]\times Y)$.
To get this correspondence, wrap the negative and the positive ends of $\Sigma_1$ slightly in the positive Reeb direction:
take $V_1$, resp $W_1$, to be the image of $\R\times\La_1^-$, resp $\R\times\La_1^+$, by the time $s_-$, resp $s_+$, flow of the Hamiltonian vector field $\rho^-_{T+N,N}\partial_z$, resp $-\rho^+_{T+N,N}\partial_z$, with $s_\pm$ bigger than the longest Morse chord from $\La_1^\pm$ to $\La_0^\pm$ but smaller than the shortest non Morse chord from $\La_1^\pm$ to $\La_0^\pm$. See Figure \ref{wrap_unit2} for a schematized picture of the perturbation.
\begin{figure}[ht]  
	\begin{center}\includegraphics[width=4cm]{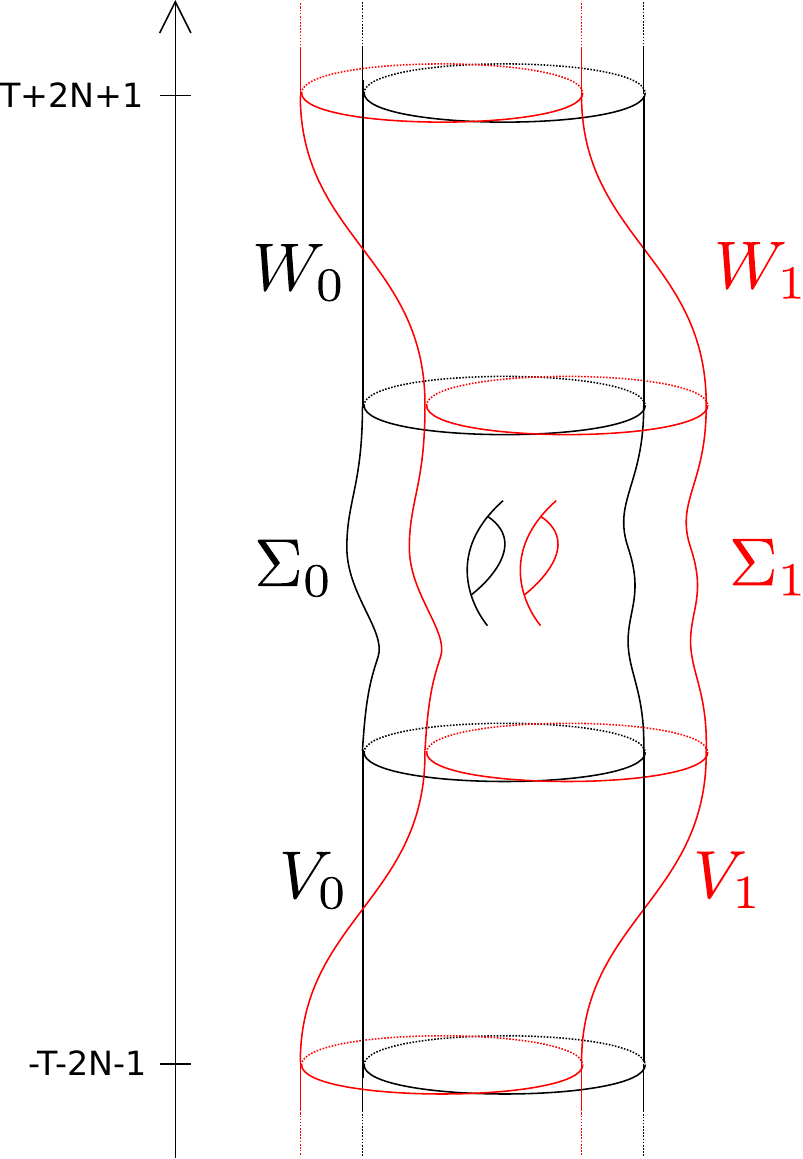}\end{center}
	\caption{Schematized picture of wrapping of the negative and positive ends of $\Sigma_1$.}
	\label{wrap_unit2}
\end{figure}
This way, $e_i^-$ corresponds canonically to an intersection point $m_i\in CF(V_0,V_1)$ and $\beta_{10}$ corresponds to an intersection point $x_\beta\in CF(W_0,W_1)$.
As before, by projecting discs on $P$ one can prove that 
$b_1^V(m_i)=e_i^-$ and $\mfm_1^{W,0}(\beta_{10})=x_\beta$.
By definition of the differential for the pairs of concatenated cobordisms $(V_0\odot(\Sigma_0\odot W_0),V_1\odot(\Sigma_1\odot W_1))$ and  $(\Sigma_0\odot W_0,\Sigma_1\odot W_1)$ one has
\begin{alignat}{1}
	\mfm_1^{V\odot(\Sigma\odot W)}(m_i)=\mfm_1^{\Sigma\odot W,+0}\circ\bs{b}_1^V(m_i)+\mfm_1^{V,0-}\circ\bs{\Delta}_1^{\Sigma\odot W}(m_i)\label{correspond}
\end{alignat}
and 
\begin{alignat*}{1}
	\mfm_1^{\Sigma\odot W,+0}\circ\bs{b}_1^V(m_i)=\mfm_1^{W,+0}\circ\bs{b}_1^\Sigma\circ\bs{b}_1^V(m_i)+\mfm_1^{\Sigma,0}\circ\bs{\Delta}_1^{W}\circ\bs{b}_1^V(m_i)
\end{alignat*}
Considering the components with values in $CF(W_0,W_1)$ on both sides of \eqref{correspond} gives
$$\mfm_1^{V\odot(\Sigma\odot W),0_W}(m_i)=\mfm_1^{W,0}\circ\bs{b}_1^\Sigma\circ\bs{b}_1^V(m_i)=\mfm_1^{W,0}\circ b_1^\Sigma\circ b_1^V(m_i)=\mfm_1^{W,0}\circ b_1^\Sigma(e_i^-)$$	
The disc $u$ contributes to the coefficient of $\beta_{10}$ in $b_1^\Sigma(e_i^-)$ which is thus equal to the coefficient of $x_\beta$ in $\mfm_1^{V\odot(\Sigma\odot W),0_W}(m_i)$, as $\mfm_1^{W,0}(\beta_{10})=x_\beta$.
Similarly as before, one can view $V_1\odot\Sigma_1\odot W_1$ as a Morse perturbation of $\Sigma_0$ by a Morse function $\widetilde{F}$ which is equal to $F$  on $([-T,T]\times Y)\cap\Sigma_0$.
Thus, pseudo-holomorphic strips asymptotic to $m_i$ and $x_\beta$ are in one-to-one correspondence with negative gradient flow lines of $\widetilde{F}$ from $m_i$ to $x_\beta$. Moreover, there exists exactly one such disc because $x_\beta$ is the starting point of two flow lines of $d\widetilde{F}$, but one escapes in the positive end (note that $d\widetilde{F}$ points outward in the positive end) while the other flows to $m_i$.

It remains to understand the pseudo-holomorphic disc $v$ with boundary on $\R\times(\La_0^+\cup\La_1^+\cup\La_2^+)$ with a positive asymptotic to $\gamma_{12}$ and negative asymptotics to a maximum Morse chord $\beta_{10}$ and a chord $\gamma_{out}\in C(\La_2^+,\La_0^+)$. 
In order to do so, we will use the same strategy as in the proof of Proposition \ref{unit_cycle} when we showed $\sum\mfm_1^-(e_i^-)=0$. Namely, we use the isomorphism recalled in Example \ref{ex1} relating $\Cth_+$ complexes of two different 2-copies, and then \cite[Theorem 3.6]{EESa} to identify discs with boundary on a 2-copy with generalised discs with boundary on one copy. The two different 2-copies we consider are the following. 
The first is $(\Gamma_0,\Gamma_1)$, with $\Gamma_0=\La_0^+\cup\La_2^+$ and $\Gamma_1=\La_1^+\cup(\La_2^+)'$ where $(\La_2^+)'$ is a perturbation by a Morse function of a small push-off of $\La_2^+$ in the negative Reeb direction. The second 2-copy is $(\Gamma_0,\overline{\Gamma_1})$ where $\overline{\Gamma_1}$ is a translation of $\Gamma_1$ far in the positive Reeb direction. Now the disc $v$ we consider with boundary on $\R\times(\La_0^+\cup\La_1^+\cup\La_2^+)$ is a disc with boundary on $\R\times(\Gamma_0\cup\Gamma_1)$ with a mixed positive asymptotic $\gamma_{12}$ which is a chord from $\Gamma_1$ to $\Gamma_0$, a mixed negative asymptotic $\beta_{10}$ from $\Gamma_1$ to $\Gamma_0$ and a negative asymptotic $\gamma_{out}$ which is a pure chord of $\Gamma_0$. According to the isomorphism in Example \ref{ex1}, this disc corresponds to a disc with boundary on $\R\times(\Gamma_0\cup\overline{\Gamma_1})$ with a mixed positive asymptotic at $\overline{\beta_{10}}$, a mixed negative asymptotic at $\overline{\gamma_{12}}$ and a pure negative asymptotic at $\gamma_{out}$. By \cite[Theorem 3.6]{EESa} it corresponds to a rigid generalised disc with boundary on $\R\times\Gamma_0$ consisting of a disc and a negative gradient flow line of $f_+$ flowing from the maximum of $f_+$ to the boundary of the disc (on $\R\times\La_0^+$). By rigidity this last disc is constant, implying $\gamma_{out}=\gamma_{02}$. Conversely, following the flow of $df_+$ from the starting point of $\gamma_{02}$ leads to the maximum of $f_+$ on the corresponding connected component. Such a flow line is a generalised disc which corresponds to a disc $v$ with boundary on $\R\times(\La_0^+\cup\La_1^+\cup\La_2^+)$ as considered above.

Thus we have proved that the coefficient of $\gamma_{02}$ in $\mfm_2(\gamma_{12},e)$ is $1$.

\item[(b)] If $\beta_{10}$ is not a Morse chord. Given $R\geq0$ such that the three cobordisms $\Sigma_0,\Sigma_1$ and $\Sigma_2$ are cylindrical outside of $[-R,R]\times Y$, the energy of the disc $v$ with boundary on the positive cylindrical ends is given by
\begin{alignat*}{1}
E(v)=\mathfrak{a}(\gamma_{12})-\mathfrak{a}(\beta_{10})-\mathfrak{a}(\gamma_{out})-\sum\limits_{i=0}^2\mathfrak{a}(\bs{\delta_i})
\end{alignat*}
with $\mathfrak{a}(\gamma_{12})=e^R\ell(\gamma_{12})+\mathfrak{c}_2-\mathfrak{c}_1$, $\mathfrak{a}(\beta_{10})=e^R\ell(\beta_{10})+\mathfrak{c}_0-\mathfrak{c}_1$ and $\mathfrak{a}(\gamma_{out})=e^R\ell(\gamma_{out})+\mathfrak{c}_2-\mathfrak{c}_0$.
One can check that
\begin{alignat*}{1}
	&|\mathfrak{a}(\gamma_{12})-\mathfrak{a}(\gamma_{02})|\leq e^R(\max\|f_+\|_{\mathcal{C}^0}+\eta)+\mathfrak{c}_0-\mathfrak{c}_1\\
	&|\mathfrak{a}(\beta_{10})-\mathfrak{a}(\beta_0)|\leq e^R(\max\|f_+\|_{\mathcal{C}^0}+\eta)+\mathfrak{c}_0-\mathfrak{c}_1
\end{alignat*}
and thus
\begin{alignat*}{1}
	\mathfrak{a}(\gamma_{02})-\mathfrak{a}(\gamma_{out})\geq E(v)+\mathfrak{a}(\beta_0)-2\big(e^R(\max\|f_+\|_{\mathcal{C}^0}+\eta)+\mathfrak{c}_0-\mathfrak{c}_1\big)
\end{alignat*}
and for $\eta$ sufficiently small, the term on the right hand side is strictly positive, so the action of $\gamma_{out}$ is strictly smaller than that of $\gamma_{02}$.
\end{enumerate}

Thus, together with the curves of type B, C, and D, we obtain as expected
\begin{alignat*}{1}
\mfm_2(\gamma_{12},e)=\gamma_{02}+\bs{\zeta}_{02}+\bs{y}_{02}^-+\bs{\xi}_{20}+\bs{y}_{02}^+
\end{alignat*}

In case the connected component of $\Sigma_1$ containing the starting point of $\gamma_{12}$ has an empty negative end, as observed at the beginning of \textbf{1.} the configurations A', B' and C' of Figure \ref{unit3} are the only one which will contribute to $\mfm_2(\gamma_{12},e)$. As for A, the configurations of type A' consist of two discs, one is like the disc $v$ studied in the case of A, positively asymptotic to $\gamma_{12}$ and negatively asymptotic to $\beta_{10}$ and $\gamma_{out}$, and the other disc is a disc $u'$ with boundary on $\Sigma_0\cup\Sigma_1$ with a positive asymptotic to $\beta_{10}$ and an intersection point asymptotic at a maximum $e_i^0$ of $F$. As before, the term $\gamma_{02}$ in $\mfm_2(\gamma_{12},e)$ will come from a configuration A' where $\beta_{10}$ is a Morse chord while the other terms in $\mfm_2(\gamma_{12},e)$ will come from $A'$ where $\beta_{10}$ is not Morse, and from the configurations $B'$ and $C'$. If $\beta_{10}$ is Morse, we just mimic the proof of \textbf{1.}(a) in order to show that $u'$ can be identified with a negative gradient flow line of a Morse function $\widetilde{F}$ (equal to $F$ on $([-T,T]\times Y)\cap\Sigma_0$) from $e_i^0$ to $x_\beta$. The only difference is that we only need to wrap slightly the positive end of $\Sigma_1$ but not the negative end because there is no negative end on the $i$-th connected component of $\Sigma_0$.
\vspace{5mm}

\textbf{2.} For $x_{12}\in CF(\Sigma_1,\Sigma_2)=CF^+(\Sigma_1,\Sigma_2)\oplus CF^-(\Sigma_1\cup\Sigma_2)$ we have
\begin{alignat*}{1}
	\mfm_2(x_{12},e)&=\mfm_2^0(x_{12},e)+\mfm_2^-(x_{12},e)\\
	&=\mfm_2^0(x_{12},e)+b_1^-\circ\Delta_2^\Sigma(x_{12},e)+b_2^-\big(\Delta_1^\Sigma(x_{12}),e^-\big)+b_2^-\big(\Delta_1^\Sigma(x_{12}),\Delta_1^\Sigma(e^0)\big)\\
	&=\mfm_2^0(x_{12},e)+b_1^-\circ\Delta_2^\Sigma(x_{12},e)+b_2^-\big(\Delta_1^\Sigma(x_{12}),e^-\big)
\end{alignat*}	
see Figure \ref{unit6}, and we will prove that for $x_{12}^+\in CF^+(\Sigma_1,\Sigma_2)$ and $x_{12}^-\in CF^-(\Sigma_1,\Sigma_2)$, one has	
\begin{alignat*}{1}
&\mfm_2(x^+_{12},e)=x^+_{02}+\bs{y}^+_{02},\mbox{ and }\\
&\mfm_2(x^-_{12},e)=x^-_{02}+\bs{z}^-_{02}+\bs{\xi}_{20}+\bs{z}^+_{02}
\end{alignat*}
where $\bs{y}_{02}^+, \bs{z}_{02}^+\in CF^+(\Sigma_0,\Sigma_2)$, $\bs{z}_{02}^-\in CF^-(\Sigma_0,\Sigma_2)$ and $\bs{\xi}_{20}\in C(\La_0^-,\La_2^-)$, and each intersection point in $\bs{y}_{02}^+$, resp. $\bs{z}^-_{02}$, has action strictly bigger than $x_{02}^+$, resp. $x_{02}^-$.

\begin{figure}[ht]  
	\begin{center}\includegraphics[width=10cm]{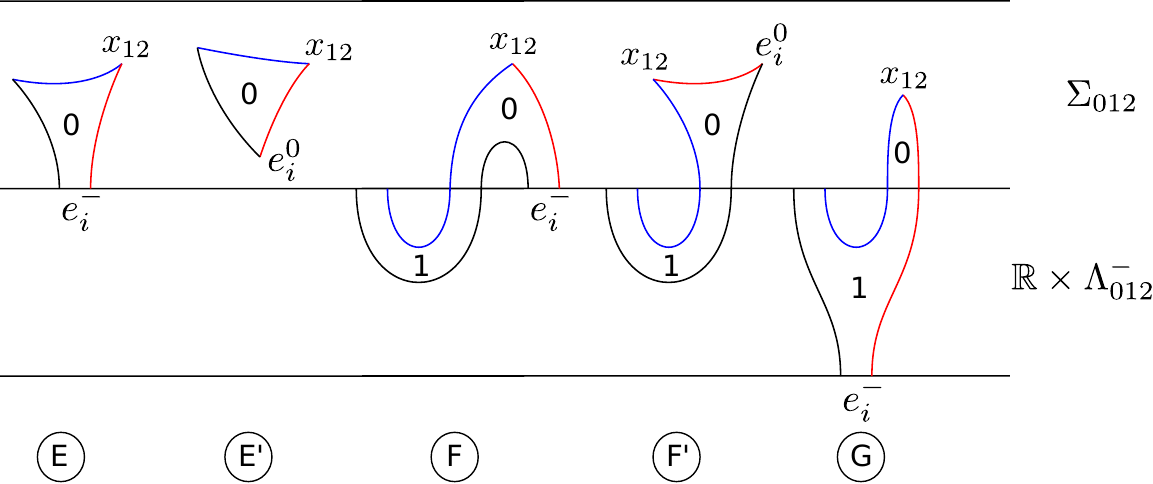}\end{center}
	\caption{Curves contributing to $\mfm_2(x_{12},e)$.}
	\label{unit6}
\end{figure}
As for the previous case, we start by assuming that $x_{12}$ is an intersection point of $\Sigma_1\cap\Sigma_2$ with the $i$-th connected component of $\Sigma_1$ having a non empty negative end. Then, only configurations E, F and G are relevant.
Let us consider configurations of type E. Let $u$ be a pseudo-holomorphic disc with boundary on $\Sigma_0\cup\Sigma_1\cup\Sigma_2$ negatively asymptotic to $e_i^-$, and asymptotic to $x_{12}\in CF(\Sigma_1,\Sigma_2)$ and an output $x_{out}\in CF(\Sigma_0,\Sigma_2)$.
The action of intersection points are assumed to be much smaller than that of pure Reeb chords, so $u$ has no pure Reeb chord asymptotics.

To understand what can be the output of such a disc, we wrap as before the negative end of $\Sigma_1$ slightly in the positive Reeb direction to get the pair $(V_0\odot\Sigma_0,V_1\odot\Sigma_1)$ where the Morse Reeb chords $e_j^-$ in $C(\La_0^-,\La_1^-)$ correspond to intersection points $m_j$ in $CF(V_0,V_1)$ and $\bs{b}_1^V(m_j)=b_1^V(m_j)=e_j^-$. By definition of the product for a pair of concatenated cobordisms (see Section \ref{sec:prod_conc}) we have:
\begin{alignat*}{2}
\mfm_2&^{V\odot \Sigma,0_\Sigma}(x_{12},m_i)\\
&=\mfm_2^{\Sigma,0}(\bs{b}_1^V(x_{12}),\bs{b}_1^V(m_i))
+\mfm_1^{\Sigma,0}\circ\,b_1^V\circ\Delta_2^\Sigma(\bs{b}_1^V(x_{12}),\bs{b}_1^V(m_i))+\mfm_1^{\Sigma,0}\circ\,b_2^V(\bs{\Delta}_1^\Sigma(x_{12}),\bs{\Delta}_1^\Sigma(m_i))\\
&=\mfm_2^{\Sigma,0}(\bs{b}_1^V(x_{12}),e_i^-)
+\mfm_1^{\Sigma,0}\circ\,b_1^V\circ\Delta_2^\Sigma(\bs{b}_1^V(x_{12}),e_i^-)+\mfm_1^{\Sigma,0}\circ\,b_2^V(\Delta_1^\Sigma(x_{12}),m_i)\\
&=\mfm_2^{\Sigma,0}(x_{12},e_i^-)+\mfm_2^{\Sigma,0}(b_1^V\circ\Delta_1^\Sigma(x_{12}),e_i^-)
+\mfm_1^{\Sigma,0}\circ\,b_1^V\circ\Delta_2^\Sigma(x_{12},e_i^-)+\mfm_1^{\Sigma,0}\circ\,b_2^V(\Delta_1^\Sigma(x_{12}),m_i)
\end{alignat*}
See Figure \ref{unit2}. All these terms except the first one involve bananas with two positive Reeb chord asymptotics and with boundary on $V_0\cup V_1\cup V_2$ where $V_0=\R\times\La_0^-$, $V_1$ is a wrapping of $\R\times\La_1^-$ and $V_2:=\R\times\La_2^-$. These rigid bananas project to rigid discs with boundary on $\pi_P(\La_0^-\cup\La_1^-\cup\La_2^-)$ and for dimension reasons they must be constant. This is not possible as they all have two distinct positive Reeb chord asymptotics (a constant curve with boundary on $\pi_P(\La_0^-\cup\La_1^-\cup\La_2^-)$ does not lift to a banana with two positive asymptotics but to a trivial strip). 
So we are left with $\mfm_2^{V\odot \Sigma,0_\Sigma}(x_{12},m_i)=\mfm_2^{\Sigma,0}(x_{12},e_i^-)$.

Let us denote again $\widetilde{F}$ a Morse function such that $V_1\odot\Sigma_1$ is viewed as a $1$-jet perturbation of $\Sigma_0$ by $\widetilde{F}$, and $\widetilde{F}$ equals $F$ on $\Sigma_0\cap([-T,T]\times Y)$.  The intersection point $m_i$ is a maximum of $\widetilde{F}$ and the gradient flow line of $\widetilde{F}$ flowing from $x_{02}$ to $m_i$ corresponds to a pseudo-holomorphic triangle asymptotic to $x_{02}$, $m_i$ and $x_{12}$. Thus the coefficient of $x_{02}$ in $\mfm_2^{\Sigma,0}(x_{12},e_i^-)$ is $1$.
Note also that the energy of this triangle is given by
\begin{alignat}{1}
E(u)=\mathfrak{a}(x_{02})-\mathfrak{a}(e_i^-)-\mathfrak{a}(x_{12})\label{energy}
\end{alignat}
and by definition of the action one can check that it can be made as small as possible by taking smaller $\eta$.

Now suppose there is another pseudo-holomorphic triangle with asymptotics $x_{12}$, $e_i^-$ and $y_{02}\neq x_{02}$, contributing to the coefficient of $y_{02}$ in $\mfm_2(x_{12},e)$. This triangle necessary leaves a small neighborhood of the gradient flow line from $x_{02}$ to $m_i$ and thus according to the relation \eqref{energy} between the energy of such a triangle and the action of its asymptotics, the action of $y_{02}$ is strictly bigger than the action of $x_{02}$, independently of how small $\eta$ is.

\begin{figure}[ht]  
	\begin{center}\includegraphics[width=10cm]{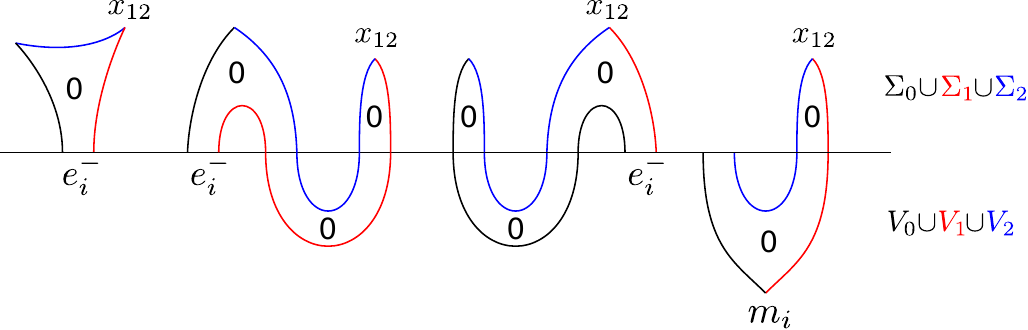}\end{center}
	\caption{Curves contributing to $\mfm_2^{V\odot \Sigma,0_\Sigma}(x,m_i)$.}
	\label{unit2}
\end{figure}

Then, about configurations of type F and G, observe that a disc with boundary on the non-cylindrical parts in such configurations exists only if the action of $x_{12}$ is negative.

To sum up, the configurations of type E, F, and G give that
\begin{alignat*}{1}
&\mfm_2(x^+_{12},e)=x^+_{02}+\bs{y}^+_{02},\mbox{ and }\\
&\mfm_2(x^-_{12},e)=x^-_{02}+\bs{z}^-_{02}+\bs{\xi}_{20}+\bs{z}^+_{02}
\end{alignat*}
If the component of $\Sigma_1$ containing $x_{12}$ is a filling, we have to consider configurations E' and F' only. For E', we don't need to wrap the negative end of $\Sigma_1$ and consider directly the one-to-one correspondence between gradient flow lines of $F$ from $x_{02}$ to $e_i^0$ and pseudo-holomorphic triangles with vertices $x_{02}$, $e_i^0$ and $x_{12}$. For F', observe that a disc with boundary on the non-cylindrical parts in such a configuration exists also only if the action of $x_{12}$ is negative (remember $e_i^0$ has positive action).
\vspace{5mm}

\textbf{3.} Finally, for $\xi_{21}\in C(\La_1^-,\La_2^-)$ we have
		\begin{alignat*}{1}
		\mfm_2(\xi_{21},e)&=\mfm_2^0(\xi_{21},e)+\mfm_2^-(\xi_{21},e)\\
		&=\mfm_2^0(\xi_{21},e^-)+b_2^-(\xi_{21},e^-)
		\end{alignat*}
because the Morse function $F$ has no maxima $e_i^0$ on the component of $\Sigma_1$ involved as this component has a non empty negative end. See Figure \ref{unit7}. For energy reasons, if a disc of type H exists then the output intersection point must have positive action. Then a disc of type I is such that $\xi_{out}$ is the chord in $C(\La_0^-,\La_2^-)$ canonically identified with $\xi_{21}\in C(\La_1^-,\La_2^-)$. In order to prove this, one can use the same type of argument as in \textbf{1.}. Let $\Gamma_0=\La_0^-\cup(\La_2^-)'$ where $(\La_2^-)'$ is a Morse perturbation of a small push-off of $\La_2^-$ in the positive Reeb direction, and $\Gamma_1=\La_1^-\cup\La_2^-$. The disc of type I with boundary on $\Gamma_0\cup\Gamma_1$ corresponds to a disc with boundary on $\Gamma_0\cup\overline{\Gamma_1}$ with a positive asymptotic at $\overline{e_i^-}\in C(\overline{\La_1^-},\La_0^-)$, a negative asymptotic at $\overline{\xi_{out}}\in C(\overline{\La_2^-},\La_0^-)$ and a negative asymptotic at $\overline{\xi_{21}}$ which is a pure chord of $\overline{\Gamma_1}$. By \cite[Theorem 3.6]{EESa} this last disc corresponds to a rigid generalised disc with boundary on $\R\times\Gamma_0$ consisting of a constant disc at $\xi'_{20}$ (chord in $C(\La_0^-,(\La_2^-)')$ canonically identified with $\xi_{21}\in C(\La_1^-,\La_2^-)$) and a negative gradient flow line of $f_-$ from the maximum $e_i^-$ to the ending point of $\xi'_{20}$. Translating this back to our setting, the configuration of type I contributes $\xi_{20}$ to $\mfm_2(\xi_{21},e)$, and thus we have $\mfm_2(\xi_{21},e)=\xi_{20}+\bs{y}^+_{02}$.

\begin{figure}[ht]  
	\begin{center}\includegraphics[width=5cm]{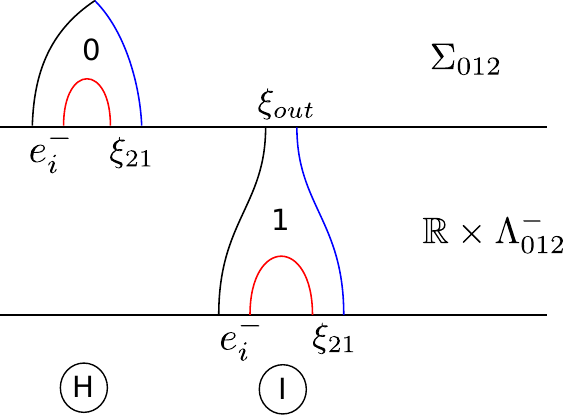}\end{center}
	\caption{Curves contributing to $\mfm_2(\xi_{21},e)$}
	\label{unit7}
\end{figure}
\end{proof} 
\begin{rem}
	Generalizing the conjectural \cite[Lemma 4.10]{E1} to the case of cobordisms, one could probably prove that with the choice of basis given above the matrix of $\mfm_2(\,\cdot\,,e)$ is actually the identity matrix. However we don't need such a strong statement on the chain level here, but what we get is that it is the identity in homology (see details at end of the current section).
\end{rem}

We will apply now the previous theorem to a $3$-copy $(\Sigma_0,\Sigma_1,\Sigma_2)$ of $\Sigma_0$. 
By $3$-copy we mean that $\Sigma_1=\varphi_{\widetilde{H}_1}^{\eta_1}(\Sigma_0)$ and $\Sigma_2=\varphi_{\widetilde{H}_2}^{\eta_2}(\Sigma_0)$ for $\eta_2>\eta_1$ and where $\widetilde{H}_1$ and $\widetilde{H}_2$ are small perturbations of the Hamiltonian $H$ from the beginning of this section.
Moreover, we assume that there exist Morse functions $F_{1}$, $F_{2}$ on $\Sigma_0$ such that $F_{2}-F_{1}$ is also Morse, and 
\begin{itemize}
	\item for $i=1,2$, $\Sigma_i$ is viewed as the graph of $dF_i$ in a standard neighborhood of $\Sigma_0$,
	\item on the cylindrical ends of $\Sigma_0$ we have $F_i=e^t(f_i^\pm-\eta_i)$, $i=1,2$, for $f_i^\pm:\La_0^\pm\to\R$ Morse functions whose $\Cc_0$-norm are strictly smaller than $\min\{\eta_1,\eta_2-\eta_1\}/2$ and such that $f_2-f_1$ is also Morse.
	\item we assume that $F_1$, $F_2$ and $F_2-F_1$ admit a unique maximum on each filling component of $\Sigma_0$ and no maximum on each component with a non-empty negative end; and that $f_1$, $f_2$ and $f_2-f_1$ admit a unique maximum on each connected component.
\end{itemize}
Note that the critical points of $F_i$ are in one-to-one correspondence with the intersection points in $\Sigma_0\cap\Sigma_i$ for $i=1,2$ and the critical points of $F_2-F_1$ are in one-to-one correspondence with intersection points in $\Sigma_1\cap\Sigma_2$.
Then we have the following:
\begin{coro}\label{coro:unit}
	Given the $3$-copy $(\Sigma_0,\Sigma_1,\Sigma_2)$ described above, we have
	$$\mfm_2(e_{\Sigma_1,\Sigma_2},e_{\Sigma_0,\Sigma_1})=e_{\Sigma_0,\Sigma_2}$$
\end{coro}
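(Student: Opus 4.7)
The plan is to apply Theorem \ref{teo_unit} to the pair $(\Sigma_0,\Sigma_1)$ with $\Sigma_2$ playing the role of the third cobordism, and then to trace through the chain-level triangular structure of $\mfm_2(\cdot,e_{\Sigma_0,\Sigma_1})$ applied to the decomposition $e_{\Sigma_1,\Sigma_2}=e^0_{12}+e^-_{12}$. The $C(\La_2^+,\La_0^+)^\dagger[n-1]$-component of the output is automatically zero, since the matrix entries $\mfm^+_{00},\mfm^+_{0-},\mfm^+_{-0},\mfm^+_{--}$ of $\mfm_2$ were defined to vanish in Section \ref{section:def_product}. For the remaining summands, case 2 of the proof of Theorem \ref{teo_unit} (applied to the positive-action intersection points $e^0_{12}$) and case 3 (applied to $e^-_{12}\in C(\La_1^-,\La_2^-)$), together with an energy bookkeeping on the mixed terms, yield
\begin{equation*}
\mfm_2(e_{\Sigma_1,\Sigma_2},e_{\Sigma_0,\Sigma_1})=e_{\Sigma_0,\Sigma_2}+\bs{Y},
\end{equation*}
where $\bs{Y}\in CF^+(\Sigma_0,\Sigma_2)\oplus C(\La_0^-,\La_2^-)$ is a combination of generators of strictly higher action than the minima making up $e_{\Sigma_0,\Sigma_2}$.

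The heart of the argument is to show $\bs{Y}=0$. I would establish this by a Morse-theoretic identification of the relevant pseudo-holomorphic triangles. Because $\Sigma_0,\Sigma_1,\Sigma_2$ are mutually $C^1$-small perturbations (so the cobordisms are $1$-jet graphs of Morse functions $F_{ij}$, and the negative ends are $2$-copies in the sense of \cite{EESa} via $f^-_{ij}$), one can adapt the stretching/projection arguments invoked already in Proposition \ref{unit_cycle} and in case 1(a) and case 2 of Theorem \ref{teo_unit} to triangles: after wrapping the negative ends slightly and projecting to $P$, the rigid triangles contributing to $\mfm_2(e^\star_{12},e^{\star'}_{01})$ with output a critical point $x$ of $F_{02}$ (respectively $f^-_{02}$) are in bijection with rigid Y-shaped gradient flow trees whose two input leaves are minima of $F_{01},F_{12}$ (resp. $f^-_{01},f^-_{12}$). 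Dimension considerations force $x$ to itself be a minimum, and on each connected component (for which $F_{02}$ or $f^-_{02}$ admits a unique minimum) there is exactly one such rigid flow tree, giving precisely $e^0_{02}+e^-_{02}$ and hence $\bs{Y}=0$.

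The main obstacle will be executing this adiabatic/stretching reduction rigorously in a uniform way across all three curve types (the non-cylindrical triangles contributing to $\mfm_{00}^0$ and $\mfm^\Sigma_2$, and the cylindrical Y-trees on the negative end contributing to $b_2^-$), since the cited references treat only the strip and banana cases. As a safety net, if only the homological statement is needed, one can bypass the chain-level vanishing entirely: by the Leibniz rule (Theorem \ref{teo_Leibniz_rule}) and Proposition \ref{unit_cycle}, $\mfm_2(e_{\Sigma_1,\Sigma_2},e_{\Sigma_0,\Sigma_1})$ is a cycle; since $\mfm_2(\cdot,e_{\Sigma_0,\Sigma_1})$ is a quasi-isomorphism by Theorem \ref{teo_unit} and the triangular/action-filtration structure established above determines the leading term, an iterative cancellation on the action filtration shows that $\bs{Y}$ is exact, giving the equality in homology.
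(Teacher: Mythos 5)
Your reduction via Theorem \ref{teo_unit} to the statement $\mfm_2(e_{\Sigma_1,\Sigma_2},e_{\Sigma_0,\Sigma_1})=e_{\Sigma_0,\Sigma_2}+\bs{Y}$ with $\bs{Y}$ supported on generators of strictly larger action is correct and matches the paper's starting point. The gap is in how you kill $\bs{Y}$. Your main route replaces the paper's argument by a full adiabatic correspondence between rigid triangles and rigid $Y$-shaped gradient flow trees, and you yourself note that this correspondence is not available in the references you cite (which treat only strips and bananas in the Legendrian/cylindrical setting); as stated it is therefore an unproved analytic input, not a proof. The paper avoids this machinery entirely with a soft energy estimate: after reducing to connected $\Sigma_0$ (so that $e$ is a single minimum $e^0$ in the filling case or a single Morse chord $e^-$ otherwise, and no cross-component terms arise), any triangle asymptotic to $e_{\Sigma_0,\Sigma_1}$, $e_{\Sigma_1,\Sigma_2}$ and an output $y^+_{02}\neq e_{\Sigma_0,\Sigma_2}$ must leave a fixed small neighborhood of the gradient trajectory joining the two input minima, hence has energy bounded below independently of the perturbation, while for a sufficiently small perturbation no intersection point $y^+_{02}$ has enough action to feed such a triangle; in the non-filling case one first wraps the negative ends slightly so that the chords $e^-$ become intersection points and the same estimate applies. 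No triangle-to-tree bijection is needed.

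Your ``safety net'' does not close the gap either, for two reasons. First, it only targets the equality in homology, whereas the corollary is a chain-level identity (and the paper's later deduction that $\mfm_2(\,\cdot\,,e)$ is the identity in homology is obtained \emph{from} this corollary together with Theorem \ref{teo_unit}, so appealing to homological unitality here is circular in spirit). Second, the key step ``an iterative cancellation on the action filtration shows that $\bs{Y}$ is exact'' is not a valid deduction: knowing that $\mfm_2(\,\cdot\,,e_{\Sigma_0,\Sigma_1})$ is an isomorphism of complexes and that it sends the cycle $e_{\Sigma_1,\Sigma_2}$ to $e_{\Sigma_0,\Sigma_2}+\bs{Y}$ with $\bs{Y}$ of higher action gives no reason for $\bs{Y}$ to be a boundary --- the complex may well have nontrivial homology in that action range, and filtration arguments alone cannot produce exactness. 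To repair the proof along the paper's lines, replace the flow-tree identification by the action/energy argument above (with the preliminary negative-end wrapping in the non-filling case).
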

\begin{proof}
	It is enough to consider the case where $\Sigma_0$ is connected. The case of a filling is already known, see for example \cite{GPS}. We recall a proof in our setting. Assume $\Sigma_0$ is a connected filling of $\La_0^+$, then we have $e_{\Sigma_0,\Sigma_1}=e_{\Sigma_0,\Sigma_1}^0$ and $e_{\Sigma_1,\Sigma_2}=e_{\Sigma_1,\Sigma_2}^0$ and according to Theorem \ref{teo_unit}:
	\begin{alignat*}{1}
		\mfm_2(e_{\Sigma_1,\Sigma_2}^0,e_{\Sigma_0,\Sigma_1}^0)=e_{\Sigma_0,\Sigma_2}^0+\bs{y}_{02}^+
	\end{alignat*}
where $\bs{y}_{02}^+\in CF^+(\Sigma_0,\Sigma_2)$, and each element in $\bs{y}_{02}^+$ has action bigger than the action of $e_{\Sigma_0,\Sigma_2}^0$. Observe then that any triangle asymptotic to $y_{02}^+\neq e_{\Sigma_0,\Sigma_2}^0, e_{\Sigma_0,\Sigma_1}^0$, and $e_{\Sigma_1,\Sigma_2}^0$ would have to leave a small neighborhood of a gradient flow line of $F_{1}$ from $e_{\Sigma_1,\Sigma_2}^0$ to the maximum $e_{\Sigma_0,\Sigma_1}^0$. But for a sufficiently small perturbation no $y_{02}^+\neq e_{\Sigma_0,\Sigma_2}^0$ has action big enough for such a triangle to exist.

Suppose now that $\Sigma_0$ is a connected cobordism from $\La_0^-\neq\emptyset$ to $\La_0^+$. Then $e_{\Sigma_0,\Sigma_1}=e_{\Sigma_0,\Sigma_1}^-$ and $e_{\Sigma_1,\Sigma_2}=e_{\Sigma_1,\Sigma_2}^-$ and according to Theorem \ref{teo_unit}:
\begin{alignat*}{1}
\mfm_2(e_{\Sigma_1,\Sigma_2}^-,e_{\Sigma_0,\Sigma_1}^-)=e_{\Sigma_0,\Sigma_2}^-+\bs{y}_{02}^+
\end{alignat*}
The proof is the same as in the filling case after wrapping slightly the negative ends of $\Sigma_1$ and $\Sigma_2$ in the positive Reeb direction. We wrap so that the negative end becomes a cylinder over $\La_0^-\cup\widetilde{\La}_1^-\cup\widetilde{\La}_2^-$ where $\widetilde{\La}_1^-$ is a perturbation of a small push-off of $\La_0^-$ in the positive Reeb direction and $\widetilde{\La}_2^-$ is a perturbation of a small push-off of $\widetilde{\La}_1^-$ in the positive Reeb direction. The pseudo-holomorphic disc asymptotic to $e_{\Sigma_0,\Sigma_2}^-$, $e_{\Sigma_0,\Sigma_1}^-$ and $e_{\Sigma_1,\Sigma_2}^-$ corresponds after wrapping to a triangle asymptotic to the corresponding intersection points. So then,  for the same reasons as before a disc asymptotic to $y_{02}^+,e_{\Sigma_0,\Sigma_1}^-$ and $e_{\Sigma_1,\Sigma_2}^-$ can not exist.
\end{proof}

We end this section by proving that the transfer maps preserve the continuation element. Consider a pair $(V_0\odot W_0,V_1\odot W_1)$ such that $V_1\odot W_1$ is a small perturbation of $(V_0\odot W_0)$ the  same way as we perturbed $\Sigma_0$ to get $\Sigma_1$ previously, in particular $\La_1^\pm$ is a perturbation of a push-off of $\La_0^\pm$ in the negative Reeb direction. We assume moreover that the Morse function $F$ used to perturb the compact part of $V_0\odot W_0$ is such that $(V_0,V_1)$ and $(W_0,W_1)$ are also pairs of cobordisms of the same type, so $\La_1$ is a perturbation of a push-off of $\La_0$ in the negative Reeb direction. 

Giving this, by what we did previously, there are continuation elements $e_V\in\Cth_+(V_0,V_1)$, $e_W\in\Cth_+(W_0,W_1)$ and $e_{V\odot W}\in\Cth_+(V_0\odot W_0,V_1\odot W_1)$, described as follows
\begin{alignat*}{1}
	&e_V=e_V^0+e_V^-=\sum e_{V,i}^0+\sum e_{V,i}^-\\
	&e_W=e_W^0+e_W^-=\sum e_{W,i}^0+\sum e_{W,i}^-\\
	&e_{V\odot W}=e_W^0+e_V^0+e_V^-
\end{alignat*}

\begin{prop}\label{prop:cont1}
	The transfer map $\bs{\Delta}_1^W$ preserves the continuation element.
\end{prop}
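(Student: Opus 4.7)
The strategy is to compute $\bs{\Delta}_1^W(e_{V\odot W})$ directly from the definitions and reduce the statement to a purely geometric vanishing. Decompose $e_{V\odot W}=e_W^0+e_V^0+e_V^-$, and observe that $e_W^0\in CF(W_0,W_1)\subset CF_{+\infty}(W_0,W_1)$ while $e_V^0+e_V^-\in CF(V_0,V_1)\oplus C(\La_0^-,\La_1^-)=CF_{-\infty}(V_0,V_1)$. The formula for $\bs{\Delta}_1^W$ from Section \ref{section:def_complex} then gives
\begin{equation*}
\bs{\Delta}_1^W(e_{V\odot W}) \;=\; \Delta_1^W(e_W^0)\,+\,e_V^0\,+\,e_V^-.
\end{equation*}
Under the compatibility hypothesis on $F$, the filling components of $V_0$ are in bijection with those filling components of $V_0\odot W_0$ whose $V_0$-part is non-trivial (which is what forces their $V_0$-part to itself be a filling), and the chosen minima match; combined with the identification of $e_V^-$ with the sum of minima of the common $f_-$ on $\La_0^-$, one recognizes $e_V^0+e_V^-$ as the continuation element $e_V$ in $\Cth_+(V_0,V_1)$. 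The proposition thus reduces to showing $\Delta_1^W(e_W^0)=0$ on the chain level.

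By construction, $e_W^0$ is supported on those filling components of $V_0\odot W_0$ lying \emph{entirely} in $W_0$. Each such component $W_0'\subset W_0$ is a Lagrangian filling of a sub-Legendrian of $\La_0^+$ with \emph{empty} negative end at $\La_0$, since it is not glued to any component of $V_0$. The map $\Delta_1^W$ counts rigid pseudo-holomorphic discs with boundary on $W_0\cup W_1$ having a negative asymptote at the intersection point input (here $e_W^0\in W_0'\cap W_1'$), a negative asymptote at an output mixed Reeb chord $\gamma_{01}\in C(\La_1,\La_0)$ in the common negative end, and possibly pure negative Reeb chord asymptotes at chords of $\La_0$ or $\La_1$. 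The boundary of such a disc is connected and passes through an intersection point of the 2-copy piece $W_0'\cup W_1'$, which forces the entire disc boundary to lie in $W_0'\cup W_1'$. But $W_0'\cup W_1'$ admits no negative cylindrical end, so no asymptote at $\R\times(\La_0\cup\La_1)$ is possible; the moduli space is empty and $\Delta_1^W(e_W^0)=0$ identically.

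The principal content is this connectivity/topological argument; there is no real analytic obstacle. The only subtlety is the bookkeeping for the identification $e_V^0+e_V^-=e_V$: one must verify that the way the minima of $F$ on filling components of $V_0\odot W_0$ are distributed between the $V_0$- and $W_0$-pieces is compatible with the Morse data chosen separately on $V_0$, so that exactly the $V_0$-placed minima appear both as $e_V^0\subset e_{V\odot W}$ and as $e_V^0\subset e_V$. This is built into the compatibility hypothesis (\textit{``$F$ is such that $(V_0,V_1)$ and $(W_0,W_1)$ are also pairs of cobordisms of the same type''}), so once that is invoked the proof is complete.
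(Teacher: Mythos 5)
Your reduction is the same as the paper's: $\bs{\Delta}_1^W(e_{V\odot W})=\Delta_1^W(e_W^0)+e_V^0+e_V^-$, with $e_V^0+e_V^-=e_V$ being the definition of the continuation element for $(V_0,V_1)$ under the stated compatibility of the Morse data. Where you genuinely differ is in how you kill $\Delta_1^W(e_W^0)$. The paper does this by the energy argument used throughout Section \ref{sec:unit}: with the chosen perturbation every intersection point has positive action, and every asymptotic of a disc counted by $\Delta_1^W$ contributes negatively, so $E(u)=-\mathfrak{a}(\gamma_{01})-\mathfrak{a}(e_W^0)-\sum_i\mathfrak{a}(\bs{\delta}_i)<0$ and the relevant moduli spaces are empty. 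You instead use that $e_W^0$ is supported on filling components of $W_0$ (condition (3) on $F$) and argue topologically that no disc through such a component can reach the negative end $\R\times(\La_0\cup\La_1)$. This is correct, up to one overstated step: the boundary arcs of the punctured disc need not all lie in $W_0'\cup W_1'$, since adjacent arcs meeting at a Reeb-chord puncture may lie on different connected components; what is actually forced, and suffices, is that the arc adjacent to $e_W^0$ on the $W_0$-side lies in the filling component $W_0'$ and must end at a puncture asymptotic to a chord with an endpoint on $\La_0$ (a pure chord of $\bs{\delta}_0$ or the output $\gamma_{01}$), which is impossible since $W_0'$ has empty negative end. Comparing the two routes: the paper's action argument is uniform with the rest of the section (it vanishes $\Delta_1^W$ on \emph{any} positive-action intersection point, with no information about where the minima sit) and is what also handles the analogous terms in Proposition \ref{unit_cycle} and Theorem \ref{teo_unit}; your argument does not use action positivity of intersection points, but does rely on the placement of the minima on filling components, so it is tied to this particular element rather than to the size of the perturbation. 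Both are valid proofs of the proposition.
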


\begin{proof}
Directly from the definition, one has
\begin{alignat*}{1}
	\bs{\Delta}_1^W(e_{V\odot W})&=\bs{\Delta}_1^W(e_W^0+e_V^0+e_V^-)=\Delta_1^W(e_W^0)+e_V^0+e_V^-=e_V^0+e_V^-
\end{alignat*}
where the last equality holds for energy reason.
\end{proof}

\begin{prop}\label{prop:cont2}
	The transfer map $\bs{b}_1^V$ preserves the continuation element in homology, i.e. $[\bs{b}_1^V(e_{V\odot W})]=[e_W]$.
\end{prop}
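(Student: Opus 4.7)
The plan is to first unfold the definition of $\bs{b}_1^V$ to reduce the statement to an identity in the subcomplex $C(\La_0,\La_1)\subset\Cth_+(W_0,W_1)$, and then to settle this identity by a wrapping and Morse-theoretic argument analogous to those used in Proposition \ref{unit_cycle} and Theorem \ref{teo_unit}.

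First I would apply the definition of $\bs{b}_1^V$ to $e_{V\odot W}=e_W^0+e_V^0+e_V^-$, treating $e_W^0\in CF_{+\infty}(W_0,W_1)$ and $e_V^0+e_V^-\in CF_{-\infty}(V_0,V_1)$ separately. This yields
\begin{alignat*}{1}
\bs{b}_1^V(e_{V\odot W})=e_W^0+b_1^V\circ\Delta_1^W(e_W^0)+b_1^V(e_V^0)+b_1^V(e_V^-).
\end{alignat*}
The middle term vanishes by the energy formula for $\Delta$-type discs: a disc contributing to $\Delta_1^W(e_W^0)$ with output $\gamma_{01}$ has energy $-\mathfrak{a}(\gamma_{01})-\mathfrak{a}(e_W^0)-\sum\mathfrak{a}(\bs{\delta}_i)$, which can only be positive if $\mathfrak{a}(\gamma_{01})<0$, and this is impossible since negative-end Reeb chord actions are positive. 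Using $e_W=e_W^0+e_W^-$, the proposition then reduces to showing that
\begin{alignat*}{1}
\eta:=b_1^V(e_V^0)+b_1^V(e_V^-)+e_W^-\in C(\La_0,\La_1)\subset\Cth_+(W_0,W_1)
\end{alignat*}
is $\mfm_1^W$-exact.

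Second, I would settle the exactness of $\eta$ by the following Morse-theoretic reduction. Wrap the negative end of $W_1$ slightly in the positive Reeb direction using a Hamiltonian of the form $\rho^-_{T+N,N}\partial_z$, with flow time chosen to be larger than the length of every Morse chord in $\Rc(\La_0,\La_1)$ but smaller than the length of every non-Morse chord. Let $V'_i$ denote the resulting short Liouville pieces. By Proposition \ref{prop:transfer2} the transfer map $\bs{b}_1^{V'}:\Cth_+(V'_0\odot W_0,V'_1\odot W_1)\to\Cth_+(W_0,W_1)$ is an isomorphism of graded complexes under which every Morse chord $e_{W,i}^-\in C(\La_0,\La_1)$ is identified with an intersection point $m_i\in CF(V'_0,V'_1)$. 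Moreover, by the same projection-to-$P$ argument used repeatedly in Section \ref{sec:unit}, each rigid banana counted by $b_1^V(e_V^0)$ or $b_1^V(e_V^-)$ corresponds to a rigid pseudo-holomorphic strip whose projection is a gradient trajectory of a Morse function $\widetilde F$ on $V'_0\odot W_0$ extending $F$ across the wrapped slice and realising $V'_1\odot W_1$ as a $1$-jet perturbation of $V'_0\odot W_0$. Pulling $\eta$ back through the wrapping isomorphism thus produces a sum of minima of $\widetilde F$.

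A component-by-component count, in direct analogy with the last step of the proof of Proposition \ref{unit_cycle}, then shows that on each connected component $K$ of $V'_0\odot W_0$ the contribution of $\eta$ is an even sum of minima of $\widetilde F|_K$ and so bounds a chain assembled from index-$1$ critical points through their two descending gradient trajectories. Transporting this primitive back via $\bs{b}_1^{V'}$ gives the desired primitive of $\eta$ in $\Cth_+(W_0,W_1)$.

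The hard part will be the component-by-component bookkeeping: one needs to verify that, on each connected component of $V'_0\odot W_0$, the minima of $\widetilde F$ reached by the bananas $b_1^V(e_V^0)$ and $b_1^V(e_V^-)$ together with the minima identified with $e_W^-$ exhaust all minima of $\widetilde F|_K$ with matching parities — this uses the assumptions made on the Morse functions $F$, $f^V_-$ and $f^W_-$ in the construction of the continuation elements. Once this identification is carried out, the argument reduces to the standard Morse cancellation and the proposition follows.
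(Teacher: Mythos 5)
Your first step is fine and agrees with the paper: since $b_1^V\circ\Delta_1^W(e_W^0)=0$ for energy reasons, one has $\bs{b}_1^V(e_{V\odot W})=e_W^0+b_1^V(e_V^0+e_V^-)$, and the statement reduces to the exactness of $\eta=b_1^V(e_V^0+e_V^-)+e_W^-$. The gap is in your second step. A wrapping whose flow time is larger than the Morse chords but smaller than the non-Morse chords of $\La_0\cup\La_1$ only converts the \emph{Morse} chords into intersection points; the non-Morse chords of $C(\La_0,\La_1)$ remain Reeb chords and are not critical points of any function $\widetilde F$, so the class $\eta$ does not become ``a sum of minima of $\widetilde F$''. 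Worse, your claim that every rigid banana counted by $b_1^V(e_V^0)+b_1^V(e_V^-)$ corresponds to a gradient trajectory is only true when the output chord is Morse. Since $V_0\cup V_1$ is a one-jet graph $2$-copy of the \emph{non-cylindrical} cobordism $V_0$, a rigid disc whose output is a non-Morse chord corresponds to a generalized disc built from an honest rigid holomorphic disc with boundary on $V_0$ with a positive puncture at a pure chord of $\La_0$, together with flow-line pieces; unlike the cylindrical-boundary situation in Proposition~\ref{unit_cycle}, dimension counting does not force this disc component to be constant, and there is no $\ep_0(\beta)+\ep_1(\beta)=0$ cancellation. This is exactly why the chain-level computation only yields $\bs{b}_1^V(e_{V\odot W})=e_W+E_W^-$ with $E_W^-$ a combination of non-Morse chords, and no bookkeeping of minima and index-one critical points can exhibit a primitive of $E_W^-$: the quantity you would need to control is governed by augmented counts of genuine holomorphic discs on $V_0$, not by Morse theory (compare the remark after Theorem~\ref{teo_unit}, where even the analogous chain-level statement is flagged as conjectural).

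For reference, the paper circumvents this by a homological argument: after establishing $\bs{b}_1^V(e_{V\odot W})=e_W+E_W^-$, it takes a third and fourth parallel copy, shows that $\mfm_2^W(\,\cdot\,,\bs{b}_1^V(e_{V\odot W_{01}}))$ is a quasi-isomorphism, and uses Corollary~\ref{coro:unit}, the functoriality of $\bs{b}_1^V$ with respect to the products in homology (Proposition~\ref{prop:transferf1}) and the $A_\infty$-relations to conclude that this map is homotopic to the identity; evaluating it on $e_{W_{12}}$ and checking by action/energy considerations that $\mfm_2^W(e_{W_{12}},\bs{b}_1^V(e_{V\odot W_{01}}))=\bs{b}_1^V(e_{V\odot W_{01}})$ then gives $[\bs{b}_1^V(e_{V\odot W})]=[e_W]$. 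If you want to salvage a more direct proof, you would have to show that $E_W^-$ is exact, which is precisely the step your Morse-theoretic reduction does not reach.
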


\begin{proof}
Observe first that
\begin{alignat*}{1}
\bs{b}_1^V(e_{V\odot W})&=\bs{b}_1^V(e_{W}^0+e_{V}^0+e_{V}^-)=e_{W}^0+b_1^V\circ\Delta_1^W(e_{W}^0)+b_1^V(e_{V}^0+e_{V}^-)=e_{W}^0+b_1^V(e_{V}^0+e_{V}^-)
\end{alignat*}
for energy reasons. Now, wrapping slightly the positive and negative cylindrical ends of $V_1$ in the positive Reeb direction one can prove that $b_1^V(e_{V}^0+e_{V}^-)=e_{W}^-+E_{W}^-$,
where $E_{W}^-\in C(\La_0,\La_1)$ is a linear combination of non Morse chords (same type of argument as in the proof of Theorem \ref{teo_unit}).

Now, take a third copy $V_2\odot W_2$ as in Corollary \ref{coro:unit} such that $(V_0,V_1,V_2)$ and $(W_0,W_1,W_2)$ are also $3$-copies. 
We claim that the map
\begin{alignat}{1}
	\mfm_2^W(\,\cdot\,,\bs{b}_1^V(e_{V\odot W_{01}})):\Cth_+(W_1,W_2)\to\Cth_+(W_0,W_2)
	\label{map1}
\end{alignat}
is a quasi-isomorphism. Again this follows from studying the pseudo-holomorphic curves involved, repeating some arguments of the proof of Theorem \ref{teo_unit}. As we are working over a field, it admits an inverse.
	
Consider finally a fourth copy $V_3\odot W_3$ being a perturbation of $V_2\odot W_2$ using the same type of perturbation as before. From the third $A_\infty$-relation satisfied by $\mfm^W$  (see Section \ref{A-inf maps}), the fact that $\mfm_2^{V\odot W}(e_{V\odot W_{12}},e_{V\odot W_{01}})=e_{V\odot W_{02}}$, and the fact that $\bs{b}_1^V$ preserves the product structures in homology, we get that the maps
	\begin{alignat*}{1}
	&\mfm_2^W\big(\mfm_2^W(\,\cdot\,,\bs{b}_1^V(e_{V\odot W_{12}})),\bs{b}_1^V(e_{V\odot W_{01}})\big):\Cth_+(W_2,W_3)\to\Cth_+(W_0,W_3)\\
	&\mfm_2^W(\,\cdot\,,\bs{b}_1^V(e_{V\odot W_{02}})):\Cth_+(W_2,W_3)\to\Cth_+(W_0,W_3)
	\end{alignat*}
are homotopic.
It implies that the map \eqref{map1} is homotopic to the identity map (after canonical identification of the generators of the complexes $\Cth_+(W_1,W_2)$ and $\Cth_+(W_0,W_1)$).
Finally, as $e_{W_{12}}$ is a continuation element we have
\begin{alignat*}{1}
\mfm_2^W(e_{W_{12}},\bs{b}_1^V(e_{V\odot W_{01}}))&=\mfm_2^W(e_{W_{12}},e_{W_{01}}+E_{W_{01}}^-)=\mfm_2^W(e_{W_{12}},e_{W_{01}})+\mfm_2^W(e_{W_{12}},E_{W_{01}}^-)\\
&=e_{W_{02}}+\mfm_2^{W,0}(e_{W_{12}},E_{W_{01}}^-)+\mfm_2^{W,-}(e_{W_{12}},E_{W_{01}}^-)\\
&=e_{W_{02}}+\mfm_2^{W,-}(e_{W_{12}}^-,E_{W_{01}}^-)\\
&=e_{W_{02}}+E_{W_{02}}^-
\end{alignat*}
where the second to last equality comes from the fact that the connected components of the cobordisms $W_0,W_1$ have non empty negative end so there is no maximum of the perturbation Morse function, so $\mfm_2^{W,0-}(e_W^0,E_W^-)=0$, and then $\mfm_2^{W,0}(e_W^-,E_W^-)=0$ for action reasons. We have thus $\mfm_2^W(e_W,\bs{b}_1^V(e_{V\odot W}))=\bs{b}_1^V(e_{V\odot W})$. As \eqref{map1} is the identity in homology, we get $[e_W]=[\bs{b}_1^V(e_{V\odot W})]$.
\end{proof}

\begin{rem} Observe that the same arguments show that
	\begin{alignat*}{1}
	\mfm_2^{\Sigma_{012}}(\,\cdot\,,e):\Cth_+(\Sigma_1,\Sigma_2)\to\Cth_+(\Sigma_0,\Sigma_2)
	\end{alignat*}
is the identity in homology as we have proved that it is an isomorphism (Theorem \ref{teo_unit}) and that $\mfm_2(e_{\Sigma_1,\Sigma_2},e_{\Sigma_0,\Sigma_1})=e_{\Sigma_0,\Sigma_2}$ (Corollary \ref{coro:unit}).
\end{rem}

\begin{rem}
	In Sections \ref{sec:higher} and \ref{sec:higher_conc} we will extend the algebraic structures we have encountered to $A_\infty$ ones. In particular we will define an $A_\infty$-category of cobordisms in $\R\times Y$, $\Fuk(\R\times Y)$, and generalize the transfer maps to families of maps satisfying the $A_\infty$-functor equations. 
	Once the technical details to extend our algebraic constructions to Lagrangian cobordisms in a more general Liouville cobordism are carried out, the transfer maps will provide $A_\infty$-functors $\Fuk^{dec}(X_0\odot X_1)\to\Fuk(X_i)$ from the full subcategory $\Fuk^{dec}(X_0\odot X_1)\subset\Fuk(X_0\odot X_1)$ generated by decomposable Lagrangian cobordisms to the Fukaya category of each cobordism. By Proposition \ref{prop:cont1} and \ref{prop:cont2} these functors will be cohomologically unital. 
\end{rem}

\section{An $A_\infty$-category of Lagrangian cobordisms}\label{sec:higher}

\subsection{Higher order maps}\label{A-inf maps}

In this section, we extend the differential $\mfm_1^\Sigma$ and the product $\mfm_2^\Sigma$ to families of maps $\mfm_d^\Sigma$ defined for each $(d+1)$-tuple of pairwise transverse exact Lagrangian cobordisms $(\Sigma_0,\dots,\Sigma_d)$ for all $d\geq1$. Remember that we denote $\mathfrak{C}(\La^\pm_i,\La^\pm_j)=C_{n-1-*}(\La^\pm_i,\La^\pm_j)\oplus C^{*-1}(\La^\pm_j,\La^\pm_i)$. We define first six families of maps, $b_d^+$, $b_d^-$, $\Delta_d^+$, $\Delta_d^-$, $b_d^\Sigma$ and $\Delta_d^\Sigma$:
\begin{alignat*}{1}
	&b_d^\pm:\mathfrak{C}(\La_{d-1}^\pm,\La_d^\pm)\otimes\mathfrak{C}(\La_{d-2}^\pm,\La_{d-1}^\pm)\otimes\dots\otimes\mathfrak{C}(\La_0^\pm,\La_1^\pm)\to C^{*-1}(\La_0^\pm,\La_d^\pm)\\
	&\Delta_d^\pm:\mathfrak{C}(\La_{d-1}^\pm,\La_d^\pm)\otimes\mathfrak{C}(\La_{d-2}^\pm,\La_{d-1}^\pm)\otimes\dots\otimes\mathfrak{C}(\La_0^\pm,\La_1^\pm)\to C_{n-1-*}(\La_d^\pm,\La_0^\pm)\\
	&b^\Sigma_d:\Cth_+(\Sigma_{d-1},\Sigma_d)\otimes\Cth_+(\Sigma_{d-2},\Sigma_{d-1})\otimes\dots\otimes\Cth_+(\Sigma_0,\Sigma_1)\to C^{*-1}(\La_0^+,\La_d^+)\\
	&\Delta^\Sigma_d:\Cth_+(\Sigma_{d-1},\Sigma_d)\otimes\Cth_+(\Sigma_{d-2},\Sigma_{d-1})\otimes\dots\otimes\Cth_+(\Sigma_0,\Sigma_1)\to C_{n-1-*}(\La_d^-,\La_0^-)
\end{alignat*}
as follows:
\begin{alignat*}{1}
	&b_d^+(a_d,\dots,a_1)=\sum\limits_{\gamma_{d,0}}\sum\limits_{\bs{\zeta}_i}\#\widetilde{\cM^1}_{\R\times\La_{0,\dots,d}^+}(\gamma_{d,0};\bs{\zeta}_0,a_1,\dots,a_d,\bs{\zeta}_d)\cdot\ep_i^+(\bs{\zeta}_i)\cdot\gamma_{d,0}\\
	&b_d^-(a_d,\dots,a_1)=\sum\limits_{\gamma_{d,0}}\sum\limits_{\bs{\delta}_i}\#\widetilde{\cM^1}_{\R\times\La_{0,\dots,d}^-}(\gamma_{d,0};\bs{\delta}_0,a_1,\dots,a_d,\bs{\delta}_d)\cdot\ep_i^-(\bs{\delta}_i)\cdot\gamma_{d,0}\\
	&\Delta_d^+(a_d,\dots,a_1)=\sum\limits_{\gamma_{0,d}}\sum\limits_{\bs{\zeta}_i}\#\widetilde{\cM^1}_{\R\times\La_{0,\dots,d}^+}(\gamma_{0,d};\bs{\zeta}_0,a_1,\dots,a_d,\bs{\zeta}_d)\cdot\ep_i^+(\bs{\zeta}_i)\cdot\gamma_{0,d}\\
	&\Delta_d^-(a_d,\dots,a_1)=\sum\limits_{\gamma_{0,d}}\sum\limits_{\bs{\delta}_i}\#\widetilde{\cM^1}_{\R\times\La_{0,\dots,d}^-}(\gamma_{0,d};\bs{\delta}_0,a_1,\dots,a_d,\bs{\delta}_d)\cdot\ep_i^-(\bs{\delta}_i)\cdot\gamma_{0,d}\\
	&b_d^\Sigma(a_d,\dots,a_1)=\sum\limits_{\gamma_{d,0}}\sum\limits_{\bs{\delta}_i}\#\cM^0_{\Sigma_{0,\dots,d}}(\gamma_{d,0};\bs{\delta}_0,a_1,\dots,a_d,\bs{\delta}_d)\cdot\ep_i^-(\bs{\delta}_i)\cdot\gamma_{d,0}\\
	&\Delta_d^\Sigma(a_d,\dots,a_1)=\sum\limits_{\gamma_{0,d}}\sum\limits_{\bs{\delta}_i}\#\cM^0_{\Sigma_{0,\dots,d}}(\gamma_{0,d};\bs{\delta}_0,a_1,\dots,a_d,\bs{\delta}_d)\cdot\ep_i^-(\bs{\delta}_i)\cdot\gamma_{0,d}
\end{alignat*}
Observe that these maps for the case $d=1$ have already been considered in Section \ref{sec:Cth}, and $\Delta_2^+$, $\Delta_2^\Sigma$ and $b_2^-$ have been defined in Section \ref{section:def_product} already.

Given these families of maps, we define the higher order maps $\mfm_d$ as being the sum $\mfm_d=\mfm_d^++\mfm_d^0+\mfm_d^-$, where each component is defined by:
\begin{alignat}{1}
	&\mfm_d^+(a_d,\dots,a_1)=\sum\limits_{j=1}^d\sum\limits_{i_1+\dots+ i_j=d}\Delta^+_j\big(\bs{b}_{i_j}^\Sigma(a_d,\dots,a_{d-i_j+1}),\dots,\bs{b}_{i_1}^\Sigma(a_{i_1+1},\dots,a_1)\big)\label{defmap+}\\
	&\mfm_d^0(a_d,\dots,a_1)=\sum\limits_{x\in\Sigma_0\cap\Sigma_d}\sum\limits_{\bs{\delta}_i}\#\cM^0_{\Sigma_{0,...,d}}(x;\bs{\delta}_0,a_1,\bs{\delta}_1,\dots,a_d,\bs{\delta}_d)\cdot\ep_i^-(\bs{\delta}_i)\cdot x\label{defmap0}\\
	&\mfm_d^-(a_d,\dots,a_1)=\sum\limits_{j=1}^d\sum\limits_{i_1+\dots+ i_j=d}b^-_j\big(\bs{\Delta}_{i_j}^\Sigma(a_d,\dots,a_{d-i_j+1}),\dots,\bs{\Delta}_{i_1}^\Sigma(a_{i_1+1},\dots,a_1)\big)\label{defmap-}
\end{alignat}
where the maps $\bs{b}_1^\Sigma$ and $\bs{\Delta}_1^\Sigma$ are special cases of transfer maps as explained in Section \ref{special_case}, and for $j\geq2$ one has $\bs{b}_j^\Sigma:=b_j^\Sigma$ and $\bs{\Delta}_j^\Sigma:=\Delta_j^\Sigma$. 
In the formulas above, for $1\leq j\leq d$ fixed and an index $i_s$, the maps $\bs{b}_{i_s}^\Sigma$ and $\bs{\Delta}_{i_s}^\Sigma$ are defined on (with convention $i_0=-1$):
\begin{alignat*}{1}
	\Cth_+(\Sigma_{i_{s}+\dots+i_1},\Sigma_{1+i_{s}+\dots+i_1})\otimes\dots\otimes\Cth_+(\Sigma_{1+i_{s-1}+\dots+i_1},\Sigma_{2+i_{s-1}+\dots+ i_1})
\end{alignat*}
and the maps $\Delta_j^+$ and $b_j^-$ on 
\begin{alignat*}{1}
	\Cth_+(\Sigma_{1+i_{j-1}+\dots+i_1},\Sigma_d)\otimes\dots\otimes\Cth_+(\Sigma_0,\Sigma_{i_1+1})
\end{alignat*}
For $d=1,2$, the Formulas \ref{defmap+}, \ref{defmap0} and \ref{defmap-} recover the definitions of the differential $\mfm_1$ and the product $\mfm_2$ given in Sections \ref{section:def_complex} and \ref{section:def_product}.
\begin{rem}
	Observe that for energy reasons, depending on the $d$-tuple of asymptotics, it can happen that a lot of terms in the Formulas \eqref{defmap+} and \eqref{defmap-} vanish, but for example, if $a_i$ is a Reeb chord in $C(\La_{i+1}^+,\La_i^+)$ for $i=0,\dots d$, then none of them vanish.
\end{rem}
\begin{rem}
	The maps $b_d^+$ and $\Delta_d^-$ defined previously are not useful to define the maps $\mfm_d$ but they naturally  appear in the proof of the $A_\infty$-equations, see Sections \ref{proofrelinf+} and \ref{proofrelinf-} below.
\end{rem}
Now we want to show that the maps $\{\mfm_d\}_{d\geq1}$ satisfy the $A_\infty$-equations, i.e. for all $k\geq1$ and all $(k+1)$-tuple of transverse cobordisms $(\Sigma_0,\dots,\Sigma_k)$, we want to check that for every $1\leq d\leq k$ and $(d+1)$-sub-tuple $(\Sigma_{i_0},\dots,\Sigma_{i_d})$ with $i_0<\dots<i_d$, we have:
\begin{alignat*}{1}
	\sum\limits_{j=1}^d\sum\limits_{n=0}^{d-j}\mfm_{d-j+1}(\id^{\otimes d-j-n}\otimes\mfm_j\otimes\id^{\otimes n})=0
\end{alignat*}
To simplify notations in the following we assume that the $(d+1)$-tuple $(\Sigma_{i_0},\dots,\Sigma_{i_d})$ is $(\Sigma_0,\dots,\Sigma_d)$.
As usual, we decompose this equation into three equations to check:
\begin{alignat}{1}
	&\sum\limits_{j=1}^d\sum\limits_{n=0}^{d-j}\mfm^+_{d-j+1}(\id^{\otimes d-j-n}\otimes\mfm_j\otimes\id^{\otimes n})=0\label{relinf+}\\
	&\sum\limits_{j=1}^d\sum\limits_{n=0}^{d-j}\mfm^0_{d-j+1}(\id^{\otimes d-j-n}\otimes\mfm_j\otimes\id^{\otimes n})=0\label{relinf0}\\
	&\sum\limits_{j=1}^d\sum\limits_{n=0}^{d-j}\mfm^-_{d-j+1}(\id^{\otimes d-j-n}\otimes\mfm_j\otimes\id^{\otimes n})=0\label{relinf-}
\end{alignat}

\subsubsection{Proof of Equation \eqref{relinf+}}\label{proofrelinf+}
Consider the boundary of the compactification of $\widetilde{\cM^2}_{\R\times\La_{0,\dots,d}^+}(\gamma_{0,d};\bs{\zeta}_0,a_1,\dots,a_d,\bs{\zeta}_d)$. According to the compactness results for one dimensional moduli spaces of pseudo-holomorphic discs with cylindrical Lagrangian boundary conditions as recalled in Section \ref{sec:structure}, the non trivial components of broken discs in the boundary consist of two index $1$ discs glued along a node asymptotic to a Reeb chord. If it is a positive asymptotic for the index $1$ disc not containing the output puncture, this disc contributes to a map $b_j^+$, and if it is a negative asymptotic, this disc contributes to a map $\Delta_j^+$. Hence we get the following:
\begin{lem}\label{relDelta-}
	For all $1\leq d\leq k$, we have $\sum\limits_{j=1}^d\sum\limits_{n=0}^{d-j}\Delta^+_{d-j+1}\big(\id^{\otimes d-j-n}\otimes(b_j^++\Delta_j^+)\otimes\id^{\otimes n}\big)=0$
\end{lem}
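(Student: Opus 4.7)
My plan is to deduce the identity from a boundary analysis of a two-dimensional moduli space of $\Delta$-type discs with cylindrical boundary conditions. Specifically, for each choice of output $\gamma_{0,d}\in\Rc(\La_d^+,\La_0^+)$ and each collection of words $\bs{\zeta}_i$ of pure Reeb chords of $\La_i^+$, I will consider the moduli space $\widetilde{\cM^2}_{\R\times\La^+_{0,\ldots,d}}(\gamma_{0,d};\bs{\zeta}_0,a_1,\bs{\zeta}_1,\ldots,a_d,\bs{\zeta}_d)$. The SFT compactness description recalled in Section \ref{sec:structure} for cylindrical boundary conditions tells me that every configuration on the boundary of its compactification is a pseudo-holomorphic building made of exactly two index-$1$ discs glued along a node asymptotic to a Reeb chord, and that these boundary points are in bijection with rigid SFT-buildings of this form.

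The bulk of the work will be a careful case split according to the nature of the node. When the node is a mixed Reeb chord, let $D_1$ denote the component containing the distinguished negative output $\gamma_{0,d}$, which is therefore automatically $\Delta$-type, and let $D_2$ denote the other component, which sits over a consecutive block of inputs $a_{n+1},\ldots,a_{n+j}$ (consecutivity comes from the fact that the inputs appear cyclically on the boundary of the original disc). Then $D_2$ is banana-type and contributes to $b_j^+$ if the node is its positive asymptotic, and is $\Delta$-type and contributes to $\Delta_j^+$ if the node is its negative asymptotic; in either case, its output plugs in as one of the $d-j+1$ inputs of the map $\Delta_{d-j+1}^+$ recording $D_1$. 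Summing over all partitions and over all possible intermediate mixed chords, while absorbing the contributions of the pure words $\bs{\zeta}_i$ into the augmentation factors $\ep_i^+$ exactly as in the definitions of $b_j^+$ and $\Delta_j^+$, will reproduce precisely $\sum_{j,n}\Delta^+_{d-j+1}\big(\id^{\otimes d-j-n}\otimes(b_j^++\Delta_j^+)\otimes\id^{\otimes n}\big)$.

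The remaining breakings, where the node is a \emph{pure} Reeb chord of some $\La_i^+$, I will dismiss using Remark \ref{rem:delta}(A): in any such breaking, one of the two components is a pure disc all of whose negative asymptotics are pure Reeb chords, and the sum over all such discs with fixed positive asymptotic yields $\ep_i^+\circ\partial$, which vanishes by the definition of an augmentation. Hence this class of broken configurations contributes trivially to the boundary count.

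The main obstacle is bookkeeping rather than conceptual: I must check that the SFT gluing/compactness correspondence between codimension-one boundary strata and pairs of rigid index-$1$ discs is indeed bijective, so that every term in the claimed sum appears exactly once, and that the mod-$2$ counts match. Transversality for all the moduli spaces in play simultaneously is ensured by the universal domain-dependent almost complex structure introduced in Section \ref{sec:mod}, and the overall argument is formally identical to the classical proof of the $A_\infty$-equations for the bar complex computing bilinearized Legendrian contact (co)homology.
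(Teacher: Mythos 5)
Your proposal is correct and follows essentially the same route as the paper: the identity is exactly the algebraic count of the boundary of the compactified moduli spaces $\widetilde{\cM^2}_{\R\times\La^+_{0,\dots,d}}(\gamma_{0,d};\bs{\zeta}_0,a_1,\dots,a_d,\bs{\zeta}_d)$, whose codimension-one strata are two index-$1$ discs glued along a Reeb chord node, with the non-output component contributing $b_j^+$ or $\Delta_j^+$ according to whether the node is a positive or a negative asymptotic for it, and pure-chord breakings cancelled by the augmentations as in Remark \ref{rem:delta}. Your bookkeeping of consecutive input blocks and the bijection between boundary strata and pairs of rigid discs matches the paper's (more tersely stated) argument, so there is nothing to add.
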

Then, we also have:
\begin{lem}\label{relbSigma}
	For all $1\leq d\leq k$, we have
	\begin{alignat*}{1}
	\sum_{j=1}^d\sum_{n=0}^{d-j}b_{d-j+1}^\Sigma\big(\id^{\otimes d-j-n}\otimes\mfm_j\otimes\id^{\otimes n}\big)+\sum\limits_{j=1}^d\sum\limits_{i_1+\dots+i_j=d}b^+_{j}\big(\bs{b}^\Sigma_{i_j}\otimes\dots\otimes\bs{b}_{i_1}^\Sigma\big)=0
	\end{alignat*}
\end{lem}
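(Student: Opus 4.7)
The strategy is to interpret both sums of the identity as the algebraic count of the boundary of the $1$-dimensional moduli space
\[
\overline{\cM^1}_{\Sigma_{0,\dots,d}}(\gamma_{d,0};\bs{\delta}_0,a_1,\bs{\delta}_1,\dots,a_d,\bs{\delta}_d),
\]
summed over positive Reeb-chord outputs $\gamma_{d,0}\in\Rc(\La_0^+,\La_d^+)$ and over words $\bs{\delta}_i$ of pure Reeb chords of $\La_i^-$ weighted by the augmentations $\ep_i^-(\bs{\delta}_i)$, exactly as in the definition of $b_d^\Sigma$. Since every boundary component is paired, the total count vanishes, and identifying each type of broken configuration with one of the terms in the lemma will conclude the argument.

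By the compactness description of Section \ref{sec:structure} together with the $\partial$-breaking convention of Remark \ref{rem:delta}, the broken buildings in this boundary fall into four families. Family (A) consists of two index $0$ discs in the cobordism glued at an intersection-point node; this yields $\sum_{j,n}b_{d-j+1}^\Sigma(\id^{\otimes d-j-n}\otimes\mfm_j^0\otimes\id^{\otimes n})$. Family (B) has one index $1$ Banana-type component in $\R\times\La^+_{0,\dots,d}$ whose distinguished positive asymptote is $\gamma_{d,0}$, glued along its negative side asymptotes to index $0$ sub-discs in the cobordism; since each side input of the top bubble is either a direct $a_i$ (the identity part of $\bs{b}_1^\Sigma$) or the positive output of an index $0$ sub-disc counted by $b_{i_s}^\Sigma$ (with $\bs{b}_{i_s}^\Sigma=b_{i_s}^\Sigma$ when $i_s\geq2$), this family counts exactly $\sum_{j=1}^d\sum_{i_1+\dots+i_j=d}b_j^+(\bs{b}_{i_j}^\Sigma\otimes\dots\otimes\bs{b}_{i_1}^\Sigma)$.

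Family (C) is the positive-end breaking in which the index $1$ bubble is of $\Delta$-type: it lives in $\R\times\La^+_{l,\dots,l+j}$ for some contiguous range, its distinguished negative output is a positive Reeb-chord input of a main index $0$ disc still carrying $\gamma_{d,0}$ at its top, and its $j$ side positive asymptotes are either direct inputs or outputs of $b_{i_s}^\Sigma$ sub-discs below it; by the very definition of $\mfm_j^+=\Delta_j^+(\bs{b}_{i_j}^\Sigma\otimes\dots\otimes\bs{b}_{i_1}^\Sigma)$ in \eqref{defmap+}, this family counts $\sum_{j,n}b_{d-j+1}^\Sigma(\id^{\otimes d-j-n}\otimes\mfm_j^+\otimes\id^{\otimes n})$. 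Symmetrically, family (D) is the negative-end breaking with one index $1$ Banana-type bubble in $\R\times\La^-_{l,\dots,l+j}$ whose top positive asymptote matches a negative Reeb-chord input of a main index $0$ disc still carrying $\gamma_{d,0}$, fed by direct inputs and $\Delta_{i_s}^\Sigma$ sub-discs; by \eqref{defmap-} this contributes $\sum_{j,n}b_{d-j+1}^\Sigma(\id^{\otimes d-j-n}\otimes\mfm_j^-\otimes\id^{\otimes n})$.

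Adding the four contributions and using $\mfm_j=\mfm_j^++\mfm_j^0+\mfm_j^-$, families (A), (C), (D) together produce $\sum_{j,n}b_{d-j+1}^\Sigma(\id^{\otimes d-j-n}\otimes\mfm_j\otimes\id^{\otimes n})$, while family (B) yields the other sum, and the vanishing of the total boundary count is precisely the stated identity. The main delicate point is the coexistence of Banana-type (B) and $\Delta$-type (C) breakings at the positive cylindrical end: in (B) the bubble captures the global output $\gamma_{d,0}$, while in (C) the bubble is an \emph{interior} bubble producing an auxiliary positive Reeb-chord input for a main disc that still carries $\gamma_{d,0}$. Verifying via the index formula of Proposition \ref{teo:grading} that these exhaust all codimension-$1$ degenerations, and that the transfer maps $\bs{b}_{i_s}^\Sigma$ and $\bs{\Delta}_{i_s}^\Sigma$ correctly encode the dichotomy between a direct input asymptote and one that has already passed through an index $0$ sub-disc, is the main bookkeeping task.
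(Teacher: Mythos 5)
Your proof is correct and follows essentially the same route as the paper: the identity is read off from the boundary of the compactified one-dimensional moduli spaces $\cM^1_{\Sigma_{0,\dots,d}}(\gamma_{d,0};\bs{\delta}_0,a_1,\dots,a_d,\bs{\delta}_d)$, and your families (A)--(D) match the paper's classification exactly (two index $0$ discs glued at an intersection point; the positive-end index $1$ disc carrying the output $\gamma_{d,0}$, giving $b_j^+(\bs{b}^\Sigma_{i_j}\otimes\dots\otimes\bs{b}^\Sigma_{i_1})$; and the interior positive-end and negative-end index $1$ breakings assembling into $\mfm_j^+$ and $\mfm_j^-$, combined with the $b^\Sigma_{d-j+1}$ disc carrying the output). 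The only slip is terminological: in your family (C) the chords fed to the $\Delta$-type bubble by the $b^\Sigma_{i_s}$ sub-discs are negative, not positive, asymptotics of that bubble, which does not affect the count.
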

\begin{proof}
This time we have to consider the boundary of the compactification of a moduli space
$\cM^1_{\Sigma_{0,\dots,d}}(\gamma_{d,0};\bs{\delta}_0,a_1,\dots,a_d,\bs{\delta}_d)$. Again as recalled in Section \ref{sec:structure}, the broken discs are of two types. It can first consist of two index $0$ discs glued at a common intersection point. In this case, the one not containing the output puncture asymptotic to $\gamma_{d,0}$ contributes $\mfm_j^0$, and the disc containing the output contributes to a banana $b_{d-j+1}^\Sigma$. The other type of possible broken disc consists of several (possibly 0 !) non trivial index $0$ components and an index $1$ disc with boundary on the negative or positive cylindrical ends, such that each index $0$ disc is connected to the index $1$ one via a Reeb chord. Observe that the output puncture is asymptotic to a chord in the positive end, so there are two subcases:
\begin{enumerate}
	\item the output puncture is contained in the index $1$ disc. In this case, this disc has boundary on the positive ends and contributes to $b_j^+$ while the index $0$ discs must then have at least one positive Reeb chord asymptotic (connecting it to the index $1$ disc) and so each of them contributes to a banana $b^\Sigma$. Note that if among the asymptotics $a_1,a_2,\dots,a_d$ there is a chord in the positive end, this chord could be an asymptotic of the index $1$ disc or of an index $0$ banana $b^\Sigma$, this is why we have the bold symbols maps $\bs{b}^\Sigma$ in the formula.
	\item the output puncture is contained in an index $0$ disc. This disc contributes thus to a map $b_{d-j+1}^\Sigma$. Then, if the index $1$ disc has boundary in the positive ends, it contributes, with the index $0$ discs not containing the output, to $\mfm_j^+$. If the index $0$ disc has boundary on the negative ends, it will contribute, with the index $0$ discs not containing the output, to $\mfm_j^-$.
\end{enumerate}
Summing the algebraic contributions of all the different types of broken discs described above gives the relation.
\end{proof}

Now we can compute
\begin{alignat*}{1}
&\sum\limits_{j=1}^d\sum\limits_{n=0}^{d-j}\mfm^+_{d-j+1}(\id^{\otimes d-j-n}\otimes\mfm_j\otimes\id^{\otimes n})\\
&=\sum_{j=1}^d\sum_{n=0}^{d-j}\sum_{k=1}^{d-j+1}\sum_{s=1}^k\sum_{\substack{i_1+...+i_k=d-j+1\\0\leq r=n-i_1-...-i_{s-1}\leq i_s}}\Delta^+_{k}\big(\bs{b}^\Sigma_{i_k}\otimes\dots\otimes\bs{b}^\Sigma_{i_s}(\id^{\otimes i_s-j-r}\otimes\mfm_j\otimes\id^{\otimes r})\otimes\dots\otimes\bs{b}^\Sigma_{i_1}\big)
\end{alignat*}
In this sum, we fix first the number $j$ of entries for the map $\mfm_j$, and then a partition of $d-j+1$ for the maps $\bs{b}^\Sigma$. Note that if $i_s<j$ the terms $\bs{b}^\Sigma_{i_s}(\id^{\otimes i_s-j-r}\otimes\mfm_j\otimes\id^{\otimes r})$ vanish. We could also choose first a partition of $d$ and then the number of entries for the $\mfm$ "in the middle". Thus, the sum above is equal to:
\begin{alignat*}{1}
&=\sum_{k=1}^d\sum_{i_1+...+i_k=d}\sum_{s=1}^k\sum_{j=1}^{i_s}\sum_{n=0}^{i_s-j}\Delta^+_{k}\big(\bs{b}^\Sigma_{i_k}\otimes\dots\otimes\bs{b}^\Sigma_{i_s-j+1}(\id^{\otimes i_s-j-n}\otimes\mfm_j\otimes\id^{\otimes n})\otimes\dots\otimes\bs{b}^\Sigma_{i_1}\big)\\
&=\sum_{k=1}^d\sum_{i_1+...+i_k=d}\sum_{s=1}^k\Delta^+_{k}\big(\bs{b}^\Sigma_{i_k}\otimes\dots\otimes\sum_{j=1}^{i_s}\sum_{n=0}^{i_s-j}\bs{b}^\Sigma_{i_s-j+1}(\id^{\otimes i_s-j-n}\otimes\mfm_j\otimes\id^{\otimes n})\otimes\dots\otimes\bs{b}^\Sigma_{i_1}\big)
\end{alignat*}
Using the definition of $\bs{b}$ and then Lemma \ref{relbSigma}, we have
\begin{alignat*}{1}
	\sum_{j=1}^{i_s}\sum_{n=0}^{i_s-j}\bs{b}^\Sigma_{i_s-j+1}(\id^{\otimes i_s-j-n}\otimes\mfm_j\otimes\id^{\otimes n})&=\sum_{j=1}^{i_s}\sum_{n=0}^{i_s-j}b^\Sigma_{i_s-j+1}(\id^{\otimes i_s-j-n}\otimes\mfm_j\otimes\id^{\otimes n})+\mfm_{i_s}^+\\	
	&=\sum\limits_{u=1}^{i_s}\sum\limits_{t_1+\dots+t_u=i_s}b^+_{u}\big(\bs{b}^\Sigma_{t_u}\otimes\dots\otimes\bs{b}_{t_1}^\Sigma\big)+\mfm_{i_s}^+
\end{alignat*}
Given this, we rewrite
\begin{alignat*}{1}
&\sum\limits_{j=1}^d\sum\limits_{n=0}^{d-j}\mfm^+_{d-j+1}(\id^{\otimes d-j-n}\otimes\mfm_j\otimes\id^{\otimes n})\\
&=\sum_{k=1}^d\sum_{i_1+...+i_k=d}\sum_{s=1}^k\Delta^+_{k}\big(\bs{b}^\Sigma_{i_k}\otimes\dots\otimes\sum\limits_{u=1}^{i_s}\sum\limits_{t_1+\dots+t_u=i_s}b^+_{u}\big(\bs{b}^\Sigma_{t_u}\otimes\dots\otimes\bs{b}_{t_1}^\Sigma\big)\otimes\dots\otimes\bs{b}^\Sigma_{i_1}\big)\\
&\hspace{45mm}+\sum_{k=1}^d\sum_{i_1+...+i_k=d}\sum_{s=1}^k\Delta^+_{k}\big(\bs{b}^\Sigma_{i_k}\otimes\dots\otimes\mfm_{i_s}^+\otimes\dots\otimes\bs{b}^\Sigma_{i_1}\big)
\end{alignat*}
and we finally use Lemma \ref{relDelta-} to obtain
\begin{alignat*}{1}
&\sum\limits_{j=1}^d\sum\limits_{n=0}^{d-j}\mfm^+_{d-j+1}(\id^{\otimes d-j-n}\otimes\mfm_j\otimes\id^{\otimes n})\\
&=\sum_{k=1}^d\sum_{i_1+...+i_k=d}\sum_{s=1}^k\Delta^+_{k}\big(\bs{b}^\Sigma_{i_k}\otimes\dots\otimes\sum\limits_{u=1}^{i_s}\sum\limits_{t_1+\dots+t_u=i_s}\Delta^+_{u}\big(\bs{b}^\Sigma_{t_u}\otimes\dots\otimes\bs{b}_{t_1}^\Sigma\big)\otimes\dots\otimes\bs{b}^\Sigma_{i_1}\big)\\
&\hspace{3cm}+\sum_{k=1}^d\sum_{i_1+...+i_k=d}\sum_{s=1}^k\Delta^+_{k}\big(\bs{b}^\Sigma_{i_k}\otimes\dots\otimes\mfm_{i_s}^+\otimes\dots\otimes\bs{b}^\Sigma_{i_1}\big)\\
&=\sum_{k=1}^d\sum_{i_1+...+i_k=d}\sum_{s=1}^k\Delta^+_{k}\big(\bs{b}^\Sigma_{i_k}\otimes\dots\otimes\mfm_{i_s}^+\otimes\dots\otimes\bs{b}^\Sigma_{i_1}\big)\\
&\hspace{3cm}+\sum_{k=1}^d\sum_{i_1+...+i_k=d}\sum_{s=1}^k\Delta^+_{k}\big(\bs{b}^\Sigma_{i_k}\otimes\dots\otimes\mfm_{i_s}^+\otimes\dots\otimes\bs{b}^\Sigma_{i_1}\big)\\
&=0
\end{alignat*}

\subsubsection{Proof of Equation \eqref{relinf0}}
This equation is obtained after describing the broken discs in the boundary of the compactification of $\cM^1_{\Sigma_{0,...,d}}(x;\bs{\delta}_0,a_1,\bs{\delta}_1,\dots,a_d,\bs{\delta}_d)$. As well as in the proof of Lemma \ref{relbSigma}, there are different types of broken discs, depending on if it contains an index $1$ component or not, but the total algebraic contribution of them gives the relation \eqref{relinf0}.

\subsubsection{Proof of Equation \eqref{relinf-}}\label{proofrelinf-}
Finally, to get Equation \eqref{relinf-}, we study the broken discs in the boundary of the compactification of the moduli spaces 
\begin{alignat}{1}
\widetilde{\cM^2}_{\R\times\La_{0,\dots,d}^-}(\gamma_{d0};\bs{\delta}_0,a_1,\dots,a_d,\bs{\delta}_d)
\end{alignat}
and
\begin{alignat}{1}
\cM^1_{\Sigma_{0,\dots,d}}(\gamma_{0d};\bs{\delta}_0,a_1,\dots,a_d,\bs{\delta}_d)
\end{alignat}
This gives us respectively the following lemmas:
\begin{lem}\label{relb-}
	For all $d\geq1$, we have $\sum\limits_{j=1}^d\sum\limits_{n=0}^{d-j}b^-_{d-j+1}\big(\id^{\otimes d-j-n}\otimes(b_j^-+\Delta_j^-)\otimes\id^{\otimes n}\big)=0$
\end{lem}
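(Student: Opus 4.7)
The plan is to mimic the proof of Lemma \ref{relDelta-} given in Section \ref{proofrelinf+}, with the roles of positive and negative cylindrical ends swapped and banana-type and $\Delta$-type moduli spaces exchanged. Concretely, for each choice of positive mixed chord output $\gamma_{d,0}\in\Rc(\La_0^-,\La_d^-)$, I would consider the one-dimensional moduli space $\widetilde{\cM^2}_{\R\times\La_{0,\dots,d}^-}(\gamma_{d,0};\bs{\delta}_0,a_1,\bs{\delta}_1,\dots,a_d,\bs{\delta}_d)$, summed over all compatible words $\bs{\delta}_i$ of pure Reeb chord asymptotics of $\La_i^-$ weighted by the augmentations $\ep_i^-$, and analyze the boundary of its compactification via the SFT compactness stated in case 1 of Section \ref{sec:structure}.

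Broken configurations in the boundary consist of two index-1 discs glued along a single Reeb chord node. Pure Reeb chord nodes are handled by Remark \ref{rem:delta}: they either vanish because $\ep_i^-\circ\partial=0$ (case (A)) or are already silently absorbed into the augmentation weights built into the definitions of $b_j^-$ and $\Delta_j^-$ (case (B)). For a mixed chord node $\beta$, the disc $u_1$ containing the distinguished positive asymptotic $\gamma_{d,0}$ is itself banana-type and contributes a factor $b^-_{d-j+1}$, with $\beta$ appearing as one of its negative or positive mixed inputs in position $n+1$. The only step requiring genuine care is the classification of the inner disc $u_2$: since $u_1$ and $u_2$ glue at $\beta$ with opposite signs, if $\beta\in\Rc(\La_n^-,\La_{n+j}^-)$ then $\beta$ is a positive asymptotic of $u_2$, and $u_2$ is banana-type with output $\beta$, contributing $b^-_j$; whereas if $\beta\in\Rc(\La_{n+j}^-,\La_n^-)$ then $\beta$ is a negative asymptotic of $u_2$, and $u_2$ is $\Delta$-type with distinguished negative output $\beta$, contributing $\Delta^-_j$.

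Combining the two possibilities for $u_2$ produces exactly the inner factor $b_j^-+\Delta_j^-$ appearing in the lemma. Since $\overline{\widetilde{\cM^2}}_{\R\times\La_{0,\dots,d}^-}(\gamma_{d,0};\dots)$ is a compact one-manifold with boundary, its mod 2 boundary count vanishes, and collecting the boundary contributions by the insertion position $n\in\{0,\dots,d-j\}$ and by the two cases for $u_2$ gives precisely
\[
\sum_{j=1}^{d}\sum_{n=0}^{d-j} b^-_{d-j+1}\bigl(\id^{\otimes d-j-n}\otimes(b_j^-+\Delta_j^-)\otimes\id^{\otimes n}\bigr)=0,
\]
which is the claimed identity. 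No new ideas beyond those already used in Lemma \ref{relDelta-} are needed; in particular, all index computations are direct applications of Proposition \ref{teo:grading}, and the pure chord bookkeeping via Remark \ref{rem:delta} is identical.
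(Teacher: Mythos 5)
Your proof is correct and is essentially the paper's own argument: the paper obtains Lemma \ref{relb-} precisely by analyzing the boundary of the compactification of $\widetilde{\cM^2}_{\R\times\La^-_{0,\dots,d}}(\gamma_{d,0};\bs{\delta}_0,a_1,\dots,a_d,\bs{\delta}_d)$, with the two-level breakings sorted exactly as you do (outer banana giving $b^-_{d-j+1}$, inner disc giving $b_j^-$ or $\Delta_j^-$ according to whether the node is its positive or negative puncture), and pure-chord nodes killed via Remark \ref{rem:delta}. Your identification of the chord directions at the node is consistent with the paper's conventions, so no gap remains.
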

and
\begin{lem}\label{relDeltaSigma}
	For all $d\geq1$, we have
	\begin{alignat*}{1}
		\sum\limits_{j=1}^d\sum\limits_{n=0}^{d-j}\Delta^\Sigma_{d-j+1}\big(\id^{\otimes d-j-n}\otimes\mfm_j\otimes\id^{\otimes n}\big)+\sum\limits_{j=1}^d\sum\limits_{i_1+\dots+i_j=d}\Delta^-_{j}\big(\bs{\Delta}^\Sigma_{i_j}\otimes\dots\otimes\bs{\Delta}_{i_1}^\Sigma\big)=0
	\end{alignat*}
\end{lem}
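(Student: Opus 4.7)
The strategy is dual to that of Lemma \ref{relbSigma}: instead of studying moduli spaces with a positive Reeb chord output in the top end, the plan is to study the boundary of the compactification of the one-dimensional $\Delta$-type moduli spaces
\begin{alignat*}{1}
\cM^1_{\Sigma_{0,\dots,d}}(\gamma_{0,d};\bs{\delta}_0,a_1,\bs{\delta}_1,\dots,a_d,\bs{\delta}_d),
\end{alignat*}
where $\gamma_{0,d}\in\Rc(\La_d^-,\La_0^-)$ is a negative Reeb chord asymptotic in the bottom end, the $a_i$ are (mixed) asymptotics from $\Cth_+(\Sigma_{i-1},\Sigma_i)$, and the $\bs{\delta}_i$ are words of pure Reeb chords of $\La_i^-$. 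Summing over all possible words $\bs{\delta}_i$ and weighting by $\ep_i^-(\bs{\delta}_i)$ will yield an algebraic identity which, after identifying each piece with one of the maps already defined, will give exactly the relation of the lemma.

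Following the compactness/structure results recalled in Section \ref{sec:structure}, broken configurations in the boundary fall into two families. The first consists of two index $0$ discs with boundary on the non-cylindrical parts of the $\Sigma_i$'s joined at a common intersection point node: the component containing the output puncture asymptotic to $\gamma_{0,d}$ contributes to a map $\Delta^\Sigma_{d-j+1}$ while the other contributes to $\mfm_j^0$, producing terms of the form $\Delta^\Sigma_{d-j+1}(\id^{\otimes d-j-n}\otimes\mfm_j^0\otimes\id^{\otimes n})$. The second family consists of a collection of (possibly zero) index $0$ discs on the $\Sigma_i$'s together with a single index $1$ disc on the cylindrical ends, each joined by a Reeb chord node. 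There are then two subcases according to which component carries the output puncture:
\begin{enumerate}
\item If the output is carried by the index $1$ disc, then this disc must have boundary on the negative cylindrical ends (since $\gamma_{0,d}$ is a negative Reeb chord of the link of negative ends), and hence contributes to a map $\Delta^-_j$; each index $0$ disc connected to it via a negative Reeb chord then contributes to a map $\Delta^\Sigma_{i_s}$, with the convention (as in Section \ref{sec:conc}) that an asymptotic $a_i\in C(\La_{i-1}^-,\La_i^-)$ may instead be a direct input of the index $1$ disc -- this is precisely what is recorded by the bold notation $\bs{\Delta}^\Sigma$. The total contribution of such broken configurations is $\sum_{j=1}^d\sum_{i_1+\dots+i_j=d}\Delta^-_{j}\big(\bs{\Delta}^\Sigma_{i_j}\otimes\dots\otimes\bs{\Delta}_{i_1}^\Sigma\big)$.
\item If the output is carried by an index $0$ disc, that disc contributes to $\Delta^\Sigma_{d-j+1}$, while the remaining index $0$ discs together with the index $1$ piece assemble (depending on whether the latter has boundary on the positive or on the negative cylindrical ends) into either $\mfm_j^+$ or $\mfm_j^-$ applied to a consecutive substring of the inputs; combined with the first family, these give $\sum_j\sum_n\Delta^\Sigma_{d-j+1}(\id^{\otimes d-j-n}\otimes\mfm_j\otimes\id^{\otimes n})$.
\end{enumerate}

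As in Remark \ref{rem:delta}, the broken configurations whose index $1$ component is pure either vanish upon applying $\ep_i^-$ (case (A)), or get absorbed into the induced augmentations $\ep_i^+$ on the positive ends (case (B)); no new terms appear from these. Summing the algebraic contributions of all broken discs, each of which is the gluing of a unique pair of rigid pieces, and using that the boundary of a compact oriented $1$-manifold has vanishing count modulo $2$, yields exactly the relation of the lemma.

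The main technical obstacle is not the compactness statement itself, which is a direct application of the SFT compactness theorem as recalled in Section \ref{sec:structure}, but rather the bookkeeping: one must carefully check that the combinatorics of how pure Reeb chord words $\bs{\delta}_i$ distribute between the index $0$ and the index $1$ components of a broken configuration, combined with the rule that a mixed negative chord asymptotic $a_i\in C(\La_{i-1}^-,\La_i^-)$ can either be consumed by a trivial-strip-like $\bs{\Delta}^\Sigma_1=\id$ or appear as a direct puncture of the $\Delta^-$-type disc, is precisely what the symbols $\bs{\Delta}^\Sigma$ and $\Delta^-_j(\bs{\Delta}^\Sigma_{i_j}\otimes\dots\otimes\bs{\Delta}^\Sigma_{i_1})$ encode. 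Once this identification is made, and the terms with $\mfm_j^+,\mfm_j^0,\mfm_j^-$ are reassembled into $\mfm_j$ in the first family, the lemma follows.
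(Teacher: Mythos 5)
Your proposal is correct and follows essentially the same route as the paper: the paper obtains Lemma \ref{relDeltaSigma} precisely by analyzing the boundary of the compactification of $\cM^1_{\Sigma_{0,\dots,d}}(\gamma_{0,d};\bs{\delta}_0,a_1,\dots,a_d,\bs{\delta}_d)$, with the case analysis (intersection-point node versus one index $1$ cylindrical component, and the role of $\bs{\Delta}^\Sigma$ versus $\Delta^\Sigma$) carried over from the proof of Lemma \ref{relbSigma} exactly as you describe. Your handling of the pure-disc breakings via Remark \ref{rem:delta} and the reassembly of $\mfm_j^+,\mfm_j^0,\mfm_j^-$ into $\mfm_j$ also matches the paper's argument.
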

We can now prove Equation \eqref{relinf-} for $d\geq1$ in a direct way:
\begin{alignat*}{1}
	&\sum_{j=1}^d\sum_{n=0}^{d-j}\mfm^-_{d-j+1}\big(\id^{\otimes d-j-n}\otimes\mfm_j\otimes\id^{\otimes n}\big)\\
	&=\sum_{j=1}^d\sum_{n=0}^{d-j}\sum_{k=1}^{d-j+1}\sum_{s=1}^k\sum_{\substack{i_1+...+i_k=d-j+1\\0\leq r=n-i_1-...-i_{s-1}\leq i_s}}b^-_{k}\big(\bs{\Delta}^\Sigma_{i_k}\otimes\dots\otimes\bs{\Delta}^\Sigma_{i_s}(\id^{\otimes i_s-j-r}\otimes\mfm_j\otimes\id^{\otimes r})\otimes\dots\otimes\bs{\Delta}^\Sigma_{i_1}\big)\\
	&=\sum_{k=1}^d\sum_{i_1+...+i_k=d}\sum_{s=1}^k\sum_{j=1}^{i_s}\sum_{n=0}^{i_s-j}b^-_{k}\big(\bs{\Delta}^\Sigma_{i_k}\otimes\dots\otimes\bs{\Delta}^\Sigma_{i_s-j+1}(\id^{\otimes i_s-j-n}\otimes\mfm_j\otimes\id^{\otimes n})\otimes\dots\otimes\bs{\Delta}^\Sigma_{i_1}\big)\\
	&=\sum_{k=1}^d\sum_{i_1+...+i_k=d}\sum_{s=1}^kb^-_{k}\big(\bs{\Delta}^\Sigma_{i_k}\otimes\dots\otimes\sum_{j=1}^{i_s}\sum_{n=0}^{i_s-j}\bs{\Delta}^\Sigma_{i_s-j+1}(\id^{\otimes i_s-j-n}\otimes\mfm_j\otimes\id^{\otimes n})\otimes\dots\otimes\bs{\Delta}^\Sigma_{i_1}\big)
\end{alignat*}	
Observe that using the definition of $\bs{\Delta}$, then adding the vanishing term $\Delta_1^\Sigma\circ\mfm_{i_s}^-$, and then applying Lemma \ref{relDeltaSigma}, we have the following consecutive equalities:
\begin{alignat*}{1}
	\sum_{j=1}^{i_s}&\sum_{n=0}^{i_s-j}\bs{\Delta}^\Sigma_{i_s-j+1}(\id^{\otimes i_s-j-n}\otimes\mfm_j\otimes\id^{\otimes n})\\
	&=\sum_{j=1}^{i_s-1}\sum_{n=0}^{i_s-j}\Delta^\Sigma_{i_s-j+1}(\id^{\otimes i_s-j-n}\otimes\mfm_j\otimes\id^{\otimes n})+\Delta_1^\Sigma\circ\mfm_{i_s}^++\Delta_1^\Sigma\circ\mfm_{i_s}^0+\mfm_{i_s}^-\\
	&=\sum_{j=1}^{i_s}\sum_{n=0}^{i_s-j}\Delta^\Sigma_{i_s-j+1}(\id^{\otimes i_s-j-n}\otimes\mfm_j\otimes\id^{\otimes n})+\mfm_{i_s}^-\\
	&=\sum\limits_{u=1}^{i_s}\sum\limits_{t_1+\dots+t_u=i_s}\Delta^-_{u}\big(\bs{\Delta}^\Sigma_{t_u}\otimes\dots\otimes\bs{\Delta}_{t_1}^\Sigma\big)+\mfm_{i_s}^-
\end{alignat*}
If we plug it into the expression above, we get:
\begin{alignat*}{1}
&\sum_{j=1}^d\sum_{n=0}^{d-j}\mfm^-_{d-j+1}\big(\id^{\otimes d-j-n}\otimes\mfm_j\otimes\id^{\otimes n}\big)\\
&=\sum_{k=1}^d\sum_{i_1+...+i_k=d}\sum_{s=1}^kb^-_{k}\big(\bs{\Delta}^\Sigma_{i_k}\otimes\dots\otimes\big(\sum\limits_{u=1}^{i_s}\sum\limits_{t_1+\dots+t_u=i_s}\Delta^-_{u}\big(\bs{\Delta}^\Sigma_{t_u}\otimes\dots\otimes\bs{\Delta}_{t_1}^\Sigma\big)\big)\otimes\dots\otimes\bs{\Delta}^\Sigma_{i_1}\big)\\
&\hspace{3cm}+\sum_{k=1}^d\sum_{i_1+...+i_k=d}\sum_{s=1}^kb^-_{k}\big(\bs{\Delta}^\Sigma_{i_k}\otimes\dots\otimes\mfm_{i_s}^-\otimes\dots\otimes\bs{\Delta}^\Sigma_{i_1}\big)
\end{alignat*}
Finally we apply Lemma \ref{relb-} and we obtain
\begin{alignat*}{1}
&\sum_{j=1}^d\sum_{n=0}^{d-j}\mfm^-_{d-j+1}\big(\id^{\otimes d-j-n}\otimes\mfm_j\otimes\id^{\otimes n}\big)\\
&=\sum_{k=1}^d\sum_{i_1+...+i_k=d}\sum_{s=1}^kb^-_{k}\big(\bs{\Delta}^\Sigma_{i_k}\otimes\dots\otimes\big(\sum\limits_{u=1}^{i_s}\sum\limits_{t_1+\dots+t_u=i_s}b^-_{u}\big(\bs{\Delta}^\Sigma_{t_u}\otimes\dots\otimes\bs{\Delta}_{t_1}^\Sigma\big)\big)\otimes\dots\otimes\bs{\Delta}^\Sigma_{i_1}\big)\\
&\hspace{3cm}+\sum_{k=1}^d\sum_{i_1+...+i_k=d}\sum_{s=1}^kb^-_{k}\big(\bs{\Delta}^\Sigma_{i_k}\otimes\dots\otimes\mfm_{i_s}^-\otimes\dots\otimes\bs{\Delta}^\Sigma_{i_1}\big)\\
&=\sum_{k=1}^d\sum_{i_1+...+i_k=d}\sum_{s=1}^kb^-_{k}\big(\bs{\Delta}^\Sigma_{i_k}\otimes\dots\otimes\mfm_{i_s}^-\otimes\dots\otimes\bs{\Delta}^\Sigma_{i_1}\big)\\
&\hspace{3cm}+\sum_{k=1}^d\sum_{i_1+...+i_k=d}\sum_{s=1}^kb^-_{k}\big(\bs{\Delta}^\Sigma_{i_k}\otimes\dots\otimes\mfm_{i_s}^-\otimes\dots\otimes\bs{\Delta}^\Sigma_{i_1}\big)=0
\end{alignat*}

\subsection{Fukaya category of Lagrangian cobordisms}\label{sec:Fuk}

We define an $A_\infty$-category $\Fuk(\R\times Y)$ whose objects are exact Lagrangian cobordisms whose negative ends are cylinders over Legendrian admitting augmentations. We define this category by localization, in the same spirit as the definition of the wrapped Fukaya category of a Liouville sector in \cite{GPS} to which we refer for details about quotient and localization, as well as \cite{LO}.

\begin{defi}
	A Hamiltonian isotopy $\varphi_h^s$ of $\R\times Y$ is called \textit{cylindrical at infinity} if there exists a $R>0$ such that $\varphi_h^s$ does not depend on the symplectization coordinate $t$ in $(-\infty,-R)\times Y$ and $(R,\infty)\times Y$.
\end{defi}

Let $E$ be a countable set of exact Lagrangian cobordisms in $\R\times Y$, with negative cylindrical ends on Legendrian submanifolds of $Y$ admitting an augmentation. Assume that any exact Lagrangian cobordism $\La^-\prec_\Sigma\La^+$ such that $\La^-$ admits an augmentation is isotopic to one in $E$ through a cylindrical at infinity Hamiltonian isotopy.
For each cobordism $\La^-\prec_\Sigma\La^+$ in $E$, we choose a  sequence $\Sigma^\bullet$ of cobordisms
\begin{alignat*}{1}
\Sigma^\bullet=(\Sigma^{(0)},\Sigma^{(1)},\Sigma^{(2)},\dots)
\end{alignat*}
as follows. First $\Sigma^{(0)}=\Sigma$, and then we need to make several choices:
\begin{enumerate}
	\item a sequence $\{\eta_i\}_{i\geq1}$ of real numbers such that $\sum\limits_{j>0}\eta_j$ is strictly smaller than the length of the shortest Reeb chord of $\La^+\cup\La^-$, and denote $\tau_i=\sum\limits_{j=1}^i\eta_j$,
	\item given that $\Sigma$ is cylindrical outside $[-T,T]\times Y$, and given $N>0$, we choose Hamiltonians $H_i:\R\times Y\to\R$, $i\geq1$, being small perturbations of $h_{T,N}$ (see Section \ref{sec:unit}) and set $\Sigma^{(i)}=\varphi_{H_i}^{\tau_i}(\Sigma)$ such that $\Sigma^{(i)}$ is the graph of $dF_i$ in a standard neighborhood of the $0$-section in $T^*\Sigma$, for $F_i:\Sigma\to\R$ Morse function satisfying the following:
\begin{enumerate}
	\item on $\Sigma\cap\big([T+N,\infty)\times Y\big)$, resp. $\Sigma\cap\big((-\infty,-T-N]\times Y\big)$, $F_i$ is equal to $e^t(f_i^+-\tau_i)$, resp $e^t(f_i^--\tau_i)$ where $f_i^\pm:\La^\pm\to\R$ are Morse functions such that the $\mathcal{C}^0$-norm of $f_i^\pm$ is strictly smaller than $\min\{\eta_i,\eta_{i+1}\}/2$,
	\item the functions $F_i-F_j$, $f_i^\pm-f_j^\pm$ are Morse for $i\neq j$,
	\item the functions $f_i^\pm$ and $f_i^\pm-f_j^\pm$ admit a unique maximum on each connected component while the functions $F_i$ and $F_i-F_j$ admit a unique maximum on each filling connected component and no maximum on each connected component of $\Sigma$ admitting a non empty negative end.
\end{enumerate}
\end{enumerate}
We call such a sequence of cobordisms \textit{cofinal}. Note that an augmentation of $\La^-$ gives canonically an augmentation of the negative end of $\Sigma^{(i)}$ for $i\geq1$.
The construction is inductive and the different choices above are made so that for any $(d+1)$-tuple of cobordisms $\Sigma_0,\Sigma_1,\dots,\Sigma_d$ in $E$ (not necessarily distinct), and any strictly increasing sequence of integers $i_0<i_1<\dots<i_d$, the cobordisms $\Sigma_0^{(i_0)},\Sigma_1^{(i_1)},\dots,\Sigma_d^{(i_d)}$ are pairwise transverse.
Let us construct now a strictly unital $A_\infty$-category $\mathcal{O}$ as follows:
\begin{itemize}
	\item Obj($\mathcal{O}$): pairs $(\Sigma^{(i)},\ep^-)$ where $\Sigma\in E$ is an exact Lagrangian cobordism from $\La^-$ to $\La^+$, and $\ep^-$ is an augmentation of $\La^-$,
	\item $\hom_\mathcal{O}\big((\Sigma_0^{(i)},\ep_0^-),(\Sigma_1^{(j)},\ep_1^-)\big)=\left\{
   \begin{array}{ll}
	\Cth_+\big(\Sigma_0^{(i)},\Sigma_1^{(j)}\big)&\mbox{ if }i<j\\
	\mathbb{Z}_2e_{\ep_0^-}^{(i)}&\mbox{ if }\Sigma_0=\Sigma_1, i=j \mbox{ and } \ep_0^-=\ep_1^-\\
	0&\mbox{otherwise}
	\end{array}\right.$
\end{itemize}
where $e_{\ep_0^-}^{(i)}$ is a formal degree $0$ element.
The $A_\infty$-operations are given by the maps defined in Section \ref{A-inf maps} for each $(d+1)$-tuple of cobordisms $\Sigma_0^{(i_0)},\Sigma_1^{(i_1)},\dots,\Sigma_d^{(i_d)}$ with $i_0<i_1<\dots<i_d$, i.e. for such a tuple we have a map
\begin{alignat*}{1}
\mfm_d:\Cth_+(\Sigma_{d-1}^{(i_{d-1})},\Sigma_d^{(i_d)})\otimes\dots\otimes\Cth_+(\Sigma_0^{(i_0)},\Sigma_1^{(i_1)})\to\Cth_+(\Sigma_0^{(i_0)},\Sigma_d^{(i_d)})
\end{alignat*}
These maps extend to maps defined for any $(d+1)$-tuple $\Sigma_0^{(i_0)},\Sigma_1^{(i_1)},\dots,\Sigma_d^{(i_d)}$ with the condition that the elements $e_{\ep_j^-}^{(i)}\in\hom_\mathcal{O}((\Sigma_j^{(i)},\ep_j^-),(\Sigma_j^{(i)},\ep_j^-))$ behave as strict units.\\

We finally define the Fukaya category $\Fuk(\R\times Y)$ of Lagrangian cobordisms in $\R\times Y$ as a quotient of $\mathcal{O}$ by the set of continuation elements, as follows.
Consider $\Sigma\in E$ together with an augmentation $\ep^-$ of $\La^-$. For all $i<j$, there is a continuation element $e_{\Sigma^{(i)},\Sigma^{(j)}}\in\hom_\mathcal{O}\big((\Sigma^{(i)},\ep^-),(\Sigma^{(j)},\ep^-)\big)$ as described in Section \ref{sec:unit}, which is a cycle in $\mathcal{O}$.
%
%
Let $Tw(\mathcal{O})$ denote the $A_\infty$-category of twisted complexes of $\mathcal{O}$ and $\mathcal{C}$ the full subcategory of $Tw(\mathcal{O})$ generated by cones of the continuation elements. We define $\Fuk(\R\times Y):=\mathcal{O}[\mathcal{C}^{-1}]$ to be the image of $\mathcal{O}$ in the quotient $Tw(\mathcal{O})/\mathcal{C}$.

Defined as follows, the category $\Fuk(\R\times Y)$ depends on various choices, namely:
\begin{enumerate}
	\item the choice for each $\Sigma$ in $E$ of a cofinal sequence $\Sigma^{\bullet}=(\Sigma^{(0)},\Sigma^{(1)},\dots)$.
	\item the choice of the countable set $E$ of representatives of Hamiltonian isotopy classes of exact Lagrangian cobordisms with negative end admitting an augmentation,
\end{enumerate}

The fact that the quasi-equivalence class of the category does not depend on the choice of a cofinal sequence for each element in $E$ is purely algebraic.
Assume $\Sigma^{\widetilde\bullet}$ is a cofinal sequence for $\Sigma$ which is a subsequence of a bigger cofinal sequence $\Sigma^\bullet$. Then, denote $\widetilde{\mathcal{O}}$ the category constructed using the cofinal sequence $\Sigma^{\widetilde\bullet}$ and $\mathcal{O}$ the one constructed using $\Sigma^{\bullet}$. The inclusion functor $\widetilde{\mathcal{O}}\to\mathcal{O}$ is full and faithful, and if $\widetilde{\mathcal{C}}\subset Tw\widetilde{\mathcal{O}}$ denotes the full subcategory generated by cones of continuation elements, one gets a cohomologically full and faithful functor $\widetilde{\mathcal{O}}[\widetilde{\mathcal{C}}^{-1}]\to\mathcal{O}[\mathcal{C}^{-1}]$. Moreover, the continuation elements become quasi-isomorphisms in $\mathcal{O}[\mathcal{C}^{-1}]$ thus this functor is a quasi-equivalence.
Now if $\Sigma^{\bullet,1}$ and $\Sigma^{\bullet,2}$ are two cofinal sequences for $\Sigma$, then one can find a cofinal sequence $\Sigma^{\widetilde{\bullet}}$ such that $\Sigma^{\bullet,i}\cup\Sigma^{\widetilde{\bullet}}$, $i=1,2$, are also cofinal sequences. As $\Sigma^{\widetilde{\bullet}}$ and $\Sigma^{\bullet,i}$ are subsequences of $\Sigma^{\bullet,i}\cup\Sigma^{\widetilde{\bullet}}$, the Fukaya category constructed using the cofinal sequence $\Sigma^{\widetilde{\bullet}}$ is quasi-equivalent to the one using $\Sigma^{\bullet,i}\cup\Sigma^{\widetilde{\bullet}}$ which is quasi-equivalent to the one using $\Sigma^{\bullet,i}$.

Then, the fact that the category does not depend (up to quasi-equivalence) on the choice of representatives of cylindrical at infinity Hamiltonian isotopy classes follows from the invariance result:

\begin{prop}
	Let $\Sigma_0$ be an exact Lagrangian cobordism and $(\varphi^s_h)_{s\in[0,1]}$ a cylindrical at infinity Hamiltonian isotopy such that $\Sigma_0$ and $\Sigma_1:=\varphi_h^1(\Sigma_0)$ are transverse. Then, for any $T$ exact Lagrangian cobordism transverse to $\Sigma_0$ and $\Sigma_1$, the complexes $\Cth_+(\Sigma_0,T)$ and $\Cth_+(\Sigma_1,T)$ are homotopy equivalent.
\end{prop}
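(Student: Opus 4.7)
The plan is to reduce to the compactly-supported case already established in Proposition \ref{prop:iso}, using the wrapping technique of Section \ref{wrapping} to absorb the nontrivial action of $\varphi_h^s$ on the Legendrian ends. Writing $\Sigma_s := \varphi_h^s(\Sigma_0)$, the cylindrical at infinity hypothesis means that outside a bounded region the Hamiltonian $h$ descends to contact Hamiltonians on $Y$, so the ends $\La_0^{s,\pm}$ of $\Sigma_s$ form smooth one-parameter families of Legendrian submanifolds obtained from $\La_0^\pm$ via Legendrian isotopy. The obstacle to applying Proposition \ref{prop:iso} directly is precisely this nontrivial variation of the ends, which modifies the set of Reeb chord generators of $\Cth_+(\Sigma_s,T)$ as $s$ varies.

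First, I would wrap the ends of $T$. Let $\La_T^\pm$ be its Legendrian ends. Using the Hamiltonians of Section \ref{wrapping}, set $\widetilde T := V \odot T \odot W$, where $V$, resp.\ $W$, is the image of $\R\times\La_T^-$, resp.\ $\R\times\La_T^+$, under a Hamiltonian translation in the positive, resp.\ negative, Reeb direction by an amount strictly greater than $\sup_{s\in[0,1]}\ell(\gamma)$ over all mixed Reeb chords $\gamma$ of $\La_T^\pm\cup\La_0^{s,\pm}$; this supremum is finite by compactness of $[0,1]$ and continuity of chord lengths under Legendrian isotopy. By the arguments of Propositions \ref{prop:transfer} and \ref{prop:transfer2}, for all $s\in[0,1]$ the complex $\Cth_+(\Sigma_s,\widetilde T)$ has only intersection-point generators, confined to a uniformly bounded region of $\R\times Y$; moreover the composition $\bs{\Delta}_1^W\circ\bs{b}_1^V$ gives an isomorphism of complexes $\Cth_+(\Sigma_s,\widetilde T)\xrightarrow{\cong}\Cth_+(\Sigma_s,T)$ for $s=0,1$. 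It therefore suffices to establish the homotopy equivalence $\Cth_+(\Sigma_0,\widetilde T)\simeq\Cth_+(\Sigma_1,\widetilde T)$.

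Since all intersections $\Sigma_s\cap\widetilde T$ remain in a fixed bounded region for $s\in[0,1]$, this equivalence would be obtained by bifurcation analysis along the one-parameter family $\{(\Sigma_s,\widetilde T)\}_{s\in[0,1]}$, exactly as in \cite[Proposition 8.4]{CDGG2long} and \cite{E1}. At generic $s$ the pair is transverse; at isolated non-generic times one sees birth-death bifurcations or handle-slides of intersection points, each of which induces a chain homotopy equivalence on the Cthulhu complex. Composing these equivalences yields the desired homotopy equivalence.

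The main technical obstacle is verifying uniform SFT-compactness of the moduli spaces contributing to the differential of $\Cth_+(\Sigma_s,\widetilde T)$ along the family, despite $\varphi_h^s$ not being compactly supported. Because $\widetilde T$ is wrapped past the length of every possible mixed chord, no intersection point can escape to infinity during the isotopy, and the action-energy estimates of Section \ref{sec:structure} applied uniformly in $s$ rule out breaking at the cylindrical ends. With this compactness in hand, the standard one-parameter family arguments for Floer complexes apply, and the invariance follows.
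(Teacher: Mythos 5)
Your reduction via wrapping is fine as far as it goes: wrapping the ends of $T$ past the longest mixed chord (uniformly in $s$) and invoking the arguments of Propositions \ref{prop:transfer} and \ref{prop:transfer2} does produce complexes $\Cth_+(\Sigma_s,\widetilde T)$ with only intersection-point generators, isomorphic to $\Cth_+(\Sigma_s,T)$ at $s=0,1$. The gap is in the second step. The bifurcation analysis you cite (Proposition \ref{prop:iso}, \cite[Proposition 8.4]{CDGG2long}, \cite{E1}) is established for \emph{compactly supported} Hamiltonian isotopies, where the cylindrical ends, hence the Legendrian submanifolds at infinity, their Reeb chords, their Chekanov--Eliashberg algebras and the chosen augmentations, are all fixed along the family. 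In your situation $\varphi^s_h$ genuinely moves the ends: $\La_0^{s,\pm}$ undergoes a Legendrian isotopy, so the \emph{pure} Reeb chords of the negative end (which enter every component of the differential through the augmentation weights $\ep^-(\bs{\delta}_i)$) undergo births, deaths and handle slides, and the augmentation itself must be transported along the isotopy; likewise pure chords of the moving positive end enter through the induced augmentations as in Remark \ref{rem:delta}. None of this is covered by the compactly supported bifurcation results, and extending them is a substantial piece of work (essentially the invariance package for the DGA and its augmentations under Legendrian isotopy, interleaved with the cobordism-level moduli spaces), not a routine compactness check. Relatedly, you never say which augmentation is used for $\La_1^-=\varphi^1_h(\La_0^-)$; specifying and transporting it is part of the statement's content.

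The paper's proof avoids exactly this issue by a different decomposition of the isotopy: first wrap the ends of $\Sigma_0$ itself far away from those of $T$ (so the complex has only intersection points, isomorphic to $\Cth_+(\Sigma_0,T)$ by the transfer-map isomorphisms), then realize the Legendrian isotopies of the ends by attaching concordances $C^\pm$ in a region disjoint from $\R\times\La_T^\pm$ — this changes the cobordism but leaves the Cthulhu complex literally unchanged, and it is also what transports the augmentation — and finally unwrap, so that what remains between the resulting cobordism $\widetilde\Sigma_0$ and $\Sigma_1$ is a \emph{compactly supported} Hamiltonian isotopy, to which Proposition \ref{prop:iso} applies. This concordance-attachment step is the key idea missing from your argument; without it (or an honest extension of the bifurcation analysis to moving ends), the proof is incomplete.
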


\begin{proof}
	All the ingredients to prove this proposition already appeared in Section \ref{sec:acy}.
	Observe first that if $\Sigma_0$ is a cobordism from $\La_0^-$ to $\La_0^+$ then $\Sigma_1$ is a cobordism from $\La_1^-$ to $\La_1^+$ with $\La_1^\pm$ Legendrian isotopic to $\La_0^\pm$.
	The isotopy from $\Sigma_0$ to $\Sigma_1$ can be decomposed as a cylindrical at infinity isotopy of $\Sigma_0$ giving a cobordism $\widetilde{\Sigma}_0$ from $\La_1^-$ to $\La_1^+$, followed by a compactly supported Hamiltonian isotopy from $\widetilde{\Sigma}_0$ to $\Sigma_1$. The proof of the proposition now goes as follows.
	
	Start from $\Sigma_0$ and wrap its positive, resp. negative, end in the positive, resp negative, Reeb direction to obtain the cobordism $\Sigma_0^s$, $s\geq0$, having cylindrical ends over $\La_{0,-s}^-$ and $\La_{0,s}^+$ where $\La_{0,\pm s}^\pm=\La_0^\pm\pm s\partial_z$. Take $s$ big enough so that the Cthulhu complex $\Cth_+(\Sigma_0^s,T)$ has only intersection points generators.

	Denote also $\La_{1,a}^+:=\La_1^++a\partial_z$ and $\La_{1,-b}^-:=\La_1^--b\partial_z$ for $a,b\geq0$ so that $\La_{1,a}^+$ lies entirely above $\La_T^+$ and $\La_{1,-b}^-$ lies entirely below $\La_T^-$.
	Denote $C^+$ a concordance from $\La_{0,s}^+$ to $\La_{1,a}^+$ and $C^-$ a concordance from $\La_{1,-b}^-$ to $\La_{0,-s}^-$. We can assume that $C^+\cap(\R\times\La_T^+)=C^-\cap(\R\times\La_T^-)=\emptyset$.
	Concatenating the concordances $C^-$ and $C^+$ with $\Sigma_0^s$ gives $C^-\odot\Sigma_0^S\odot C^+$ which is an exact Lagrangian cobordism from $\La_{1,-b}^-$ to $\La_{1,a}^+$, satisfying $\Cth_+(C^-\odot\Sigma_0^s\odot C^+,T)=\Cth_+(\Sigma_0^s,T)$ by construction, where it is an equality of complexes. Finally, wrap the ends of $C^-\odot\Sigma_0^s\odot C^+$ in such a way that it \textquotedblleft translates back\textquotedblright\, $\La_{1,a}^+$ to $\La_1^+$ and $\La_{1,-b}^-$ to $\La_1^-$, to obtain a cobordism $\widetilde{\Sigma}_0$ from $\La_1^-$ to $\La_1^+$, Hamiltonian isotopic to $\Sigma_1$ by a compactly supported Hamiltonian isotopy. Invariance of the Cthulhu complex by wrapping the ends and compactly supported Hamiltonian isotopy ends the proof.
\end{proof}

\section{Higher order maps in the concatenation}\label{sec:higher_conc}

In case of a $(d+1)$-tuple of concatenated cobordisms $(V_0\odot W_0,\dots,V_d\odot W_d)$ one can also define higher order maps $\mfm_d^{V\odot W}$, which will recover the maps $\mfm_d^\Sigma$ defined in the previous section in the case of concatenation of a cobordism with a trivial cylinder.
We define recursively, for $d\geq1$, the maps
\begin{alignat*}{1}
 	&\bs{\Delta}_d^W:\Cth_+(V_{d-1}\odot W_{d-1},V_d\odot W_d)\otimes\dots\Cth_+(V_0\odot W_0,V_1\odot W_1)\to\Cth_+(V_0,V_d)\\
 	&\bs{b}_d^V:\Cth_+(V_{d-1}\odot W_{d-1},V_d\odot W_d)\otimes\dots\Cth_+(V_0\odot W_0,V_1\odot W_1)\to\Cth_+(W_0,W_d)
\end{alignat*}
as follows. First, $\bs{b}_1^V$ and $\bs{\Delta}_1^W$ are the transfer maps from Section \ref{sec:conc}, and then for $d\geq2$ one sets:
\begin{alignat*}{1}
	&\bs{\Delta}_d^W=\sum_{s=2}^d\sum_{\substack{1\leq i_1,\dots,i_s\\i_1+\dots+i_s=d}}\Delta_s^W\big(\bs{b}_{i_s}^V\otimes\dots\otimes\bs{b}_{i_1}^V\big)\\
	&\bs{b}_d^V=\sum_{s=1}^d\sum_{\substack{1\leq i_1,\dots,i_s\\i_1+\dots+i_s=d}}b_s^V\big(\bs{\Delta}_{i_s}^W\otimes\dots\otimes\bs{\Delta}_{i_1}^W\big)
\end{alignat*}
Using the maps $\Delta_s^W,b_s^V$ from Section \ref{A-inf maps}. Observe that the maps $\bs{b}_2^V$ and $\bs{\Delta}_2^W$ already appeared in Section \ref{sec:functoriality}. 
Given this, we define
\begin{alignat*}{1}
	\mfm_d^{V\odot W}:\Cth_+(V_{d-1}\odot W_{d-1},V_d\odot W_d)\otimes\dots\Cth_+(V_0\odot W_0,V_1\odot W_1)\to\Cth_+(V_0\odot W_0,V_d\odot W_d)
\end{alignat*}
by
\begin{alignat*}{1}
	\mfm_d^{V\odot W}=\sum_{s=1}^d\sum_{\substack{1\leq i_1,\dots,i_s\\i_1+\dots+i_s=d}}\mfm_s^{W,+0}\big(\bs{b}_{i_s}^V\otimes\dots\otimes\bs{b}_{i_1}^V\big)+\mfm_s^{V,0-}\big(\bs{\Delta}_{i_s}^W\otimes\dots\otimes\bs{\Delta}_{i_1}^W\big)
\end{alignat*}
We will first prove that the maps $\bs{\Delta}_j^W$ and $\bs{b}_j^V$ satisfy the $A_\infty$-functor equations, and then we will prove that the maps $\mfm_j^{V\odot W}$ satisfy the $A_\infty$ equations. We start by proving the following:
\begin{lem}\label{lem:deltabanane}
	For all $d\geq1$,
	\begin{alignat*}{1}
	\sum_{j=1}^d\sum_{i_1+\dots+i_j=d}\bs{b}_j^V\big(\bs{\Delta}_{i_j}^W\otimes\dots\otimes\bs{\Delta}_{i_1}^W\big)+\bs{\Delta}_j^W\big(\bs{b}_{i_j}^V\otimes\dots\otimes\bs{b}_{i_1}^V\big)=0
	\end{alignat*}		
\end{lem}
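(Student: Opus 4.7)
The plan is to proceed by induction on $d$. The base case $d=1$ is exactly Lemma \ref{bcircdelta}, which was already established. For the inductive step assume the identity for all $d'<d$; the strategy is to expand both types of terms in the sum using the recursive definitions
$$\bs{\Delta}_d^W=\sum_{s=2}^{d}\sum_{k_1+\dots+k_s=d}\Delta_s^W\bigl(\bs{b}_{k_s}^V\otimes\dots\otimes\bs{b}_{k_1}^V\bigr),\qquad \bs{b}_d^V=\sum_{s=1}^{d}\sum_{k_1+\dots+k_s=d}b_s^V\bigl(\bs{\Delta}_{k_s}^W\otimes\dots\otimes\bs{\Delta}_{k_1}^W\bigr)$$
valid for $d\geq 2$, and then re-index the resulting double sums so that the terms can be matched and cancelled in pairs.

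Concretely, I would first isolate the $j=1$ contribution from the sum of the lemma, namely $\bs{b}_1^V\circ\bs{\Delta}_d^W+\bs{\Delta}_1^W\circ\bs{b}_d^V$, and expand it with the above formulas. Then for each $j\geq 2$ I would substitute the recursive expression for $\bs{b}_j^V$ (respectively $\bs{\Delta}_j^W$) into $\bs{b}_j^V(\bs{\Delta}_{i_j}^W\otimes\dots\otimes\bs{\Delta}_{i_1}^W)$ (respectively $\bs{\Delta}_j^W(\bs{b}_{i_j}^V\otimes\dots\otimes\bs{b}_{i_1}^V)$). After doing so, every term in the full expansion has the outer shape $b_s^V\bigl(X_{m_s}\otimes\dots\otimes X_{m_1}\bigr)$ or $\Delta_s^W\bigl(Y_{m_s}\otimes\dots\otimes Y_{m_1}\bigr)$ with $m_1+\dots+m_s=d$. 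The key point I expect to exploit is that, once terms are grouped by their outer shape $b_s^V$ or $\Delta_s^W$ and by the outer partition $(m_1,\dots,m_s)$, the tensor factor in each slot becomes itself a sum of the form appearing in the statement of the lemma but with $d$ replaced by $m_t<d$; the inductive hypothesis then kills every such slot, giving zero overall.

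The main obstacle will be the combinatorial bookkeeping of the re-indexing, and in particular the careful handling of the special-case interpretations of $\bs{b}_1^V$ and $\bs{\Delta}_1^W$ on $\Cth_+(V,V)$ or $\Cth_+(W,W)$ inputs described in Section \ref{special_case}. On those subspaces these transfer maps specialise to the identity plus a correction ($\bs{b}_1^V(a)=a+b_1^V(a)$ on $C(\La_1,\La_0)$, $\bs{\Delta}_1^W(a)=a$ on $C(\La_0,\La_1)$, etc.), and this identity part must be tracked consistently through the expansion, since it is precisely what allows the inductive pattern to reassemble in each tensor slot. An alternative route would be to interpret the identity as the algebraic shadow of the boundary of a suitable $1$-dimensional moduli space of discs with boundary on the full family $V_0\cup\dots\cup V_d\cup W_0\cup\dots\cup W_d$ whose broken configurations are exactly the two types of trees appearing in the lemma, but the inductive-algebraic route is cleaner given that the higher transfer maps are defined only recursively, not via a direct moduli count.
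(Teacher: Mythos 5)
Your base case is right, but the inductive step as you describe it would not go through, and in fact no induction is needed. The first issue is a typing confusion: in the statement of the lemma the \emph{outer} maps $\bs{b}_j^V$ and $\bs{\Delta}_j^W$ are applied to inputs lying in $\Cth_+(V_\bullet,V_\bullet)$, respectively $\Cth_+(W_\bullet,W_\bullet)$, so they are the maps attached to the single tuple $(V_0,\dots,V_d)$, resp. $(W_0,\dots,W_d)$, from Section \ref{A-inf maps}: for $j\geq 2$ these are just the moduli-count maps $b_j^V$ and $\Delta_j^W$ and admit no nontrivial recursive expansion; only the \emph{inner} factors are the concatenation transfer maps defined recursively. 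Substituting ``the recursive expression for $\bs{b}_j^V$'' into the outer slot, as you propose, is therefore either vacuous or ill-typed. The second and more serious issue is the claimed cancellation mechanism: after any such expansion, each tensor slot contains a single map $\bs{\Delta}_{m_t}^W$ or $\bs{b}_{m_t}^V$ (or pieces of its expansion), not the full mixed sum $\sum_j\big(\bs{b}_j^V(\bs{\Delta}^W\otimes\cdots)+\bs{\Delta}_j^W(\bs{b}^V\otimes\cdots)\big)$ over partitions of $m_t$, so the inductive hypothesis cannot be ``applied in each slot''. The cancellation is not slot-wise at all; it happens at the top level.

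Indeed the identity is purely definitional, which is the route the paper takes. Writing out the $j=1$ terms, one has $\bs{b}_1^V\circ\bs{\Delta}_d^W=\bs{\Delta}_d^W+b_1^V\circ\bs{\Delta}_d^W$ (since for $d\geq2$ the image of $\bs{\Delta}_d^W$ lies in the positive-chord part, where $\bs{b}_1^V=\id+b_1^V$) and $\bs{\Delta}_1^W\circ\bs{b}_d^V=\bs{b}_d^V$ (its image lies where $\bs{\Delta}_1^W=\id$, equivalently $\Delta_1^W\circ\bs{b}_d^V=0$). For $j\geq2$ the outer maps are $b_j^V$ and $\Delta_j^W$, so the total sum becomes $\bs{\Delta}_d^W+b_1^V\circ\bs{\Delta}_d^W+\bs{b}_d^V+\sum_{j\geq2}\big(b_j^V(\bs{\Delta}^W\otimes\cdots)+\Delta_j^W(\bs{b}^V\otimes\cdots)\big)$, and by the very definitions $\bs{b}_d^V=b_1^V\circ\bs{\Delta}_d^W+\sum_{j\geq2}b_j^V(\bs{\Delta}^W\otimes\cdots)$ and $\bs{\Delta}_d^W=\sum_{j\geq2}\Delta_j^W(\bs{b}^V\otimes\cdots)$, so each of $\bs{b}_d^V$ and $\bs{\Delta}_d^W$ appears exactly twice and the sum vanishes over $\Z_2$. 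So the fix is not better bookkeeping in an induction, but recognizing that, apart from the $j=1$ special cases you correctly flagged, the sum literally reassembles the defining formulas of $\bs{b}_d^V$ and $\bs{\Delta}_d^W$ twice. Your alternative suggestion of interpreting the identity via a one-dimensional moduli space is likewise unnecessary (and unsupported, since the higher transfer maps are not defined by a single disc count).
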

\begin{proof}
	This holds by definition of the maps. Observe that we have already made use of the case $d=1$ in Section \ref{sec:LCconc}. For $d\geq2$, one has
\begin{alignat*}{1}
	&\sum_{j=1}^d\sum_{i_1+\dots+i_j=d}\bs{b}_j^V\big(\bs{\Delta}_{i_j}^W\otimes\dots\otimes\bs{\Delta}_{i_1}^W\big)+\bs{\Delta}_j^W\big(\bs{b}_{i_j}^V\otimes\dots\otimes\bs{b}_{i_1}^V\big)\\
	&=\bs{b}_1^V\circ\bs{\Delta}_d^W+\bs{\Delta}_1^W\circ\bs{b}_d^V+\sum_{j=2}^d\sum_{i_1+\dots+i_j=d}\bs{b}_j^V\big(\bs{\Delta}_{i_j}^W\otimes\dots\otimes\bs{\Delta}_{i_1}^W\big)+\bs{\Delta}_j^W\big(\bs{b}_{i_j}^V\otimes\dots\otimes\bs{b}_{i_1}^V\big)
\end{alignat*}
Observe that $\bs{\Delta}_{i_j}^W\otimes\dots\otimes\bs{\Delta}_{i_1}^W$ takes values in $\Cth_+(V_{d-i_j},V_d)\otimes\dots\otimes\Cth_+(V_0,V_{i_1})$ and $\bs{b}_{i_j}^V\otimes\dots\otimes\bs{b}_{i_1}^V$ takes values in $\Cth_+(W_{d-i_j},W_d)\otimes\dots\otimes\Cth_+(W_0,W_{i_1})$, hence the maps $\bs{b}_j^V$ and $\bs{\Delta}_j^W$ are the one corresponding to one cobordism only and not the concatenation, as defined in Section \ref{A-inf maps}. So the sum above equals
\begin{alignat*}{1}
	&\bs{\Delta}_d^W+b_1^V\circ\bs{\Delta}_d^W+\bs{b}_d^V+\sum_{j=2}^d\sum_{i_1+\dots+i_j=d}b_j^V\big(\bs{\Delta}_{i_j}^W\otimes\dots\otimes\bs{\Delta}_{i_1}^W\big)+\Delta_j^W\big(\bs{b}_{i_j}^V\otimes\dots\otimes\bs{b}_{i_1}^V\big)\\
	&=\bs{\Delta}_d^W+\bs{b}_d^V+\sum_{j=1}^d\sum_{i_1+\dots+i_j=d}b_j^V\big(\bs{\Delta}_{i_j}^W\otimes\dots\otimes\bs{\Delta}_{i_1}^W\big)+\Delta_j^W\big(\bs{b}_{i_j}^V\otimes\dots\otimes\bs{b}_{i_1}^V\big)
\end{alignat*}
	where note that in the sum on the right we have $\Delta_1^W\circ\bs{b}_d^V=0$. Then, this gives $0$ by definition of $\bs{b}_d^V$ and $\bs{\Delta}_d^W$.
\end{proof}

Now we can prove:
\begin{lem}\label{lem:functoriality} For all $d\geq1$
	\begin{alignat*}{1}
		&\sum_{s=1}^d\sum_{i_1+\dots+i_s=d}\mfm_s^V\big(\bs{\Delta}_{i_s}^W\otimes\dots\otimes\bs{\Delta}_{i_1}^W\big)+\sum_{j=1}^d\sum_{n=0}^{d-j}\bs{\Delta}_{d-j+1}^W\big(\id\otimes\dots\otimes\id\otimes\mfm_j^{V\odot W}\otimes\underbrace{\id\otimes\dots\otimes\id}_{n}\big)=0
	\end{alignat*}
and
	\begin{alignat*}{1}
		&\sum_{s=1}^d\sum_{i_1+\dots+i_s=d}\mfm_s^W\big(\bs{b}_{i_s}^V\otimes\dots\otimes\bs{b}_{i_1}^V\big)+\sum_{j=1}^d\sum_{n=0}^{d-j}\bs{b}_{d-j+1}^V\big(\id\otimes\dots\otimes\id\otimes\mfm_j^{V\odot W}\otimes\underbrace{\id\otimes\dots\otimes\id}_{n}\big)=0
	\end{alignat*}
\end{lem}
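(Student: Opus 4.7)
The plan is to prove both equations simultaneously by strong induction on $d$, with the base case $d=1$ being exactly the content of Propositions \ref{prop:Delta} and \ref{prop:Phi}, established in Section \ref{sec:conc}.

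For the inductive step, I would expand every composite map according to its recursive definition. The inner map $\mfm_j^{V\odot W}$ in the second sum decomposes as a sum over partitions $i_1+\dots+i_s=j$ of terms $\mfm_s^{W,+0}(\bs{b}_{i_s}^V\otimes\dots\otimes\bs{b}_{i_1}^V) + \mfm_s^{V,0-}(\bs{\Delta}_{i_s}^W\otimes\dots\otimes\bs{\Delta}_{i_1}^W)$, and similarly the outer $\bs{\Delta}_{d-j+1}^W$ (resp.\ $\bs{b}_{d-j+1}^V$) decomposes into compositions involving $\Delta^W$ and $\bs{b}^V$ (resp.\ $b^V$ and $\bs{\Delta}^W$). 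In the first sum, $\mfm_s^V$ splits as $\mfm_s^{V,+}+\mfm_s^{V,0}+\mfm_s^{V,-}$, and by \eqref{defmap+} the component $\mfm_s^{V,+}$ itself expands as a sum of $\Delta_t^{\La}(\bs{b}_*^V\otimes\dots\otimes\bs{b}_*^V)$. After this expansion the terms can be grouped by which outermost map appears. The key identities used for cancellation are: the $A_\infty$-equations of Section \ref{A-inf maps} applied to the tuples $(V_0,\dots,V_d)$ and $(W_0,\dots,W_d)$; higher-order analogues of Lemmas \ref{relbSigma} and \ref{relDeltaSigma} applied to $b^V$ and $\Delta^W$, obtained by examining boundaries of $1$-dimensional moduli spaces of mixed discs with boundary on $V_0\cup\dots\cup V_d$ with two mixed positive asymptotics (resp.\ with boundary on $W_0\cup\dots\cup W_d$ with two mixed negative asymptotics); and Lemma \ref{lem:deltabanane}, which allows one to swap iterated compositions of $\bs{b}^V$ with $\bs{\Delta}^W$. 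The induction hypothesis is invoked whenever a composition involves $\mfm_k^{V\odot W}$ composed with $\bs{b}_\ell^V$ or $\bs{\Delta}_\ell^W$ for some $k<d$.

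A conceptually cleaner but geometrically heavier alternative would be to read both identities directly off the boundary of the compactification of $1$-dimensional moduli spaces of SFT buildings with boundary on $V_0\cup W_0\cup\dots\cup V_d\cup W_d$, whose output is a mixed chord $\gamma_{0,d}\in\Rc(\La_d^-,\La_0^-)$ (for the first equation) or $\gamma_{d,0}\in\Rc(\La_0^+,\La_d^+)$ (for the second). Each broken configuration at the boundary would contribute to exactly one term in the equation, and the vanishing of the algebraic count of boundary points of a compact $1$-manifold would yield the identity. Matching the combinatorial types of SFT building with composites in the equation mirrors Section \ref{sec:LCconc} but now must account for $\mfm_0$-type discs, bananas of arbitrary order, and all possible ways the boundary of each subdisc splits across $V$- and $W$-parts.

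The main obstacle is combinatorial bookkeeping. The $d=2$ verification in Sections \ref{sec:LC1} and \ref{sec:LC2} already required many terms to cancel in pairs; in general one sums over all ordered partitions of $d$ together with positions and sub-partitions, and each summand must be paired with exactly one other via the identities above. The only genuinely new input beyond the $d=2$ case is the higher-order compatibility furnished by Lemma \ref{lem:deltabanane}, which guarantees that the various ways of factoring a composition $\bs{b}^V\circ\bs{\Delta}^W$ through lower-order maps are consistent, so that the bookkeeping closes up at each order.
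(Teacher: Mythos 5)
Your primary route is exactly the paper's proof: induction on $d$ with base case $d=1$ given by Propositions \ref{prop:Phi} and \ref{prop:Delta}, then expanding $\mfm_j^{V\odot W}$, $\bs{\Delta}^W_{d-j+1}$ and $\bs{b}^V_{d-j+1}$ by their recursive definitions, invoking the induction hypothesis on the inner compositions of order $<d$, and closing the cancellation with Lemmas \ref{relbSigma}/\ref{relDeltaSigma} (applied to the tuples $(V_0,\dots,V_d)$, resp.\ $(W_0,\dots,W_d)$), the energy vanishing $\Delta_1^W\circ\mfm_s^{W,-}=0$, and Lemma \ref{lem:deltabanane}. The only cosmetic difference is that the paper does not need the full $A_\infty$-equations for $\mfm^V$ and $\mfm^W$ in this lemma (those enter only in the subsequent proposition); otherwise your plan matches the paper's argument.
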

\begin{proof}
We prove it by recursion on $d$. For $d=1$, the relations above means that $\bs{b}_1^V$ and $\bs{\Delta}_1^W$ are chain maps, which is the content of Proposition \ref{prop:Phi} and Proposition \ref{prop:Delta}. Note that we have also proved the case $d=2$ in Section \ref{sec:functoriality}.
For $d\geq2$, we have
\begin{alignat*}{1}
	&\sum_{s=1}^d\sum_{i_1+\dots+i_s=d}\mfm_s^V\big(\bs{\Delta}_{i_s}^W\otimes\dots\otimes\bs{\Delta}_{i_1}^W\big)+\sum_{j=1}^d\sum_{n=0}^{d-j}\bs{\Delta}_{d-j+1}^W\big(\id\otimes\dots\otimes\id\otimes\mfm_j^{V\odot W}\otimes\underbrace{\id\otimes\dots\otimes\id}_{n}\big)\\
	&=\sum_{s=1}^d\sum_{i_1+\dots+i_s=d}\mfm_s^V\big(\bs{\Delta}_{i_s}^W\otimes\dots\otimes\bs{\Delta}_{i_1}^W\big)+\bs{\Delta}_1^W\circ\mfm_d^{V\odot W}\\
	&\hspace{5mm}+\sum_{j=1}^{d-1}\sum_{n=0}^{d-j}\bs{\Delta}_{d-j+1}^W\big(\id\otimes\dots\otimes\id\otimes\mfm_j^{V\odot W}\otimes\underbrace{\id\otimes\dots\otimes\id}_{n}\big)
\end{alignat*}
but by definition of $\mfm_d^{V\odot W}$:
\begin{alignat*}{1}
	&\bs{\Delta}_1^W\circ\mfm_d^{V\odot W}\\
	&=\bs{\Delta}_1^W\circ\Big(\sum_{s=1}^d\sum_{i_1+\dots+i_s=d}\mfm_s^{W,+0}\big(\bs{b}_{i_s}^V\otimes\dots\otimes\bs{b}_{i_1}^V\big)+\mfm_s^{V,0-}\big(\bs{\Delta}_{i_s}^W\otimes\dots\otimes\bs{\Delta}_{i_1}^W\big)\Big)\\
	&=\sum_{s=1}^d\sum_{i_1+\dots+i_s=d}\Delta_1^W\circ\mfm_s^{W,+0}\big(\bs{b}_{i_s}^V\otimes\dots\otimes\bs{b}_{i_1}^V\big)+\mfm_s^{V,0-}\big(\bs{\Delta}_{i_s}^W\otimes\dots\otimes\bs{\Delta}_{i_1}^W\big)
\end{alignat*}
So we get
\begin{alignat*}{1}
&\sum_{s=1}^d\sum_{i_1+\dots+i_s=d}\mfm_s^V\big(\bs{\Delta}_{i_s}^W\otimes\dots\otimes\bs{\Delta}_{i_1}^W\big)+\sum_{j=1}^d\sum_{n=0}^{d-j}\bs{\Delta}_{d-j+1}^W\big(\id\otimes\dots\otimes\id\otimes\mfm_j^{V\odot W}\otimes\id\otimes\dots\otimes\id\big)\\
&=\sum_{s=1}^d\sum_{i_1+\dots+i_s=d}\mfm_s^{V,+}\big(\bs{\Delta}_{i_s}^W\otimes\dots\otimes\bs{\Delta}_{i_1}^W\big)+\sum_{s=1}^d\sum_{i_1+\dots+i_s=d}\Delta_1^W\circ\mfm_s^{W,+0}\big(\bs{b}_{i_s}^V\otimes\dots\otimes\bs{b}_{i_1}^V\big)\\
&\hspace{5mm}+\sum_{j=1}^{d-1}\sum_{n=0}^{d-j}\bs{\Delta}_{d-j+1}^W\big(\id\otimes\dots\otimes\id\otimes\mfm_j^{V\odot W}\otimes\id\otimes\dots\otimes\id\big)\\
\end{alignat*}
Then one has
\begin{alignat*}{1}
&\sum_{j=1}^{d-1}\sum_{n=0}^{d-j}\bs{\Delta}_{d-j+1}^W\big(\id\otimes\dots\otimes\id\otimes\mfm_j^{V\odot W}\otimes\id\otimes\dots\otimes\id\big)\\
&=\sum_{j=1}^{d-1}\sum_{n=0}^{d-j}\sum_{s=2}^{d-j+1}\sum_{i_1+\dots+i_s=d-j+1}\Delta_{s}^W\big(\bs{b}_{i_s}^V\otimes\dots\otimes\bs{b}_{i_1}\big)\big(\id\otimes\dots\otimes\id\otimes\mfm_j^{V\odot W}\otimes\id\otimes\dots\otimes\id\big)\\
&=\sum_{j=1}^{d-1}\sum_{n=0}^{d-j}\sum_{s=2}^{d-j+1}\sum_{i_1+\dots+i_s=d-j+1}\sum_{l=1}^s\Delta_{s}^W\big(\bs{b}_{i_s}^V\otimes\dots\otimes\bs{b}_{i_l}^V(\id\otimes\dots\otimes\mfm_j^{V\odot W}\otimes\dots\id)\otimes\dots\otimes\bs{b}_{i_1}^V\big)\\
&=\sum_{s=2}^{d}\sum_{i_1+\dots+i_s=d}\sum_{l=1}^s\sum_{j=1}^{i_l}\sum_{n=0}^{i_l-j}\Delta_{s}^W\big(\bs{b}_{i_s}^V\otimes\dots\otimes\bs{b}_{i_l}^V(\id\otimes\dots\otimes\mfm_j^{V\odot W}\otimes\dots\id)\otimes\dots\otimes\bs{b}_{i_1}^V\big)\\
&=\sum_{s=2}^{d}\sum_{i_1+\dots+i_s=d}\sum_{l=1}^s\Delta_{s}^W\big(\bs{b}_{i_s}^V\otimes\dots\otimes\sum_{j=1}^{i_l}\sum_{n=0}^{i_l-j}\bs{b}_{i_l}^V(\id\otimes\dots\otimes\mfm_j^{V\odot W}\otimes\dots\id)\otimes\dots\otimes\bs{b}_{i_1}^V\big)
\end{alignat*}
Observe that $i_l\leq d-1$ so by recursion we have that
\begin{alignat*}{1}
	&\sum_{j=1}^{d-1}\sum_{n=0}^{d-j}\bs{\Delta}_{d-j+1}^W\big(\id\otimes\dots\otimes\id\otimes\mfm_j^{V\odot W}\otimes\id\otimes\dots\otimes\id\big)\\
	&=\sum_{s=2}^{d}\sum_{i_1+\dots+i_s=d}\sum_{l=1}^s\Delta_{s}^W\big(\bs{b}_{i_s}^V\otimes\dots\otimes\sum_{u=1}^{i_l}\sum_{t_1+\dots+t_u=i_l}\mfm_u^W(\bs{b}_{t_u}^V\otimes\dots\otimes\bs{b}_{t_1}^V)\otimes\dots\otimes\bs{b}_{i_1}\big)
\end{alignat*}
So we get
\begin{alignat*}{1}
&\sum_{s=1}^d\sum_{i_1+\dots+i_s=d}\mfm_s^V\big(\bs{\Delta}_{i_s}^W\otimes\dots\otimes\bs{\Delta}_{i_1}^W\big)+\sum_{j=1}^d\sum_{n=0}^{d-j}\bs{\Delta}_{d-j+1}^W\big(\id\otimes\dots\otimes\id\otimes\mfm_j^{V\odot W}\otimes\id\otimes\dots\otimes\id\big)\\
&=\sum_{s=1}^d\sum_{i_1+\dots+i_s=d}\mfm_s^{V,+}\big(\bs{\Delta}_{i_s}^W\otimes\dots\otimes\bs{\Delta}_{i_1}^W\big)+\sum_{s=1}^d\sum_{i_1+\dots+i_s=d}\Delta_1^W\circ\mfm_s^{W,+0}\big(\bs{b}_{i_s}^V\otimes\dots\otimes\bs{b}_{i_1}^V\big)\\
&\hspace{5mm}+\sum_{s=2}^{d}\sum_{i_1+\dots+i_s=d}\sum_{l=1}^s\Delta_{s}^W\big(\bs{b}_{i_s}^V\otimes\dots\otimes\sum_{u=1}^{i_l}\sum_{t_1+\dots+t_u=i_l}\mfm_u^W(\bs{b}_{t_u}^V\otimes\dots\otimes\bs{b}_{t_1}^V)\otimes\dots\otimes\bs{b}_{i_1}\big)\\
&=\sum_{s=1}^d\sum_{i_1+\dots+i_s=d}\mfm_s^{V,+}\big(\bs{\Delta}_{i_s}^W\otimes\dots\otimes\bs{\Delta}_{i_1}^W\big)\\
&\hspace{5mm}+\sum_{s=1}^{d}\sum_{i_1+\dots+i_s=d}\sum_{l=1}^s\Delta_{s}^W\big(\bs{b}_{i_s}^V\otimes\dots\otimes\sum_{u=1}^{i_l}\sum_{t_1+\dots+t_u=i_l}\mfm_u^W(\bs{b}_{t_u}^V\otimes\dots\otimes\bs{b}_{t_1}^V)\otimes\dots\otimes\bs{b}_{i_1}\big)\\
\end{alignat*}
where we have used the fact that $\Delta_1^W\circ\mfm_s^{W,-}=0$.
By definition of $\mfm_s^{V,+}$, it gives
\begin{alignat*}{1}
&=\sum_{s=1}^d\sum_{i_1+\dots+i_s=d}\sum_{u=1}^s\sum_{t_1+\dots+t_u=s}\Delta_u^\La\big(\bs{b}_{t_u}^V\otimes\dots\otimes\bs{b}_{t_1}^V\big)\big(\bs{\Delta}_{i_s}^W\otimes\dots\otimes\bs{\Delta}_{i_1}^W\big)\\
&\hspace{5mm}+\sum_{s=1}^{d}\sum_{i_1+\dots+i_s=d}\sum_{l=1}^s\Delta_{s}^W\big(\bs{b}_{i_s}^V\otimes\dots\otimes\sum_{u=1}^{i_l}\sum_{t_1+\dots+t_u=i_l}\mfm_u^W(\bs{b}_{t_u}^V\otimes\dots\otimes\bs{b}_{t_1}^V)\otimes\dots\otimes\bs{b}_{i_1}\big)
\end{alignat*}
and finally, using Lemma \ref{relDeltaSigma} which states in this case that
\begin{alignat*}{1}
\sum\limits_{j=1}^d\sum\limits_{n=0}^{d-j}\Delta^W_{d-j+1}\big(\id^{\otimes d-j-n}\otimes\mfm_j^W\otimes\id^{\otimes n}\big)+\sum\limits_{j=1}^d\sum\limits_{i_1+\dots+i_j=d}\Delta^\La_{j}\big(\bs{\Delta}^W_{i_j}\otimes\dots\otimes\bs{\Delta}_{i_1}^W\big)=0
\end{alignat*}
and Lemma \ref{lem:deltabanane}, one obtains that the sum vanishes, and the maps $\bs{\Delta}_j^W$ satisfy the $A_\infty$-functor equations.

One proves analogously that the maps $\bs{b}_j^V$ satisfy the $A_\infty$-functor equations.
\end{proof}

\begin{prop}
	 The maps $\mfm_d^{V\odot W}$ satisfy the $A_\infty$-equations.
\end{prop}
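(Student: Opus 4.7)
The proof generalizes directly the argument used for $\mfm_d^\Sigma$ in Section~\ref{A-inf maps} (proofs of Equations~\eqref{relinf+} and \eqref{relinf-}), with Lemma~\ref{lem:functoriality} (the $A_\infty$-functor equations for $\bs{b}^V$ and $\bs{\Delta}^W$) playing the role of Lemmas~\ref{relbSigma} and \ref{relDeltaSigma}, and the $A_\infty$-equations for $\mfm^W$ and $\mfm^V$ (from Section~\ref{A-inf maps}) replacing the intermediate relations used there. Decompose $\mfm_d^{V\odot W}=M_d^W+M_d^V$ with
$$M_d^W=\sum_{s=1}^d\sum_{i_1+\dots+i_s=d}\mfm_s^{W,+0}(\bs{b}_{i_s}^V\otimes\dots\otimes\bs{b}_{i_1}^V),\quad M_d^V=\sum_{s=1}^d\sum_{i_1+\dots+i_s=d}\mfm_s^{V,0-}(\bs{\Delta}_{i_s}^W\otimes\dots\otimes\bs{\Delta}_{i_1}^W).$$
Since $M^W$ has image in $CF_{+\infty}(W_0,W_d)$ while $M^V$ has image in $CF_{-\infty}(V_0,V_d)$, the $A_\infty$-equation $\sum_{j,n}\mfm_{k-j+1}^{V\odot W}(\id^{\otimes k-j-n}\otimes\mfm_j^{V\odot W}\otimes\id^{\otimes n})=0$ splits into $A^W=0$ and $A^V=0$, where $A^\bullet$ is the corresponding sum with outer $M^\bullet$. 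I treat $A^W$; the argument for $A^V$ is entirely symmetric.

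Expanding $M_{k-j+1}^W$, each term $\mfm_s^{W,+0}(\bs{b}_{i_s}^V\otimes\dots\otimes\bs{b}_{i_1}^V)$ absorbs the inner $\mfm_j^{V\odot W}$ into exactly one factor $\bs{b}_{i_\ell}^V$, at some sub-position $m$ within the $\ell$-th block. Fix the outer data $(s,\ell,(i_q)_{q\neq\ell})$, set $d=k-\sum_{q\neq\ell}i_q$, and sum over the triple $(j,i_\ell,m)$ subject to $i_\ell+j-1=d$ and $0\le m\le i_\ell-1$. The $A_\infty$-functor equation for $\bs{b}^V$ (Lemma~\ref{lem:functoriality}) gives
$$\sum_{j=1}^{d}\sum_{m=0}^{d-j}\bs{b}_{d-j+1}^V(\id^{\otimes d-j-m}\otimes\mfm_j^{V\odot W}\otimes\id^{\otimes m})=\sum_{u=1}^{d}\sum_{t_1+\dots+t_u=d}\mfm_u^W(\bs{b}_{t_u}^V\otimes\dots\otimes\bs{b}_{t_1}^V).$$

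Regrouping by the refined composition $(r_1,\dots,r_p)$ of $k$ obtained by replacing the $\ell$-th part $i_\ell$ with $(t_1,\dots,t_u)$ (so $p=s+u-1$ and the marked sub-range is $[a,b]=[\ell,\ell+u-1]$), and parametrizing by $u=b-a+1$ and $n=p-u+1-a$, one arrives at
$$A^W=\sum_{p=1}^k\sum_{r_1+\dots+r_p=k}\Bigl[\sum_{u=1}^p\sum_{n=0}^{p-u}\mfm_{p-u+1}^{W,+0}(\id^{\otimes p-u-n}\otimes\mfm_u^W\otimes\id^{\otimes n})\Bigr](\bs{b}_{r_p}^V\otimes\dots\otimes\bs{b}_{r_1}^V).$$
The bracketed expression vanishes: the $A_\infty$-relations for $\mfm^W$ hold componentwise with respect to the output decomposition $\mfm^W=\mfm^{W,+}+\mfm^{W,0}+\mfm^{W,-}$, so projecting the outer operator to its $\{+,0\}$-components still gives zero.

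The same pattern handles $A^V$: the functor equation for $\bs{\Delta}^W$ converts the inner $\mfm^{V\odot W}$ into an inner $\mfm^V$, after which the $\{0,-\}$-projection of the $A_\infty$-relations for $\mfm^V$ forces $A^V=0$. The main obstacle is purely combinatorial: checking that the reindexing from $(j,n,s,(i_r),\ell,m)$ with the insertion constraint to pairs (refined composition $(r_1,\dots,r_p)$ of $k$, consecutive sub-range $[a,b]\subset[1,p]$) is a bijection. Once this bookkeeping is done, the proof is formal, reducing the $A_\infty$-property of $\mfm^{V\odot W}$ to the $A_\infty$-functor property of the transfer maps together with the already-established $A_\infty$-properties of $\mfm^W$ and $\mfm^V$.
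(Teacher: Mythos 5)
Your proof is correct and follows essentially the same route as the paper: expand the outer $\mfm^{V\odot W}$, absorb the inner one into a single $\bs{b}^V$ (resp.\ $\bs{\Delta}^W$) block, apply the $A_\infty$-functor equations of Lemma \ref{lem:functoriality}, and reindex so that the $A_\infty$-relations for $\mfm^W$ and $\mfm^V$ (projected to the $+0$, resp.\ $0-$, components) force the vanishing. The only cosmetic difference is that you split the equation into the $CF_{+\infty}(W_0,W_d)$ and $CF_{-\infty}(V_0,V_d)$ parts at the outset, whereas the paper carries both summands through one computation.
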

\begin{proof} For all $d\geq1$, one has
\begin{alignat*}{1}
	&\sum_{j=1}^d\sum_{n=0}^{d-j}\mfm_{d-j+1}^{V\odot W}\big(\id\otimes\dots\otimes\id\otimes\mfm_j^{V\odot W}\otimes\overbrace{\id\otimes\dots\otimes\id}^{n}\big)\\
	&=\sum_{j,n}\Big(\sum_{k=1}^{d-j+1}\sum_{i_1+\dots+i_k=d-j+1}\mfm_k^{W,+0}\big(\bs{b}_{i_k}^V\otimes\dots\otimes\bs{b}_{i_1}^V\big)+\mfm_k^{V,0-}\big(\bs{\Delta}_{i_k}^W\otimes\dots\otimes\bs{\Delta}_{i_1}^W\big)\Big)\big(\id\otimes\dots\otimes\mfm_j^{V\odot W}\otimes\dots\otimes\id\big)\\
	&=\sum_{k=1}^d\sum_{i_1+...+i_k=d}\sum_{s=1}^k\sum_{j=1}^{i_s}\sum_{n=0}^{i_s-j}\mfm_k^{W,+0}\big(\bs{b}_{i_k}^V\otimes\dots\otimes\bs{b}_{i_s-j+1}^V(\id\otimes\dots\otimes\mfm_j^{V\odot W}\otimes\dots\otimes\id)\otimes\dots\otimes\bs{b}_{i_1}^V\big)\\
	&\hspace{4cm}+\mfm_k^{V,0-}\big(\bs{\Delta}_{i_k}^W\otimes\dots\otimes\bs{\Delta}_{i_s-j+1}^V(\id\otimes\dots\otimes\mfm_j^{V\odot W}\otimes\dots\otimes\id)\otimes\dots\otimes\bs{\Delta}_{i_1}^W\big)\\
	&=\sum_{k=1}^d\sum_{i_1+...+i_k=d}\sum_{s=1}^k\mfm_k^{W,+0}\big(\bs{b}_{i_k}^V\otimes\dots\otimes\sum_{j=1}^{i_s}\sum_{n=0}^{i_s-j}\bs{b}_{i_s-j+1}^V(\id\otimes\dots\otimes\mfm_j^{V\odot W}\otimes\dots\otimes\id)\otimes\dots\otimes\bs{b}_{i_1}^V\big)\\
	&+\sum_{k=1}^d\sum_{i_1+...+i_k=d}\sum_{s=1}^k\mfm_k^{V,0-}\big(\bs{\Delta}_{i_k}^W\otimes\dots\otimes\sum_{j=1}^{i_s}\sum_{n=0}^{i_s-j}\bs{\Delta}_{i_s-j+1}^V(\id\otimes\dots\otimes\mfm_j^{V\odot W}\otimes\dots\otimes\id)\otimes\dots\otimes\bs{\Delta}_{i_1}^W\big)
\end{alignat*}
Using Lemma \ref{lem:functoriality}, the sum above is equal to:
\begin{alignat*}{1}
	&\sum_{k=1}^d\sum_{i_1+...+i_k=d}\sum_{s=1}^k\mfm_k^{W,+0}\big(\bs{b}_{i_k}^V\otimes\dots\otimes\sum_{u=1}^{i_s}\sum_{t_1+\dots+t_u=i_s}\mfm_u^W(\bs{b}_{t_u}^V\otimes\dots\otimes\bs{b}_{t_1}^V)\otimes\dots\otimes\bs{b}_{i_1}^V\big)\\
	&+\sum_{k=1}^d\sum_{i_1+...+i_k=d}\sum_{s=1}^k\mfm_k^{V,0-}\big(\bs{\Delta}_{i_k}^W\otimes\dots\otimes\sum_{u=1}^{i_s}\sum_{t_1+\dots+t_u=i_s}\mfm_u^V(\bs{\Delta}_{t_u}^W\otimes\dots\otimes\bs{\Delta}_{t_1}^W)\otimes\dots\otimes\bs{\Delta}_{i_1}^W\big)\\
	&=\sum_{j=1}^d\sum_{r_1+\dots+r_j=d}\sum_{u=1}^j\sum_{s=1}^{j-u+1}\mfm_{j-u+1}^{W,+0}\big(\underbrace{\id\otimes\dots\otimes\id}_{k-s}\otimes\mfm_u^W\otimes\underbrace{\id\otimes\dots\otimes\id}_{s-1}\big)\big(\bs{b}_{r_j}^V\otimes\dots\otimes\bs{b}_{r_1}^V\big)\\
	&+\sum_{j=1}^d\sum_{r_1+\dots+r_j=d}\sum_{u=1}^j\sum_{s=1}^{j-u+1}\mfm_{j-u+1}^{V,0-}\big(\underbrace{\id\otimes\dots\otimes\id}_{k-s}\otimes\mfm_u^V\otimes\underbrace{\id\otimes\dots\otimes\id}_{s-1}\big)\big(\bs{\Delta}_{r_j}^V\otimes\dots\otimes\bs{\Delta}_{r_1}^V\big)\\
	&=0	
\end{alignat*}
as the maps $\mfm_j^W$ and $\mfm_j^V$ satisfy the $A_\infty$-equations.
\end{proof}

\normalsize
\bibliographystyle{alpha}
\bibliography{references2.bib}
\end{document}